\documentclass[]{amsart}
\usepackage{amssymb}


 \newtheorem{Theorem}{Theorem}[section]
 \newtheorem{Corollary}[Theorem]{Corollary}
 \newtheorem{Lemma}[Theorem]{Lemma}
 \newtheorem{Proposition}[Theorem]{Proposition}

 \newtheorem{Definition}[Theorem]{Definition}

 \newtheorem{Remark}[Theorem]{Remark}

 \numberwithin{equation}{section}



\begin{document}

\title
 {minimal $L^2$ integrals for the Hardy spaces and the Bergman spaces}

\author{Qi'an Guan}
\address{Qi'an Guan: School of Mathematical Sciences,
Peking University, Beijing, 100871, China.}
\email{guanqian@math.pku.edu.cn}

\author{Zheng Yuan}

\address{Zheng Yuan: School of Mathematical Sciences,
Peking University, Beijing, 100871, China.}
\email{zyuan@pku.edu.cn}

\thanks{}

\subjclass[2020]{30H10 30H20 31C12 30E20}

\keywords{Hardy space, Bergman space, minimal $L^2$ integral, product manifold, concavity property}

\date{\today}

\dedicatory{}

\commby{}


\begin{abstract}
In this article, we consider the minimal $L^2$ integrals for the Hardy spaces and the Bergman spaces, and we present some relations between them, which  can be regarded as the solutions  of the finite points versions of Saitoh's conjecture for conjugate Hardy kernels. As applications, we give  optimal $L^2$ extension theorems for the Hardy spaces, and characterizations for the holding of the equality in the optimal $L^2$ extension theorems.
\end{abstract}

\maketitle

\section{Introduction}

Let $D$ be a planar regular region with finite boundary components, which are analytic Jordan curves (see \cite{saitoh,yamada}).

\begin{Definition}[see \cite{saitoh,GY-weightedsaitoh}]
	We call a holomorphic function $f$ on $D$ belongs to Hardy space $H^2(D)$, if $|f(z)|^2$ have harmonic majorants $U(z)$, i.e., $|f(z)|^2\le U(z)\,\, \text{on}\,\,D.$
\end{Definition}

Each function $f(z)\in H^{2}(D)$ has Fatou's nontangential boundary value a.e. on $\partial D$ belonging to $L^2(\partial D)$ (see \cite{duren}), and we also denote the nontangential boundary value by $f$ for simplicity. The conjugate Hardy $H^2$ kernel $\hat K_t(z,\overline w)$ is defined as follow: 
$$f(w)=\frac{1}{2\pi}\int_{\partial D}f(z)\overline{\hat K_t(z,\overline w)}\left(\frac{\partial G_D(z,t)}{\partial v_z}\right)^{-1}|dz|$$
holds for any $f\in H^{2}(D)$, where  $G_D(z,t)$ is the Green function on $D$, and $\partial/\partial v_z$ denotes the derivative along the outer normal unit vector $v_z$. Fixed $t\in D$, $\frac{\partial G_D(z,t)}{\partial v_z}$ is  positive and continuous on $\partial D$ because of the analyticity of the boundary (see \cite{saitoh}, \cite{guan-19saitoh}).  When $t=w=z$, $\hat K(z)$ denotes $\hat K_t(z,\overline w)$ for simplicity.

 In \cite{guan-19saitoh}, Guan proved the following theorem, which was conjectured by Saitoh (see \cite{saitoh}):

\begin{Theorem}
	[\cite{guan-19saitoh}]\label{thm:saitoh}
	If $D$ is not simple connected, then $\hat K(z)>\pi B(z)$, where $B(z)$ is the Bergman kernel on $D$.
\end{Theorem}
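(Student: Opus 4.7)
The plan is to compare the two reproducing kernels via minimal $L^2$ integrals over sublevel sets of the Green function, a strategy that fits the theme announced in the abstract. Fix $z_0 \in D$ and consider the pluricomplex Green function $G_D(\cdot, z_0)$. For each $t \ge 0$, set $D_t = \{\zeta \in D : G_D(\zeta, z_0) < -t\}$ and define
\[
\Phi(t) \;=\; \inf\Bigl\{\int_{D_t} |f(\zeta)|^2 \, d\lambda(\zeta) \;:\; f \in \mathcal{O}(D_t),\ f(z_0) = 1\Bigr\}.
\]
At $t = 0$, $\Phi(0)$ equals $\pi / B(z_0)$ by the extremal characterisation of the Bergman kernel. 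As $t \to +\infty$ the sets $D_t$ shrink onto $z_0$, and since $G_D(\zeta, z_0) = \log|\zeta - z_0| + O(1)$ near $z_0$, one obtains $\lim_{t\to\infty} e^{2t}\Phi(t) = \pi^2 / \hat K(z_0)$ after identifying the limiting extremal with the normalised reproducing kernel of $H^2(D)$ via the boundary formula supplied in the definition of $\hat K$.

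The bridge between these two limits is a concavity property: I would show that the function $t \mapsto -\log \Phi(t)$ is concave, or equivalently that $e^t \Phi(t)$ is convex in $t$ after the appropriate change of variable. This kind of concavity is a descendant of the Guan--Zhou sharp $L^2$ extension/optimal estimates, and it is derived by taking holomorphic extensions with good $L^2$ bounds from $D_{t}$ to $D_{t'}$ for $t'<t$ and comparing with the extremal function. The concavity, applied between $t = 0$ and $t = +\infty$, then yields the weak inequality $\hat K(z_0) \ge \pi B(z_0)$; equality in the limit would force $-\log\Phi(t)$ to be exactly linear in $t$.

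To upgrade to the strict inequality under the hypothesis that $D$ is not simply connected, I would analyse the equality case of the concavity. Linearity of $-\log\Phi$ would force the extremal functions $f_t$ on $D_t$ to be restrictions of a single global holomorphic function on $D$ that behaves like a power of $e^{G_D(\cdot,z_0)}$; in particular it would imply that $G_D(\cdot, z_0) = \log |\varphi|$ for some holomorphic $\varphi : D \to \mathbb{D}$ with a single simple zero at $z_0$. But on a multiply connected planar domain the harmonic conjugate of $G_D(\cdot, z_0)$ has non-zero periods around the non-trivial boundary components, so no such single-valued $\varphi$ exists. This topological obstruction rules out equality and gives $\hat K(z_0) > \pi B(z_0)$.

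The main obstacle is not the weak inequality, which follows once the right concavity for minimal $L^2$ integrals is set up, but rather the clean extraction of the equality case. The delicate point is showing that a \emph{single} equality in the concavity of $-\log \Phi$ propagates to a global rigidity statement about $G_D(\cdot,z_0)$, because this requires tracking the uniqueness of the extremal problem at every sublevel and matching them through the extension procedure. Once this uniqueness is in hand, the period obstruction for conjugate harmonic differentials on non-simply-connected $D$ does the rest.
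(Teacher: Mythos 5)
Your overall strategy---concavity of a minimal $L^2$ integral over sublevel sets of the Green function, plus a rigidity/equality analysis that reduces to a period obstruction for the harmonic conjugate of $G_D(\cdot,z_0)$ on a multiply connected domain---is indeed the strategy behind this theorem (the paper only cites it from \cite{guan-19saitoh}, but recovers it as the case $m=1$, $k_1=0$, $\varphi\equiv 0$, $c\equiv 1$ of Theorem \ref{main theorem1}, whose condition $(3)$ forces $\chi_{z_0}$ to be trivial, which is exactly your topological obstruction). However, there is a concrete error in where the conjugate Hardy kernel enters the picture. You place $\hat K(z_0)$ at the $t\to+\infty$ end, claiming $\lim_{t\to\infty}e^{2t}\Phi(t)=\pi^2/\hat K(z_0)$ because ``$D_t$ shrinks onto $z_0$.'' That limit is a purely local quantity at $z_0$: since $G_D(\zeta,z_0)=\log|\zeta-z_0|+O(1)$, one gets $\lim_{t\to\infty}e^{2t}\Phi(t)=\pi/c_\beta(z_0)^2$, the logarithmic capacity. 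Comparing $t=0$ with $t=+\infty$ via concavity proves the Suita inequality $c_\beta(z_0)^2\le \pi B(z_0)$, not Saitoh's inequality. The kernel $\hat K$ is defined by a boundary integral over $\partial D$, and the boundary of $D$ corresponds to $t\to 0$, not $t\to\infty$.

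The missing ingredient is the identification of the Hardy norm with the \emph{derivative of the minimal $L^2$ integral at $t=0$}. Concretely, with $G(t)$ the minimal integral over $\{2G_D(\cdot,z_0)<-t\}$ and $r=e^{-t}$, concavity of $G$ in $r$ together with $G\to 0$ as $t\to+\infty$ gives $\liminf_{r\to 1-0}\frac{G(0)-G(-\log r)}{1-r}\le G(0)=1/B(z_0)$; and by the coarea formula the numerator $\int_{\{2G_D\ge \log r\}}|F_0|^2$ divided by $1-r$ converges to $\frac12\int_{\partial D}|F_0|^2\left(\frac{\partial G_D(\cdot,z_0)}{\partial v_z}\right)^{-1}|dz|$, provided one first shows the extremal $F_0$ lies in $H^2(D)$ so that its nontangential boundary values exist (this is the content of Lemma \ref{l:2}, Lemma \ref{l:01a} and Lemma \ref{l:0-4v2} in the paper). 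That chain yields $1/\hat K(z_0)\le 1/(\pi B(z_0))$, i.e.\ $\hat K\ge \pi B$, and only then does your equality analysis (linearity of $G(h^{-1}(r))$ forcing $e^{G_D(\cdot,z_0)}=|\varphi|$ for a single-valued holomorphic $\varphi$, impossible when $D$ is not simply connected) apply. Two smaller points: with the paper's normalization $\Phi(0)=1/B(z_0)$, not $\pi/B(z_0)$; and the concavity actually used is that of $G$ as a function of $r=e^{-t}$, not concavity of $-\log\Phi$.
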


By discussing the weighted kernel functions, we \cite{GY-weightedsaitoh} gave a weighted version of Saitoh's conjecture and a weighted version of Saitoh's conjecture for higher derivatives. 

In \cite{GY-saitohprodct}, we considered two classes of weighted Hardy spaces on products of planar domains.  Let us recall their definitions.

  Let $D_j$ be a planar region bounded by finite analytic Jordan curves   for any $1\le j\le n$. Let $M=\prod_{1\le j\le n}D_j$ be a bounded domain in $\mathbb{C}^n$. Let  $M_j=\prod_{1\le l\le n,l\not=j}D_l$, then $M=D_j\times M_j$ and $\partial M=\cup_{1\le j\le n}\partial D_j\times\overline{M_j}$. 

Let $\rho$ be a Lebesgue measurable function on $\partial M$ such that $\inf_{\partial M}\rho>0$. Now, we recall the Hardy space $H_{\rho}^2(M,\partial M)$ (see \cite{GY-saitohprodct}).
 Note that $\partial M=\cup_{j=1}^n\partial D_j\times \overline{M_j}$.  
Let $d\mu_j$ be the Lebesgue measure on $M_j$ for any $1\le j\le n$ and $d\mu$ is a measure on $\partial M$ defined by
$$\int_{\partial M}hd\mu=\sum_{1\le j\le n}\frac{1}{2\pi}\int_{M_j}\int_{\partial D_j}h(w_j,\hat w_j)|dw_j|d\mu_j(\hat w_j)$$
 for any $h\in L^1(\partial M)$, where $\hat w_j:=(w_1,\ldots,w_{j-1},w_{j+1},\ldots,w_n)\in M_j$.  For any $f\in H^2(D_j)$, $\gamma_j(f)$ denotes the nontangential boundary value of $f$ a.e. on $\partial D_j$.

 \begin{Definition}[\cite{GY-saitohprodct}]
 	 Let $f\in L^2(\partial M,\rho d\mu)$. We call $f\in H^2_{\rho}(M,\partial M)$ if there exists $f^*\in\mathcal{O}(M)$ such that for any $1\le j\le n$, $f^*(\cdot,\hat w_j)\in H^2(D_j)$  for any  $\hat w_j\in M_j$ and $f=\gamma_j(f^*)$ a.e. on $\partial D_j\times M_j$. 
 \end{Definition}
  $H^2_{\rho}(M,\partial M)$ is a Hilbert space (see \cite{GY-saitohprodct}) equipped with the norm $\ll\cdot,\cdot\gg_{\partial M,\rho}$, which is defined by
$$\ll f,g\gg_{\partial M,\rho}:=\int_{\partial M}f\overline{g}\rho d\mu.$$
 Denote that $P_{\partial M}(f)=f^*$ for any $f\in H^2_{\rho}(M,\partial M)$.
  $P_{\partial M}$ is a linear injective map from $H^2(M,\partial D_j\times M_j)$ to $\mathcal{O}(M)$ (see \cite{GY-saitohprodct}). When $n=1$, $P_{\partial M}=\gamma_1^{-1}$, thus $H^2_{\rho}(M,\partial M)$ can be seen as a weighted generalization on product spaces of $H^2(D)$.

Denote that $S:=\prod_{1\le j\le n}\partial D_j$.  Let $\lambda$ be a Lebesgue measurable function on $S$ such that $\inf_{S}\lambda>0$. Let us recall another class of Hardy space $H^2_{\lambda}(M,S)$.
\begin{Definition}[\cite{GY-saitohprodct}]\label{def2}
	Let $f\in L^2(S,\lambda d\sigma)$, where $d\sigma:=\frac{1}{(2\pi)^n}|dw_1|\ldots|dw_n|$. We call $f\in H^2_{\lambda}(M,S)$ if there exists $\{f_m\}_{m\in\mathbb{Z}_{\ge0}}\subset\mathcal{O}(M)\cap C(\overline M)\cap L^2(S,\lambda d\sigma)$ such that $\lim_{m\rightarrow+\infty}\|f_m-f\|_{S,\lambda}^2=0$, where $\|g\|_{S,\lambda}:=\left(\int_{S}|g|^2\lambda d\sigma\right)^{\frac{1}{2}}$ for any $g\in L^2(S,\lambda d\sigma)$.
\end{Definition}
 Denote that 
$$\ll f,g\gg_{S,\lambda}=\frac{1}{(2\pi)^n}\int_S f\overline g\lambda |dw_1|\ldots|dw_n|$$
for any $f,g\in L^2(S,\lambda d\sigma),$
then $H_{\lambda}^2(M,S)$ is a Hilbert space equipped with the inner product $\ll \cdot,\cdot\gg_{S,\lambda}$ (see \cite{GY-saitohprodct}). There exists a  linear injective map $P_S:H^2_{\lambda}(M,S)\rightarrow\mathcal{O}(M)$ satisfying that $P_S(f)=f$ for any $f\in\mathcal{O}(M)\cap C(\overline M)\cap L^2(S,\lambda d\sigma)$ (see \cite{GY-saitohprodct}). When $n=1$, $P_{S}=\gamma_1^{-1}$, thus $H^2_{\lambda}(M,S)$ can also be seen as a weighted generalization on product spaces of $H^2(D)$.

In \cite{GY-saitohprodct}, we discussed some properties and kernel functions for the spaces $H^2_{\rho}(M,\partial M)$ and $H^2_{\lambda}(M,S)$, and we discussed the relations between them and the weighted Bergman kernels on $M$, which can be regarded as the solutions of the product versions of Saitoh's conjecture.

Note that the above mentioned kernel functions for the Hardy spaces and the Bergman spaces can be seen as the reciprocal of some minimal $L^2$ integrals related to one point, such as:
$$\hat K(z)=\frac{1}{\inf\left\{\frac{1}{2\pi}\int_{\partial D}|f(z)|^2\left(\frac{\partial G_D(z,t)}{\partial v_z}\right)^{-1}|dz|:f\in H^2(D)\,\&\,f(z)=1\right\}},$$
and
$$B(z)=\frac{1}{\inf\left\{\int_D|f|^2:f\in\mathcal{O}(D)\,\&\,f(z)=1 \right\}}.$$
In this article, we consider more general minimal $L^2$ integrals for the Hardy spaces and the Bergman spaces, and we give some relations between them. As applications, we give  optimal $L^2$ extension theorems for the Hardy spaces, and characterizations for the holding of equality in the optimal $L^2$ extension theorems.

\subsection{Minimal $L^2$ integrals on a planar region}\label{sec:main}

Let $D$ be a planar region bounded by finite analytic Jordan curves, and let $Z_0:=\{z_1,\ldots,z_m\}\subset D$, where $m$ is a positive integer. 

Let $\psi$ be a Lebesgue measurable function on  $\overline D$, which satisfies that $\psi$ is  subharmonic on $D$, $\psi\equiv 0$ on $\partial D$
 and the Lelong number $v(dd^c\psi,z_j)>0$ for any $z_j\in Z_0$, where $d^c=\frac{\partial-\bar\partial}{2\pi\sqrt{-1}}$. Assume that $\psi\in C^1(U\cap\overline{D})$ and $\frac{\partial\psi}{\partial v_z}$ is positive on $\partial D$, where  $U$ is an open neighborhood of $\partial D$ and  $\partial/\partial v_z$ denotes the derivative along the outer normal unit vector $v_z$.
 
Let $k_j$ be a nonnegative integer for $1\le j \le m$. Let $\varphi$ be a Lebesgue measurable function on $\overline D$ satisfying that $\varphi+2\psi$ is subharmonic on $D$, the Lelong number 
 $$v(dd^c(\varphi+2\psi),z_j)\ge2(k_j+1)$$
 for any $1\le j\le n$, and $\varphi$ is continuous at $z$ for any $z\in\partial D$. Besides, we assume that  one of the following two statements holds:
 
  $(1)$ $(\psi-p_{j}G_{D}(\cdot,z_j))(z_j)>-\infty$, where $p_j=v(dd^c(\psi),z_j)>0$ for any $1\le j\le m$;
  
  $(2)$ for any $1\le j\le m$, there exists $a_j\in[0,1)$ such that $\varphi+2a_j\psi$ is subharmonic near $z_j$.

 Let $c$ be a positive Lebesgue measurable function on $[0,+\infty)$ satisfying that $c(t)e^{-t}$ is decreasing on $[0,+\infty)$, $\lim_{t\rightarrow0+0}c(t)=c(0)=1$ and $\int_0^{+\infty}c(t)e^{-t}dt<+\infty$. Denote that  
$$\tilde\rho:=e^{-\varphi}c(-2\psi),$$ 
and assume that $\tilde\rho$ has a positive lower bound on any compact subset of $D\backslash Z$, where $Z\subset\{\psi=-\infty\}$ is a discrete subset of $D$. Denote that $$\rho:=e^{-\varphi}\left(\frac{\partial\psi}{\partial v_z}\right)^{-1}$$
on $\partial D$.

Let us consider the following two minimal integrals. Let $\mathfrak{a}=(a_{j,l})$ $(1\le j\le m,0\le l\leq k_j )$, where $a_{j,l}\in\mathbb{C}$ such that $\sum_{1\le j\le m}\sum_{0\le l\le k_j}|a_{j,l}|\not=0$.
Denote that 
\begin{displaymath}
	\begin{split}
		M(Z_0,\mathfrak{a},\tilde\rho):=\inf\bigg\{&\int_{ D}|f|^2\tilde\rho:f\in \mathcal{O}(D) 
		\\ &\text{ s.t. } f^{(l)}(z_j)=l!a_{j,l}\text{ for any $0\le l\le k_j$ and any $1\le j\le m$}\bigg\}.	\end{split}
\end{displaymath}
and 
\begin{displaymath}
	\begin{split}
		M_H(Z_0,\mathfrak{a},\rho):=\inf\bigg\{\frac{1}{2\pi}&\int_{\partial D}|f|^2\rho|dz|:f\in H^2(D) 
		\\ &\text{ s.t. } f^{(l)}(z_j)=l!a_{j,l}\text{ for any $0\le l\le k_j$ and any $1\le j\le m$}\bigg\}.	\end{split}
\end{displaymath}

We recall some notations (see \cite{OF81}, see also \cite{guan-zhou13ap,GY-concavity,GMY-concavity2}).
  Let $p:\Delta\rightarrow D$ be the universal covering from unit disc $\Delta$ to $D$.
 we call the holomorphic function $f$  on $\Delta$ a multiplicative function,
 if there is a character $\chi$, which is the representation of the fundamental group of $D$, such that $g^{\star}f=\chi(g)f$,
 where $|\chi|=1$ and $g$ is an element of the fundamental group of $D$. It is known that for any harmonic function $u$ on $D$,
there exists a $\chi_{u}$ and a multiplicative function $f_u\in\mathcal{O}^{\chi_u}(D)$,
such that $|f_u|=p^{\star}\left(e^{u}\right)$.
Recall that for the Green function $G_{D}(z,z_j)$,
there exist a $\chi_{z_j}$ and a multiplicative function $f_{z_j}\in\mathcal{O}^{\chi_{z_j}}(D)$, such that $|f_{z_j}(z)|=p^{\star}\left(e^{G_{D}(z,z_j)}\right)$ (see \cite{yamada,suita}).

We present a relation between $M_H(Z_0,\mathfrak{a},\rho)$ and $M(Z_0,\mathfrak{a},\tilde\rho)$ as follows:

\begin{Theorem}
\label{main theorem1}Assume that $M(Z_0,\mathfrak{a},\tilde\rho)<+\infty$. Then
\begin{equation}
	\label{eq:221201b}M_H(Z_0,\mathfrak{a},\rho)\leq\frac{M(Z_0,\mathfrak{a},\tilde\rho)}{\pi\int_0^{+\infty}c(t)e^{-t}dt} 
	\end{equation}
 holds, and the equality holds if and only if the following statements hold:

	$(1)$ $\varphi+2\psi=2\sum_{1\le j\le m}(k_j+1)G_{D}(\cdot,z_j)+2u$, where $u$ is a harmonic function on $D$;
	 
	$(2)$ $\psi=\sum_{1\le j\le m}p_jG_{D}(\cdot,z_j)$, where $p_j=v(dd^c(\psi),z_j)>0$;
	
	$(3)$ $\chi_{-u}=\prod_{1\le j\le m}\chi_{z_j}^{k+1}$, where $\chi_{-u}$ and $\chi_{z_j}$ are the  characters associated to the functions $-u$ and $G_{D}(\cdot,z_j)$ respectively;
	
	$(4)$  For any $1\le j\le m$, $\lim_{z\rightarrow z_j}\frac{p_*\left(f_u\left(\prod_{1\le j\le m}f_{z_j}^{k_j+1}\right)\left(\sum_{1\le j\le m}p_j\frac{df_{z_j}}{f_{z_j}}\right)\right)}{(z-z_j)^{k_j}dz}=c_0a_{j,k_j}$ and $a_{j,l}=0$ for any $l<k_j$, where $c_0\not=0$ is a constant independent of $j$. 
	\end{Theorem}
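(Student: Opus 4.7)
The plan is to derive \eqref{eq:221201b} via a Berndtsson/Ohsawa--Takegoshi-type integral inequality, obtained as a consequence of the Guan--Yuan concavity property of minimal $L^2$ integrals applied along the exhaustion of $D$ by sublevel sets of $-2\psi$, and then to extract the equality conditions from a rigidity analysis on the universal cover of $D$.

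Let $f_0 \in \mathcal{O}(D)$ be a minimizer for $M(Z_0,\mathfrak{a},\tilde\rho)$; it exists by the projection theorem because $M<+\infty$ and the admissible set is a closed affine subspace of the weighted Bergman space. Because $\psi \equiv 0$ on $\partial D$ and $\varphi$ is continuous on $\overline D$, $\tilde\rho$ is bounded on a neighborhood of $\partial D$, and the standard boundary regularity of weighted Bergman minimizers on smoothly bounded domains forces $f_0 \in H^2(D)$. The co-area formula gives
$$\int_D |f_0|^2 \tilde\rho = \int_0^{+\infty} c(t) F(t)\,dt,\qquad F(t) := \int_{\{-2\psi = t\}} \frac{|f_0|^2 e^{-\varphi}}{|\nabla(-2\psi)|}|dz|,$$
and $F(0) = \frac{1}{2}\int_{\partial D} |f_0|^2 \rho\,|dz|$ because $|\nabla\psi| = \partial \psi/\partial v_z$ on $\partial D$.

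The key input is the monotonicity of $t \mapsto e^t F(t)$ at the minimizer, equivalently $F(t) \ge e^{-t} F(0)$ for $t > 0$, which I would derive from the Guan--Yuan concavity property of minimal $L^2$ integrals applied to the family indexed by the sublevel sets $\{-\psi < t\}$ carrying the same jet constraints at $Z_0$. Integrating against $c(t)\,dt$ produces
$$M = \int_D |f_0|^2 \tilde\rho \ge F(0) \int_0^{+\infty} c(t) e^{-t}\,dt = \pi \cdot \frac{1}{2\pi}\int_{\partial D} |f_0|^2\rho\,|dz| \cdot \int_0^{+\infty} c(t) e^{-t}\,dt,$$
and since the boundary integral is bounded below by $M_H(Z_0,\mathfrak{a},\rho)$, \eqref{eq:221201b} follows after rearrangement.

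Equality in \eqref{eq:221201b} forces $f_0$ to also realise $M_H(Z_0,\mathfrak{a},\rho)$ and forces $F(t) \equiv e^{-t} F(0)$. By rigidity of the concavity property, the latter forces $\log|f_0|^2 - \varphi - 2\psi$ to be harmonic on $D \setminus Z_0$. Combined with $\psi|_{\partial D} = 0$ and the prescribed Lelong numbers at $Z_0$, uniqueness of harmonic functions with logarithmic singularities yields (2) $\psi = \sum_j p_j G_D(\cdot, z_j)$; substituting back and using the bound $v(dd^c(\varphi + 2\psi), z_j) \ge 2(k_j + 1)$ (saturated at equality) gives (1) $\varphi + 2\psi = 2\sum_j(k_j + 1) G_D(\cdot, z_j) + 2u$. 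Lifting to the universal cover $p\colon \Delta \to D$, the factorizations $|f_u| = p^* e^u$ and $|f_{z_j}| = p^* e^{G_D(\cdot, z_j)}$ combined with the single-valuedness of $f_0$ on $D$ force the character identity (3) $\chi_{-u} = \prod_j \chi_{z_j}^{k_j + 1}$; finally, matching the prescribed jet data $f_0^{(l)}(z_j) = l!\,a_{j,l}$ against the $k_j$-th Laurent coefficient at $z_j$ of the explicit push-forward form $p_*\bigl(f_u \prod_j f_{z_j}^{k_j + 1} \sum_j p_j\,df_{z_j}/f_{z_j}\bigr)$ yields $a_{j,l} = 0$ for $l < k_j$ and the normalization (4) with a single constant $c_0 \ne 0$.

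The main obstacle will be this equality analysis: extracting the four explicit rigidity conditions (especially the character identity (3) and the universal constant $c_0$ in (4)) from the scalar identity $F(t) \equiv e^{-t} F(0)$. The key device is the universal-cover factorization combined with the Lelong number constraints at $Z_0$ and with either alternative (1) or (2) of the setup; these are exactly what ensures that the candidate minimizer built from multiplicative functions on $\Delta$ descends to a genuine element of $\mathcal{O}(D)$ realising the prescribed jet data.
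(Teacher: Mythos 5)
Your route to the inequality is the same as the paper's in substance: concavity of the minimal $L^2$ integral $G(t)$ over the sublevel sets $\{2\psi<-t\}$ after the reparametrization $h(t)=\int_t^{+\infty}c(s)e^{-s}ds$, followed by a coarea/boundary-limit computation. Still, there are genuine gaps. The assertion that the Bergman minimizer automatically lies in $H^2(D)$ by ``standard boundary regularity'' is not an argument: in the paper this is deduced from the decay $\int_{\{2\psi\ge\log r\}}|F_0|^2\tilde\rho=O(1-r)$ (a consequence of the concavity) together with the nontrivial Hardy-space criterion of Lemmas \ref{l:2} and \ref{l:01a}, and your use of the boundary integral $F(0)$ before $H^2$ membership is established is circular. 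More seriously, in the necessity direction the linearity characterization (Theorem \ref{thm:m-points}) only yields $\varphi+2\psi=2\log|g_1|+2\sum_{j}G_D(\cdot,z_j)+2u_1$ with $\mathrm{ord}_{z_j}(g_1)=k_j$; to reach statement $(1)$, in which no $\log|g|$ term appears, one must prove that the residual factor $g_2=g_1/g_0$ has no zeros anywhere in $D$. The Lelong-number conditions at the $z_j$ do not exclude zeros of $g_2$ away from $Z_0$, and the paper needs a separate comparison argument (replace $\varphi$ by $\tilde\varphi$ via a Dirichlet problem, then use $M(Z_0,\mathfrak a,\tilde\rho)=M(Z_0,\mathfrak a,\tilde\rho_1)$ to force $\tilde\rho_1=\tilde\rho$) to rule this out; your sketch omits this step entirely.

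The larger omission is the sufficiency half of the equivalence, which your proposal does not address. Given $(1)$--$(4)$ one must build the explicit candidate $F_0=c_0^{-1}\,p_*\bigl(f_u\bigl(\prod_{j}f_{z_j}^{k_j+1}\bigr)\bigl(\sum_{j}p_j\,df_{z_j}/f_{z_j}\bigr)\bigr)/dz$, verify $|F_0|\in C(\overline D)$, and then show that this \emph{same} function realizes $M_H(Z_0,\mathfrak a,\rho)$; the paper does this via the orthogonality relation $\int_{\{2\psi<-t\}}F_0\overline{F_0-f}\,\tilde\rho=0$ from Lemma \ref{l:unique}, which descends by the coarea formula to each level set $\{\psi=r\}$ and then to $\partial D$ by Lemma \ref{l:0-4v2}. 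Without an argument of this type the equality case cannot be closed, since nothing a priori forces the Bergman and Hardy extremal problems to share a minimizer. Finally, your intermediate rigidity claim that linearity forces $\log|f_0|^2-\varphi-2\psi$ to be harmonic on $D\setminus Z_0$ is not correct as stated: on the candidate minimizer this quantity is $2\log$ of the modulus of a holomorphic object that can vanish at interior critical points of $\psi$, so the correct route is through the full characterization of Theorem \ref{thm:m-points} rather than a pointwise harmonicity statement.
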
 

When $m=1$, Theorem \ref{main theorem1} is a solution of the weighted version of Saitoh's conjecture for higher derivatives, which can be referred to \cite{GY-weightedsaitoh}.
	
\begin{Remark}
	\label{rem1.1} Assume that the four statements in Theorem \ref{main theorem1} hold, then we know $\frac{p_*\left(f_{ u}\left(\prod_{1\le j\le m}f^{k_j+1}_{z_j}\right)\left(\sum_{1\le j\le m}p_j\frac{df_{z_j}}{f_{z_j}}\right)\right)}{c_0dz}$ is a (single-valued) holomorphic function on $D$, and we denote it by $F_0$. Then $F_0^{(l)}(z_j)=l!a_{j,l}$ for any $0\le l\le k_j$ and any $1\le j\le m$, and there exists $f_0\in H^2(D)$ such that $f_0^*=F_0$,
	$$M(Z_0,\mathfrak{a},\tilde\rho)=\int_{ D}|F_0|^2\tilde\rho\,\,\,\,and\,\,\,\,M_H(Z_0,\mathfrak{a},\rho)=\frac{1}{2\pi}\int_{\partial D}|f_0|^2\rho|dz|.$$
	We prove the remark in Section \ref{sec:proof1}.
\end{Remark}

Let $Z_0:=\{z_j:1\le k\le m\}$ be a subset of $D$. Let $\lambda$ be a positive continuous function on $\partial D$. By solving Dirichlet problem, there exists a positive continuous function on $\overline D$ denoted also by $\lambda$, such that $\log\lambda$ is harmonic on $D$. Let $c_{\beta}(z)$ be the logarithmic capacity (see \cite{S-O69}) on $D$, which is locally defined by
$$c_{\beta}(z):=\exp\lim_{\tilde z\rightarrow z}(G_{D}(\tilde z,z)-\log|\tilde z-z|).$$

Using Theorem \ref{main theorem1}, we present the following optimal $L^2$ extension theorem for the Hardy space, and give a characterization for the holding of the equality in this extension theorem.

\begin{Corollary}
	\label{corollary1} Let $k_j$ be a nonnegative integer, and let $a_j\in\mathbb{C}$ for any $j$. Assume that $\sum_{1\le j\le m}\frac{2|a_j|^2t_j}{(k_j+1)c_{\beta}(z_j)^{2(k_j+1)}}\lambda(z_j)\in(0,+\infty)$. Then there exists $f\in H^2(D)$ such that $f^{(l)}(z_j)=0$ for $0\le l<k_j$ and $f^{(k_j)}(z_j)=k_j!a_j$ for any $1\le j\le m$, and
	$$\frac{1}{2\pi}\int_{\partial D}|f|^2\lambda\left(\frac{\partial\psi}{\partial v_z}\right)^{-1}|dz|\le\sum_{1\le j\le m}\frac{2|a_j|^2t_j}{(k_j+1)c_{\beta}(z_j)^{2(k_j+1)}}\lambda(z_j),$$
	where $\psi:=\sum_{1\le j\le m}(k_j+1)G_D(\cdot,z_j)$ and $t_j:=e^{-2\sum_{1\le j_1\le m,j_1\not=j}(k_j+1)G_D(z_j,z_{j_1})}$.
	
	Moreover, denote that $M_H:=\inf\{\frac{1}{2\pi}\int_{\partial D}|f|^2\lambda\left(\frac{\partial\psi}{\partial v_z}\right)^{-1}|dz|:f\in H^2(D)$ such that $f^{(l)}(z_j)=0$ for $0\le l<k_j$ and $f^{(k_j)}(z_j)=k_j!a_j$ for any $1\le j\le m\}$, then equality 
	$$M_H=\sum_{1\le j\le m}\frac{2|a_j|^2t_j}{(k_j+1)c_{\beta}(z_j)^{2(k_j+1)}}\rho(z_j)$$
	 holds if and only if the following statements hold:
	
	$(1)$ $\chi_{\frac{1}{2}\log\lambda}=\prod_{1\le j\le m}\chi_{z_j}^{k_j+1}$;
	
	$(2)$ For any $1\le j\le m$, $$\lim_{z\rightarrow z_j}\frac{p_*\left(f_{-\frac{1}{2}\log\lambda}\left(\prod_{1\le j\le m}f_{z_j}^{k_j+1}\right)\left(\sum_{1\le j\le m}(k_j+1)\frac{df_{z_j}}{f_{z_j}}\right)\right)}{(z-z_j)^{k_j}dz}=c_0a_{j},$$ where $c_0\not=0$ is a constant independent of $j$.
\end{Corollary}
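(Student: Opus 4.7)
The plan is to specialize Theorem \ref{main theorem1} to a particular $\psi,\varphi$ built from the Green function and $\lambda$, and then to bound the Bergman-side minimum by an explicit extremal computation.

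First I would set $\psi:=\sum_{1\le j\le m}(k_j+1)G_D(\cdot,z_j)$, $\varphi:=-\log\lambda$ (using the harmonic extension of $\log\lambda$ to $D$), $c(t)\equiv 1$, and $a_{j,l}:=0$ for $l<k_j$, $a_{j,k_j}:=a_j$. Then all hypotheses of Theorem \ref{main theorem1} hold: $\psi\equiv 0$ and $\partial\psi/\partial v_z>0$ on $\partial D$, $v(dd^c\psi,z_j)=k_j+1>0$; $\varphi+2\psi$ is subharmonic on $D$ with Lelong number exactly $2(k_j+1)$ at $z_j$ since $\log\lambda$ is harmonic; $\varphi$ is continuous on $\overline D$; and the setup-condition (1) is satisfied because $(\psi-(k_j+1)G_D(\cdot,z_j))(z_j)=\sum_{l\ne j}(k_l+1)G_D(z_j,z_l)>-\infty$. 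One has $\tilde\rho=\lambda$, $\rho=\lambda(\partial\psi/\partial v_z)^{-1}$ and $\int_0^{+\infty}c(t)e^{-t}\,dt=1$, so the integrals in the corollary and in Theorem \ref{main theorem1} agree. Theorem \ref{main theorem1} then yields
\begin{equation*}
M_H(Z_0,\mathfrak{a},\rho)\le \frac{M(Z_0,\mathfrak{a},\tilde\rho)}{\pi},
\end{equation*}
reducing the existence half of the corollary to the Bergman-side bound
\begin{equation*}
M(Z_0,\mathfrak{a},\tilde\rho)\;\le\;\pi\sum_{1\le j\le m}\frac{2|a_j|^2 t_j}{(k_j+1)c_\beta(z_j)^{2(k_j+1)}}\lambda(z_j). \tag{$\star$}
\end{equation*}

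To establish $(\star)$, I would build a test section on the universal cover $p:\Delta\to D$ by
\begin{equation*}
\Phi:=\frac{p_*\!\left(f_{-\frac{1}{2}\log\lambda}\Bigl(\prod_{1\le l\le m}f_{z_l}^{k_l+1}\Bigr)\sum_{1\le l\le m}(k_l+1)\frac{df_{z_l}}{f_{z_l}}\right)}{c_0\,dz},
\end{equation*}
with $c_0$ chosen so that $\Phi^{(l)}(z_j)=0$ for $l<k_j$ and $\Phi^{(k_j)}(z_j)=k_j!\,a_j$; this choice is possible because the numerator vanishes to order exactly $k_j$ at each preimage of $z_j$ (the simple pole of $(k_j+1)df_{z_j}/f_{z_j}$ eats one order of vanishing of $\prod f_{z_l}^{k_l+1}$). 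The identities $|f_{-\frac{1}{2}\log\lambda}|^2=p^*(1/\lambda)$ and $|f_{z_j}(z)|=p^*(e^{G_D(\cdot,z_j)})$ make $|\Phi|^2\lambda$ descend to a positive expression on $D$ in which the factor $\lambda$ cancels. Integrating by slicing along the exhaustion $\{\psi<-t\}$ and applying Stokes, the boundary terms collapse to residue sums at the $z_j$'s: the local expansion $p^*(e^{G_D(\cdot,z_j)})/|z-z_j|\to 1/c_\beta(z_j)$ produces the factor $c_\beta(z_j)^{-2(k_j+1)}$; the exponentials $e^{2(k_l+1)G_D(z_j,z_l)}$ for $l\ne j$ package into $t_j$; $\lambda$ evaluates to $\lambda(z_j)$ at the residue point; and the factor $2/(k_j+1)$ arises from the normalisation $(k_j+1)df_{z_j}/f_{z_j}$ together with the Jacobian of the substitution $t=-2\psi$. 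When the character $\chi_{-\frac{1}{2}\log\lambda}\prod_j\chi_{z_j}^{k_j+1}$ is non-trivial, so that $\Phi$ is multi-valued, one substitutes a standard single-valued correction sharing the prescribed jets (e.g.\ by combining Blaschke-type factors of the $f_{z_j}$ with a harmonic conjugate correction) and passes to the infimum, obtaining $(\star)$ in general.

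For the equality assertion, equality in the corollary forces simultaneous equality in the bound of Theorem \ref{main theorem1} and in $(\star)$. Conditions (1) and (2) of Theorem \ref{main theorem1} are automatic from our setup with $u=-\frac{1}{2}\log\lambda$ and $p_j=k_j+1$, and its conditions (3) and (4) become exactly conditions (1) and (2) of the corollary; conversely, under these two, Remark \ref{rem1.1} produces a single-valued extremal $F_0$ achieving $M(Z_0,\mathfrak{a},\tilde\rho)$ which, by the computation in the previous paragraph, saturates $(\star)$, and the associated Hardy boundary value $f_0$ realises $M_H$ as the right-hand side. The main technical obstacle is the precise evaluation of $\int_D|\Phi|^2\lambda$ in $(\star)$: the logarithmic differential $\sum(k_l+1)df_{z_l}/f_{z_l}$ is multi-valued, and producing the stated constant requires controlling the cross terms in $|\sum(k_l+1)df_{z_l}/f_{z_l}|^2$ on sublevel sets of $\psi$ so that only the diagonal residues at the $z_j$'s contribute with the precise coefficient $2/(k_j+1)$.
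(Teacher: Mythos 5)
Your overall architecture matches the paper's: specialize Theorem \ref{main theorem1} with $\psi=\sum_{1\le j\le m}(k_j+1)G_D(\cdot,z_j)$, $\varphi=-\log\lambda$ and $c\equiv 1$ (your verification of the hypotheses, including $(\psi-(k_j+1)G_D(\cdot,z_j))(z_j)>-\infty$, is correct), reduce everything to the Bergman-side bound $M(Z_0,\mathfrak{a},\tilde\rho)\le\pi\sum_{1\le j\le m}\frac{2|a_j|^2t_j}{(k_j+1)c_\beta(z_j)^{2(k_j+1)}}\lambda(z_j)$, and observe that conditions $(1)$--$(2)$ of Theorem \ref{main theorem1} are automatic in this setup so that $(3)$--$(4)$ become the two conditions of the corollary. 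That part is fine and is exactly what the paper does.

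The genuine gap is in your treatment of the inequality you label $(\star)$. The paper does not prove $(\star)$ by a test-function computation; it cites Theorem \ref{c:L2-1d-char} (recalled from \cite{GY-concavity3}), which is the optimal $L^2$ extension theorem on open Riemann surfaces together with its equality characterization, and whose proof rests on the concavity machinery for $G(h^{-1}(r))$, not on exhibiting an explicit extremal. Your construction of $\Phi$ from $f_{-\frac12\log\lambda}\prod f_{z_l}^{k_l+1}\sum(k_l+1)df_{z_l}/f_{z_l}$ only yields a single-valued holomorphic function on $D$ when the character identity $\chi_{\frac12\log\lambda}=\prod_{1\le j\le m}\chi_{z_j}^{k_j+1}$ holds --- which is precisely condition $(1)$ of the corollary, i.e.\ the \emph{equality} case. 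In the generic case where the characters do not match, there is no candidate of this form, and your proposed fix (``a standard single-valued correction sharing the prescribed jets \dots and passes to the infimum'') is not a proof: an arbitrary single-valued competitor with the right jets will in general have strictly larger $L^2$ norm than the right-hand side of $(\star)$, and showing that the infimum still satisfies the sharp bound is exactly the content of the optimal $L^2$ extension theorem, which cannot be recovered by naive test functions. Even in the matching-character case, the residue evaluation of $\int_D|\Phi|^2\lambda$ that you defer (controlling the cross terms of $\bigl|\sum(k_l+1)df_{z_l}/f_{z_l}\bigr|^2$ on sublevel sets of $\psi$) is the substance of Remark \ref{r:1.2}/Remark \ref{rem:finite uniform section} and is obtained there from the linearity of $G(h^{-1}(r))$ via Corollary \ref{c:linear}, not by a direct Stokes argument. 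To close the gap you should either invoke Theorem \ref{c:L2-1d-char} with $c\equiv1$ (as the paper does, for both the inequality and the sufficiency direction of the equality statement), or supply a complete proof of the optimal extension bound, which is a substantially harder task than the corollary itself.
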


Corollary \ref{corollary1} implies the following result. 

\begin{Corollary}
	\label{c:1.2}
Let $k$ be a nonnegative integer. Then there is a constant $C$ (depending on $k$), such that for any $a_{j,l}\in\mathbb{C}$, where  $1\le j\le m$ and $0\le l\le k$, there exists $f\in H^2(D)$ such that $f^{(l)}(z_j)=a_{j,l}$ for any $1\le j\le m$ and $0\le l\le k$, and
	$$\frac{1}{2\pi}\int_{\partial D}|f|^2|dz|\le C\sum_{1\le j\le m}\sum_{0\le l\le k}|a_{j,l}|^2.$$
\end{Corollary}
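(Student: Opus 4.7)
The plan is to build, using Corollary \ref{corollary1}, an explicit dual basis of interpolating functions in $H^2(D)$. For each pair $(j_0, l_0)$ with $1 \le j_0 \le m$ and $0 \le l_0 \le k$, I construct $h_{j_0, l_0} \in H^2(D)$ satisfying
$$h_{j_0, l_0}^{(l)}(z_j) = \delta_{(j_0, l_0), (j, l)}$$
for every index pair $(j, l)$ in $\{1,\dots,m\}\times\{0,\dots,k\}$. Once these are in hand, $f := \sum_{j, l} a_{j, l}\, h_{j, l}$ meets all interpolation conditions, and Cauchy--Schwarz together with a uniform bound on $\|h_{j_0, l_0}\|_{H^2}$ gives
$$\frac{1}{2\pi} \int_{\partial D} |f|^2\, |dz| \le m(k+1) \Bigl( \max_{j_0, l_0} \|h_{j_0, l_0}\|_{H^2}^2 \Bigr) \sum_{j, l} |a_{j, l}|^2,$$
yielding the required $C$ (depending on $k$ and on the fixed data $D$, $Z_0$).

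The dual basis is built by backward induction on $l_0$, from $l_0 = k$ down to $l_0 = 0$. For the base case $l_0 = k$, apply Corollary \ref{corollary1} with $k_j = k$ for every $j$, $\lambda \equiv 1$, $\psi = \sum_{1\le j\le m} (k+1)\, G_D(\cdot, z_j)$, $a_{j_0} = 1/k!$, and $a_j = 0$ for $j \neq j_0$; the resulting function is $h_{j_0, k}$, with $H^2$ norm bounded by the explicit right-hand side of Corollary \ref{corollary1} (the weight $(\partial \psi/\partial v_z)^{-1}$ has positive upper and lower bounds on the analytic boundary $\partial D$).

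For the inductive step, suppose $h_{j, l}$ has been constructed for every $l > l_0$ and every $j$. Apply Corollary \ref{corollary1} with $k_{j_0} = l_0$, $k_j = k$ for $j \neq j_0$, $a_{j_0} = 1/l_0!$, and $a_j = 0$ otherwise, obtaining $g_{j_0, l_0} \in H^2(D)$ which vanishes to order $k+1$ at each $z_j$ with $j \neq j_0$ and satisfies $g_{j_0, l_0}^{(l)}(z_{j_0}) = \delta_{l, l_0}$ for $0 \le l \le l_0$. Only the higher derivatives $c_l := g_{j_0, l_0}^{(l)}(z_{j_0})$, $l_0 < l \le k$, are left uncontrolled; cancel them by setting
$$h_{j_0, l_0} := g_{j_0, l_0} - \sum_{l = l_0 + 1}^{k} c_l\, h_{j_0, l}.$$
A direct check using the inductive hypothesis confirms that $h_{j_0, l_0}^{(l)}(z_j) = \delta_{(j_0, l_0), (j, l)}$. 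Cauchy's estimate bounds each $|c_l|$ by a constant (depending on $D$ and $Z_0$) times $\|g_{j_0, l_0}\|_{H^2}$, and Corollary \ref{corollary1} bounds $\|g_{j_0, l_0}\|_{H^2}$, so the induction closes with a norm bound on $h_{j_0, l_0}$ depending only on $k$, $D$, and $Z_0$.

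The main obstacle is that Corollary \ref{corollary1} pins down only a single top-order derivative $f^{(k_j)}(z_j)$ at each interpolation point (with all lower derivatives forced to zero), so no single application can produce a Kronecker-delta basis element for an intermediate order $l_0<k$. The backward induction sidesteps this precisely by using the previously constructed higher-order interpolants to subtract off the uncontrolled derivatives $c_{l_0+1},\dots,c_k$; bookkeeping the resulting norm bounds through the induction is then routine.
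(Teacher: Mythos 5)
Your proposal is correct, and it rests on exactly the same two ingredients as the paper's proof --- Corollary \ref{corollary1} applied with all lower-order derivatives forced to zero, plus the boundedness of derivative evaluation on compact subsets (Lemma \ref{l:bounded}, which is your Cauchy estimate) to control and then cancel the uncontrolled derivatives. The organization differs: the paper runs a forward induction on the top order $k$, taking $f_1$ from the case $k-1$ to match all derivatives of order $\le k-1$, bounding $f_1^{(k)}(z_j)$ by Lemma \ref{l:bounded}, and adding a single corrector $f_2$ from Corollary \ref{corollary1} for the $k$-th derivatives; you instead fix $k$ and run a backward induction on the derivative order to build a full dual basis $\{h_{j,l}\}$, then take the linear combination $\sum a_{j,l}h_{j,l}$ and apply Cauchy--Schwarz. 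Your route costs a little more bookkeeping (one application of Corollary \ref{corollary1} per pair $(j_0,l_0)$, with mixed multiplicities $k_{j_0}=l_0$, $k_j=k$, rather than one per order level) but buys an explicit interpolation basis whose norms are bounded independently of the data, from which the constant $C$ is read off directly; the paper's version is shorter but the constant is produced only implicitly through the recursion $C=\max\{C_1+2C_1C_2C_3,2C_3\}$. Both arguments are complete; just make sure to note, as you implicitly do, that $(\partial\psi/\partial v_z)^{-1}$ is bounded above and below by positive constants on $\partial D$ for each of the finitely many choices of $\psi$ you use, so that all the weighted boundary norms are uniformly comparable to $\frac{1}{2\pi}\int_{\partial D}|\cdot|^2|dz|$.
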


\subsection{Minimal $L^2$ integrals for the Hardy space $H_{\rho}^2(M,\partial M)$}

Let $D_j$ be a planar region bounded by finite analytic Jordan curves   for any $1\le j\le n$. Let $M=\prod_{1\le j\le n}D_j$ be a bounded domain in $\mathbb{C}^n$.

Let $Z_j=\{z_{j,1},z_{j,2},...,z_{j,m_j}\}\subset D_j$ for any  $j\in\{1,2,...,n\}$, where $m_j$ is a positive integer. Denote that 
$$Z_0:=\prod_{1\le j\le n}Z_j\subset M.$$
Let $\psi=\max_{1\le j\le n}\{\sum_{1\le k\le m_j}p_{j,k}G_{D_j}(\cdot,z_{j,k})\}$ on $M$.
Let $V_{z_{j,k}}\Subset D_{j}$ be a  neighborhood of $z_{j,k}$ satisfying $V_{z_{j,k}}\cap V_{z_{j,k'}}=\emptyset$ for any $j$ and $k\not=k'$. Denote that $I_1:=\{(\beta_1,\beta_2,...,\beta_n):1\le \beta_j\le m_j$ for any $j\in\{1,2,...,n\}\}$, $V_{\beta}:=\prod_{1\le j\le n}V_{z_{j,\beta_j}}$ and $z_{\beta}:=(z_{1,\beta_1},z_{2,\beta_2},\ldots,z_{n,\beta_n})\in M$ for any $\beta=(\beta_1,\beta_2,...,\beta_n)\in I_1$.

Let $\varphi_j$ be a subharmonic function on $D_j$, which satisfies that  $\varphi_j$ is continuous at $z$ for any $z\in \partial D_j$. Denote that 
$$\varphi(w_1,\ldots,w_n):=\sum_{1\le j\le n}\varphi_j(w_j)$$
on $M$. Let $f_0$ be a holomorphic function $\cup_{\beta\in I_1}V_{\beta}$. For any $\beta\in I_1$, let $J_{\beta}$ be an ideal of $\mathcal{O}_{z_{\beta}}$ satisfying $\mathcal{I}(\varphi+2\psi)_{z_{\beta}}\subset J_{\beta}$. Note that for any $\tilde z\in D_j$, $\frac{\partial G_{D_j}(z,\tilde z)}{\partial v_z}$ is a positive continuous function on $\partial D_j$ by the analyticity of the boundary (see \cite{saitoh},\cite{guan-19saitoh}), where $\partial/\partial v_z$ denotes the derivative along the outer normal unit vector $v_z$.
 Let $\rho$ be a Lebesgue measurable function on $\partial M$ such that 
$$\rho(w_1,\ldots,w_n)=\left(\sum_{1\le k\le m_j}p_{j,k}\frac{\partial G_{D_j}(w_j,z_{j,k})}{\partial v_{w_j}}\right)^{-1}\times\prod_{1\le l\le n}e^{-\varphi_l(w_l)}$$
on $\partial D_j\times {M_j}$.
Let $c$ be a positive function on $[0,+\infty)$, which satisfies that $c(t)e^{-t}$ is decreasing on $[0,+\infty)$, $\lim_{t\rightarrow0+0}c(t)=c(0)=1$ and $\int_{0}^{+\infty}c(t)e^{-t}dt<+\infty$.
Denote that 
$$\tilde \rho=c(-2\psi)\prod_{1\le j\le n}e^{-\varphi_j}$$
on $M$.

Let us consider the following two minimal integrals. 
Denote that 
\begin{displaymath}
	\begin{split}
		M(Z_0,J,\tilde\rho):=\inf\bigg\{&\int_{M}|f|^2\tilde\rho:f\in \mathcal{O}(D) 
	\text{ s.t. } (f-f_0,z_{\beta})\in J_{\beta}\text{ for any $\beta\in I_1$}\bigg\}	\end{split}
\end{displaymath}
and 
\begin{displaymath}
	\begin{split}
		M_H(Z_0,J,\rho):=\inf\bigg\{&\|f\|^2_{\partial M,\rho}:f\in H_{\rho}^2(M,\partial M) 
		\\ &\text{ s.t. } (f^*-f_0,z_{\beta})\in J_{\beta}\text{ for any $\beta\in I_1$}\bigg\}.\end{split}
\end{displaymath}

Denote that \begin{equation*}
\begin{split}
G(t):=\inf\bigg\{\int_{\{2\psi<-t\}}|f|^2\tilde\rho:&f\in \mathcal{O}(\{2\psi<-t\}) 
	\\
	&\text{ s.t. } (f-f_0,z_{\beta})\in J_{\beta}\text{ for any $\beta\in I_1$}\bigg\}
\end{split}
\end{equation*}
for any $t\ge0$.  Note that $\tilde\rho=c(-2\psi)\prod_{1\le j\le n}e^{-\varphi_j}$ and
$G(0)=M(Z_0,J_{\beta},\tilde\rho)$. As $\mathcal{I}(\varphi+2\psi)_{z_{\beta}}\subset J_{\beta}$ for any $\beta\in I_1$, it follows from 
Theorem \ref{thm:general_concave} that $G(h^{-1}(r))$ is concave, where $h(t)=\int_t^{+\infty}c(s)e^{-s}ds$.

We present a relation between $M_H(Z_0,J,\rho)$ and $M(Z_0,J,\tilde\rho)$.
 
\begin{Theorem}
	\label{thm:2.1}
	Assume that $M(Z_0,J,\tilde\rho)<+\infty$. Then
\begin{equation}
	\label{eq:0212a}M_H(Z_0,J,\rho)\leq\frac{M(Z_0,J,\tilde\rho)}{\pi\int_0^{+\infty}c(t)e^{-t}dt} 
	\end{equation}
 holds, and equality holds if and only if $G(h^{-1}(r))$ is linear on $[0,\int_0^{+\infty}c(t)e^{-t}dt]$ and there exists $f\in H^2_{\rho}(M,\partial M)$, such that $(f^*-f_0,z_\beta)\in J_{\beta}$ for any $\beta\in I_1,$ $M_H(Z_0,J,\rho)=\|f\|^2_{\partial M,\rho}$ and $M(Z_0,J,\tilde\rho)=\int_M|f^*|^2\tilde\rho$.
 \end{Theorem}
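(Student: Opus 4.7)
The plan is to deduce the inequality \eqref{eq:0212a} from the concavity of $F(r):=G(h^{-1}(r))$ on $[0,h(0)]$ (Theorem \ref{thm:general_concave}) combined with a coarea computation identifying a one-sided derivative of $G$ at $t=0$ with a multiple of a Hardy-space norm. Since $F$ is concave and $F(0)=\lim_{t\to+\infty}G(t)\ge 0$, the chord-above-curve property produces $F(r)\ge \frac{r}{h(0)}F(h(0))$, equivalently
\begin{equation*}
	\frac{G(0)-G(t)}{h(0)-h(t)}\le\frac{G(0)}{h(0)}=\frac{M(Z_0,J,\tilde\rho)}{\int_0^{+\infty}c(s)e^{-s}\,ds}\qquad\text{for every }t\in(0,+\infty).
\end{equation*}

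Next I would produce a minimizer $f^*\in\mathcal{O}(M)$ for $M(Z_0,J,\tilde\rho)$ by weak compactness (the constraint set is a closed affine subspace of $L^2(M,\tilde\rho)$). The main computation is the identity
\begin{equation*}
	\lim_{t\to 0+}\frac{G(0)-G(t)}{h(0)-h(t)}=\lim_{t\to 0+}\frac{G(0)-G(t)}{t}=\pi\int_{\partial M}|f^*|^2\rho\,d\mu,
\end{equation*}
obtained using $c(0)=1$, the coarea formula applied to $G(0)-G(t)=\int_{\{-t<2\psi<0\}}|f^*|^2\,c(-2\psi)\,e^{-\varphi}$, and the following observations: $\psi\equiv 0$ on $\partial M$ (since on $\partial D_j\times M_j$ the $j$-th term of the maximum vanishes while the others are strictly negative), so for small $t$ the level set $\{-2\psi=t\}$ lies in a collar of $\partial M$ on which $\psi$ coincides with a smooth sum of Green functions; on $\partial D_j\times M_j$ one has $|\nabla(2\psi)|=2\sum_k p_{j,k}\partial G_{D_j}/\partial v_{w_j}$, and the surface measure combines with $e^{-\varphi}$ to reproduce exactly the weight $\rho$ and measure $d\mu$ appearing in $\|\cdot\|_{\partial M,\rho}$ (the factor $\pi$ emerges from the factor $\frac{1}{2\pi}$ in the definition of $d\mu$ against the factor $\frac{1}{2}$ from $|\nabla(2\psi)|$). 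The finiteness of these boundary integrals together with slicewise one-variable Hardy theory (as reviewed in Section \ref{sec:main}) shows that $f^*=P_{\partial M}(f)$ for some $f\in H^2_\rho(M,\partial M)$, making $f$ admissible for the Hardy minimization. Combining these estimates yields
\begin{equation*}
	\pi M_H(Z_0,J,\rho)\le\pi\|f\|^2_{\partial M,\rho}\le\frac{M(Z_0,J,\tilde\rho)}{\int_0^{+\infty}c(s)e^{-s}\,ds}.
\end{equation*}

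For the equality assertion, both inequalities in the chain $M_H\le\|f\|^2_{\partial M,\rho}\le G(0)/(\pi h(0))$ must saturate. A short concavity argument shows that the second is an equality exactly when $F$ coincides with its chord on $[0,h(0)]$ (i.e.\ when $G(h^{-1}(r))$ is linear there); the first, combined with the minimality of $f^*$, then forces $f$ to realize $M_H(Z_0,J,\rho)$ and $f^*$ to realize $M(Z_0,J,\tilde\rho)$. The converse is immediate upon reversing the chain, using a given extremal $f$ as the minimizer. I expect the main technical obstacle to be the coarea-limit identity together with the slicewise Hardy membership of $f^*$: the function $\psi$, being a maximum of smooth plurisubharmonic functions, is not smooth along the ``corner'' loci where two slabs $\partial D_j\times M_j$ meet, so the coarea integration must be carried out on the complement of these corners (which form a set of measure zero), and the Hardy trace on each slab must be assembled using slicewise Fatou theory and then glued across slabs by dominated convergence.
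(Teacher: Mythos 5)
Your skeleton is the same as the paper's: the chord bound from concavity of $G(h^{-1}(r))$, a coarea computation near $\partial M$ converting the slab integrals $\int_{\{2\psi\ge\log r\}}|F_0|^2\tilde\rho$ into the Hardy norm, a Fatou-type argument for membership of the interior minimizer in $H^2_{\rho}(M,\partial M)$ (this is exactly Proposition \ref{p:b6}), and the equality case read off from saturation of the resulting chain. However, the ``main computation'' on which you rest both directions is misstated and, in one respect, genuinely under-justified. The identity $G(0)-G(t)=\int_{\{-t<2\psi<0\}}|f^*|^2c(-2\psi)e^{-\varphi}$ is false in general: writing $F_t$ for the minimizer at level $t$, Lemma \ref{l:unique} gives $G(0)-G(t)=\int_{\{2\psi\ge -t\}}|F_0|^2\tilde\rho+\int_{\{2\psi<-t\}}|F_0-F_t|^2\tilde\rho$, and the second term need not be $o(t)$; hence your asserted limit $\lim_{t\to0^+}\frac{G(0)-G(t)}{t}=\pi\|f\|^2_{\partial M,\rho}$ is only proved with ``$\ge$''. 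This does not hurt inequality \eqref{eq:0212a}, since there one only needs $\int_{\{2\psi\ge-t\}}|F_0|^2\tilde\rho\le G(0)-G(t)$ together with the chord bound, and the necessity of linearity also survives (equality forces the one-sided derivative of $G\circ h^{-1}$ at $h(0)$ to equal the full chord slope, hence linearity). But the sufficiency half of the characterization does need a genuine two-sided limit.

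The real gap is there: to pass from linearity plus a double minimizer to equality, you must know $\lim_{r\to1^-}\frac{1}{1-r}\int_{\{2\psi\ge\log r\}}|F_0|^2\tilde\rho=\pi\|\tilde F_0\|^2_{\partial M,\rho}$, and this cannot be obtained by ``slicewise Fatou theory \dots glued across slabs by dominated convergence''. Fatou yields only ``$\ge$''; for ``$\le$'' there is no dominating function, because $|F_0|^2$ on the level sets $\{2\psi=-t\}$ is controlled only through nontangential boundary values. The paper's substitute is Lemma \ref{l:0-4v3}: one expands $\tilde F_0$ in the product orthonormal basis of $H^2_{\rho_1}(D_1,\partial D_1)\otimes A^2(M_1,\lambda_1)$ (Lemma \ref{l:prod-d1xm1}), exploits the monotonicity in $r$ of $\int_{\partial D_{1,r}}|h|^2\frac{\partial G_{D_{1,r}}}{\partial v_z}|dz|$ coming from subharmonicity of $|h|^2$, and controls the tail of the expansion before passing to the limit; already the one-variable version (Lemma \ref{l:0-4v2}) requires the density of $\mathcal{O}(D)\cap C(\overline D)$ in $H^2(D)$. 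Without an argument of this kind your proposal establishes \eqref{eq:0212a} and the ``only if'' part of the equality statement, but not the ``if'' part.
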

 
 \begin{Remark}
 	Let $\hat\rho$ be any Lebesgue measurable function on $\overline M$, which satisfies that $\inf_{\overline{M}}\hat\rho>0$, $-\log\hat\rho$ is plurisubharmonic on $M$ and $\hat\rho(w_j,\hat w_j)\leq \liminf_{w\rightarrow w_j}\hat\rho(w,\hat w_j)$ for any $(w_j,\hat w_j)\in \partial D_j\times M_j\subset \partial M$ and any $1\le j\le n$, where $M_j=\prod_{l\not=j}D_l$. Let $\rho(w_1,\ldots,w_n)=\left(\sum_{1\le k\le m_j}p_{j,k}\frac{\partial G_{D_j}(w_j,z_{j,k})}{\partial v_{w_j}}\right)^{-1}\times\hat\rho$
on $\partial D_j\times {M_j}$, and let $\tilde \rho=c(-2\psi)\hat\rho$
on $M$. Inequality \eqref{eq:0212a} in Theorem \ref{thm:2.1} also holds for this case (We prove the remark in the Step 1 of the proof of  Theorem \ref{thm:2.1}).
 \end{Remark}

 Using Theorem \ref{main theorem1} and Theorem \ref{thm:prod-finite-point} (a characterization for the concavity of $G(h^{-1}(r))$ degenerating to linearity), we obtain the following theorem.

 \begin{Theorem}
 	\label{thm:2.2}Assume that $J_{\beta}=\mathcal{I}(2\psi)_{z_\beta}$ for any $\beta\in I_1$, and $f_0=\prod_{1\le j\le n}(w_j-z_{j,1})^{\tilde\alpha_j}$ on $V_{\beta^*}$, where $\beta^*=(1,1,...,1)\in I_1$. Then equality 
 	$$M_H(Z_0,J,\rho)=\frac{M(Z_0,J,\tilde\rho)}{\pi\int_0^{+\infty}c(t)e^{-t}dt}$$ holds if and only if the following statements hold:

 $(1)$ $\varphi_j=2\log|g_j|+2u_j$ for any $j\in\{1,2,...,n\}$, where $u_j$ is a harmonic function on $D_j$ and $g_j$ is a holomorphic function on $\mathbb{C}$ satisfying $g_j(z_{j,k})\not=0$ for any $k\in\{1,2,...,m_j\}$;
	
	$(2)$ There exists a nonnegative integer $\gamma_{j,k}$ for any $j\in\{1,2,...,n\}$ and $k\in\{1,2,...,m_j\}$, which satisfies that $\Pi_{1\le k\leq m_j}\chi_{j,z_{j,k}}^{\gamma_{j,k}+1}=\chi_{j,-u_j}$ and $\sum_{1\le j\le n}\frac{\gamma_{j,\beta_j}+1}{p_{j,\beta_j}}=1$ for any $\beta\in I_1$, where $\chi_{-u_j}$ and $\chi_{z_{j,k}}$ are the  characters associated to the functions $-u_j$ and $G_{D_j}(\cdot,z_{j,k})$ respectively;
	
	$(3)$ $f_0=c_{\beta}\Pi_{1\le j\le n}(w_j-z_{j,\beta_j})^{\gamma_{j,\beta_j}}+g_{\beta}$ on $V_{\beta}$ for any $\beta\in I_1$, where $c_{\beta}$ is a constant and $g_{\beta}$ is a holomorphic function on $V_{\beta}$ such that $(g_{\beta},z_{\beta})\in\mathcal{I}(\psi)_{z_{\beta}}$;
	
	$(4)$ $\lim_{z\rightarrow z_{\beta}}\frac{c_{\beta}\Pi_{1\le j\le n}(w_j-z_{j,\beta_j})^{\gamma_{j,\beta_j}}dw_{1}\wedge dw_{2}\wedge...\wedge dw_{n}}{\wedge_{1\le j\le n} g_j(P_{j})_*\left(f_{u_j}\left(\Pi_{1\le k\le m_j}f_{z_{j,k}}^{\gamma_{j,k}+1}\right)\left(\sum_{1\le k\le m_j}p_{j,k}\frac{df_{z_{j,k}}}{f_{z_{j,k}}}\right)\right)}=c_0$ for any $\beta\in I_1$, where $c_0\in\mathbb{C}\backslash\{0\}$ is a constant independent of $\beta$, $P_j:\Delta\rightarrow D_j$ is the universal covering, $f_{u_j}$ is a holomorphic function $\Delta$ such that $|f_{u_j}|=P_j^*(e^{u_j})$ and $f_{z_{j,k}}$ is a holomorphic function on $\Delta$ such that $|f_{z_{j,k}}|=P_j^*\left(e^{G_{D_j}(\cdot,z_{j,k})}\right)$ for any $j\in\{1,2,...,n\}$ and $k\in\{1,2,...,m_j\}$.
 \end{Theorem}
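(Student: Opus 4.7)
My plan is to invoke Theorem \ref{thm:2.1}, which already reduces equality in \eqref{eq:0212a} to the conjunction of two conditions: (i) the function $G(h^{-1}(r))$ is linear on $[0,\int_0^{+\infty}c(t)e^{-t}dt]$; and (ii) there exists an element $f\in H^2_\rho(M,\partial M)$ with $(f^*-f_0,z_\beta)\in J_\beta$ for every $\beta\in I_1$ that realizes $M_H(Z_0,J,\rho)=\|f\|^2_{\partial M,\rho}$ and whose $f^*$ realizes $M(Z_0,J,\tilde\rho)$. So the task is to show that this pair of conditions, under the hypothesis $J_\beta=\mathcal{I}(2\psi)_{z_\beta}$ and $f_0=\prod_{1\le j\le n}(w_j-z_{j,1})^{\tilde\alpha_j}$ near $z_{\beta^*}$, is equivalent to the explicit structural list (1)--(4).

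The translation of (i) into (1)--(4) is exactly what the previously announced Theorem \ref{thm:prod-finite-point} (characterization of concavity degenerating to linearity on a product) is designed to supply. I would feed into that theorem the data $\psi=\max_{1\le j\le n}\{\sum_{1\le k\le m_j}p_{j,k}G_{D_j}(\cdot,z_{j,k})\}$, $\varphi=\sum_j\varphi_j$, and $f_0$ as above, and read off: (a) a factorization $\varphi_j=2\log|g_j|+2u_j$ with $g_j$ entire and nonvanishing at each $z_{j,k}$, giving (1); (b) a character-matching identity $\prod_{1\le k\le m_j}\chi_{j,z_{j,k}}^{\gamma_{j,k}+1}=\chi_{j,-u_j}$ together with the arithmetic relation $\sum_{1\le j\le n}(\gamma_{j,\beta_j}+1)/p_{j,\beta_j}=1$ for every $\beta\in I_1$, giving (2); and (c)--(d) the local form of $f_0$ and the matching constant $c_0$ in (3) and (4). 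The arithmetic relation comes from requiring the tensor-product extension on the factors to land in $\mathcal{I}(2\psi)_{z_\beta}$ for the maximum-type $\psi$, which forces precisely this linear balance between the Lelong weights $p_{j,\beta_j}$ and the per-factor vanishing orders $\gamma_{j,\beta_j}$.

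For the forward direction, I combine Theorem \ref{thm:2.1} with Theorem \ref{thm:prod-finite-point} to produce (1)--(4). For the converse, assuming (1)--(4), I construct the joint minimizer factor by factor: on each $D_j$ I apply Theorem \ref{main theorem1} together with Remark \ref{rem1.1} with weight $\varphi_j$ and the per-factor potential $\psi_j=\sum_k p_{j,k}G_{D_j}(\cdot,z_{j,k})$ (which degenerates to the one-dimensional equality case thanks to (1) and (2)) to obtain the canonical element whose local datum at each $z_{j,k}$ is governed by $g_j(P_j)_*\bigl(f_{u_j}\prod_k f_{z_{j,k}}^{\gamma_{j,k}+1}(\sum_k p_{j,k}df_{z_{j,k}}/f_{z_{j,k}})\bigr)$. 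Condition (2) ensures that the tensor product of these single-factor sections is single-valued on $M$; condition (4) ensures that the product agrees with $f_0$ modulo $J_\beta$ at every $z_\beta$; and the one-dimensional equality cases applied factor by factor produce simultaneous minimization of both $\|\cdot\|^2_{\partial M,\rho}$ and $\int_M|\cdot|^2\tilde\rho$, forcing condition (ii) and giving equality in \eqref{eq:0212a}.

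The main obstacle I anticipate is the bookkeeping in condition (4): a single constant $c_0$ independent of $\beta$ encodes compatibility across \emph{all} product points of $Z_0$ simultaneously, and extracting it from the per-factor rigidity statements requires matching the off-diagonal values $G_{D_j}(z_{j,\beta_j},z_{j,k})$ for $k\neq\beta_j$ with the prescribed local data of $f_0$. A secondary subtlety is that the maximum potential $\psi$ does not decouple across factors, so verifying that Theorem \ref{thm:prod-finite-point} genuinely applies in this setting and that its output matches (1)--(4) verbatim requires unpacking $\mathcal{I}(2\psi)_{z_\beta}$ at each product point as the ideal generated by monomials $\prod_j(w_j-z_{j,\beta_j})^{\alpha_j}$ with $\sum_j(\alpha_j+1)/p_{j,\beta_j}\ge 1$, and checking that the threshold case is exactly $\alpha_j=\gamma_{j,\beta_j}$.
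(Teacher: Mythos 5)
Your reduction of the equality to the two conditions of Theorem \ref{thm:2.1}, and the translation of the linearity condition into statements $(1)$--$(4)$ via Theorem \ref{thm:prod-finite-point} (together with Remark \ref{r:var=-infty} and the Weierstrass theorem to pass between $g_j$ holomorphic on $D_j$ and $g_j$ entire), is exactly the paper's first step, and your forward direction is essentially correct.

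The gap is in your converse. You propose to verify condition (ii) of Theorem \ref{thm:2.1} --- that the candidate $\tilde F_0$ with $\tilde F_0^*=F_0=c_0\wedge_j g_j(P_j)_*(\cdots)$ simultaneously attains $M_H(Z_0,J,\rho)$ and $M(Z_0,J,\tilde\rho)$ --- by ``applying the one-dimensional equality cases factor by factor.'' This does not work as stated: the competitor class for $M_H(Z_0,J,\rho)$ consists of \emph{all} $f\in H^2_\rho(M,\partial M)$ with $(f^*-f_0,z_\beta)\in\mathcal{I}(2\psi)_{z_\beta}$, and neither this multiplier ideal of the max-type $\psi$ (which is generated by monomials $\prod_j(w_j-z_{j,\beta_j})^{\alpha_j}$ with $\sum_j(\alpha_j+1)/p_{j,\beta_j}>1$, a genuinely coupled condition) nor the boundary norm $\|\cdot\|^2_{\partial M,\rho}=\sum_j\|\cdot\|^2_{\partial D_j\times M_j,\rho}$ decomposes as a product of one-variable problems. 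The product-decomposition lemmas the paper does have (Lemmas \ref{l:pro1}, \ref{l:pro2}, \ref{l:pro3}) are proved only for point evaluation ($J_\beta$ the maximal ideal, $h_0=\prod_j h_j$) and are used for Theorem \ref{thm:3.1}, not here; no analogue for $\mathcal{I}(2\psi)_{z_\beta}$ is available, so minimizing each factor separately gives no control over non-product competitors. The paper instead argues as follows: linearity plus Corollary \ref{c:linear} shows $F_0$ minimizes on \emph{every} sublevel set $\{2\psi<-t\}$, whence Lemma \ref{l:unique} gives the orthogonality $\int_{\{2\psi<-t\}}(f^*-F_0)\overline{F_0}\,\tilde\rho=0$ for an arbitrary competitor $f$ (after first checking, via statement $(1)$ and Lemma \ref{l:integral}, that $\int_{\{2\psi<-t\}}|f^*|^2\tilde\rho<+\infty$ --- an integrability point you do not address); differentiating in $t$ by the coarea formula (Lemma \ref{l:3}) yields level-set orthogonality, and passing to the boundary by Lemma \ref{l:0-4v3} gives $\|f\|_{\partial M,\rho}\ge\|\tilde F_0\|_{\partial M,\rho}$. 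You would need to replace your factor-by-factor step by this (or an equivalent) argument valid against arbitrary competitors.
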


When $m_j=0$ for any $1\le j\le n$, the above theorem is a solution of the product version of Saitoh's conjecture, which can be referred to \cite{GY-saitohprodct}.
 
 \begin{Remark}
	\label{rem2.1} Assume that the four statements in Theorem \ref{thm:2.2} hold, then we know $\frac{c_0\wedge_{1\le j\le n} g_j(P_{j})_*\left(f_{u_j}\left(\Pi_{1\le k\le m_j}f_{z_{j,k}}^{\gamma_{j,k}+1}\right)\left(\sum_{1\le k\le m_j}p_{j,k}\frac{df_{z_{j,k}}}{f_{z_{j,k}}}\right)\right)}{dw_{1}\wedge dw_{2}\wedge...\wedge dw_{n}}$ is a (single-valued) holomorphic function on $D_j$, and we denote it by $F_0$. Then $(F_0-f_0,z_{\beta})\in\mathcal{I}(2\psi)_{z_\beta}$ for any $\beta\in I_1$, and there exists $\tilde F_0\in H^2_{\rho}(M,\partial M)$ such that $\tilde F_0^*=F_0$,
	$$M(Z_0,J,\tilde\rho)=\int_{M}|F_0|^2\tilde\rho \,\,\,\,and\,\,\,\, M_H(Z_0,J,\rho)=\|\tilde F_0\|^2_{\partial M,\rho}.$$
	We prove the remark in Section \ref{sec:proof2}.
\end{Remark}

Denote that $E_{\beta}:=\left\{(\alpha_1,\alpha_2,...,\alpha_n):\sum_{1\le j\le n}\frac{\alpha_j+1}{p_{j,\beta_j}}=1\,\&\,\alpha_j\in\mathbb{Z}_{\ge0}\right\}$ for any $\beta\in I_1$, and assume that  $f_0=\sum_{\alpha\in E_{\beta}}d_{\beta,\alpha}\prod_{1\le j\le n}(w_j-z_{j,\beta_j})^{\alpha_j}$ on $V_{\beta}$.  Denote that
\begin{equation*}
c_{j,k}:=\exp\lim_{z\rightarrow z_{j,k}}\left(\frac{\sum_{1\le k_1\le m_j}p_{j,k_1}G_{D_j}(z,z_{j,k_1})}{p_{j,k}}-\log|w_{j,k}(z)|\right)
\end{equation*}
 for any $j\in\{1,2,...,n\}$ and $k\in\{1,2,...,m_j\}$.

Using Theorem \ref{thm:2.2}, we obtain the following optimal $L^2$ extension theorem for the Hardy space on product spaces, and give a characterization for the holding of the equality in this extension theorem.

\begin{Corollary}
	\label{c:2.1}
	Assume that $\sum_{\beta\in I_1}\sum_{\alpha\in E_{\beta}}\frac{|d_{\beta,\alpha}|^22^n\pi^{n-1}e^{-\varphi(z_{\beta})}}{\Pi_{1\le j\le n}(\alpha_j+1)c_{j,\beta_j}^{2\alpha_{j}+2}}\in(0,+\infty)$.
Then there exists $f\in H^2_{\rho}(M,\partial M)$, satisfying that $(f^*-f_0,z_{\beta})\in\mathcal{I}(2\psi)_{z_\beta}$ for any $\beta\in I_1$ and
	$$\|f\|_{\partial M,\rho}^2\le\sum_{\beta\in I_1}\sum_{\alpha\in E_{\beta}}\frac{|d_{\beta,\alpha}|^22^n\pi^{n-1}e^{-\varphi(z_{\beta})}}{\Pi_{1\le j\le n}(\alpha_j+1)c_{j,\beta_j}^{2\alpha_{j}+2}}.$$
	
	Moreover, assume that $f_0=\prod_{1\le j\le n}(w_j-z_{1,\beta_1})^{\tilde\alpha_{j}}$ on $V_{\beta^*}$, where $\beta^*=(1,1,...,1)\in I_1$, then 
 equality 
 $$M_H(Z_0,\mathcal{I}(2\psi),\rho)=\sum_{\beta\in I_1}\sum_{\alpha\in E_{\beta}}\frac{|d_{\beta,\alpha}|^22^n\pi^{n-1}e^{-\varphi(z_{\beta})}}{\Pi_{1\le j\le n}(\alpha_j+1)c_{j,\beta_j}^{2\alpha_{j}+2}}$$ holds if and only if the following statements hold:

	$(1)$  $\varphi_j=2\log|g_j|+2u_j$ for any $j\in\{1,2,...,n\}$, where $u_j$ is a harmonic function on $\mathcal{C}$ and $g_j$ is a holomorphic function on $D_j$ satisfying $g_j(z_{j,k})\not=0$ for any $k\in\{1,2,...,m_j\}$;
	
	$(2)$  there exists a nonnegative integer $\gamma_{j,k}$ for any $j\in\{1,2,...,n\}$ and $k\in\{1,2,...,m_j\}$, which satisfies that $\Pi_{1\le k\leq m_j}\chi_{j,z_{j,k}}^{\gamma_{j,k}+1}=\chi_{j,-u_j}$ and $\sum_{1\le j\le n}\frac{\gamma_{j,\beta_j}+1}{p_{j,\beta_j}}=1$ for any $\beta\in I_1$;
	
	$(3)$ $f_0=c_{\beta}\Pi_{1\le j\le n}(w_j-z_{j,\beta_j})^{\gamma_{j,\beta_j}}+g_{\beta}$ on $V_{\beta}$ for any $\beta\in I_1$, where $c_{\beta}$ is a constant and $g_{\beta}$ is a holomorphic function on $V_{\beta}$ such that $(g_{\beta},z_{\beta})\in\mathcal{I}(\psi)_{z_{\beta}}$;
	
	$(4)$ $\lim_{z\rightarrow z_{\beta}}\frac{c_{\beta}\Pi_{1\le j\le n}(w_j-z_{j,\beta_j})^{\gamma_{j,\beta_j}}dw_{1}\wedge dw_{2}\wedge...\wedge dw_{n}}{\wedge_{1\le j\le n}g_j(P_{j})_*\left(f_{u_j}\left(\Pi_{1\le k\le m_j}f_{z_{j,k}}^{\gamma_{j,k}+1}\right)\left(\sum_{1\le k\le m_j}p_{j,k}\frac{df_{z_{j,k}}}{f_{z_{j,k}}}\right)\right)}=c_0$ for any $\beta\in I_1$, where $c_0\in\mathbb{C}\backslash\{0\}$ is a constant independent of $\beta$.	\end{Corollary}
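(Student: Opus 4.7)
The plan is to derive the corollary from Theorem \ref{thm:2.1} combined with an explicit upper bound for the Bergman-side minimal integral $M(Z_0,\mathcal{I}(2\psi),\tilde\rho)$. Concretely, I first plan to prove
$$M(Z_0,\mathcal{I}(2\psi),\tilde\rho)\le\left(\int_0^{+\infty}c(t)e^{-t}dt\right)\sum_{\beta\in I_1}\sum_{\alpha\in E_\beta}\frac{|d_{\beta,\alpha}|^2\,2^n\pi^{n}\,e^{-\varphi(z_\beta)}}{\prod_{1\le j\le n}(\alpha_j+1)c_{j,\beta_j}^{2\alpha_j+2}},$$
and then invoke inequality \eqref{eq:0212a} of Theorem \ref{thm:2.1} to absorb the factor $\pi\int_0^{+\infty}c(t)e^{-t}dt$ from the denominator, producing the stated Hardy-space constant $2^n\pi^{n-1}$ and the existence of an admissible $f\in H^2_\rho(M,\partial M)$.

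For the Bergman-side estimate I would use the concavity of $G(h^{-1}(r))$ on $[0,\int_0^{+\infty}c(t)e^{-t}dt]$ cited immediately after the definition of $G$. Combined with $\lim_{t\to+\infty}G(t)=0$, concavity yields the linear upper bound
$$G(0)\le\left(\int_0^{+\infty}c(t)e^{-t}dt\right)\cdot\lim_{r\to 0^+}\frac{G(h^{-1}(r))}{r},$$
which reduces the estimate to a local calculation near each $z_\beta$. Near $z_\beta$ one has $\psi\sim\max_j p_{j,\beta_j}\log|w_j-z_{j,\beta_j}|+O(1)$, so the sublevel sets $\{2\psi<-t\}$ split, up to negligible error, into a disjoint union of polydisc-like regions centered at the $z_\beta$. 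On each such region the monomials $\prod_j(w_j-z_{j,\beta_j})^{\alpha_j}$ with $\alpha\in E_\beta$ are exactly those saturating the ideal condition $\mathcal{I}(2\psi)_{z_\beta}$; after the change of variables $r_j=|w_j-z_{j,\beta_j}|^{p_{j,\beta_j}}$ they decouple into an orthogonal sum, and a direct Beta-integral calculation produces the numerical factor $\frac{2^n\pi^n}{\prod_j(\alpha_j+1)c_{j,\beta_j}^{2\alpha_j+2}}$. The logarithmic capacities $c_{j,\beta_j}$ enter through their defining limit, while the factor $e^{-\varphi(z_\beta)}$ comes from the continuity of $\varphi$ at $z_\beta$.

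For the equality characterization I would invoke Theorem \ref{thm:2.2}: its conditions (1)--(4) coincide with those asserted in the corollary, so it suffices to check that equality forces both inequalities in the chain $M_H\le M/(\pi\int c(t)e^{-t}dt)\le\mathrm{RHS}$ to saturate. Equality in the first is precisely Theorem \ref{thm:2.2} applied to our setting (the assumed monomial form of $f_0$ on $V_{\beta^*}$ is exactly what that theorem needs); for equality in the second, under conditions (1)--(4) the explicit extremal $F_0$ of Remark \ref{rem2.1} shows that $G(h^{-1}(r))$ is linear and that the local Beta-integral calculation is sharp. The main obstacle I anticipate is this local borderline computation: one has to handle the fact that $\psi$ is a max (not a sum) near each $z_\beta$, control the behaviour of $c(-2\psi)$ as $t\to+\infty$, and identify the limiting orthogonal projection onto the span of $\{\prod_j(w_j-z_{j,\beta_j})^{\alpha_j}:\alpha\in E_\beta\}$ with the correct capacity constants; once this is in place, the remainder reduces to bookkeeping between Theorems \ref{thm:2.1} and \ref{thm:2.2}.
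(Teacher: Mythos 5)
Your proposal follows the same skeleton as the paper's proof: sandwich $M_H(Z_0,\mathcal{I}(2\psi),\rho)\le M(Z_0,\mathcal{I}(2\psi),\tilde\rho)/(\pi\int_0^{+\infty}c(t)e^{-t}dt)\le \mathrm{RHS}$ via Theorem \ref{thm:2.1}, and then observe that equality forces both links to saturate, so that Theorem \ref{thm:2.2} delivers conditions $(1)$--$(4)$ in one direction and, together with the sharpness of the Bergman-side bound, the converse. The one place you diverge is that you propose to re-derive the Bergman-side estimate (the bound $M(Z_0,\mathcal{I}(2\psi),\tilde\rho)\le(\int_0^{+\infty}c(t)e^{-t}dt)\sum_{\beta}\sum_{\alpha\in E_\beta}|d_{\beta,\alpha}|^2(2\pi)^ne^{-\varphi(z_\beta)}/\bigl(\Pi_j(\alpha_j+1)c_{j,\beta_j}^{2\alpha_j+2}\bigr)$ and its equality case) from concavity of $G(h^{-1}(r))$ plus a local computation near each $z_\beta$; the paper simply cites this as Theorem \ref{thm:prod-finite-jet} (taking $c\equiv1$), whose proof in the reference is exactly the concavity-plus-local-expansion argument you sketch (cf.\ Lemma \ref{l:m2}), so nothing is gained or lost beyond self-containedness, and your ``main obstacle'' is precisely the content of that cited theorem. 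Two small points to tighten: the first displayed assertion of the corollary asks for an actual $f\in H^2_\rho(M,\partial M)$ achieving the bound, and inequality \eqref{eq:0212a} only controls the infimum $M_H$, so you should either invoke Lemma \ref{l:exists of $M_H$2} (as the paper does) or point to the explicit $\tilde F_0$ constructed via Proposition \ref{p:b6} inside the proof of Theorem \ref{thm:2.1}; and in the sufficiency direction you should make explicit that under condition $(3)$ the only nonvanishing coefficient $d_{\beta,\alpha}$ with $\alpha\in E_\beta$ is $d_{\beta,\gamma_\beta}=c_\beta$, so that the value produced by Remark \ref{rem2.1} (equivalently Remark \ref{r:1.2}) coincides with the stated right-hand side.
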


Denote that $L_{k}:=\{\sum_{1\le j\le n}\alpha_{j}\le k:\alpha=(\alpha_1,\ldots,\alpha_n)\in\mathbb{Z}_{\ge0}^n\}$.

\begin{Corollary}
	\label{c:2.2}
	Let $k$ be a nonnegative integer. Then there is a constant $C$ (depending on $k$ and $Z_0$), such that for any $a_{\beta,\alpha}\in\mathbb{C}$, where  $\beta\in I_1$ and $\alpha\in L_k$, there exists $f\in H^2_{\rho}(M,\partial M)$ such that $\partial^{\alpha}f^*(z_{\beta})=a_{\beta,\alpha}$ for any $\beta\in I_1$ and $\alpha\in L_k$, and
	$$\|f\|_{\partial M,\rho}^2\le C\sum_{\beta\in I_1,\alpha\in L_k}|a_{\beta,\alpha}|^2,$$
	where $\partial^{\alpha}=\left(\frac{\partial}{\partial w_1}\right)^{\alpha_1}\dots\left(\frac{\partial}{\partial w_n}\right)^{\alpha_n}$.
\end{Corollary}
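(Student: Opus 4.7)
The plan is to construct a finite family of Lagrange primitives $\{f_{\beta_0,\alpha_0}\}_{(\beta_0,\alpha_0)\in I_1\times L_k}\subset H^2_\rho(M,\partial M)$ satisfying
\[
\partial^\alpha f^*_{\beta_0,\alpha_0}(z_\beta)=\delta_{(\beta_0,\alpha_0),(\beta,\alpha)}\qquad\text{for all }(\beta,\alpha)\in I_1\times L_k,
\]
and then set $f:=\sum_{\beta,\alpha}a_{\beta,\alpha}f_{\beta,\alpha}$. With $N:=|I_1|\cdot|L_k|$, Cauchy--Schwarz yields
\[
\|f\|_{\partial M,\rho}^2\le\Bigl(\sum_{\beta,\alpha}|a_{\beta,\alpha}|\,\|f_{\beta,\alpha}\|_{\partial M,\rho}\Bigr)^2\le N\sum_{\beta,\alpha}|a_{\beta,\alpha}|^2\,\|f_{\beta,\alpha}\|_{\partial M,\rho}^2,
\]
so that $C:=N\cdot\max_{(\beta_0,\alpha_0)}\|f_{\beta_0,\alpha_0}\|_{\partial M,\rho}^2$ is the constant we want; it depends only on $k$ and $Z_0$ through the fixed auxiliary data chosen below.

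To produce the Lagrange primitives, I would invoke Theorem~\ref{thm:2.1} with the following tuning: fix $p_{j,l}:=n+k$ for all $(j,l)$, set $\varphi_j\equiv 0$, pick any admissible $c$, and let $\psi:=\max_{1\le j\le n}\{\sum_{1\le l\le m_j}p_{j,l}G_{D_j}(\cdot,z_{j,l})\}$. Near a point $z_\beta\in Z_0$, the function $2\psi$ differs from $\max_{1\le j\le n}\{2p_{j,\beta_j}\log|w_j-z_{j,\beta_j}|\}$ by a bounded term, so the standard description of the multiplier ideal of such a Kiselman-type weight identifies $\mathcal{I}(2\psi)_{z_\beta}$ as the monomial ideal generated by the $(w-z_\beta)^\alpha$ with $\sum_{j=1}^n(\alpha_j+1)/p_{j,\beta_j}>1$. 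With $p_{j,l}=n+k$ every $\alpha\in L_k$ gives $\sum_j(\alpha_j+1)/p_{j,\beta_j}=(|\alpha|+n)/(n+k)\le 1$, so the residue classes of the $L_k$-monomials all survive in the quotient. Consequently, for any holomorphic $F$ and any local reference $g$, the condition $(F-g,z_\beta)\in\mathcal{I}(2\psi)_{z_\beta}$ forces $\partial^\alpha F(z_\beta)=\partial^\alpha g(z_\beta)$ for every $\alpha\in L_k$.

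Now take $J_\beta:=\mathcal{I}(2\psi)_{z_\beta}$ for each $\beta\in I_1$ and pick the local reference by $f_0|_{V_{\beta_0}}:=(w-z_{\beta_0})^{\alpha_0}/\alpha_0!$ and $f_0|_{V_\beta}:=0$ for $\beta\ne\beta_0$. To apply Theorem~\ref{thm:2.1} I need $M(Z_0,J,\tilde\rho)<+\infty$, and for this it suffices to exhibit one holomorphic competitor on $M$: a polynomial $F\in\mathbb{C}[w_1,\dots,w_n]$ performing Hermite interpolation at $Z_0$, designed to match the jet of $f_0$ at each $z_\beta$ in the finite-dimensional quotient $\mathcal{O}_{z_\beta}/\mathcal{I}(2\psi)_{z_\beta}$. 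Since $M$ is bounded and $F$ is bounded on $\overline{M}$, and since $\tilde\rho=c(-2\psi)$ is integrable on $M$ by the hypothesis $\int_0^{+\infty}c(t)e^{-t}\,dt<+\infty$ together with the logarithmic singularity of $\psi$ along $Z_0$, the bound $\int_M|F|^2\tilde\rho<+\infty$ is immediate. Theorem~\ref{thm:2.1} then delivers the desired $f_{\beta_0,\alpha_0}\in H^2_\rho(M,\partial M)$, and the superposition argument above completes the proof.

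The main obstacle is the bookkeeping in the second paragraph: verifying the monomial description of $\mathcal{I}(2\psi)_{z_\beta}$ for our Kiselman-type weight, and producing a Hermite interpolant $F$ on $\mathbb{C}^n$ whose jet lies correctly in $\mathcal{O}_{z_\beta}/\mathcal{I}(2\psi)_{z_\beta}$ at each of the (possibly many) points $z_\beta$. Both steps are essentially standard, but require some care because the local ideals are non-reduced and the interpolation is simultaneous at several nodes; once these pieces are in place, the estimate follows from Theorem~\ref{thm:2.1} and elementary finite-dimensional linear algebra.
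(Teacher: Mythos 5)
Your route is genuinely different from the paper's. The paper proves Corollary \ref{c:2.2} by induction on $k$: it goes through Corollary \ref{c:2.1}, which only prescribes the jet components on the critical set $E_\beta=\{\alpha:\sum_j(\alpha_j+1)/p_{j,\beta_j}=1\}$ (for the isotropic choice of the $p_{j,l}$, only the top-degree homogeneous part, with all lower-order derivatives of the extension forced to vanish), so the paper first produces $f_1$ handling $L_{k-1}$ by the inductive hypothesis, controls its order-$k$ derivatives at $Z_0$ by the pointwise bound of Lemma \ref{l:b1-p}, and then corrects the top-order jet with an $f_2$ from Corollary \ref{c:2.1}. You instead apply Theorem \ref{thm:2.1} once, with $J_\beta=\mathcal{I}(2\psi)_{z_\beta}$ and $p_{j,l}=n+k$, so that the quotient $\mathcal{O}_{z_\beta}/J_\beta$ already records the full $k$-jet; this eliminates the induction and the intermediate derivative bounds, at the price of the Hermite-interpolation competitor and the monomial description of $\mathcal{I}(2\psi)_{z_\beta}$ (which is correct: the surviving monomials are exactly those with $\sum_j(\alpha_j+1)/(n+k)\le1$, i.e.\ $\alpha\in L_k$). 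The superposition and Cauchy--Schwarz steps are fine.

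There is, however, one concrete error: ``pick any admissible $c$'' together with the claim that $\int_M|F|^2\tilde\rho<+\infty$ is ``immediate.'' With $p_{j,l}=n+k$ the volume of $\{2\psi<-t\}\cap V_\beta$ decays only like $e^{-nt/(n+k)}$, so $\int_{V_\beta}c(-2\psi)$ is comparable to $\int_0^{+\infty}c(t)e^{-nt/(n+k)}\,dt$, which diverges for admissible gains such as $c(t)=e^{t}(1+t)^{-2}$ as soon as $k\ge1$. Since every competitor $\tilde F$ for $M(Z_0,J,\tilde\rho)$ with $\alpha_0=0$ satisfies $\tilde F(z_{\beta_0})=1$, for such $c$ one gets $M(Z_0,J,\tilde\rho)=+\infty$ and Theorem \ref{thm:2.1} yields nothing. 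The fix is trivial and is exactly what the paper does in proving Corollaries \ref{c:2.1} and \ref{c:2.2}: take $c\equiv1$, for which $\tilde\rho\equiv1$ on the bounded domain $M$ and the finiteness really is immediate. A smaller point worth flagging: your choice $\varphi_j\equiv0$, $p_{j,l}=n+k$ produces a boundary weight $\rho$ that differs from the one fixed in the subsection's setup; this is harmless when the two weights are comparable on $\partial M$, and the paper's own proof changes $\psi$ (hence $\rho$) in the same way, but strictly speaking the constant $C$ then also absorbs the comparison constants.
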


\subsection{Minimal $L^2$ integrals for the Hardy space  $H^2_{\lambda}(M,S)$}

Let $D_j$ be a planar regular region with finite boundary components which are analytic Jordan curves   for any $1\le j\le n$. Let $M=\prod_{1\le j\le n}D_j$ be a bounded domain in $\mathbb{C}^n$.  
Denote that $S:=\prod_{1\le j\le n}\partial D_j$.

Let $Z_j=\{z_{j,1},z_{j,2},...,z_{j,m_j}\}\subset D_j$ for any  $j\in\{1,2,...,n\}$, where $m_j$ is a positive integer. Denote that 
$$Z_0:=\prod_{1\le j\le n}Z_j\subset M.$$
Let $\psi=\max_{1\le j\le n}\{\sum_{1\le k\le m_j}2G_{D_j}(\cdot,z_{j,k})\}$.
Let $V_{z_{j,k}}\Subset D_{j}$ be a  neighborhood of $z_{j,k}$ satisfying $V_{z_{j,k}}\cap V_{z_{j,k'}}=\emptyset$ for any $j$ and $k\not=k'$. Denote that $I_1:=\{(\beta_1,\beta_2,...,\beta_n):1\le \beta_j\le m_j$ for any $j\in\{1,2,...,n\}\}$, $V_{\beta}:=\prod_{1\le j\le n}V_{z_{j,\beta_j}}$  and $z_{\beta}:=(z_{1,\beta_1},z_{2,\beta_2},\ldots,z_{n,\beta_n})\in M$ for any $\beta=(\beta_1,\beta_2,...,\beta_n)\in I_1$.

Let $\varphi_j$ be a subharmonic function on $D_j$, which satisfies that  $\varphi_j$ is continuous at $z$ for any $z\in \partial D_j$. Denote that 
$$\varphi(w_1,\ldots,w_n):=\sum_{1\le j\le n}\varphi_j(w_j)$$
on $M$. Let $f_0$ be a holomorphic function $\cup_{\beta\in I_1}V_{\beta}$. 
 Let $\rho$ be a Lebesgue measurable function on $\partial M$ such that 
$$\rho(w_1,\ldots,w_n)=\left(\sum_{1\le k\le m_j}2\frac{\partial G_{D_j}(w_j,z_{j,k})}{\partial v_{w_j}}\right)^{-1}\times\prod_{1\le l\le n}e^{-\varphi_l(w_l)}$$
on $\partial D_j\times {M_j}$.
Let $c$ be a positive function on $[0,+\infty)$, which satisfies that $c(t)e^{-t}$ is decreasing on $[0,+\infty)$, $\lim_{t\rightarrow0+0}c(t)=c(0)=1$ and $\int_{0}^{+\infty}c(t)e^{-t}dt<+\infty$. Let 
$$\lambda(w_1,\ldots,w_n)=\prod_{1\le j\le n}\left(\sum_{1\le k\le m_j}2\frac{\partial G_{D_j}(w_j,z_{j,k})}{\partial v_{w_j}}\right)^{-1}e^{-\varphi_j(w_j)}$$
on $S=\prod_{1\le j\le n}\partial D_j$. Note that $\lambda$ is continuous on $S$.

Let $h_j$ be a holomorphic function on a neighborhood of $Z_j$ for any $1\le j\le n$ satisfying that there exists $k\in\{1,\ldots,m_j\}$ such that $h_j(z_{j,k})\not=0$. Denote that $h_0=\prod_{1\le j\le n}h_j$. Let us consider the following  minimal integral. Let $J_{\beta}$ be the maximal ideal of $\mathcal{O}_{z_\beta}$ for any $\beta\in I_1$.
Denote that 
\begin{displaymath}
	\begin{split}
		M_S(Z_0,J,\lambda):=\inf\bigg\{&\|f\|^2_{S,\lambda}:f\in H_{\lambda}^2(M,S) 
		\\ &\text{ s.t. } f^*(z_{\beta})=h_0(z_{\beta})\text{ for any $\beta\in I_1$}\bigg\}\end{split}
\end{displaymath}
and
\begin{displaymath}
	\begin{split}
		M_H(Z_0,J,\rho):=\inf\bigg\{&\|f\|^2_{\partial M,\rho}:f\in H_{\rho}^2(M,\partial M) 
		\\ &\text{ s.t. } f^*(z_{\beta})=h_0(z_{\beta})\text{ for any $\beta\in I_1$}\bigg\}.\end{split}
\end{displaymath}

We present a relation between $M_S(Z_0,J,\lambda)$ and $M_H(Z_0,J,\rho)$.
 
\begin{Theorem}
	\label{thm:3.1}
	Assume that $M_H(Z_0,J,\rho)<+\infty$. Then
\begin{equation}
	\label{eq:0312a}M_S(Z_0,J,\lambda)\leq\frac{M_H(Z_0,J,\rho)}{n\pi^{n-1}}
		\end{equation}
 holds, and equality holds if and only if the following three statements hold 
 
 $(1)$ $\varphi_j=2u_j$ for any $1\le j\le n$, where $u_j$ is a harmonic function on $D_j$;
 
 $(2)$ $\prod_{1\le k\le m_j}\chi_{j,z_{j,k}}=\chi_{j,-u_j}$ for any $1\le j\le n$;
 
 $(3)$ For any $j$, there exists a constant $c_j\not=0$ such that 
 $$\lim_{z\rightarrow z_{j,k}}\frac{P_j^*\left(f_{u_j}\left(\prod_{1\le k\le m_j}f_{z_{j,k}}\right)\left(\sum_{1\le k\le m_j}\frac{df_{z_{j,k}}}{f_{z_{j,k}}}\right)\right)}{h_jdw_j}=c_j$$
  holds for any $1\le k\le m_j$.
 \end{Theorem}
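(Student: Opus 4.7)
The strategy is to reduce the $\partial M$-Hardy norm to the $S$-Hardy norm by an integrated slice-wise application of the one-variable Saitoh kernel inequality (Theorem \ref{thm:saitoh}), and then to characterize the equality case by invoking the single-point characterization of equality in Theorem \ref{main theorem1} in each coordinate direction.

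\textbf{The inequality.} Split $\|f\|^2_{\partial M,\rho}=\sum_{j=1}^n T_j$, where $T_j=\frac{1}{2\pi}\int_{M_j}\int_{\partial D_j}|f|^2\rho\,|dw_j|\,d\mu_j(\hat w_j)$. I aim to prove the slice bound $T_j\ge \pi^{n-1}\|f^*\|^2_{S,\lambda}$ for every $j$; summing over $j$ then gives $\|f\|^2_{\partial M,\rho}\ge n\pi^{n-1}\|f^*\|^2_{S,\lambda}$, and passing to the infimum over competitors $f$ yields \eqref{eq:0312a}. For fixed $j$, I swap the order of integration so the inner integral is over $M_j$, and for a.e.\ $w_j\in\partial D_j$ I view $\hat w_j\mapsto f^*(w_j,\hat w_j)$ as a holomorphic function on $M_j$. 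Applying the one-variable Saitoh inequality (the $n{=}1$ case of Theorem \ref{thm:2.1} with $c\equiv 1$) successively in each coordinate $w_l$, $l\ne j$, converts the $D_l$-interior integral of $|f^*|^2 e^{-\varphi_l}$ into the $\partial D_l$-boundary integral with the correct weight $(\sum_k 2\partial G_{D_l}/\partial v_{w_l})^{-1}e^{-\varphi_l}$ while contributing a factor $\pi$ per application. After $n-1$ conversions one arrives at $T_j\ge\pi^{n-1}\|f^*\|^2_{S,\lambda}$. To interpret $f^*$ as a valid competitor for $M_S$, one verifies that $f\in H^2_\rho(M,\partial M)$ implies $f^*\in H^2_\lambda(M,S)$ with the same interpolation conditions; this rests on an approximation/exhaustion argument that uses the analyticity of each $\partial D_j$ to produce a sequence in $\mathcal O(M)\cap C(\overline M)\cap L^2(S,\lambda d\sigma)$ converging to $f^*$ in the $S$-norm.

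\textbf{The equality characterization.} Equality in \eqref{eq:0312a} forces each of the $n(n-1)$ single-variable Saitoh inequalities applied above to be tight. Invoking the equality characterization from Theorem \ref{main theorem1} in each coordinate $D_l$ gives: $\varphi_l=2u_l$ with $u_l$ harmonic (condition (1)); the character identity $\prod_k\chi_{l,z_{l,k}}=\chi_{l,-u_l}$ (condition (2)); and the explicit form of the one-variable extremal slice via the multiplicative-function formula, from which condition (3) follows by identifying the constant $c_j$ with the normalization of that extremal in $D_j$. Conversely, assuming (1)--(3), I form the wedge-product of the one-variable extremal multiplicative functions across the $n$ factors: condition (2) guarantees that this wedge-product descends to a single-valued holomorphic function $F_0$ on $M$, and condition (3) identifies $F_0$ as a common extremal that simultaneously attains $M_H$ (via Remark \ref{rem1.1} applied in each coordinate) and $M_S$ with the precise ratio $n\pi^{n-1}$ dictated by the slice decomposition.

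\textbf{Main obstacle.} The principal difficulty is that Theorem \ref{thm:saitoh} is stated as an inequality between reproducing kernels rather than a pointwise inequality between Hardy and Bergman norms of arbitrary functions; invoking it inside an integral over $\partial D_j$ therefore requires expressing the inner $M_j$-integral as an infimum over Bergman-type functions matching the boundary data of $f^*$ on the finitely many slices $w_l=z_{l,\beta_l}$, and then carefully interchanging the slice-wise infimum with the outer $|dw_j|$-integration. A secondary technical issue is justifying the inclusion $H^2_\rho(M,\partial M)\hookrightarrow H^2_\lambda(M,S)$ preserving the interpolation conditions, which requires the approximation argument above together with a Fatou-type identification of boundary traces on $S$ through nested boundary traces on each $\partial D_j$.
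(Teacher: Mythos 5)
Your overall architecture (split $\|f\|^2_{\partial M,\rho}$ into the $n$ pieces $T_j$, reduce each to a one-variable Hardy--Bergman comparison, then characterize equality coordinate-by-coordinate via Theorem \ref{main theorem1} and rebuild the extremal as a product) is the right shape, but the central inequality step has a genuine gap. You claim the slice bound $T_j\ge\pi^{n-1}\|f^*\|^2_{S,\lambda}$ for an \emph{arbitrary} competitor $f$, obtained by applying the one-variable inequality of Theorem \ref{main theorem1} ``successively in each coordinate $w_l$, $l\ne j$'' to convert interior $D_l$-integrals into boundary $\partial D_l$-integrals at a cost of $\pi$ each. That one-variable inequality, $M_H(Z_0,\mathfrak a,\rho)\le M(Z_0,\mathfrak a,\tilde\rho)/\pi$, is a comparison of \emph{infima} over functions satisfying interpolation conditions; it is not a pointwise inequality
\begin{equation*}
\int_{D_l}|g|^2 e^{-\varphi_l}\;\ge\;\frac{\pi}{2\pi}\int_{\partial D_l}|g|^2\Big(\sum_k 2\tfrac{\partial G_{D_l}}{\partial v}\Big)^{-1}e^{-\varphi_l}|dw_l|
\end{equation*}
valid for every individual $g$. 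It fails already on the unit disc with $\varphi_l=0$, one point $z=0$: for $g=z^k$ the left side is $\pi/(k+1)$ while the right side is $\pi/2$, so the inequality is violated for $k\ge2$. Hence the slice of $f^*$ in the $w_l$-variable, which is generally not the extremal of the corresponding interpolation problem, cannot be processed this way, and $T_j\ge\pi^{n-1}\|f^*\|^2_{S,\lambda}$ is not established. You flag this yourself as the ``main obstacle'' and propose to interchange a slice-wise infimum with the outer integration, but this is not carried out, and even if it were, replacing each slice integral by the minimal integral of its interpolation data yields a lower bound in terms of minimal integrals of the (variable-dependent, vector-valued) slice data rather than in terms of $\|f^*\|^2_{S,\lambda}$; relating that to $M_S$ is exactly the nontrivial content that must be supplied.

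The paper circumvents this entirely by never working with individual slices: it first bounds $M_H(Z_0,J,\rho)\ge\sum_j M_{H,j}(Z_0,J,\rho)$ and then invokes the product (tensor) decompositions of Lemmas \ref{l:pro1}, \ref{l:pro2} and \ref{l:pro3}, proved via complete orthonormal bases of the factor spaces, to factor $M_{H,j}=M_{\partial D_j}\times\prod_{l\ne j}M_{D_l}$ and $M_S=\prod_l M_{\partial D_l}$. Only at that stage, with everything expressed as products of \emph{one-variable minimal integrals}, is Theorem \ref{main theorem1} applied in the legitimate form $M_{D_l}\ge\pi M_{\partial D_l}$, giving $M_H\ge n\pi^{n-1}M_S$. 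The equality analysis then reduces, as you anticipated, to $M_{D_j}=\pi M_{\partial D_j}$ for each $j$ and to Remark \ref{rem1.1} for the explicit product extremal; that part of your sketch is consistent with the paper, but it rests on the inequality derivation that is not yet sound. To repair your proof you would need to either prove the product decomposition lemmas or find a substitute for the false pointwise slice inequality.
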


\section{Preparations}

In this section, we do some preparations.

\subsection{Some results on Hardy space $H^2(D)$}\label{sec:2.1}
Let $D$ be a planar regular region with finite boundary components which are analytic Jordan curves  (see \cite{saitoh}, \cite{yamada}). In this section, we recall some propoerties related to Hardy space $H^2(D)$.

Let $H^2(D)$ (see \cite{saitoh}) denote the analytic Hardy class on $D$ defined as the set of all analytic functions $f(z)$ on $D$ such that the subharmonic functions $|f(z)|^2$ have harmonic majorants $U(z)$: 
$$|f(z)|^2\le U(z)\,\, \text{on}\,\,D.$$
Then each function $f(z)\in H^2(D)$ has Fatou's nontangential boundary value a.e. on $\partial D$ belonging to $L^2(\partial D)$ (see \cite{duren}). It is well know (see \cite{rudin55}) that if a subharmonic function has a harmonic majorant in $D$, then there exists a least harmonic majorant. Denote the least harmonic majorant of $|f|^2$ by $u_f$.

Let $z_0\in D$. Let $L^2(\partial D,\rho)$ be the space of complex valued measurable function $h$ on $\partial D$, normed by 
$$\|h\|_{\partial D,\rho}^2=\frac{1}{2\pi}\int_{\partial D}|h|^2\rho |dz|,$$
where $\rho=\frac{\partial G_D(z,z_0)}{\partial v_z}$ is a positive continuous function on $\partial D$ by the analyticity of $\partial D$, $G_D(z,z_0)$ is the Green function on $D$, and $\partial/\partial v_z$ denotes the derivative along the outer normal unit vector $v_z$.

The following lemma gives some properties related to the Hardy space $H^2(D)$.
\begin{Lemma}[\cite{rudin55}]
\label{l:0-1}
	$(a)$ If $f\in H^2(D)$, there is a function $f_*$ on $\partial D$ such that $f$  has nontangential boundary value $f_*$ almost everywhere on $\partial D$. The map $\gamma:f\mapsto f_*$ is an injective linear map from $H^2(D)$ into $L^2(\partial D,\rho)$ and
	$$\|f_*\|_{\partial D,\rho}^2=u_f(z_0)$$
	holds for any $f\in H^2(D)$, where $u_f$ is the least harmonic majorant of $|f|^2$. 
	
	$(b)$ $g\in \gamma(H^2(D))$ if and only if 
	$$\int_{\partial D}g(z)\phi(z)dz=0$$
	holds for any holomorphic function $\phi$ on a neighborhood of  $\overline D$.
	 
	 $(c)$ The inverse of $\gamma $ is given by 
	\begin{equation}
		\label{eq:0728a}f(w)=\frac{1}{2\pi\sqrt{-1}}\int_{\partial D}\frac{f_*(z)}{z-w}dz
	\end{equation}
	for any $z\in D$.
\end{Lemma}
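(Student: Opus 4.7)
The plan is to treat the three parts in order, using the Poisson--Green representation for (a), Cauchy's theorem and its converse via a Cauchy transform for (b), and an exhaustion argument for (c). Throughout I will rely on the fact that any bounded (or $L^1$-boundary) harmonic function $u$ on $D$ admits the representation
$$u(z_0)=\frac{1}{2\pi}\int_{\partial D}u(z)\frac{\partial G_D(z,z_0)}{\partial v_z}|dz|,$$
which is the analogue of the Poisson formula on a regular multiply connected region.

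For part (a), the existence of nontangential boundary values $f_*$ for $f\in H^2(D)$ is a classical Fatou-type theorem, obtained either from the disk case via the universal covering $p:\Delta\to D$ or via local charts straightening $\partial D$ (each boundary arc is analytic). The crucial analytic input is that the least harmonic majorant $u_f$ of $|f|^2$ has boundary value $|f_*|^2$ a.e.\ on $\partial D$; inserting this into the Poisson--Green formula above with $u=u_f$ yields the identity $\|f_*\|_{\partial D,\rho}^2=u_f(z_0)$. Injectivity of $\gamma$ is then immediate: if $f_*=0$ a.e.\ then $u_f\equiv 0$, so $|f|^2\equiv 0$.

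For part (b), the forward direction is a Cauchy calculation: for $\phi$ holomorphic on an open neighborhood of $\overline D$, the product $f\phi$ lies in $H^2(D)$ with boundary function $f_*\phi$, and applying Cauchy's theorem on smooth curves $\partial D_n$ exhausting $D$ from inside, together with $L^2$ convergence of boundary traces, gives $\int_{\partial D}f_*\phi\,dz=0$. The converse is the main obstacle. Given $g\in L^2(\partial D,\rho)$ with the stated orthogonality, I would introduce the Cauchy transform
$$F(w):=\frac{1}{2\pi\sqrt{-1}}\int_{\partial D}\frac{g(z)}{z-w}\,dz,\qquad w\in\mathbb{C}\setminus\partial D,$$
which is holomorphic off $\partial D$. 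In the unbounded component of the complement one has $F(\infty)=0$ and orthogonality against polynomials (Taylor expansion of $1/(z-w)$ at infinity) forces $F\equiv 0$ there; in each bounded component (a ``hole'' of $D$) orthogonality against rational functions with a single pole inside that component (Laurent expansion of $1/(z-w)$ there) forces $F\equiv 0$ as well. Then the Plemelj jump relation at $\partial D$, together with $F$ vanishing on the outside, identifies the boundary trace of $F|_D$ with $g$ a.e. The remaining point is to check that $F|_D\in H^2(D)$, which is the classical $L^2$-boundedness of the Cauchy projection on analytic Jordan curves and yields the least harmonic majorant from the $L^2$ datum.

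For part (c), fix $w\in D$ and choose an exhaustion $D_n\Subset D$ with smooth boundaries $\partial D_n$ converging to $\partial D$ so that $w\in D_n$ for large $n$. Cauchy's formula on $D_n$ gives $f(w)=\frac{1}{2\pi\sqrt{-1}}\int_{\partial D_n}\frac{f(z)}{z-w}dz$, and passage to the limit is justified by the boundedness of $z\mapsto 1/(z-w)$ near $\partial D$ together with the $L^2(\rho)$ convergence of the traces of $f$ on $\partial D_n$ to $f_*$ on $\partial D$. The hardest step will be the $F\equiv 0$ argument on the complementary components in (b): one must select enough rational test functions (pole at $\infty$ for the unbounded component, one pole in each hole) so that their closed linear span separates analytic functions in those components; this is where the finite connectivity and analyticity of $\partial D$ are genuinely used.
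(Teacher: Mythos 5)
The paper does not prove this lemma at all: it is quoted directly from Rudin's paper \cite{rudin55} (as is Lemma \ref{l:0-4}), so there is no in-paper argument to compare yours against. Your overall architecture --- Poisson--Green representation for (a), Cauchy's theorem plus a Cauchy-transform/jump argument for (b), exhaustion for (c) --- is the classical route, and (b) and (c) are sketched at an acceptable level: annihilating the Cauchy transform on the unbounded component by polynomials and on each hole by rational functions with one pole per hole is exactly where finite connectivity enters, and the Plemelj relation together with $L^2$-boundedness of the Cauchy integral on analytic curves closes the loop.

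The genuine gap is in (a). You state the Poisson--Green formula for harmonic functions that are bounded or have $L^1$ boundary data and then ``insert'' $u=u_f$; but $u_f$ is only known to be a positive harmonic function, so the Riesz--Herglotz representation gives $u_f(z_0)=\frac{1}{2\pi}\int_{\partial D}\frac{\partial G_D(z,z_0)}{\partial v_z}\,d\mu(z)$ for a positive measure $\mu$ that could a priori carry a singular part. Knowing that $u_f$ has nontangential boundary value $|f_*|^2$ a.e.\ only identifies the absolutely continuous part of $\mu$ --- the Poisson kernel itself has nontangential boundary value $0$ a.e.\ and is not zero --- so the identity $\|f_*\|^2_{\partial D,\rho}=u_f(z_0)$ does not follow from what you wrote; the absence of a singular part is precisely the substantive content of the statement. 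The standard repair is either to pull back by the universal covering $p:\Delta\to D$, where $|f\circ p|^2\le u_f\circ p$ places $f\circ p$ in $H^2(\Delta)$ and the disk theory gives $\int_0^{2\pi}|f\circ p(re^{i\theta})|^2\,d\theta\rightarrow\|(f\circ p)^*\|^2_{L^2}$, killing the singular part, or to exhaust $D$ by subdomains $D_k$ and show that $\frac{1}{2\pi}\int_{\partial D_k}|f|^2\frac{\partial G_{D_k}(z,z_0)}{\partial v_z}|dz|$ increases to $\frac{1}{2\pi}\int_{\partial D}|f_*|^2\frac{\partial G_{D}(z,z_0)}{\partial v_z}|dz|$, which is exactly the content of Lemma \ref{l:0-4}, also imported from \cite{rudin55}. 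Without one of these steps, part (a) is asserted rather than proved.
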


Equality \eqref{eq:0728a} in Lemma \ref{l:0-1} implies the following lemma.
\begin{Lemma}
	\label{l:0-1b}If $\lim_{n\rightarrow+\infty}\|\gamma(f_n)\|_{\partial D,\rho}=0$ for $f_n\in H^2(D)$, then $f_n$ uniformly converges to $0$ on any compact subset of $D$.
\end{Lemma}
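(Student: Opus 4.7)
The plan is to combine the Cauchy-type reproducing formula \eqref{eq:0728a} with the Cauchy--Schwarz inequality, using the fact that $\rho=\partial G_D(\cdot,z_0)/\partial v_z$ is a strictly positive continuous function on $\partial D$ and is therefore bounded below by a positive constant.

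First, I would fix a compact subset $K\subset D$ and set $\delta:=\mathrm{dist}(K,\partial D)>0$. Since $\rho$ is positive and continuous on the compact set $\partial D$, there is $c_0>0$ with $\rho\ge c_0$ everywhere on $\partial D$. Hypothesis then yields
\[
\frac{1}{2\pi}\int_{\partial D}|\gamma(f_n)|^2|dz|\le c_0^{-1}\|\gamma(f_n)\|_{\partial D,\rho}^2\longrightarrow0.
\]
Next, for each $w\in K$ I invoke the reproducing formula \eqref{eq:0728a} from Lemma \ref{l:0-1}(c) to write
\[
f_n(w)=\frac{1}{2\pi\sqrt{-1}}\int_{\partial D}\frac{\gamma(f_n)(z)}{z-w}\,dz,
\]
and apply Cauchy--Schwarz in the form
\[
|f_n(w)|\le\frac{1}{2\pi}\left(\int_{\partial D}\frac{|dz|}{|z-w|^2}\right)^{1/2}\left(\int_{\partial D}|\gamma(f_n)(z)|^2|dz|\right)^{1/2}.
\]
For $w\in K$ and $z\in\partial D$ we have $|z-w|\ge\delta$, so the first factor on the right is bounded by $\sqrt{\mathrm{length}(\partial D)}/\delta$, a constant depending only on $K$. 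Combining this with the first step, the right side tends to $0$ uniformly in $w\in K$, which gives the required uniform convergence on compact subsets.

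No step looks genuinely difficult; the only thing to be careful about is the passage between the $\rho$-weighted and the unweighted $L^2$ norms on $\partial D$, which is harmless because $\rho$ is bounded away from $0$. Everything else is a direct application of Cauchy--Schwarz to the Cauchy-type representation provided by Lemma \ref{l:0-1}(c).
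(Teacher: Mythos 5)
Your proof is correct and is exactly the argument the paper has in mind: the paper gives no separate proof of Lemma \ref{l:0-1b}, merely noting that equality \eqref{eq:0728a} implies it, and your combination of that reproducing formula with Cauchy--Schwarz and the positive lower bound of $\rho$ on $\partial D$ is the standard way to fill in that one-line remark. No issues.
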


\begin{Lemma}
	\label{l:bounded}
	For any compact set $V\subset D$ and nonnegative integer $k$, there is a constant $C>0$, such that 
	$$|f^{(k)}(w)|^2\le C\int_{\partial D}|f_*|^2|dz|$$
	for any $w\in V$ and $f\in H^2(D)$.
\end{Lemma}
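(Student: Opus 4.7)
The plan is to derive the estimate directly from the Cauchy-type integral representation in Lemma~\ref{l:0-1}(c), combined with Cauchy--Schwarz and the fact that a compact subset $V \subset D$ stays uniformly away from $\partial D$.

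First, I would differentiate the reproducing formula
\[
f(w) = \frac{1}{2\pi\sqrt{-1}}\int_{\partial D}\frac{f_*(z)}{z-w}\,dz
\]
under the integral sign $k$ times with respect to $w \in D$. Since $\partial D$ consists of finitely many analytic Jordan curves (in particular is compact) and for $w$ in a compact $V \Subset D$ the kernel $(z-w)^{-(k+1)}$ is uniformly bounded and smooth in $w$, differentiation under the integral is justified, yielding
\[
f^{(k)}(w) = \frac{k!}{2\pi\sqrt{-1}}\int_{\partial D}\frac{f_*(z)}{(z-w)^{k+1}}\,dz.
\]

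Next I would apply the Cauchy--Schwarz inequality to obtain
\[
|f^{(k)}(w)|^2 \le \frac{(k!)^2}{(2\pi)^2}\left(\int_{\partial D}\frac{1}{|z-w|^{2(k+1)}}|dz|\right)\left(\int_{\partial D}|f_*(z)|^2|dz|\right).
\]
Set $\delta := \mathrm{dist}(V,\partial D)$, which is strictly positive because $V$ is compact and $V \subset D$. Then $|z-w| \ge \delta$ for all $w \in V$ and $z \in \partial D$, hence
\[
\int_{\partial D}\frac{1}{|z-w|^{2(k+1)}}|dz| \le \frac{|\partial D|}{\delta^{2(k+1)}},
\]
where $|\partial D|$ is the total length of $\partial D$. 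Taking $C := \frac{(k!)^2 |\partial D|}{(2\pi)^2 \delta^{2(k+1)}}$ gives the desired inequality uniformly in $w \in V$.

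There is no real obstacle: the only point requiring a line of justification is the legitimacy of differentiating under the integral, which is routine since the integrand and all its $w$-derivatives are continuous in $(z,w) \in \partial D \times V$ and $\partial D$ has finite length. The constant $C$ produced depends only on $k$, $\delta$, and $|\partial D|$, hence only on $k$ and $V$ as claimed in the statement.
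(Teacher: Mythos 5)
Your proposal is correct and follows essentially the same route as the paper: differentiate the reproducing formula of Lemma \ref{l:0-1}(c) under the integral to get $f^{(k)}(w)=\frac{k!}{2\pi\sqrt{-1}}\int_{\partial D}\frac{f_*(z)}{(z-w)^{k+1}}dz$, then bound via Cauchy--Schwarz using $\mathrm{dist}(V,\partial D)>0$. You merely spell out the Cauchy--Schwarz step and the constant that the paper leaves implicit (and your sign in the differentiated kernel is in fact the correct one).
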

\begin{proof}
	By equality \eqref{eq:0728a}, we have
	$$f^{(k)}(w)=\frac{(-1)^{k+1}k!}{2\pi\sqrt{-1}}\int_{\partial D}\frac{f_*(z)}{(z-w)^{k+1}}dz$$
	for any $w\in D$ and $f\in H^2(D)$.
	Hence, there exists a constant $C>0$, such that 
	$|f^{(k)}(w)|^2\le C\int_{\partial D}|f_*|^2|dz|$
	for any $w\in V$.
	\end{proof}

\begin{Lemma}[\cite{rudin55}] \label{l:0-2}$ H^2(D)$ is a Hilbert space equipped with the inner product
	$$\ll f,g\gg_{\partial D,\rho}=\frac{1}{2\pi}\int_{\partial D} f_*\overline {g_*}\rho|dz|,$$
	where $\rho=\frac{\partial G_D(z,z_0)}{\partial v_z}$. Moreover,	$\mathcal{O}(D)\cap C(\overline D)$ is dense in $ H^2(D)$.
\end{Lemma}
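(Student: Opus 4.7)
The proof splits naturally into three pieces: the inner-product axioms, completeness, and density of $\mathcal{O}(D)\cap C(\overline{D})$.

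For the inner-product structure, sesquilinearity and conjugate symmetry of $\ll\cdot,\cdot\gg_{\partial D,\rho}$ follow directly from its integral definition. Non-negativity is clear since $\rho>0$ on $\partial D$ by analyticity of $\partial D$. For definiteness, if $\ll f,f\gg_{\partial D,\rho}=0$, then $f_{*}=0$ a.e.\ on $\partial D$ (because $\rho$ has a positive lower bound on $\partial D$), so $\gamma(f)=0$, and the injectivity of $\gamma$ established in Lemma \ref{l:0-1}(a) gives $f\equiv 0$.

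For completeness, let $\{f_n\}\subset H^2(D)$ be Cauchy for $\|\cdot\|_{\partial D,\rho}$. Then $\{\gamma(f_n)\}$ is Cauchy in the complete space $L^2(\partial D,\rho)$, and converges to some $g\in L^2(\partial D,\rho)$. It suffices to show $g\in\gamma(H^2(D))$. To that end, for every $\phi$ holomorphic on a neighborhood of $\overline{D}$ we have $\int_{\partial D}\gamma(f_n)\phi\,dz=0$ by Lemma \ref{l:0-1}(b). Since $\phi$ and $\rho^{-1}$ are bounded on $\partial D$, the Cauchy--Schwarz inequality allows us to pass to the limit, yielding $\int_{\partial D}g\phi\,dz=0$. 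Lemma \ref{l:0-1}(b) then gives $g=\gamma(f)$ for some $f\in H^2(D)$, and $\|f_n-f\|_{\partial D,\rho}\to 0$; as a cross-check, Lemma \ref{l:0-1b} shows $f_n\to f$ uniformly on compacta of $D$.

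For density, given $f\in H^2(D)$ with $\|f\|_{\partial D,\rho}^{2}<\infty$, the plan is to exploit the analyticity of $\partial D$ to produce approximants that actually extend holomorphically across the boundary. One route is to use the Szeg\H{o} reproducing kernel $K(z,w)$ of $H^2(D)$ (with respect to $\rho$): because each boundary component is an analytic Jordan curve, $K(\cdot,w)$ extends holomorphically across $\partial D$ for every $w\in D$, so $K(\cdot,w)\in\mathcal{O}(D)\cap C(\overline{D})$. A standard reproducing-kernel density argument then shows that finite linear combinations $\sum c_j K(\cdot,w_j)$, with $\{w_j\}$ a countable dense subset of $D$, are dense in $H^2(D)$ in the $\|\cdot\|_{\partial D,\rho}$-norm (if $f$ is orthogonal to all $K(\cdot,w_j)$, then $f(w_j)=0$ and hence $f\equiv 0$ by analytic continuation). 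Equivalently, one may use that analyticity of $\partial D$ allows constructing a decreasing family of domains $D_\epsilon\supset\overline{D}$ via Schwarz reflection and then approximate $f$ by restrictions of functions in $\mathcal{O}(D_\epsilon)$. The main obstacle is precisely the construction of approximants that are continuous up to $\overline{D}$, and it is here that the assumption that $\partial D$ consists of analytic Jordan curves is essential; granted this, the rest of the density argument is a soft functional-analytic exercise.
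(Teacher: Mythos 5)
The paper does not prove this lemma at all --- it is quoted verbatim from Rudin's 1955 paper \emph{Analytic functions of class $H_p$} --- so there is no in-paper argument to match yours against. Your treatment of the inner-product axioms and of completeness is correct and self-contained: positivity and non-degeneracy follow from $\inf_{\partial D}\rho>0$ together with the injectivity of $\gamma$ from Lemma \ref{l:0-1}(a), and your completeness argument (pass the orthogonality relations $\int_{\partial D}\gamma(f_n)\phi\,dz=0$ to the limit using Cauchy--Schwarz and the boundedness of $\phi$ and $\rho^{-1}$, then invoke the characterization of $\gamma(H^2(D))$ in Lemma \ref{l:0-1}(b)) is exactly the standard one and is complete.

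The density part, however, has a genuine gap. Your main route rests entirely on the assertion that the reproducing kernel $K(\cdot,w)$ of $\bigl(H^2(D),\ll\cdot,\cdot\gg_{\partial D,\rho}\bigr)$ extends continuously (indeed holomorphically) to $\overline D$. That boundary-regularity statement for a \emph{weighted} Szeg\H{o} kernel on a multiply connected domain is itself a nontrivial theorem --- comparable in depth to, and in most treatments proved \emph{after}, the density of nice functions in $H^2(D)$ --- and you give no argument for it; as you yourself note, it is ``precisely the main obstacle,'' yet it is exactly the point left unproved. The second route you sketch (Schwarz reflection to domains $D_\epsilon\supset\overline D$) does not work as stated, since $f\in H^2(D)$ has no extension to any larger domain to restrict from. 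The classical argument (Rudin's) avoids both issues: write $f(w)=\frac{1}{2\pi\sqrt{-1}}\int_{\partial D}\frac{f_*(z)}{z-w}\,dz$ and split the integral over the individual boundary curves $\Gamma_0,\dots,\Gamma_N$, obtaining $f=\sum_j f_j$ with each $f_j$ holomorphic on the full component of $\mathbb{C}\setminus\Gamma_j$ containing $D$; each summand lives on a simply connected (or exterior) domain with analytic boundary, where a conformal map to the disc and the elementary fact that the dilates $g_r(z)=g(rz)$ converge to $g$ in $H^2$ of the disc produce approximants holomorphic in a neighborhood of $\overline D$, hence in $\mathcal{O}(D)\cap C(\overline D)$. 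If you want a proof at the level of the tools already available in this paper, that decomposition is the missing idea.
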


\begin{Lemma}
	\label{l:0-3}Let $f_n\in H^2(D)$ for any $n\in \mathbb{Z}_{>0}$. Assume that $f_n$ uniformly converges to $0$ on any compact subset of $D$ and   there exists $f\in L^2(\partial D,\rho)$ such that  $\lim_{n\rightarrow+\infty}\|\gamma(f_n)-f\|_{\partial D,\rho}=0$. Then we have $f=0$.  
\end{Lemma}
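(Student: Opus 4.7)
The plan is to show first that $f$ is itself the boundary value of some $F \in H^2(D)$, and then to use the hypothesis of uniform-on-compacta convergence of $f_n$ to force $F$ to vanish.

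For the first step, I would exploit that $\gamma: H^2(D) \to L^2(\partial D, \rho)$ is an \emph{isometric} embedding. Indeed, Lemma \ref{l:0-1}(a) identifies $\|\gamma(g)\|_{\partial D,\rho}^2$ with $u_g(z_0)$, and Lemma \ref{l:0-2} asserts that this is precisely the norm coming from the Hilbert space structure of $H^2(D)$. Since $H^2(D)$ is complete (again Lemma \ref{l:0-2}) and $\gamma$ is an isometry, the image $\gamma(H^2(D))$ is a closed linear subspace of $L^2(\partial D, \rho)$. The assumption that $\gamma(f_n) \to f$ in $L^2(\partial D, \rho)$ with $\gamma(f_n) \in \gamma(H^2(D))$ therefore puts $f$ in $\gamma(H^2(D))$, so I may write $f = \gamma(F)$ for some $F \in H^2(D)$.

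For the second step, the isometry gives $\|f_n - F\|^2_{H^2(D)} = \|\gamma(f_n) - \gamma(F)\|^2_{\partial D, \rho} = \|\gamma(f_n) - f\|^2_{\partial D,\rho} \to 0$. Applying Lemma \ref{l:0-1b} to the sequence $f_n - F$, we conclude that $f_n - F \to 0$ uniformly on every compact subset of $D$. Combined with the hypothesis that $f_n \to 0$ uniformly on compact subsets of $D$, this forces $F \equiv 0$ on $D$, and hence $f = \gamma(F) = 0$ in $L^2(\partial D, \rho)$.

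No serious obstacle is anticipated: the crux is simply to notice that the two norms appearing in Lemma \ref{l:0-1}(a) and Lemma \ref{l:0-2} coincide, so that $\gamma(H^2(D))$ is closed in $L^2(\partial D, \rho)$, after which the two modes of convergence (interior uniform-on-compacta via Lemma \ref{l:0-1b}, and boundary $L^2$) are made compatible by passing through $F$. As an alternative that avoids the closedness argument, one could instead apply the Cauchy representation \eqref{eq:0728a} to $f_n$, pass to the limit using that $(z-w)^{-1}$ is continuous and bounded on $\partial D$ for each fixed $w \in D$ (so $\gamma(f_n) \to f$ in $L^1(\partial D)$ against this kernel), deduce that the Cauchy integral of $f$ vanishes on $D$, and then combine this with Lemma \ref{l:0-1}(b) and (c) to conclude $f = 0$.
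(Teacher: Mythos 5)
Your proof is correct and follows essentially the same route as the paper: identify $f=\gamma(F)$ for some $F\in H^2(D)$ using that $\gamma(H^2(D))$ is a closed (isometrically embedded) subspace of $L^2(\partial D,\rho)$ via Lemmas \ref{l:0-1} and \ref{l:0-2}, then apply Lemma \ref{l:0-1b} to $f_n-F$ and combine with the hypothesis to get $F\equiv 0$. The paper's proof is exactly this argument, only stated more tersely.
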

\begin{proof}
	It follows from Lemma \ref{l:0-1} and Lemma \ref{l:0-2} that there exists $f_0\in H^2(D)$ such that $\gamma(f_0)=f$. Using Lemma \ref{l:0-1b}, we get that $f_n-f_0$ uniformly converges to $0$ on any compact subset of $D$, i.e. $f_0=0$, which implies that $f=0$.
\end{proof}

Let $\{D_k\}_{k\in\mathbb{Z}_{>0}}$ be an increasing sequence of domains with analytic boundaries, such that $z_0\in D_1$ and $\cup_{k=1}^{+\infty} D_k=D$. Let $G_{D_k}(\cdot,z_0)$ be the Green function of $D_k$.
\begin{Lemma}[see \cite{rudin55}] \label{l:0-4}
	$\|f_*\|^2_{\partial D,\rho}=\lim_{k\rightarrow+\infty}\frac{1}{2\pi}\int_{\partial D_k}|f|^2\frac{\partial G_{D_k}(z,z_0)}{\partial v_z}|dz|$
	holds for any $f\in H^2(D)$.
\end{Lemma}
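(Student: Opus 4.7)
The plan is to represent each right-hand integral as the value $u_k(z_0)$ of a Dirichlet solution on $D_k$, show that these values increase to the least harmonic majorant $u_f$ of $|f|^2$ on $D$, and then invoke Lemma \ref{l:0-1}(a) (which identifies $u_f(z_0)$ with $\|f_*\|^2_{\partial D,\rho}$) to finish.

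Fix $f\in H^2(D)$. For each $k$, since $D_k\Subset D$ has analytic boundary and $|f|^2$ is continuous and subharmonic on $\overline{D_k}$, there is a unique harmonic extension $u_k$ on $D_k$ of the boundary values $|f|^2|_{\partial D_k}$; subharmonicity gives $|f|^2\le u_k$ on $D_k$, and the Green-function reproducing formula at the interior point $z_0$ yields
\begin{equation*}
u_k(z_0)=\frac{1}{2\pi}\int_{\partial D_k}|f(z)|^2\frac{\partial G_{D_k}(z,z_0)}{\partial v_z}|dz|.
\end{equation*}
For $k<l$, $u_l$ is harmonic on $D_l\supset\overline{D_k}$ and satisfies $u_l\ge|f|^2=u_k$ on $\partial D_k$, so the maximum principle gives $u_l\ge u_k$ on $D_k$; hence the sequence $(u_k(z_0))_k$ is nondecreasing. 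The same boundary comparison applied to the globally defined harmonic majorant $u_f$ yields $u_k\le u_f$ on $D_k$, so $u_k(z_0)\le u_f(z_0)<+\infty$.

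Set $U(z):=\lim_k u_k(z)$ on $D$. By Harnack's theorem for monotone increasing sequences of harmonic functions, $U$ is harmonic on $D$ and $u_k\to U$ locally uniformly. Clearly $U\ge|f|^2$ on $D$, and the same maximum-principle argument gives $U\le v$ for any harmonic majorant $v$ of $|f|^2$ on $D$, so $U=u_f$ by minimality. Combining this with Lemma \ref{l:0-1}(a) yields
\begin{equation*}
\|f_*\|^2_{\partial D,\rho}=u_f(z_0)=\lim_{k\to+\infty}u_k(z_0)=\lim_{k\to+\infty}\frac{1}{2\pi}\int_{\partial D_k}|f|^2\frac{\partial G_{D_k}(z,z_0)}{\partial v_z}|dz|.
\end{equation*}
The only mildly delicate point is the identification $U=u_f$; once the boundary comparison via the maximum principle and Harnack's monotone convergence theorem are in place, the rest is routine, as one would expect for a fact attributed to \cite{rudin55}.
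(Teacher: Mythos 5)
Your proof is correct: the identification of the boundary integral over $\partial D_k$ with $u_k(z_0)$ via harmonic measure, the monotonicity from the maximum principle, Harnack's monotone convergence, and the identification of the limit with the least harmonic majorant $u_f$ together with Lemma \ref{l:0-1}(a) give exactly the claimed equality. The paper itself offers no proof of this lemma (it is quoted from \cite{rudin55}), and your argument is essentially the standard one from that reference, so there is nothing to compare beyond noting that it fills in the omitted details correctly.
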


We recall a well-known property for the Green function $G_{D}(\cdot,z_j)$ on $D$.

\begin{Lemma}[see \cite{GY-concavity3}]
	\label{l:green-sup2}Let $Z_0':=\{z_j:j\in\mathbb{Z}_{\ge1}\,\&\,j<\gamma \}$ be a discrete subset of $D$, where $\gamma\in\mathbb{Z}_{\geq1}\cup\{+\infty\}$. Let $\psi$ be a negative subharmonic function on $D$ such that $\frac{1}{2}v(dd^c\psi,z_j)\ge p_j>0$ for any $j$, where $p_j$ is a constant. Then $2\sum_{1\le j< \gamma}p_jG_{D}(\cdot,z_j)$ is a subharmonic function on $D$ satisfying that $2\sum_{1\le j<\gamma }p_jG_{D}(\cdot,z_j)\ge\psi$ and $2\sum_{1\le j<\gamma }p_jG_{D}(\cdot,z_j)$ is harmonic on $D\backslash Z_0'$.
\end{Lemma}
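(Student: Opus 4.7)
The plan is to first compare $\psi$ to finite truncated sums of Green functions via the maximum principle, then pass to the (decreasing) limit.

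For any finite index set $F\subset\{j:1\le j<\gamma\}$, set $u_F:=2\sum_{j\in F}p_jG_D(\cdot,z_j)$. Then $u_F$ is subharmonic on $D$, harmonic on $D\setminus\{z_j:j\in F\}$, tends to $0$ at $\partial D$, and near each pole $z_k$ with $k\in F$ admits the expansion $u_F(z)=2p_k\log|z-z_k|+O(1)$. The one-variable Riesz representation of the subharmonic $\psi$ near $z_k$ extracts a point-mass contribution $v(dd^c\psi,z_k)\log|z-z_k|$ plus a locally bounded-above subharmonic remainder; combined with the hypothesis $v(dd^c\psi,z_k)\ge 2p_k$ and the fact that $\log|z-z_k|\le 0$ near $z_k$, this yields $\psi(z)\le 2p_k\log|z-z_k|+O(1)$ in a punctured neighborhood of each $z_k$. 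Hence $\psi-u_F$ is bounded above in a punctured neighborhood of every $z_k\in F$, so extends subharmonically across those poles; by negativity of $\psi$ and $u_F\to 0$ at $\partial D$, the extended function has $\limsup\le 0$ at $\partial D$, and the maximum principle gives $\psi\le u_F$ on $D$.

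Enumerate $Z_0'=\{z_j:1\le j<\gamma\}$ and set $u_N:=u_{\{1,\dots,N\}}$. Since each $p_jG_D(\cdot,z_j)\le 0$, the sequence $\{u_N\}$ is pointwise decreasing; combined with $u_N\ge\psi$ for each $N$, the decreasing limit $u_\infty:=2\sum_{1\le j<\gamma}p_jG_D(\cdot,z_j)$ dominates $\psi$ and in particular is not identically $-\infty$. Since a decreasing limit of subharmonic functions that is not $\equiv -\infty$ is subharmonic, $u_\infty$ is subharmonic on $D$. For harmonicity on $D\setminus Z_0'$: since $Z_0'$ is discrete in $D$, any $z\in D\setminus Z_0'$ has an open disc $B\subset D\setminus Z_0'$ around it; on $B$ every $G_D(\cdot,z_j)$ with $z_j\in Z_0'$ is harmonic (its pole lies outside $B$), so each $u_N|_B$ is harmonic. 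The decreasing sequence $u_N|_B\downarrow u_\infty|_B$ is not identically $-\infty$, so Harnack's theorem for monotone sequences (applied to the increasing locally-bounded-above sequence $-u_N$) yields that $u_\infty|_B$ is harmonic, whence $u_\infty$ is harmonic on all of $D\setminus Z_0'$.

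The only delicate input is the local upper bound $\psi(z)\le 2p_k\log|z-z_k|+O(1)$ extracted from the Lelong-number inequality $v(dd^c\psi,z_k)\ge 2p_k$; this can be handled either via the one-variable Riesz decomposition as above, or by subtracting $\varepsilon\log|z-z_k|$ for small $\varepsilon>0$ and using the bounded-above-on-compacts property of subharmonic functions, then letting $\varepsilon\to 0^+$. Everything else is a routine application of the maximum principle, monotone limits of subharmonic functions, and Harnack's theorem.
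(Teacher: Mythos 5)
Your proof is correct. Note that the paper itself gives no argument for this lemma --- it is recalled from \cite{GY-concavity3} --- so there is no in-text proof to compare against; your argument (the local bound $\psi(z)\le 2p_k\log|z-z_k|+O(1)$ from the Lelong number via the Riesz decomposition, the maximum principle applied to $\psi-u_F$ after removing the bounded-above singularities for each finite partial sum $u_F$, and then the monotone passage to the limit using $u_N\ge\psi$ to rule out $-\infty$ together with Harnack for harmonicity off $Z_0'$) is the standard route and matches what the cited reference does. The only delicate steps --- extracting the logarithmic upper bound from $v(dd^c\psi,z_k)\ge 2p_k$, checking $\limsup\le 0$ at $\partial D$ (which uses that $G_D(\cdot,z_j)\to 0$ on the analytic boundary), and noting $\psi(z_k)=-\infty$ so the inequality also holds at the poles --- are all handled or immediate.
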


We recall the following basic formula.
\begin{Lemma}[see \cite{GY-saitohprodct}]
	\label{l:4}$\frac{\partial \psi}{\partial v_z}=\left(\left(\frac{\partial \psi}{\partial x}\right)^2+\left(\frac{\partial \psi}{\partial y}\right)^2\right)^{\frac{1}{2}}$ on $\partial D$, where $\partial/\partial v_z$ denotes the derivative along the outer normal unit vector $v_z$. 
\end{Lemma}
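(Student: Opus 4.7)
The plan is to exploit the fact, imposed earlier in Section \ref{sec:main}, that $\psi\equiv 0$ on $\partial D$ together with $\psi\in C^1$ in a neighborhood of $\partial D$. Since $\partial D$ consists of analytic Jordan curves, at each point $z\in\partial D$ we have a well-defined unit outer normal $v_z$ and a unit tangent $\tau_z$, and these form an orthonormal basis of $\mathbb{R}^2\cong\mathbb{C}$.

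First, I would observe that, because $\psi$ vanishes identically on $\partial D$, its restriction to the boundary curve is constant, so the tangential derivative satisfies
\begin{equation*}
\frac{\partial\psi}{\partial\tau_z}=0\qquad\text{on }\partial D.
\end{equation*}
Next, I would decompose the Euclidean gradient along the orthonormal basis $(v_z,\tau_z)$:
\begin{equation*}
\nabla\psi(z)=\frac{\partial\psi}{\partial v_z}\,v_z+\frac{\partial\psi}{\partial\tau_z}\,\tau_z=\frac{\partial\psi}{\partial v_z}\,v_z.
\end{equation*}
Taking Euclidean length of both sides and using $|v_z|=1$ gives
\begin{equation*}
\left(\left(\tfrac{\partial\psi}{\partial x}\right)^{2}+\left(\tfrac{\partial\psi}{\partial y}\right)^{2}\right)^{1/2}=|\nabla\psi(z)|=\left|\frac{\partial\psi}{\partial v_z}\right|.
\end{equation*}
Finally, the assumption in Section \ref{sec:main} that $\partial\psi/\partial v_z>0$ on $\partial D$ allows the absolute value to be dropped, yielding the claimed identity.

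There is essentially no obstacle: the only regularity needed is $C^1$ up to $\partial D$, which is part of the standing hypothesis on $\psi$, and the analyticity of $\partial D$ ensures that $v_z$ and $\tau_z$ are well defined and continuous so that the pointwise decomposition above is legitimate.
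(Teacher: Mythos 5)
Your proof is correct, and it is the natural argument: since $\psi\equiv 0$ on $\partial D$ the tangential derivative vanishes, so $\nabla\psi$ is purely normal and $|\nabla\psi|=|\partial\psi/\partial v_z|$, with the sign fixed by the standing positivity assumption (or, alternatively, by the fact that $\psi<0$ in $D$ and $\psi=0$ on $\partial D$, which already forces the outward normal derivative to be nonnegative). The paper itself gives no proof of this lemma — it is quoted from \cite{GY-saitohprodct} — but your argument is evidently the intended one and needs no changes.
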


Let $\psi=\sum_{1\le j\le m}p_jG_{D}(\cdot,z_j)$, where $p_j>0$ and $\{z_j\}\subset D$ satisfying $z_j\not= z_k$ for $j\not=k$. Then there exist a neighborhood $U$ of $\partial D$ and $r_0\in(0,1)$ such that $\{z\in D:\psi(z)\ge\log r_0\}\Subset U$ and $dG_{D}(\cdot,z_j) \not=0$ on $U\cap\overline{D}$ for any $j$. 

The following lemma will be used in the proof of Theorem \ref{main theorem1}. 
\begin{Lemma} \label{l:0-4v2}Let $\varphi$ be a positive Lebesgue measurable function on $U\cap \overline D$ satisfying that  $\lim_{z\rightarrow\tilde z}\varphi(z)=\varphi(\tilde z)$ for any $\tilde z\in\partial D$. Then 
\begin{equation}
	\label{eq:1210e}\int_{\partial D}|f|^2\varphi|dz|=\lim_{r\rightarrow1-0}\int_{\partial D_r}|f|^2\varphi|dz|
\end{equation}
	holds for any $f\in H^2(D)$, where $D_r=\{z\in \overline D:\psi(z)<\log r\}$ for $r\in[r_0,1]$.
\end{Lemma}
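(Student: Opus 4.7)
My plan is to reduce the identity \eqref{eq:1210e} to the dense subset $\mathcal{O}(D)\cap C(\overline D)\subset H^2(D)$ (Lemma~\ref{l:0-2}) and handle that case directly via a change of variables along the level curves of $\psi$. First I would observe that $\frac{\partial\psi}{\partial v_z}=\sum_{j}p_j\frac{\partial G_D(\cdot,z_j)}{\partial v_z}$ is strictly positive on $\partial D$, so by Lemma~\ref{l:4} and compactness of $\partial D$, after possibly enlarging $r_0$, $d\psi\neq 0$ on $\{z\in\overline D:r_0\le e^{\psi(z)}\le 1\}$. The gradient flow of $-\nabla\psi/|\nabla\psi|^2$ then yields a diffeomorphism $\Phi_r:\partial D\to\partial D_r$ for $r\in(r_0,1)$ whose Jacobian $J_r$ tends to $1$ uniformly on $\partial D$ as $r\to 1^-$, giving
$$\int_{\partial D_r}|f|^2\varphi|dz|=\int_{\partial D}|f\circ\Phi_r|^2(\varphi\circ\Phi_r)J_r|dz|.$$
As a byproduct, each $\partial D_r$ is a union of analytic Jordan curves for $r$ close to $1$, since $\psi$ is real-analytic there.

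For $f\in\mathcal O(D)\cap C(\overline D)$, \eqref{eq:1210e} follows from dominated convergence applied to the right side: $f\circ\Phi_r\to f$ uniformly, $\varphi\circ\Phi_r\to\varphi$ pointwise on $\partial D$ by the stated hypothesis, $J_r\to 1$ uniformly, and $\varphi$ is uniformly bounded on a tubular neighborhood of $\partial D$ (by pointwise continuity at each boundary point together with compactness of $\partial D$). To treat a general $f\in H^2(D)$, I would approximate by $f_n\in\mathcal O(D)\cap C(\overline D)$ with $\|f_n-f\|_{\partial D,\rho}\to 0$, write $I_r(g):=\int_{\partial D_r}|g|^2\varphi|dz|$ and $I(g):=\int_{\partial D}|g|^2\varphi|dz|$, and apply the triangle inequality
$$|I_r(f)-I(f)|\le|I_r(f)-I_r(f_n)|+|I_r(f_n)-I(f_n)|+|I(f_n)-I(f)|.$$
The middle term vanishes as $r\to 1$ for each fixed $n$ by the previous step, and the last term is controlled by $\|f_n-f\|_{\partial D,\rho}$ since $\rho$ is bounded below and $\varphi$ is bounded on $\partial D$.

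The main obstacle is the first term, which requires the uniform-in-$r$ bound $\int_{\partial D_r}|g|^2|dz|\le C\|g_*\|^2_{\partial D,\rho}$ for $g\in H^2(D)$. I would derive this from Lemma~\ref{l:0-4} applied to the exhausting sequence $\{D_{r_n}\}$ for $r_n\uparrow 1$: the convergence $\int_{\partial D_{r_n}}|g|^2\frac{\partial G_{D_{r_n}}(z,z_0)}{\partial v_z}|dz|\to\|g_*\|^2_{\partial D,\rho}$ yields a uniform upper bound on these weighted integrals for $r$ near $1$, and by the continuous dependence of $G_{D_r}(\cdot,z_0)$ on $r$ combined with the positivity of $\frac{\partial G_D(\cdot,z_0)}{\partial v_z}$ on $\partial D$, the weight $\frac{\partial G_{D_r}(z,z_0)}{\partial v_z}$ admits a uniform positive lower bound on $\partial D_r$ for such $r$. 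Dividing out this weight, together with the boundedness of $\varphi$ near $\partial D$, produces the required uniform estimate on $I_r$. A Cauchy--Schwarz argument then gives $|I_r(f)-I_r(f_n)|\le C'(\|f\|_{H^2}+\|f_n\|_{H^2})\|f-f_n\|_{\partial D,\rho}$ uniformly in $r$, so sending $n\to\infty$ after $r\to 1$ concludes the proof.
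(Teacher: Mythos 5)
Your overall strategy coincides with the paper's: approximate $f$ by $\mathcal{O}(D)\cap C(\overline D)$ (Lemma~\ref{l:0-2}), handle the continuous case by dominated convergence along the level sets of $\psi$, and control the error terms uniformly in $r$ via Lemma~\ref{l:0-4}. The one place you deviate is also the one place where your argument has a gap: to get the uniform-in-$r$ bound $\int_{\partial D_r}|g|^2|dz|\le C\|g_*\|_{\partial D,\rho}^2$ you apply Lemma~\ref{l:0-4} with the weight $\frac{\partial G_{D_r}(z,z_0)}{\partial v_z}$ for a generic reference point $z_0$, and then assert that this weight has a positive lower bound on $\partial D_r$ uniformly for $r$ near $1$ ``by continuous dependence of $G_{D_r}(\cdot,z_0)$ on $r$.'' That continuity of the normal derivative of the Green function under perturbation of the domain is a nontrivial claim and is not proved (nor available as a lemma in the paper); as written, the crux of your uniform estimate rests on it.

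The paper avoids this entirely by a better choice of weight: since $\psi-\log r=\sum_{j}p_jG_{D_r}(\cdot,z_j)$ on $D_r$ (Lemma~\ref{l:green-sup2}), the Lemma~\ref{l:0-4}-weight on $\partial D_r$ associated to the points $z_1,\dots,z_m$ is \emph{exactly} $\frac{\partial\psi}{\partial v_z}=|\bigtriangledown\psi|$, which is bounded above and below on the collar $\{\psi\ge\log r_0\}$ simply because $d\psi\neq0$ there. This gives \eqref{eq:1210b} and the chain \eqref{eq:1210c} with no domain-dependence of the weight to worry about. If you replace your single $z_0$ by this combination, your proof closes; the remaining differences (Cauchy--Schwarz on $|I_r(f)-I_r(f_n)|$ versus the paper's square-root triangle inequality plus a final Fatou step, and your explicit gradient-flow parametrization $\Phi_r$ versus the paper's implicit one) are cosmetic.
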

\begin{proof}
	Following from Lemma \ref{l:green-sup2}, we know that $\psi-\log r=\sum_{1\le j\le m}p_jG_{D_r}(\cdot,z_j)$ on $D_r$. Thus, using Lemma \ref{l:0-4}, we get that 
	\begin{equation}
		\label{eq:1210b}
		\begin{split}
					\lim_{r\rightarrow1-0}\int_{\partial D_r}|\tilde f|^2\frac{\partial\psi}{\partial v_z}|dz|&=\sum_{1\le j\le m}p_j\lim_{r\rightarrow1-0}\int_{\partial D_r}|\tilde f|^2\frac{\partial G_{D_r}(z,z_j)}{\partial v_z}|dz|\\
					&=\sum_{1\le j\le m}p_j\int_{\partial D}|\tilde f|^2\frac{\partial G_{D}(z,z_j)}{\partial v_z}|dz|\\
					&=\int_{\partial D}|\tilde f|^2\frac{\partial \psi}{\partial v_z}|dz|
		\end{split}
			\end{equation}
	holds for any $\tilde f\in H^2(D)$. 
	As  $\lim_{z\rightarrow\tilde z}\varphi(z)=\varphi(\tilde z)$ for any $\tilde z\in\partial D_1$ and $dG_{D}(\cdot,z_j) \not=0$ on $U\cap\overline{D}$ for any $j$ , there exists a positive number $L_1$ such that 
	\begin{displaymath}
		\frac{1}{L_1}<\inf_{\{z\in\overline D:\psi(z)\ge\log r_0\}}\min\{|\bigtriangledown\psi|,\varphi\}\le\sup_{\{z\in\overline D:\psi(z)\ge\log r_0\}}\max\{|\bigtriangledown\psi|,\varphi\}<L_1,
	\end{displaymath}
	where $|\bigtriangledown\cdot|^2=\left(\frac{\partial \cdot}{\partial x}\right)^2+\left(\frac{\partial \cdot}{\partial y}\right)^2$.

		By Lemma \ref{l:0-2}, there exists $\{f_n\}_{n\in\mathbb{Z}_{>0}}\subset\mathcal{O}(D)\cap C(\overline D)$ such that 
	\begin{equation}
		\label{eq:1210a}\lim_{n\rightarrow+\infty}\int_{\partial D}|f_n-f|^2\varphi|dz|=0.
	\end{equation}
	 It follows from equality \eqref{eq:1210b} and Lemma \ref{l:4} that 
\begin{equation}
\label{eq:1210c}
\begin{split}
	\limsup_{r\rightarrow1-0}\int_{\partial D_r}|f_n-f|^2\varphi|dz|&\le L_1^2\limsup_{r\rightarrow1-0}\int_{\partial D_r}|f_n-f|^2\frac{\partial\psi}{\partial v_z}|dz|\\
	&\le L_1^2\int_{\partial D}|f_n-f|^2\frac{\partial\psi}{\partial v_z}|dz|\\
	&\le L_1^4\int_{\partial D}|f_n-f|^2\varphi |dz|.
\end{split}
\end{equation}
Using the dominated convergence theorem, we know that 
\begin{equation}
	\label{eq:1210d}
	\lim_{r\rightarrow1-0}\int_{\partial D_r}|f_n|^2\varphi |dz|=\int_{\partial D}|f_n|^2\varphi |dz|
\end{equation}	
	holds for any $n\in\mathbb{Z}_{>0}$. 
Following from equality \eqref{eq:1210a}, inequality \eqref{eq:1210c} and equality \eqref{eq:1210d}, we have
\begin{displaymath}
	\begin{split}
		&\limsup_{r\rightarrow1-0}\left(\int_{\partial D_r}|f|^2\varphi |dz|\right)^{\frac{1}{2}}\\\le&\liminf_{n\rightarrow+\infty}\left(\limsup_{r\rightarrow1-0}\left(\int_{\partial D_r}|f_n|^2\varphi |dz|\right)^{\frac{1}{2}}+\limsup_{r\rightarrow1-0}\left(\int_{\partial D_r}|f_n-f|^2\varphi |dz|\right)^{\frac{1}{2}}\right)\\
		\le &\liminf_{n\rightarrow+\infty}\left(\left(\int_{\partial D}|f_n|^2\varphi |dz|\right)^{\frac{1}{2}}+L_1^2\left(\int_{\partial D}|f_n-f|^2\varphi |dz|\right)^{\frac{1}{2}}\right)\\
		= &\left(\int_{\partial D}|f|^2\varphi |dz|\right)^{\frac{1}{2}}.
	\end{split}
\end{displaymath}
By Fatou's Lemma, we have 
$$\liminf_{r\rightarrow1-0}\left(\int_{\partial D_r}|f|^2\varphi |dz|\right)^{\frac{1}{2}}\ge\left(\int_{\partial D}|f|^2\varphi |dz|\right)^{\frac{1}{2}}.$$
Thus, equality \eqref{eq:1210e} holds.
\end{proof}

Let $Z_0:=\{z_j:1\le j\le m\}$ be a subset of $D$.
Let $\rho$ be a positive continuous function on $\partial D$.
 Let $\mathfrak{a}=(a_{j,l})$ $(1\le j\le m,0\le l\leq k_j )$, where $a_{j,l}\in\mathbb{C}$ such that $\sum_{1\le j\le m}\sum_{0\le l\le k_j}|a_{j,l}|\not=0$.
Denote that 
\begin{displaymath}
	\begin{split}
		M(Z_0,\mathfrak{a},\tilde\rho):=\inf\bigg\{&\int_{ D}|f|^2\tilde\rho:f\in \mathcal{O}(D) 
		\\ &\text{ s.t. } f^{(l)}(z_j)=l!a_{j,l}\text{ for any $0\le l\le k_j$ and any $1\le j\le m$}\bigg\}.	\end{split}
\end{displaymath}
and 
\begin{displaymath}
	\begin{split}
		M_H(Z_0,\mathfrak{a},\rho):=\inf\bigg\{\frac{1}{2\pi}&\int_{\partial D}|f|^2\rho|dz|:f\in H^2(D) 
		\\ &\text{ s.t. } f^{(l)}(z_j)=l!a_{j,l}\text{ for any $0\le l\le k_j$ and any $1\le j\le m$}\bigg\}.	\end{split}
\end{displaymath}

\begin{Lemma}
	\label{l:exists of $M_H$}If $M_H(Z_0,\mathfrak{a},\rho)<+\infty$, then there exists a unique $f\in H^2(D)$ such that $M_H(Z_0,\mathfrak{a},\rho)=\frac{1}{2\pi}\int_{\partial D}|f|^2\rho|dz|$, and $f^{(l)}(z_j)=l!a_{j,l}$ for any $0\le l\le k_j$ and any $1\le j\le m$. 
\end{Lemma}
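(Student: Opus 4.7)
The plan is to recognize the minimization problem as a norm projection onto a closed affine subspace of the Hilbert space $H^2(D)$, and then invoke the standard projection theorem.

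First I would verify that for each pair $(j,l)$ with $1\le j\le m$ and $0\le l\le k_j$, the evaluation map $T_{j,l}:H^2(D)\to\mathbb{C}$ defined by $T_{j,l}(f):=f^{(l)}(z_j)$ is a bounded linear functional in the $\ll\cdot,\cdot\gg_{\partial D,\rho}$ inner product. Picking a small compact neighborhood $V\subset D$ of $Z_0$, Lemma \ref{l:bounded} gives a constant $C>0$ with $|f^{(l)}(z_j)|^2\le C\int_{\partial D}|f_*|^2|dz|$. Since $\rho$ is positive continuous on the compact set $\partial D$, it has a positive lower bound, so $\int_{\partial D}|f_*|^2|dz|\le(\inf_{\partial D}\rho)^{-1}\int_{\partial D}|f_*|^2\rho|dz|$. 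Combined with Lemma \ref{l:0-2} (which identifies $H^2(D)$ as a Hilbert space under $\ll\cdot,\cdot\gg_{\partial D,\rho}$), this shows each $T_{j,l}$ is continuous.

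Next I would set
\[
\mathcal{A}:=\bigl\{f\in H^2(D):T_{j,l}(f)=l!a_{j,l}\text{ for all }0\le l\le k_j,\,1\le j\le m\bigr\}.
\]
By continuity of the finitely many functionals $T_{j,l}$, $\mathcal{A}$ is a closed affine subspace of $H^2(D)$. The hypothesis $M_H(Z_0,\mathfrak{a},\rho)<+\infty$ guarantees $\mathcal{A}\ne\emptyset$, so $\mathcal{A}$ is a nonempty closed convex subset of the Hilbert space $H^2(D)$. Applying the classical projection theorem to the origin, there exists a unique $f\in\mathcal{A}$ of minimal norm, i.e. $\frac{1}{2\pi}\int_{\partial D}|f|^2\rho|dz|=\inf_{g\in\mathcal{A}}\frac{1}{2\pi}\int_{\partial D}|g|^2\rho|dz|=M_H(Z_0,\mathfrak{a},\rho)$.

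No step is really an obstacle; the only mild point is recording the standard fact that continuity of the pointwise derivative evaluations in the Hardy norm follows from Lemma \ref{l:bounded} together with the lower bound on $\rho$. For uniqueness one may alternatively argue directly via the parallelogram identity: if $f_1,f_2\in\mathcal{A}$ both achieve $M_H$, then $\tfrac12(f_1+f_2)\in\mathcal{A}$ and
\[
\bigl\|\tfrac12(f_1+f_2)\bigr\|_{\partial D,\rho}^2+\bigl\|\tfrac12(f_1-f_2)\bigr\|_{\partial D,\rho}^2=\tfrac12\|f_1\|_{\partial D,\rho}^2+\tfrac12\|f_2\|_{\partial D,\rho}^2=M_H(Z_0,\mathfrak{a},\rho),
\]
forcing $f_1=f_2$ in the Hilbert space and hence as holomorphic functions via Lemma \ref{l:0-1}.
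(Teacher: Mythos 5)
Your proof is correct, and it packages the argument differently from the paper. The paper runs the direct method by hand: it takes a minimizing sequence, extracts a subsequence that converges weakly in $L^2(\partial D,\rho)$ and (by Montel) locally uniformly on $D$, and then invokes parts $(b)$ and $(c)$ of Lemma \ref{l:0-1} to certify that the weak limit lies in $\gamma(H^2(D))$ and coincides with the locally uniform limit, so that the interpolation conditions pass to the limit; uniqueness is then the same parallelogram argument you give. You instead observe that the constraint set is a nonempty closed affine subset of the Hilbert space $(H^2(D),\ll\cdot,\cdot\gg_{\partial D,\rho})$ and quote the projection theorem, which delivers existence and uniqueness in one stroke. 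The ingredient you need that the paper's route does not is the continuity of the derivative-evaluation functionals, which you correctly extract from Lemma \ref{l:bounded} together with $\inf_{\partial D}\rho>0$; conversely you avoid having to re-identify a weak $L^2(\partial D,\rho)$ limit as a Hardy function. The only point worth recording explicitly is that Lemma \ref{l:0-2} states completeness for the particular weight $\frac{\partial G_D(z,z_0)}{\partial v_z}$, so you should note that any two positive continuous weights on the compact set $\partial D$ give equivalent norms, whence $(H^2(D),\|\cdot\|_{\partial D,\rho})$ is again complete; with that remark inserted, the argument is complete.
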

\begin{proof}
Firstly, we prove the existence of $f$.
As $M_H(Z_0,\mathfrak{a},\rho)<+\infty,$
then there is $\{f_{s}\}_{s\in\mathbb{Z}_{>0}}\subset H^2(D)$  such that
$$\lim_{s\rightarrow+\infty}\frac{1}{2\pi}\int_{\partial D}|f_s|^2\rho|dz|=M_H(Z_0,\mathfrak{a},\rho),$$ 
and $f_s^{(l)}(z_j)=l!a_{j,l}$ for any $0\le l\le k_j$ and any $1\le j\le m$. 
Thus, there exists a subsequence of $\{f_{s}\}_{s\in\mathbb{Z}_{>0}}$ (denoted also by $\{f_{s}\}_{s\in\mathbb{Z}_{>0}}$), which satisfies that $\{f_{s}\}_{s\in\mathbb{Z}_{>0}}$ weakly converges to a function $g\in L^2(\partial D,\rho)$ in the Hilbert space $L^2(\partial D,\rho)$ and $\{f_{s}\}_{s\in\mathbb{Z}_{>0}}$ uniformly converges to a function $f\in \mathcal{O}(D)$ on any compact subset of $D$. 
Then we have 
\begin{equation}
	\label{eq:1229a}\frac{1}{2\pi}\int_{\partial D}|g|^2\rho|dz|\le\lim_{s\rightarrow+\infty}\frac{1}{2\pi}\int_{\partial D}|f_s|^2\rho|dz|=M_H(Z_0,\mathfrak{a},\rho).
\end{equation}
 By Lemma \ref{l:0-1}, we have
\begin{displaymath}
	\begin{split}
		\int_{\partial D}g(z)\phi(z)dz&=\int_{\partial D}g(z)\left(\phi(z)\frac{dz}{|dz|\rho(z)}\right)\rho(z)|dz|\\
		&=\lim_{s\rightarrow+\infty}\int_{\partial D}f_s(z)\left(\phi(z)\frac{dz}{|dz|\rho(z)}\right)\rho(z)|dz|\\
		&=\lim_{s\rightarrow+\infty}\int_{\partial D}f_s(z)\phi(z)dz\\
		&=0
	\end{split}
\end{displaymath} 
for any holomorphic function $\phi$ on a neighborhood of $\overline D$, and 
\begin{displaymath}
	\begin{split}
		f(w)&=\lim_{s\rightarrow+\infty}f_s(w)\\
		&=\lim_{s\rightarrow+\infty}\frac{1}{2\pi\sqrt{-1}}\int_{\partial D}\frac{f_s(z)}{z-w}dz\\
		&=\lim_{s\rightarrow+\infty}\int_{\partial D}f_s(z)\left(\frac{dz}{|dz|(z-w)\rho(z)}\right)\rho(z)|dz|\\
		&=\frac{1}{2\pi\sqrt{-1}}\int_{\partial D}\frac{g(z)}{z-w}dz.
	\end{split}
\end{displaymath}
Thus, it follows from Lemma \ref{l:0-1} that $f\in H^2(D)$ and $\gamma(f)=g$. By inequality \eqref{eq:1229a} and the definition of $M_H(Z_0,\mathfrak{a},\rho)$, we get 
$$\frac{1}{2\pi}\int_{\partial D}|f|^2\rho|dz|=M_H(Z_0,\mathfrak{a},\rho).$$
Thus,  we obtain the existence of $f$.

Secondly, we prove the uniqueness of $f$ by contradiction:
if not, there exist two different $g_{1}\in H^2(D)$ and $g_{2}\in H^2(D)$ satisfying that $\frac{1}{2\pi}\int_{\partial D}|g_s|^2\rho|dz|=M_H(Z_0,\mathfrak{a},\rho)$, and $g_s^{(l)}(z_j)=l!a_{j,l}$ for any $0\le l\le k_j$ and any $1\le j\le m$, where $s=1,2$. Note that
\begin{equation}\nonumber
\begin{split}
\frac{1}{2\pi}\int_{\partial D}|\frac{g_1+g_2}{2}|^2\rho|dz|+\frac{1}{2\pi}\int_{\partial D}|\frac{g_1-g_2}{2}|^2\rho|dz|=
M_H(Z_0,\mathfrak{a},\rho),
\end{split}
\end{equation}
hence we obtain that
$$\frac{1}{2\pi}\int_{\partial D}|\frac{g_1+g_2}{2}|^2\rho|dz|<M_H(Z_0,\mathfrak{a},\rho),$$
 which contradicts the definition of $M_H(Z_0,\mathfrak{a},\rho)$.

Thus, Lemma \ref{l:exists of $M_H$} has been proved.
\end{proof}

In the following, let $\psi$ be as in Theorem \ref{main theorem1}. 
The following lemma will be used in the proof of Lemma \ref{l:01a}.

\begin{Lemma}[\cite{GY-weightedsaitoh}]
	\label{l:2}
	Let $f$ be a holomorphic function on $D$.   Assume that 
	\begin{equation}
		\nonumber \liminf_{r\rightarrow1-0}\frac{\int_{\{z\in D:\psi(z)\ge\log r\}}|f(z)|^2}{1-r}<+\infty,
	\end{equation}
	then we have $f\in H^{2}(D)$.
	\end{Lemma}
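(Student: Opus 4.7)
The strategy is to produce a harmonic majorant for $|f|^{2}$ on $D$, which, by the definition of $H^{2}(D)$, is equivalent to $f\in H^{2}(D)$. The bridge between the hypothesized area integral bound near $\partial D$ and the sought harmonic majorant is the coarea formula applied to $\psi$, combined with the explicit decomposition $\psi=\sum_{j=1}^{m}p_{j}G_{D}(\cdot,z_{j})$.

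Write $\phi(r):=\int_{\{\psi\ge\log r\}}|f|^{2}$ and $D_{r}:=\{z\in D:\psi(z)<\log r\}$. Near $\partial D$, $\psi$ is harmonic and real-analytic, $|\nabla\psi|$ is bounded above and below, and (by Lemma~\ref{l:4}) $|\nabla\psi|=\partial\psi/\partial v_{z}$ on each level set $\{\psi=t\}$. The coarea formula gives
$$\phi(r)=\int_{\log r}^{0}g(t)\,dt,\qquad g(t):=\int_{\{\psi=t\}}\frac{|f|^{2}}{|\nabla\psi|}|dz|.$$
Since $|\log r|/(1-r)\to 1$ as $r\to1^{-}$, a short contradiction argument (if $g(t)\to +\infty$ as $t\to 0^{-}$, then $\phi(r)\ge K|\log r|$ for $r$ near $1$ and any preassigned $K$, forcing $\liminf_{r\to 1^-}\phi(r)/(1-r)=+\infty$) converts the hypothesis into $\liminf_{t\to 0^{-}}g(t)<+\infty$. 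Extracting a monotone sequence $r_{n}\uparrow 1$ with $g(t_{n})\le C_{1}$ for $t_{n}=\log r_{n}$, and using the two-sided bound on $|\nabla\psi|$ on $\partial D_{r_{n}}$, yields $\int_{\partial D_{r_{n}}}|f|^{2}(\partial\psi/\partial v_{z})\,|dz|\le C_{2}$.

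By Lemma~\ref{l:green-sup2}, $\psi-\log r_{n}=\sum_{j}p_{j}G_{D_{r_{n}}}(\cdot,z_{j})$ on $D_{r_{n}}$; differentiating along the outer normal on $\partial D_{r_{n}}$ gives $\partial\psi/\partial v_{z}=\sum_{j}p_{j}\,\partial G_{D_{r_{n}}}(\cdot,z_{j})/\partial v_{z}$. So, letting $h_{n}$ denote the Poisson integral of the continuous function $|f|^{2}|_{\partial D_{r_{n}}}$ on $D_{r_{n}}$,
$$C_{2}\;\ge\;\int_{\partial D_{r_{n}}}|f|^{2}\frac{\partial\psi}{\partial v_{z}}|dz|\;=\;2\pi\sum_{j=1}^{m}p_{j}h_{n}(z_{j}),$$
and hence $h_{n}(z_{j})\le C_{2}/(2\pi p_{j})$ for every $j$. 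A chain-of-balls Harnack argument, applied to the positive harmonic function $h_{n}$ on $D_{r_{n}}$ and using that a small $\delta$-neighborhood of the finite set $\{z_{0},z_{1},\dots,z_{m}\}$ lies inside every $D_{r_{n}}$ for $n$ large, upgrades this to $h_{n}(z_{0})\le C_{3}$ uniformly in $n$.

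Finally, subharmonicity of $|f|^{2}$ and the maximum principle give $|f|^{2}\le h_{n}$ on $D_{r_{n}}$ and make $\{h_{n}\}$ an increasing sequence on the exhaustion $\{D_{r_{n}}\}$ of $D$; since $h_{n}(z_{0})$ is bounded, $h_{n}\uparrow h$ for a harmonic function $h$ on $D$ with $h\ge|f|^{2}$, producing the desired harmonic majorant. The principal technical step is the uniform Harnack estimate: the domains $D_{r_{n}}$ vary with $n$, but the Harnack chain can be kept in a fixed compact subset of $D$ contained in every $D_{r_{n}}$ for large $n$, so the Harnack constant stays bounded.
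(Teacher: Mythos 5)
The paper does not actually prove this lemma here — it is quoted from \cite{GY-weightedsaitoh} — so there is no internal proof to measure you against. Taken on its own terms, the core of your argument is sound and is the natural route: the coarea formula extracts a sequence of level curves $\{\psi=t_n\}$, $t_n\to 0^-$, on which $\int_{\{\psi=t_n\}}|f|^2\frac{\partial\psi}{\partial v_z}|dz|$ is bounded; the identity $\psi-\log r_n=\sum_jp_jG_{D_{r_n}}(\cdot,z_j)$ and the Poisson representation turn this into a uniform bound on the harmonic extensions $h_n$ of $|f|^2|_{\partial D_{r_n}}$ at the points $z_j$; a Harnack chain kept inside a fixed compact subset of $D$ makes the bound uniform at a base point; and the monotone limit of the $h_n$ is the desired harmonic majorant. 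Each of these steps checks out.

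There is, however, a genuine gap at the outset: you assume ``the explicit decomposition $\psi=\sum_{j=1}^m p_jG_D(\cdot,z_j)$''. In the context of this lemma, $\psi$ is the function of Theorem \ref{main theorem1}: a general negative subharmonic function vanishing on $\partial D$, of class $C^1$ near $\partial D$ with positive outer normal derivative, and with positive Lelong numbers at the $z_j$. It is \emph{not} assumed to be a sum of Green functions — that identity is one of the equality conditions characterized in Theorem \ref{main theorem1}, so it cannot be taken as a hypothesis — and the lemma is later applied (in Lemma \ref{l:01a}) with this general $\psi$. Your argument therefore proves only a special case; moreover, for a general $\psi$ the set $\{\psi\ge\log r\}$ need not lie in the collar where $\psi$ is $C^1$, so the coarea formula cannot even be applied to $\psi$ directly. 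The missing step is a sublevel-set comparison: put $\tilde\psi:=\sum_j v(dd^c\psi,z_j)\,G_D(\cdot,z_j)$, note that $-\psi\le C\,d(\cdot,\partial D)$ in a collar (from the $C^1$ regularity and $\psi|_{\partial D}=0$) while $-\tilde\psi\ge c\,d(\cdot,\partial D)$ there (Hopf lemma for the Green functions), so that $\{\tilde\psi\ge a\log r\}\subset\{\psi\ge\log r\}$ for $a=c/C$ and $r$ close to $1$. Since $1-r^a$ is comparable to $a(1-r)$, this transfers the liminf hypothesis from $\psi$ to $\tilde\psi$, and your proof then applies verbatim with $\tilde\psi$ in place of $\psi$. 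This is exactly the comparison of sublevel sets invoked, with a citation to the same source, in the proof of Proposition \ref{p:b6}; without it the lemma as stated is not established.
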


We recall the following coarea formula.
\begin{Lemma}[see \cite{federer}]
	\label{l:3}Suppose that $\Omega$ is an open set in $\mathbb{R}^n$ and $u\in C^1(\Omega)$. Then for any $g\in L^1(\Omega)$, 
	$$\int_{\Omega}g(x)|\bigtriangledown u(x)|dx=\int_{\mathbb{R}}\left(\int_{u^{-1}(t)}g(x)dH_{n-1}(x)\right)dt,$$
	where $H_{n-1}$ is the $(n-1)$-dimensional Hausdorff measure.
\end{Lemma}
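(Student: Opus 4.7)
The plan is to prove the coarea formula by reducing to a local computation on the noncritical set and applying Sard's theorem to dispose of the critical set. First I would reduce to the case where $g$ is a characteristic function of a Borel set: by linearity the identity passes to simple functions, and by the monotone convergence theorem it then extends to all nonnegative $g\in L^1(\Omega)$, after which one splits a general real-valued $g$ into positive and negative parts. Next, using a countable partition of unity subordinate to a locally finite cover of $\Omega$ by coordinate balls, it suffices to establish the formula inside each such ball.

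Decompose $\Omega$ into the critical set $C:=\{x\in\Omega:\nabla u(x)=0\}$ and its complement $\Omega\setminus C$. On $C$, the integrand $|\nabla u(x)|$ on the left-hand side is identically zero, while by Sard's theorem $u(C)$ has Lebesgue measure zero in $\mathbb{R}$, so for almost every $t\in\mathbb{R}$ we have $u^{-1}(t)\cap C=\emptyset$; consequently both sides of the identity contribute nothing from $C$. Hence it remains to verify the formula on $\Omega\setminus C$.

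On $\Omega\setminus C$, fix $x_0$ with $\nabla u(x_0)\neq 0$. By the implicit function theorem, after permuting coordinates one may assume $\partial u/\partial x_n\neq 0$ near $x_0$, and the map $\Phi(x):=(x_1,\ldots,x_{n-1},u(x))$ is a $C^1$ diffeomorphism of a neighborhood $U\ni x_0$ onto its image, with Jacobian determinant $|\det D\Phi|=|\partial u/\partial x_n|$. In these straightened coordinates the level set $u^{-1}(t)\cap U$ becomes a horizontal slice, and the area factor of the graph parametrization produces exactly $|\nabla u|/|\partial u/\partial x_n|$, so that the change-of-variables formula together with Fubini's theorem yield
\begin{equation*}
\int_U g(x)|\nabla u(x)|\,dx=\int_{\mathbb{R}}\left(\int_{u^{-1}(t)\cap U}g(x)\,dH_{n-1}(x)\right)dt.
\end{equation*}
Summing over the partition of unity then gives the global identity.

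The main obstacle I would expect is the measurability of the inner integral $t\mapsto\int_{u^{-1}(t)}g\,dH_{n-1}$ as a function of $t$ (so that the outer integral makes sense). This is handled by the local straightening argument above, where in the coordinates provided by $\Phi$ the inner integral becomes a section integral of a measurable function and Fubini's theorem supplies the measurability automatically; globally, the countable partition of unity preserves this property. The rest of the argument is routine bookkeeping once the regular versus critical dichotomy and the local diffeomorphism $\Phi$ are in place.
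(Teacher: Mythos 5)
The paper does not actually prove this lemma; it is quoted from Federer, so there is no internal proof to compare against. Judged on its own terms, your outline follows the standard route (reduction to characteristic functions, localization, straightening by the implicit function theorem on the noncritical set, change of variables plus Fubini), and that part is sound, including the Jacobian bookkeeping $|\det D\Phi|=|\partial u/\partial x_n|$ and the area factor $|\nabla u|/|\partial u/\partial x_n|$. But there is one genuine gap: the treatment of the critical set $C=\{x:\nabla u(x)=0\}$ via Sard's theorem. The Morse--Sard theorem for a map $u\colon\Omega\subset\mathbb{R}^n\to\mathbb{R}$ requires $u\in C^{n}$; for $u$ merely $C^1$ and $n\ge 2$ it is false --- Whitney's classical example is a $C^1$ function on $\mathbb{R}^2$ that is nonconstant on a connected arc of critical points, so its set of critical values contains an interval. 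Hence you cannot conclude that $u(C)$ is Lebesgue-null, nor that $u^{-1}(t)\cap C=\emptyset$ for a.e.\ $t$.

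What is actually needed --- and what is true --- is the weaker statement that $\int_{\mathbb{R}}\int_{u^{-1}(t)\cap C}|g|\,dH_{n-1}\,dt=0$, matching the vanishing of $\int_C g|\nabla u|\,dx$. The standard way to obtain this is the coarea (Eilenberg) inequality: for a Lipschitz function $f$ and a measurable set $A$ one has $\int_{\mathbb{R}}^{*}H_{n-1}\left(A\cap f^{-1}(t)\right)dt\le c_n\,(\mathrm{Lip}\,f)\,H_n(A)$. Since $u\in C^1$ and $\nabla u=0$ on $C$, for every $\varepsilon>0$ each point of $C$ has a neighborhood on which $u$ is $\varepsilon$-Lipschitz; a Vitali-type covering argument then bounds $\int_{\mathbb{R}}^{*}H_{n-1}\left(C\cap K\cap u^{-1}(t)\right)dt$ by $c_n\,\varepsilon\,H_n(C\cap K)$ for bounded pieces $K$, and letting $\varepsilon\to 0$ kills the critical-set contribution (this also supplies the measurability of the inner integral there, which your Fubini argument only covers on the noncritical part). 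With this substitution for the Sard step, the rest of your argument goes through.
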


Let $\tilde\rho$ be a Lebesgue measurable function on $\overline D$, which satisfies that $\inf_{\overline D}\tilde\rho>0$ and $\tilde\rho(z)\leq\liminf_{w\rightarrow z}\tilde\rho(w)$ for any $z\in\partial D$. Denote that 
$$\rho=\left(\frac{\partial \psi}{\partial v_z}\right)^{-1}\tilde\rho$$
on $\partial D$. In the following, we give a sufficient condition for $f\in H^{2}(D)$ .

\begin{Lemma}
	\label{l:01a}
	Let $f$ be a holomorphic function on $D$.   Assume that 
	\begin{equation}
		\nonumber \liminf_{r\rightarrow1-0}\frac{\int_{\{z\in D:\psi(z)\ge\log r\}}|f(z)|^2\tilde\rho}{1-r}<+\infty,
	\end{equation}
	then we have $f\in H^{2}(D)$ and 
	\begin{equation}
		\label{eq:221201a}\int_{\partial D}|f|^2\rho|dz| \le \liminf_{r\rightarrow1-0}\frac{\int_{\{z\in D:\psi(z)\ge\log r\}}|f(z)|^2\tilde\rho}{1-r}.	\end{equation}
\end{Lemma}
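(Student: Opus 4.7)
The plan is to apply the coarea formula to rewrite the volume integral on $\{\psi\ge\log r\}$ as an average of boundary integrals over the level curves $\partial D_s := \{\psi = \log s\}$, and then pass to the limit $s\to 1^-$ using Fatou's lemma together with the a.e.~non-tangential boundary values of $H^2(D)$ functions. First, since $\inf_{\overline D}\tilde\rho>0$, the hypothesis immediately gives $\liminf_{r\to 1^-}\frac{\int_{\{\psi\ge\log r\}}|f|^2}{1-r}<+\infty$, so Lemma \ref{l:2} yields $f\in H^2(D)$ right away.

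Next, I would apply the coarea formula (Lemma \ref{l:3}) with $u=\psi$ and $g=|f|^2\tilde\rho/|\nabla\psi|$ on the collar $\{z\in D : \psi(z)\ge\log r_0\}\subset U\cap D$, on which $\psi\in C^1$ and $|\nabla\psi|>0$. Identifying $|\nabla\psi|=\partial\psi/\partial v_z$ on $\partial D$ via Lemma \ref{l:4} and making the substitution $s=e^t$, this rewrites the hypothesis quotient as
$$\frac{1}{1-r}\int_{\{\psi\ge\log r\}}|f|^2\tilde\rho \;=\; \frac{1}{1-r}\int_r^1\frac{I(s)}{s}\,ds, \qquad I(s) := \int_{\partial D_s}|f|^2\frac{\tilde\rho}{|\nabla\psi|}\,|dz|,$$
where $I(1)$ is precisely $\int_{\partial D}|f|^2\rho\,|dz|$, the left-hand side of \eqref{eq:221201a}.

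The central step is to prove $\liminf_{s\to 1^-}I(s)\ge I(1)$. For this I would parameterize $\partial D_s$ by $\partial D$ via the inward-normal map: for $\tilde z\in\partial D$ and $s$ sufficiently close to $1$, let $z_s(\tilde z)$ be the unique point on the inward normal ray at $\tilde z$ with $\psi(z_s(\tilde z))=\log s$, well-defined by the implicit function theorem since $\partial\psi/\partial v_z>0$ on $\partial D$. Then $z_s(\tilde z)\to\tilde z$ non-tangentially as $s\to 1^-$ and the arc-length Jacobian $J_s(\tilde z)\to 1$ uniformly in $\tilde z$. Since $f\in H^2(D)$, one has $|f(z_s(\tilde z))|^2\to|f(\tilde z)|^2$ for a.e.~$\tilde z$; together with continuity of $|\nabla\psi|$ at $\partial D$ and the assumed lower semicontinuity $\tilde\rho(\tilde z)\le\liminf_{w\to\tilde z}\tilde\rho(w)$, Fatou's lemma applied to the pulled-back integrand on $\partial D$ produces $\liminf_{s\to 1^-}I(s)\ge I(1)$.

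Using $1/s\ge 1$ on $(r,1)$ and a standard averaging estimate, $\liminf_{r\to 1^-}\frac{1}{1-r}\int_r^1\frac{I(s)}{s}\,ds\ge\liminf_{s\to 1^-}I(s)\ge I(1)$, which combined with the coarea identity gives \eqref{eq:221201a}. The main obstacle is the careful implementation of the normal-flow parameterization and the need to combine the a.e.~non-tangential convergence of $f$ with the merely lower-semicontinuous behavior of $\tilde\rho$ inside a single application of Fatou's lemma; everything else reduces to routine manipulations of the coarea identity and Cesaro-type averaging.
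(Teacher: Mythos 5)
Your proposal is correct and follows essentially the same route as the paper's proof: membership in $H^{2}(D)$ via Lemma \ref{l:2} using $\inf_{\overline D}\tilde\rho>0$, then the coarea formula (Lemma \ref{l:3}) together with Lemma \ref{l:4} and Fatou's lemma to compare the boundary integral with the averaged level-set integrals. The only difference is presentational: you spell out the normal-flow parameterization and the Ces\`aro averaging step that the paper compresses into a single invocation of ``Fatou's Lemma and Lemma \ref{l:3}''.
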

\begin{proof}
Note that $\inf_{\overline D}\tilde\rho>0$, then Lemma \ref{l:2} tells us that $f\in H^2(D)$. Thus, it suffices to prove
 inequality \eqref{eq:221201a}.

Note that $f$ has Fatou's nontangential boundary value on $\partial D$.
By Lemma \ref{l:4}, we have 
$$\int_{\partial D}|f|^2\rho|dz|
		=\int_{\partial D}|f|^2\left(\frac{\partial \psi}{\partial v_z}\right)^{-1}\tilde\rho|dz|
		=\int_{\partial D}|f|^2\tilde\rho\left|\bigtriangledown\psi\right|^{-1}|dz|.$$
As $\frac{\partial\psi}{\partial v_z}>0$ on $\partial D$, $\psi=0$ on $\partial D$ and $\tilde\rho(z)\leq\liminf_{w\rightarrow z}\tilde\rho(w)$ for any $z\in\partial D$,  it follows from Fatou's Lemma and Lemma \ref{l:3} that 
\begin{equation}
	\nonumber \begin{split}
		&\int_{\partial D}|f|^2\tilde\rho\left|\bigtriangledown\psi\right|^{-1}|dz|\\
		\le&\liminf_{r\rightarrow1-0}\frac{\int_{\log r}^0\left(\int_{\{z\in D:\psi(z)=t\}}|f|^2\rho\left|\bigtriangledown\psi\right|^{-1}|dz|\right)dt}{-\log r}\\
		=&\liminf_{r\rightarrow1-0}\frac{\int_{\{z\in D:\psi(z)\ge\log r\}}|f|^2\tilde\rho}{1-r}\times\frac{1-r}{-\log r}\\
		=&\liminf_{r\rightarrow1-0}\frac{\int_{\{z\in D:\psi(z)\ge\log r\}}|f|^2\tilde\rho}{1-r}.
	\end{split}
\end{equation}
Thus, inequality \eqref{eq:221201a} holds.
\end{proof}

\subsection{The Hardy space over $\partial M$}
\label{sec:hardy}

Let $D_j$ be a planar regular region with finite boundary components which are analytic Jordan curves   for any $1\le j\le n$. Let $$M=\prod_{1\le j\le n}D_j$$ be a bounded domain in $\mathbb{C}^n$. In this section, we recall and give some properties on the Hardy space over $\partial M$, which will be used in the proofs of the main theorems.

\subsubsection{Some results on $H^2_{\rho}(M,\partial D_j\times M_j)$}
\

 Let  $M_j=\prod_{1\le l\le n,l\not=j}D_l$, then $M=D_j\times M_j$. Let $z_j\in D_j$ for any $1\le j\le n$. Recall that $H^2(D_j)$ denotes  the Hardy space on $D_j$ and there exists a norm-preserving linear map  $\gamma_j:H^2(D_j)\rightarrow L^2(\partial D_j,\frac{\partial G_{D_j}(z,z_j)}{\partial v_z})$ (see Section \ref{sec:2.1}) satisfying that $\gamma_j(f)$ denotes the nontangential boundary value of $f$ a.e. on $\partial D_j$ for any $f\in H^2(D_j)$, where $G_{D_j}(\cdot,z_j)$ is the Green function on $D_j$.

Let $d\mu_j$ be the Lebesgue measure on $M_j$ for any $1\le j\le n$, and let $d\mu$ be a measure on $\partial M$ defined by
$$\int_{\partial M}hd\mu=\sum_{1\le j\le n}\frac{1}{2\pi}\int_{M_j}\int_{\partial D_j}h(w_j,\hat w_j)|dw_j|d\mu_j(\hat w_j)$$
 for any $h\in L^1(\partial M)$, where $\hat w_j:=(w_1,\ldots,w_{j-1},w_{j+1},\ldots,w_n)$. For simplicity, denote $d\mu|_{\partial D_j\times M_j}$ by $d\mu$. Let us consider a space over $\partial D_j\times M_j$.
Denote 
\begin{displaymath}\begin{split}
	\{f\in L^2(\partial D_j\times M_j,d\mu):\exists f^*\in \mathcal{O}(M)\text{, s.t.  }&f^*(\cdot,\hat w_j)\in H^2(D_j) \text{ for any  $\hat w_j\in M_j$}\\
	&\,\&\, f=\gamma_j(f^*) \text{ a.e. on } \partial D_j\times M_j\}	
\end{split}\end{displaymath}
by  $H^2(M,\partial D_j\times M_j)$. In \cite{GY-saitohprodct}, we proved that there exists a unique linear injective map $P_{\partial M,j}$ from $H^2(M,\partial D_j\times M_j)$ to $\mathcal{O}(M)$ such that $P_{\partial M,j}(f)$ (denoted by $f^*$ for simplicity) satisfies the following conditions for any $f\in H^2(M,\partial D_j\times M_j)$:
	
	$(1)$ $P_{\partial M,j}(f)(\cdot,\hat w_j)\in H^2(D_j)$ for any  $\hat w_j\in M_j$;

	$(2)$ $f=\gamma_j(P_{\partial M,j}(f))$ a.e. on $\partial D_j\times M_j$.
	
	\

Let $\rho$ be a Lebesgue measurable function on $\partial M$ such that $\inf_{\partial M}\rho>0$. Denote that  
$$\ll f,g\gg_{\partial D_j\times M_j,\rho}:=\frac{1}{2\pi}\int_{M_j}\int_{\partial D_j}f(w_j,\hat w_j)\overline{g(w_j,\hat w_j)}\rho |dw_j|d\mu_j(\hat w_j)$$
for any $f,g\in L^2(\partial D_j\times M_j,\rho d\mu)\subset L^2(\partial D_j\times M_j,d\mu)$.
 Denote that
 \begin{displaymath}
 	H^2_{\rho}(M,\partial D_j\times M_j):=\{f\in H^2(M,\partial D_j\times M_j):\|f\|_{\partial D_j\times M_j,\rho}<+\infty \}.
 \end{displaymath}
 $H^2_{\rho}(M,\partial D_j\times M_j)$ is a Hilbert space equipped with the inner product $\ll \cdot,\cdot\gg_{\partial D_j\times M_j,\rho}$ (see \cite{GY-saitohprodct}).
 
 We recall the following lemma.
 \begin{Lemma}[\cite{GY-saitohprodct}]
 	\label{l:b1-p}For any compact subset $K$ of $M$, there exists a positive constant $C_K$ such that 
 	$$|f^*(z)|\le C_K\|f\|_{\partial D_j\times M_j,\rho}$$
 	holds for any $z\in K$ and $f\in H_{\rho}^2(M,\partial D_j\times M_j)$.
 \end{Lemma}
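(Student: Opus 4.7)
The plan is to combine a pointwise bound in the $w_j$ direction (coming from the one-dimensional Hardy theory) with an $L^2$ mean value inequality in the $\hat w_j$ directions (coming from holomorphicity of the slice), and then trade the unweighted boundary integral for the weighted one using the positive lower bound on $\rho$.

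First I would reduce to a product compact neighborhood: since $K \Subset M = D_j \times M_j$, I can pick compact sets $V \Subset D_j$ and $W \Subset M_j$ and a slightly larger compact $\tilde W$ with $W \subset \mathrm{int}(\tilde W) \Subset M_j$ such that $K \subset V \times W$. Fix $r > 0$ small enough so that for every $\hat z_j \in W$ the Euclidean ball $B(\hat z_j, r) \subset \tilde W$. By Lemma \ref{l:bounded} applied to $D_j$ with $k=0$ and the compact $V$, there is a constant $C_V > 0$ such that for every $g \in H^2(D_j)$ and every $w \in V$,
$$|g(w)|^2 \le C_V \int_{\partial D_j} |\gamma_j(g)|^2 |dw_j|.$$
Applying this to $g(\cdot) = f^*(\cdot, \hat w_j) \in H^2(D_j)$ for each fixed $\hat w_j \in \tilde W$ (and recalling that $f = \gamma_j(f^*)$ a.e. on $\partial D_j \times M_j$) yields
$$|f^*(z_j, \hat w_j)|^2 \le C_V \int_{\partial D_j} |f(w_j, \hat w_j)|^2 |dw_j|$$
uniformly for $z_j \in V$ and $\hat w_j \in \tilde W$.

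Next, for any fixed $z_j \in V$, the slice $\hat w_j \mapsto f^*(z_j, \hat w_j)$ is holomorphic on $M_j$, so $|f^*(z_j, \cdot)|^2$ is plurisubharmonic on $M_j$. The sub-mean value inequality on $B(\hat z_j, r) \Subset \tilde W$ gives
$$|f^*(z_j, \hat z_j)|^2 \le \frac{1}{\mathrm{vol}(B(\hat z_j, r))} \int_{B(\hat z_j, r)} |f^*(z_j, \hat w_j)|^2 \, d\mu_j(\hat w_j)$$
for every $\hat z_j \in W$. Combining this with the previous estimate and Fubini,
$$|f^*(z_j, \hat z_j)|^2 \le \frac{C_V}{\mathrm{vol}(B(\hat z_j, r))} \int_{\tilde W} \int_{\partial D_j} |f(w_j, \hat w_j)|^2 \, |dw_j| \, d\mu_j(\hat w_j).$$

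Finally, since $\inf_{\partial M}\rho > 0$, say $\rho \ge c_0 > 0$ on $\partial M$, I can replace $|f|^2$ by $|f|^2 \rho / c_0$ and enlarge the domain of integration from $\tilde W$ to $M_j$ to obtain
$$|f^*(z)|^2 \le \frac{2\pi \, C_V}{c_0 \, \omega_{2(n-1)} r^{2(n-1)}} \, \|f\|^2_{\partial D_j \times M_j, \rho}$$
for every $z = (z_j, \hat z_j) \in V \times W \supset K$, where $\omega_{2(n-1)} r^{2(n-1)}$ is the volume of a ball of radius $r$ in $M_j \cong \mathbb{R}^{2(n-1)}$. Taking $C_K$ to be the square root of the constant above finishes the proof. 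The only nontrivial step is the one-dimensional pointwise bound, and that has been prepared as Lemma \ref{l:bounded}; the rest is bookkeeping about compact sets and the positivity of $\rho$.
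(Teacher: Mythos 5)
The paper does not actually reprove this lemma (it is quoted from \cite{GY-saitohprodct}), so there is no internal proof to compare against; judged on its own, your argument is correct and is the natural one: the slicewise Hardy-space bound of Lemma \ref{l:bounded}, the sub-mean value inequality for the plurisubharmonic function $|f^*(z_j,\cdot)|^2$ in the remaining variables, and the lower bound $\inf_{\partial M}\rho>0$ assemble exactly as you describe. The only point worth tightening is that $f=\gamma_j(f^*)$ holds a.e.\ on the product $\partial D_j\times M_j$, so by Fubini the identity $\int_{\partial D_j}|\gamma_j(f^*(\cdot,\hat w_j))|^2|dw_j|=\int_{\partial D_j}|f(w_j,\hat w_j)|^2|dw_j|$ is only guaranteed for a.e.\ $\hat w_j$; this is harmless since you immediately integrate in $\hat w_j$ over the ball $B(\hat z_j,r)$, but the intermediate pointwise inequality should be asserted for a.e.\ $\hat w_j\in\tilde W$ rather than uniformly.
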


In the following, assume that $\rho|_{\partial D_1\times M_1}=\rho_1\times\lambda_1$,  where $\rho_1$ is a positive Lebesgue measurable function on $\partial D_1$, and $\lambda_1$ is a positive Lebesgue measurable function on $M_1$ such that $A^2(M_1,\lambda_1):=\{f\in\mathcal{O}(M_1):\int_{M_1}|f|^2\lambda_1<+\infty\}$ is a Hilbert space with the inner $\ll f,g\gg_{M_1,\lambda_1}:=\int_{M_1}f\overline g\lambda_1$, i.e., $\lambda_1$ is an admissible weight on $M_1$ (see Section \ref{sec:other}).
\begin{Lemma}[\cite{GY-saitohprodct}]\label{l:prod-d1xm1}
	 Assume that $H^2_{\rho}(M,\partial D_1\times M_1)\not=\{0\}$. Then we have $H^2_{\rho_1}(D_1,\partial D_1)\not=\{0\}$ and $A^2(M_1,\lambda_1)\not=\{0\}$. Furthermore,
	$\{e_m(z)\tilde e_l(w)\}_{m,l\in\mathbb{Z}_{>0}}$ is a complete orthonormal basis for $H^2_{\rho}(M,\partial D_1\times M_1)$, where $\{e_m\}_{m\in\mathbb{Z}_{>0}}$ is a complete orthonormal basis for $H^2_{\rho_1}(D_1,\partial D_1)$, and $\{\tilde e_m\}_{m\in\mathbb{Z}_{>0}}$ is a complete orthonormal basis for $A^2(M_1,\lambda_1)$. 
\end{Lemma}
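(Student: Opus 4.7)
The plan is to handle three items: (I) non-triviality of $H^2_{\rho_1}(D_1,\partial D_1)$ and $A^2(M_1,\lambda_1)$, (II) orthonormality and membership of $\{e_m\tilde e_l\}$ in $H^2_\rho(M,\partial D_1\times M_1)$, and (III) completeness. The product structure $\rho=\rho_1\lambda_1$ is the key tool throughout, letting Fubini factor the inner product.

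For (I), I would pick nonzero $f\in H^2_\rho(M,\partial D_1\times M_1)$. Fubini gives
\begin{equation*}
\|f\|^2_{\partial D_1\times M_1,\rho}=\int_{M_1}\bigl(\tfrac{1}{2\pi}\int_{\partial D_1}|f(\cdot,\hat w_1)|^2\rho_1\,|dw_1|\bigr)\lambda_1\,d\mu_1,
\end{equation*}
so for a.e.\ $\hat w_1$ the section $f^*(\cdot,\hat w_1)$ lies in $H^2_{\rho_1}(D_1,\partial D_1)$; since $f^*\not\equiv 0$, some such section is nonzero and $H^2_{\rho_1}(D_1,\partial D_1)\neq\{0\}$. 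For $A^2(M_1,\lambda_1)\neq\{0\}$, I would first note that $\rho_1$ has a positive lower bound on $\partial D_1$: fixing any $\hat w_1^0\in M_1$ at which $\lambda_1(\hat w_1^0)<\infty$ gives $\rho_1\geq\inf_{\partial M}\rho/\lambda_1(\hat w_1^0)>0$. Cauchy--Schwarz applied to the Cauchy formula of Lemma \ref{l:0-1}(c) then yields $|f^*(z_0,\hat w_1)|^2\leq C(z_0)\int_{\partial D_1}|f(\cdot,\hat w_1)|^2\rho_1\,|dw_1|$ fiberwise, and integrating against $\lambda_1\,d\mu_1$ gives $f^*(z_0,\cdot)\in A^2(M_1,\lambda_1)$, nonzero for some $z_0$.

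For (II), I would set $(e_m\tilde e_l)^*(z,\hat w_1):=e_m^*(z)\tilde e_l(\hat w_1)$, which is holomorphic on $M$; each slice is a scalar multiple of $e_m^*\in H^2(D_1)$, and the boundary values coincide with $e_m(w_1)\tilde e_l(\hat w_1)$ a.e.\ on $\partial D_1\times M_1$, so $e_m\tilde e_l\in H^2_\rho(M,\partial D_1\times M_1)$. Fubini factors the inner product as
\begin{equation*}
\ll e_m\tilde e_l,\,e_{m'}\tilde e_{l'}\gg_{\partial D_1\times M_1,\rho}=\ll e_m,e_{m'}\gg_{\partial D_1,\rho_1}\,\ll\tilde e_l,\tilde e_{l'}\gg_{M_1,\lambda_1}=\delta_{mm'}\delta_{ll'}.
\end{equation*}

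For (III), I would assume $f\in H^2_\rho$ is orthogonal to every $e_m\tilde e_l$ and aim to show $f=0$. Define the fiberwise coefficients $c_m(\hat w_1):=\ll f^*(\cdot,\hat w_1),e_m\gg_{\partial D_1,\rho_1}$, defined for a.e.\ $\hat w_1$. Parseval on each fiber and Fubini give
\begin{equation*}
\sum_m\int_{M_1}|c_m|^2\lambda_1\,d\mu_1=\int_{M_1}\|f^*(\cdot,\hat w_1)\|^2_{\partial D_1,\rho_1}\lambda_1\,d\mu_1=\|f\|^2_{\partial D_1\times M_1,\rho}<+\infty,
\end{equation*}
so each $c_m\in L^2(M_1,\lambda_1)$. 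The main obstacle will be to promote $c_m$ to a holomorphic function on $M_1$. My approach is to expand $f^*$ fiberwise as the $L^2(\partial D_1,\rho_1)$-series $f^*(z,\hat w_1)=\sum_k c_k(\hat w_1)e_k^*(z)$; by the lower bound on $\rho_1$ and Lemma \ref{l:bounded}, the partial sums converge locally uniformly in $z$ on $D_1$ for each fixed $\hat w_1$. Picking points $z_1,\ldots,z_N\in D_1$ at which $[e_k^*(z_j)]_{1\leq k,j\leq N}$ is invertible (possible by linear independence of the $e_k^*$) and inverting the system expresses the first $N$ coefficients as a finite linear combination of the holomorphic functions $\hat w_1\mapsto f^*(z_j,\hat w_1)$ plus a tail that tends to zero; Weierstrass then gives $c_m\in\mathcal{O}(M_1)$, hence $c_m\in A^2(M_1,\lambda_1)$. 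The orthogonality hypothesis reads $0=\ll f,e_m\tilde e_l\gg=\ll c_m,\tilde e_l\gg_{M_1,\lambda_1}$ for every $l$, so $c_m\equiv 0$ by completeness of $\{\tilde e_l\}$. Hence $f^*(\cdot,\hat w_1)=0$ in $H^2_{\rho_1}$ for a.e.\ $\hat w_1$, so $f^*\equiv 0$ on $M$ by holomorphicity, and finally $f=0$.
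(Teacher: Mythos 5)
Your overall architecture --- factor the norm via Fubini, verify orthonormality of the products, and prove completeness by expanding fiberwise --- is the natural one, and parts (I) and (II) are sound. (Note that this paper does not prove the lemma; it is quoted from the reference \cite{GY-saitohprodct}, so there is no in-paper proof to compare against.) The genuine gap is in part (III), at exactly the step you identify as the main obstacle: your argument that each coefficient $c_m(\hat w_1)=\ll f^*(\cdot,\hat w_1),e_m\gg_{\partial D_1,\rho_1}$ is holomorphic does not go through. After inverting the $N\times N$ system you get $c_m=\sum_j a_j f^*(z_j,\cdot)-\sum_{l>N}c_l(\cdot)\alpha_l$, where the $z_j$, $a_j$, $\alpha_l$ all depend on $N$; the error term is bounded by $\bigl(\sum_{l>N}|c_l(\hat w_1)|^2\bigr)^{1/2}$ times a constant coming from the inverse matrix. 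But $\sum_{l>N}|c_l(\hat w_1)|^2\rightarrow 0$ only pointwise a.e.\ in $\hat w_1$ (you only know $\sum_l|c_l|^2\in L^1(\lambda_1 d\mu_1)$), not locally uniformly, and there is no control on how the inverse matrices grow with $N$. Weierstrass requires locally uniform convergence of the holomorphic approximants, so holomorphy of $c_m$ --- and with it the membership $c_m\in A^2(M_1,\lambda_1)$, which you genuinely need because $\{\tilde e_l\}$ is complete only in $A^2(M_1,\lambda_1)$ and not in $L^2(M_1,\lambda_1 d\mu_1)$ --- is not established.

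A workable repair is to expand in the other order: set $b_l(z):=\ll f^*(z,\cdot),\tilde e_l\gg_{M_1,\lambda_1}$. By Lemma \ref{l:b1-p} the functions $\hat w_1\mapsto f^*(z,\hat w_1)$ are uniformly bounded for $z$ in a compact set, so Morera plus Fubini give $b_l\in\mathcal{O}(D_1)$ with no tail estimate needed; one then checks $b_l\in H^2_{\rho_1}(D_1,\partial D_1)$ with $\gamma_1(b_l)=\int_{M_1}f(\cdot,\hat w_1)\overline{\tilde e_l}\,\lambda_1 d\mu_1$ (this step also needs an argument, e.g.\ via harmonic majorants), and orthogonality to all $e_m\tilde e_l$ then forces $b_l\equiv 0$ for every $l$, hence $f^*\equiv 0$. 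Alternatively, one can keep your order of expansion but must first prove that $\hat w_1\mapsto\|f^*(\cdot,\hat w_1)\|_{\partial D_1,\rho_1}$ is locally bounded, and then deduce holomorphy of $c_m$ from weak holomorphy against the dense span of reproducing kernels. Either way, an additional argument is required at this point; as written the completeness claim is not proved.
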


Let $p_{j,k}$ be positive real number for any $1\le j\le n$ and $1\le k\le m_j$. Let $\psi_1=\sum_{1\le k\le m_1}p_{1,k}G_{D_1}(\cdot,z_{1,k})$ on $D_1$, and let $\hat\psi_1=\max_{2\le j\le n}\{\sum_{1\le k\le m_j}p_{j,k}G_{D_j}(\cdot,z_{j,k})\}$ on $M_1$.	

\begin{Lemma} \label{l:0-4v3}Assume that $\rho_1$ is a positive  Lebesgue measurable function on $U\cap \overline D_1$ satisfying that $\lim_{z\rightarrow\tilde z}\rho_1(z)=\rho_1(\tilde z)$ for any $\tilde z\in\partial D_1$, where $U$ is a neighborhood of $\partial D_1$. Then 
\begin{equation}
	\label{eq:0213d}\|f\|_{\partial D_1\times M_1,\rho}=\frac{1}{2\pi}\lim_{r\rightarrow1-0}\int_{M_{1,r}}\int_{\partial D_{1,r}}|f^*(w_1,\hat w_1)|^2\rho_1(w_1)|dw_1|\lambda_1(\hat w_1)d\mu_1(\hat w_1)
\end{equation}
	holds for any $f\in H_{\rho}^2(M,\partial D_1\times M_1)$, where $D_{1,r}=\{z\in \overline D_1:\psi_1(z)<\log r\}$ and $M_{1,r}=\{z\in M_1:\hat\psi_1(z)<\log r\}$ for $r\in[0,1]$.
\end{Lemma}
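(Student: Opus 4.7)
The strategy is to reduce to the one-variable Lemma \ref{l:0-4v2} by means of the orthonormal basis decomposition in Lemma \ref{l:prod-d1xm1}, and then to extend from finite linear combinations to general $f\in H^2_\rho(M,\partial D_1\times M_1)$ by Lemma \ref{l:b1-p}. (The left-hand side of \eqref{eq:0213d} should read $\|f\|^2_{\partial D_1\times M_1,\rho}$.)

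First I would verify the identity on a simple tensor $f=e\cdot\tilde e$, where $e\in H^2_{\rho_1}(D_1,\partial D_1)$ and $\tilde e\in A^2(M_1,\lambda_1)$. In this case $f^*(w_1,\hat w_1)=e^*(w_1)\tilde e(\hat w_1)$ and the double integral factors as a product. Lemma \ref{l:0-4v2}, applied to $e\in H^2(D_1)$ with the weight $\rho_1$, identifies the $\partial D_{1,r}$-factor in the limit as $\int_{\partial D_1}|e|^2\rho_1|dw_1|$, while monotone convergence with $M_{1,r}\uparrow M_1$ handles the $M_{1,r}$-factor. Bilinearity together with the polarization of Lemma \ref{l:0-4v2} (giving $\int_{\partial D_{1,r}}e_m^*\overline{e_{m'}^*}\rho_1|dw_1|\to 2\pi\delta_{mm'}$) extends the identity to any finite partial sum $f_N=\sum_{(m,l)\in F}c_{m,l}e_m\tilde e_l$; the orthonormality of $\{e_m\}$ and $\{\tilde e_l\}$ then collapses the expansion of $|f_N^*|^2$ to $\sum_{(m,l)\in F}|c_{m,l}|^2=\|f_N\|^2_{\partial D_1\times M_1,\rho}$.

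Next, expand $f=\sum_{m,l}c_{m,l}e_m\tilde e_l$ using Lemma \ref{l:prod-d1xm1}, let $f_N$ be the $N$-th partial sum so that $\|f-f_N\|_{\partial D_1\times M_1,\rho}\to 0$, and write $I(r):=\tfrac{1}{2\pi}\int_{\partial D_{1,r}\times M_{1,r}}|f^*|^2\rho_1\lambda_1$ with $I_N(r)$ defined analogously. The previous step gives $\lim_{r\to 1}I_N(r)=\|f_N\|^2_{\partial D_1\times M_1,\rho}$, and the $L^2$ triangle inequality yields
\[
\bigl|I(r)^{1/2}-I_N(r)^{1/2}\bigr|\le\left(\tfrac{1}{2\pi}\int_{\partial D_{1,r}\times M_{1,r}}|f^*-f_N^*|^2\rho_1\lambda_1\right)^{1/2}.
\]

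The main obstacle is controlling this remainder uniformly in $r$. For each fixed $\hat w_1$, the slice $g^*(\cdot,\hat w_1)\in H^2(D_1)$; using the decomposition $\psi_1-\log r=\sum_k p_{1,k}G_{D_{1,r}}(\cdot,z_{1,k})$ on $D_{1,r}$ provided by Lemma \ref{l:green-sup2}, together with Lemma \ref{l:0-4}, one obtains
\[
\int_{\partial D_{1,r}}|g^*(\cdot,\hat w_1)|^2\tfrac{\partial\psi_1}{\partial v_z}|dw_1|=2\pi\sum_k p_{1,k}u^{D_{1,r}}_{|g^*(\cdot,\hat w_1)|^2}(z_{1,k})\le\int_{\partial D_1}|g^*|^2\tfrac{\partial\psi_1}{\partial v_z}|dw_1|,
\]
because the least harmonic majorant on $D_{1,r}$ is dominated by that on $D_1$. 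Combining this with the boundedness of $\rho_1$ and $|\nabla\psi_1|$ near $\partial D_1$ (both are continuous with positive upper and lower bounds on $U\cap\overline{D_1}$) and then integrating over $\hat w_1\in M_1$ against $\lambda_1$ yields the uniform estimate $\int_{\partial D_{1,r}\times M_{1,r}}|g^*|^2\rho_1\lambda_1\le C\|g\|^2_{\partial D_1\times M_1,\rho}$ for every $g\in H^2_\rho(M,\partial D_1\times M_1)$ and every $r<1$. Applying this to $g=f-f_N$, letting $r\to 1$ first and then $N\to\infty$ in the triangle inequality, and using $\|f_N\|\to\|f\|$, gives $\lim_{r\to 1}I(r)=\|f\|^2_{\partial D_1\times M_1,\rho}$, which is the desired identity.
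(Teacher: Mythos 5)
Your proposal is correct and follows essentially the same route as the paper: decompose $f$ via the orthonormal basis of Lemma \ref{l:prod-d1xm1}, evaluate the limit on finite partial sums by reducing to the one-variable Lemma \ref{l:0-4v2}, and control the tail uniformly in $r$ by combining the monotonicity of $\int_{\partial D_{1,r}}|h|^2\frac{\partial G_{D_{1,r}}}{\partial v_z}|dz|$ (Lemmas \ref{l:green-sup2} and \ref{l:0-4}) with two-sided bounds comparing $\rho_1$ to $\frac{\partial\psi_1}{\partial v_z}$ near $\partial D_1$. The only (immaterial) deviations are that you obtain the lower bound from the two-sided triangle inequality where the paper invokes Fatou's lemma, and you treat the partial sums by polarization rather than term-by-term estimates.
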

\begin{proof}
	Following from Lemma \ref{l:green-sup2}, we know that 
	\begin{equation}
		\label{eq:02100}
		\psi_1-\log r=\sum_{1\le k\le m_1}p_{1,k}G_{D_{1,r}}(\cdot,z_{1,k})
	\end{equation}
	 on $D_{1,r}$, where $G_{D_{1,r}}(\cdot,z_{1,k})$ is the Green function on $D_{1,r}$. For any holomorphic function $h$ on $D_1$ and any $\tilde z\in D_1$, as $|h|^2$ is subharmonic, we know that $\int_{\partial D_{1,r}}|h(z)|^2\frac{G_{D_{1,r}}(z,\tilde z)}{\partial v_z} |dz|$ is increasing with respect to $r$. Combining Lemma \ref{l:0-4}, we have 
	$$\lim_{r\rightarrow+\infty}\int_{\partial D_{1,r}}|h(z)|^2\frac{G_{D_{1,r}}(z,\tilde z)}{\partial v_z} |dz|=\int_{\partial D_{1}}|h(z)|^2\frac{G_{D_{1}}(z,\tilde z)}{\partial v_z} |dz|$$
	for $h\in H^2(D_1)$.  Thus, it follows from Levi's Theorem and equality \eqref{eq:02100} that 
	\begin{equation}
		\label{eq:0210a}
		\begin{split}
					&\lim_{r\rightarrow1-0}\int_{M_{1,r}}\int_{\partial D_{1,r}}|\tilde f^*(w_1,\hat w_1)|^2\frac{\partial\psi_1}{\partial v_z}|dw_1|\lambda_{1}(\hat w_1)d\mu_1(\hat w_1) \\
					=&\lim_{r\rightarrow1-0}\sum_{1\le k\le m_1}p_{1,k}\int_{M_{1,r}}\int_{\partial D_{1,r}}|\tilde f^*(w_1,\hat w_1)|^2\frac{\partial G_{D_{1,r}}(w_1,z_{1,k})}{\partial v_z}|dw_1|\lambda_{1}(\hat w_1)d\mu_1(\hat w_1)\\
					=&\sum_{1\le k\le m_1}p_{1,k}\int_{M_1}\int_{\partial D_{1}}|\tilde f(w_1,\hat w_1)|^2\frac{\partial G_{D_{1}}(w_1,z_{1,k})}{\partial v_z}|dw_1|\lambda_{1}(\hat w_1)d\mu_1(\hat w_1)\\
					=&\int_{M_1}\int_{\partial D_{1}}|\tilde f(w_1,\hat w_1)|^2\frac{\partial \psi_1}{\partial v_z}\lambda_{1}(\hat w_1)|dw_1|d\mu_1(\hat w_1)
		\end{split}
			\end{equation}
	holds for any $\tilde f\in H_{\frac{\partial \psi_1}{\partial v_z}\lambda_{1}}^2(M,\partial D_1\times M_1)$. 
There exist positive numbers $L_1$ and $r_0\in[0,1]$ such that 
	\begin{displaymath}
		\frac{1}{L_1}<\inf_{\{z\in\overline D_1:\psi_1(z)\ge\log r_0\}}\min\{|\bigtriangledown\psi_1|,\rho_1\}\le\sup_{\{z\in\overline D_1:\psi_1(z)\ge\log r_0\}}\max\{|\bigtriangledown\psi_1|,\rho_1\}<L_1.
	\end{displaymath}

		By Lemma \ref{l:prod-d1xm1}, there exist $\{f_l\}_{l\in\mathbb{Z}_{>0}}\subset H_{\rho_1}^2(D_1,\partial D_1)$ and $\{g_l\}_{l\in\mathbb{Z}_{>0}}\subset A^2(M_1,\lambda_1)$ such that 
	\begin{equation}
		\label{eq:0213a}f=\sum_{l=1}^{+\infty}f_lg_l.
	\end{equation}
	Denote that $F_m:=\sum_{l=m+1}^{+\infty}f_lg_l\in H^2_{\rho}(M,\partial D_1\times M_1)$.
	 It follows from equality \eqref{eq:0210a} and Lemma \ref{l:4} that 
\begin{equation}
\label{eq:0213b}
\begin{split}
	&\limsup_{r\rightarrow1-0}\int_{M_{1,r}}\int_{\partial D_{1,r}}|F^*_m(w_1,\hat w_1)|^2\rho_1(w_1)|dw_1|\lambda_{1}(\hat w_1)d\mu_1(\hat w_1)\\
	\le& L_1^2\limsup_{r\rightarrow1-0}\int_{M_{1,r}}\int_{\partial D_{1,r}}|F^*_m(w_1,\hat w_1)|^2\frac{\partial\psi_1}{\partial v_z}|dw_1|\lambda_{1}(\hat w_1)d\mu_1(\hat w_1)\\
	\le& L_1^2\int_{M_1}\int_{\partial D_{1}}|F_m(w_1,\hat w_1)|^2\frac{\partial \psi_1}{\partial v_z}\lambda_{1}(\hat w_1)|dw_1|d\mu_1(\hat w_1)\\
	\le& L_1^4\int_{M_1}\int_{\partial D_{1}}|F_m(w_1,\hat w_1)|^2\rho|dw_1|d\mu_1(\hat w_1).
\end{split}
\end{equation}
Using Lemma \ref{l:0-4v2}, we have 
\begin{equation}
\label{eq:0213c}
\begin{split}
	&\limsup_{r\rightarrow1-0}\left(\int_{M_{1,r}}\int_{\partial D_{1,r}}|\sum_{1\le l\le m}f_l(w_1)g_l(\hat w_1)|^2\rho_1(w_1)|dw_1|\lambda_{1}(\hat w_1)d\mu_1(\hat w_1)\right)^{\frac{1}{2}}\\
	\le&\limsup_{r\rightarrow1-0}\sum_{1\le l\le m}\left(\int_{M_{1,r}}\int_{\partial D_{1,r}}|f_l(w_1)g_l(\hat w_1)|^2\rho_1(w_1)|dw_1|\lambda_{1}(\hat w_1)d\mu_1(\hat w_1)\right)^{\frac{1}{2}}\\
	=&\limsup_{r\rightarrow1-0}\sum_{1\le l\le m}\left(\int_{M_{1,r}}|g_l(\hat w_1)|^2\lambda_{1}(\hat w_1)d\mu_1(\hat w_1) \int_{\partial D_{1,r}}|f_l(w_1)|^2\rho_1(w_1)|dw_1|\right)^{\frac{1}{2}}\\
	=&\sum_{1\le l\le m}\left(\int_{M_{1}}|g_l(\hat w_1)|^2\lambda_{1}(\hat w_1)d\mu_1(\hat w_1) \int_{\partial D_{1}}|f_l(w_1)|^2\rho_1(w_1)|dw_1|\right)^{\frac{1}{2}}\\
	=&\left(\int_{M_{1}}\int_{\partial D_{1}}|\sum_{1\le l\le m}f_l(w_1)g_l(\hat w_1)|^2\rho_1(w_1)|dw_1|\lambda_{1}(\hat w_1)d\mu_1(\hat w_1)\right)^{\frac{1}{2}}
	\end{split}
\end{equation}
 for any $m\in\mathbb{Z}_{>0}$. 
Following from equality \eqref{eq:0213a}, inequality \eqref{eq:0213b} and \eqref{eq:0213c}, we have
\begin{displaymath}
	\begin{split}
&\limsup_{r\rightarrow1-0}\left(\int_{M_{1,r}}\int_{\partial D_{1,r}}|f^*(w_1,\hat w_1)|^2\rho_1(w_1)|dw_1|\lambda_1(\hat w_1)d\mu_1(\hat w_1)\right)^{\frac{1}{2}}
		\\\le&\liminf_{m\rightarrow+\infty}\Bigg(\limsup_{r\rightarrow1-0}\Big(\int_{M_{1,r}}\int_{\partial D_{1,r}}|\sum_{1\le l\le m}f_lg_l|^2\rho_1(w_1)|dw_1|\lambda_1(\hat w_1)d\mu_1(\hat w_1)\Big)^{\frac{1}{2}}\\
		&+\limsup_{r\rightarrow1-0}\Big(\int_{M_{1,r}}\int_{\partial D_{1,r}}|F_m^*|^2\rho_1(w_1)|dw_1|\lambda_1(\hat w_1)d\mu_1(\hat w_1)\Big)^{\frac{1}{2}}\Bigg)\\
		\le &\liminf_{m\rightarrow+\infty}\Bigg(\Big(\int_{M_{1}}\int_{\partial D_{1}}|\sum_{1\le l\le m}f_l(w_1)g_l(\hat w_1)|^2\rho_1(w_1)|dw_1|\lambda_{1}(\hat w_1)d\mu_1(\hat w_1)\Big)^{\frac{1}{2}}\\
		&+\Big(L_1^4\int_{M_1}\int_{\partial D_{1}}|F_m(w_1,\hat w_1)|^2\rho|dw_1|d\mu_1(\hat w_1)\Big)^{\frac{1}{2}}\Bigg)\\
		= &(2\pi)^{\frac{1}{2}}\|f\|_{\partial D_1\times M_1,\rho}.
	\end{split}
\end{displaymath}
By Fatou's Lemma, we have 
$$2\pi\|f\|^2_{\partial D_1\times M_1,\rho}\le\liminf_{r\rightarrow1-0}\int_{M_{1,r}}\int_{\partial D_{1,r}}|f^*(w_1,\hat w_1)|^2\rho_1(w_1)|dw_1|\lambda_1(\hat w_1)d\mu_1(\hat w_1).$$
Thus, equality \eqref{eq:0213d} holds.
\end{proof}

\subsubsection{Some results on $H^2_{\rho}(M,\partial M)$}

Let $\rho$ be a Lebesgue measurable function on $\partial M$ such that $\inf_{\partial M}\rho>0$. Denote that  
$$\ll f,g\gg_{\partial M,\rho}:=\sum_{1\le j\le n}\frac{1}{2\pi}\int_{M_j}\int_{\partial D_j}f(w_j,\hat w_j)\overline{g(w_j,\hat w_j)}\rho |dw_j|d\mu_j(\hat w_j)$$
for any $f,g\in L^2(\partial M,\rho d\mu)\subset L^2(\partial M,d\mu)$.
 The weighted Hardy space over $\partial M$ is defined as follows:
 
 \emph{For any $f\in L^2(\partial M,\rho d\mu)$, we call $f\in H^2_{\rho}(M,\partial M)$ if $f\in H^2_{\rho}(M,\partial D_j\times M_j)$ for any $1\le j\le n$ and $P_{\partial M,j}(f)=P_{\partial M,k}(f)$ for any $j\not=k$.} 
 
Denote that $P_{\partial M}(f):=P_{\partial M,j}(f)$ for any $f\in H^2_{\rho}(M,\partial M)$ (denote also by $f^*$ for simplicity), and $P_{\partial M}$ is a linear injective map from $H^2_{\rho}(M,\partial M)$ to $\mathcal{O}(M)$. 
$H^2_{\rho}(M,\partial M)$ is a Hilbert space with the inner product $\ll\cdot,\cdot\gg_{\partial M,\rho}$ (see \cite{GY-saitohprodct}).
Let $Z_0$ be any subset of $M$, and let $J_z$ be an ideal of $\mathcal{O}_z$ for any $z\in Z_0$. Let $f_0$ be a holomorphic function on a neighborhood of $Z_0$.
Denote that
\begin{displaymath}
	\begin{split}
		M_H(Z_0,J,\rho):=\inf\bigg\{&\|f\|^2_{\partial M,\rho}:f\in H_{\rho}^2(M,\partial M) 
		\\ &\text{ s.t. } (f^*-f_0,z)\in J_{z}\text{ for any $z\in Z_0$}\bigg\}.\end{split}
\end{displaymath}

The following Lemma will be used in the proof of Lemma \ref{l:exists of $M_H$2}.

\begin{Lemma}[see \cite{G-R}]
\label{l:closedness}
Let $N$ be a submodule of $\mathcal O_{\mathbb C^n,o}^q$, $1\leq q<+\infty$, and let $f_j\in\mathcal O_{\mathbb C^n}(U)^q$ be a sequence of $q-$tuples holomorphic in an open neighborhood $U$ of the origin $o$. Assume that the $f_j$ converge uniformly in $U$ towards  a $q-$tuple $f\in\mathcal O_{\mathbb C^n}(U)^q$, assume furthermore that all germs $(f_{j},o)$ belong to $N$. Then $(f,o)\in N$.	
\end{Lemma}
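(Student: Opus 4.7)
The plan is to reduce the closedness assertion to two classical facts: the Rückert/Noether property of the local ring $\mathcal{O}_{\mathbb{C}^n,o}$ (which makes $N$ finitely generated), and a uniform--bound version of the division/lifting theorem for finitely many holomorphic generators (Cartan's ``open mapping'' statement for sheaf surjections).

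First I would fix a finite generating set of $N$. By the Rückert basis theorem the local ring $\mathcal{O}_{\mathbb{C}^n,o}$ is Noetherian, so $N$ is finitely generated; pick germs $g_{1},\ldots,g_{p}\in\mathcal{O}_{\mathbb{C}^n,o}^{q}$ whose residues generate $N$, and shrink $U$ so that each $g_{l}$ has a holomorphic representative on $U$. Since for every sufficiently large $j$ we have $(f_{j},o)\in N$, there exist germs $h_{j,1},\ldots,h_{j,p}\in\mathcal{O}_{\mathbb{C}^n,o}$ with
\[
f_{j}=\sum_{l=1}^{p}h_{j,l}\,g_{l}
\]
in some neighborhood of $o$ (depending on $j$). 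So the question becomes: can the $h_{j,l}$ be chosen, on a fixed polydisk $W\Subset U$ independent of $j$, with a uniform sup-norm bound controlled by $\sup_{U}|f_{j}|$?

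This bounded-lifting step is the heart of the proof and would be my main obstacle. The clean way to handle it is via the coherence of the relations sheaf $\mathcal{R}=\ker\bigl(\mathcal{O}_{U}^{p}\xrightarrow{\,\Phi\,}\mathcal{O}_{U}^{q}\bigr)$, where $\Phi(a_{1},\ldots,a_{p})=\sum a_{l}g_{l}$, combined with Cartan's open mapping theorem for continuous surjections between Fréchet spaces of holomorphic sections. Concretely, on a fixed polydisk $W\Subset U$ the $\mathbb{C}$-linear map $\Phi_{W}:\mathcal{O}(W)^{p}\to\Phi(\mathcal{O}(W)^{p})\subset\mathcal{O}(W)^{q}$ is a continuous surjection of Fréchet spaces onto its (closed) image, so the open mapping theorem produces, for each $j$ large enough, a preimage $(h_{j,1},\ldots,h_{j,p})\in\mathcal{O}(W)^{p}$ with $\sup_{W}|h_{j,l}|\leq C\sup_{U}|f_{j}|$ for a constant $C$ independent of $j$. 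Alternatively one can argue more concretely via Weierstrass preparation and induction on $n$: after a generic linear change of coordinates each $g_{l}$ becomes regular in $z_{n}$, and iterated Weierstrass division with remainder yields explicit bounds.

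With the uniform bound in hand, Montel's theorem gives a subsequence (still denoted $j$) such that $(h_{j,l})_{l=1}^{p}$ converges in $\mathcal{O}(W)^{p}$ to some $(h_{1},\ldots,h_{p})$ uniformly on compact subsets of $W$. On the other hand the hypothesis gives $f_{j}\to f$ uniformly on $U\supset W$, so passing to the limit in $f_{j}=\sum_{l}h_{j,l}g_{l}$ yields $f=\sum_{l}h_{l}g_{l}$ on $W$, and therefore $(f,o)\in N$, which is the desired conclusion. The remaining verifications (that the bounded lifting is compatible with restriction to smaller neighborhoods, and that Montel applies to each component) are routine.
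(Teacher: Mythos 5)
The paper does not prove this lemma at all; it is quoted from Grauert--Remmert \cite{G-R}, where it is established by a double induction on $n$ and $q$ using the Weierstrass division theorem with norm bounds. Your overall architecture --- finite generation of $N$, a bounded lifting $f_j=\sum_l h_{j,l}g_l$ on a fixed polydisk, Montel, passage to the limit --- is exactly the right skeleton, and you correctly flag the bounded lifting as the crux. But the justification you offer for that step does not hold up. First, the open-mapping-theorem route is circular: to apply the theorem to $\Phi_W:\mathcal O(W)^p\to\Phi_W(\mathcal O(W)^p)$ you must already know that the image is a Fr\'echet space, i.e.\ closed in $\mathcal O(W)^q$ (equivalently, that the subspace topology agrees with the quotient topology from $\mathcal O(W)^p/\ker\Phi_W$). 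That closedness is Cartan's global ``closure of modules'' theorem, which in every standard treatment (Grauert--Remmert, H\"ormander, Gunning--Rossi) is \emph{deduced from} the local statement you are trying to prove, combined with Theorem B for the relation sheaf; it is not available beforehand. Second, even granting closedness, you would need $f_j|_W\in\Phi_W(\mathcal O(W)^p)$ for a polydisk $W$ independent of $j$. The hypothesis only gives $(f_j,o)\in N$, hence a representation $f_j=\sum_l h_{j,l}g_l$ on a neighborhood that may shrink with $j$; the locus where the germ of $f_j$ fails to lie in the subsheaf generated by the $g_l$ is an analytic set avoiding $o$ but possibly entering every fixed $W$, so membership of $f_j|_W$ in the image over $W$ is not automatic.

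Your fallback --- Weierstrass preparation and division with bounds, by induction on $n$ --- is indeed the route taken in \cite{G-R}, but as you state it it does not apply: the generators $g_l$ are $q$-tuples, and for $q>1$ one cannot make ``each $g_l$ regular in $z_n$'' or divide by it. The actual argument is an induction on the pair $(n,q)$: one separates the cases $N=0$ and $N\neq 0$, extracts from $N$ a single nonzero function germ (a distinguished component after a linear change of coordinates) to which Weierstrass division with the estimate $\sup|a|+\sup|r|\le C\sup|f|$ on a fixed smaller polydisk applies, and thereby reduces to submodules of smaller rank over a ring finite over $\mathcal O_{\mathbb C^{n-1},o}$. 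It is this quantitative division, not abstract functional analysis, that produces the uniform constant and the $j$-independent polydisk; once those are in hand, your Montel-plus-limit conclusion is correct. So the skeleton is right, but the load-bearing step is missing.
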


\begin{Lemma}
	\label{l:exists of $M_H$2}Assume that $M_H(Z_0,J,\rho)<+\infty$. Then  there is a unique $f\in H^2_{\rho}(M,\partial M)$ satisfying that $(f^*-f_0,z)\in J_z$ for any $z\in Z_0$ and $M_H(Z_0,J,\rho)=\|f\|_{\partial M,\rho}^2$.\end{Lemma}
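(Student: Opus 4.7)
The plan is to recast the problem as finding the closest point to the origin in a closed affine (hence convex) subset of the Hilbert space $H^2_{\rho}(M,\partial M)$, and then invoke the standard projection theorem. Set
\[
E := \{f \in H^2_{\rho}(M,\partial M) : (f^*-f_0,z) \in J_z \text{ for any } z \in Z_0\}.
\]
Then $M_H(Z_0,J,\rho) = \inf_{f \in E} \|f\|_{\partial M,\rho}^2$, and one wants a unique minimizer in $E$.

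First, $E$ is non-empty because $M_H(Z_0,J,\rho) < +\infty$, and $E$ is affine (hence convex) by linearity of $f \mapsto f^*$: for any $f_1,f_2 \in E$ and any $\lambda \in \mathbb{R}$, the germ $(\lambda f_1 + (1-\lambda)f_2)^* - f_0 = \lambda(f_1^*-f_0) + (1-\lambda)(f_2^*-f_0)$ lies in $J_z$ at each $z \in Z_0$. The main step is to prove that $E$ is closed in $H^2_{\rho}(M,\partial M)$. Let $f_s \in E$ converge in $\|\cdot\|_{\partial M,\rho}$-norm to some $f \in H^2_{\rho}(M,\partial M)$ (completeness is guaranteed since $H^2_{\rho}(M,\partial M)$ is Hilbert). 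For any compact $K \subset M$, Lemma \ref{l:b1-p} applied on, say, the face $\partial D_1 \times M_1$ gives
\[
\sup_{z \in K} |f_s^*(z) - f^*(z)| \le C_K \|f_s - f\|_{\partial D_1 \times M_1,\rho} \le C_K \|f_s - f\|_{\partial M,\rho} \to 0,
\]
so $f_s^* \to f^*$ locally uniformly on $M$. For each $z \in Z_0$, apply Lemma \ref{l:closedness} with $q=1$, $N = J_z$, and the sequence $(f_s^* - f_0)$ converging uniformly on a small neighborhood of $z$ to $f^*-f_0$; this yields $(f^*-f_0,z) \in J_z$. Hence $f \in E$, establishing closedness.

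Since $E$ is a non-empty closed convex subset of the Hilbert space $H^2_{\rho}(M,\partial M)$, the projection theorem produces a unique element $f \in E$ of minimal norm, and $\|f\|_{\partial M,\rho}^2 = M_H(Z_0,J,\rho)$ by definition. The main obstacle is the closedness of $E$; it is handled by combining the quantitative pointwise estimate of Lemma \ref{l:b1-p} (norm control implies locally uniform control on $M$) with the closedness of submodules under locally uniform limits (Lemma \ref{l:closedness}). This avoids having to separately identify boundary values of a weakly convergent subsequence, which is how the one-dimensional analogue Lemma \ref{l:exists of $M_H$} proceeds; I prefer the clean Hilbert-space projection argument above because the relevant pointwise control for members of $H^2_{\rho}(M,\partial M)$ is already packaged in Lemma \ref{l:b1-p}.
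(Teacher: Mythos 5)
Your proof is correct, and it takes a cleanly different functional-analytic route from the paper's. The paper argues directly on a minimizing sequence: it extracts a weakly convergent subsequence, uses weak lower semicontinuity of the norm, then must identify the weak limit's associated holomorphic function with the locally uniform limit of the $f_j^*$ — this requires both Lemma \ref{l:b1-p} (for a Montel-type normal-families step) and the observation that point evaluations $g\mapsto g^*(z)$ are represented by elements $g_z$ of the Hilbert space, so that weak convergence forces pointwise convergence of the $f_j^*$; uniqueness is then a separate parallelogram-law computation. You instead show once and for all that the admissible set $E$ is a nonempty closed convex (affine) subset of the Hilbert space $H^2_{\rho}(M,\partial M)$ and invoke the projection theorem. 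Your closedness step is easier than the paper's corresponding step because you work with norm-convergent sequences, for which Lemma \ref{l:b1-p} immediately yields locally uniform convergence of $f_s^*$ to $f^*$ (using linearity of $P_{\partial M}$ and $\|\cdot\|_{\partial D_1\times M_1,\rho}\le\|\cdot\|_{\partial M,\rho}$), so no identification of a weak limit is needed; Lemma \ref{l:closedness} then closes the germ condition exactly as in the paper. What you give up is only cosmetic: the parallelogram-law and completeness arguments that the paper writes out are absorbed into the black-boxed projection theorem. Both proofs rest on the same two essential ingredients, Lemma \ref{l:b1-p} and Lemma \ref{l:closedness}, and your packaging is arguably the tidier of the two.
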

\begin{proof}
Firstly, we prove the existence of $f$.
As $M_H(Z_0,J,\rho)<+\infty,$ there is $\{f_{j}\}_{j\in\mathbb{Z}_{>0}}\subset H^2_{\rho}(M,\partial M)$  such that
$\lim_{j\rightarrow+\infty}\|f_j\|^2_{\partial M,\rho}=M_H(Z_0,J,\rho)<+\infty$ 
and $(f^*_{j}-f_0,z)\in J_z$ for any $z\in Z_0$ and any $j$. 
Then there is a subsequence of $\{f_j\}_{j\in\mathbb{Z}_{>0}}$ denoted also by $\{f_j\}_{j\in\mathbb{Z}_{>0}}$, which weakly converges to an element $f\in H_{\rho}^2(M,\partial M)$, i.e.,
	\begin{equation}\label{eq:0803a}
		\lim_{j\rightarrow+\infty}\ll f_j,g\gg_{\partial M,\rho}=\ll f,g\gg_{\partial M,\rho}
	\end{equation}
holds for any $g\in H^2_{\rho}(M,\partial M)$. Hence we have 
\begin{equation}
	\label{eq:220807g}\|f\|_{\partial M,\rho}^2\le \lim_{j\rightarrow+\infty}\|f_j\|^2_{\partial M,\rho}=M_H(Z_0,J,\rho).
\end{equation}
It follows from Lemma \ref{l:b1-p} that there is a subsequence of $\{f_j\}_{j\in\mathbb{Z}_{>0}}$ denoted also by $\{f_j\}_{j\in\mathbb{Z}_{>0}}$, which satisfies that $f_j^*$ uniformly converges to a holomorphic function $g_0$ on $M$ on any compact subset of $M$. 
Following from Lemma \ref{l:b1-p}, for any $z\in M$, there exists $g_z\in H^2_{\rho}(M,\partial M)$ such that 
\begin{equation}
	\label{eq:0217a}\ll g,g_z\gg_{\partial M,\rho}=g(z)
\end{equation}
holds for any $g\in H^2_{\rho}(M,\partial M)$.
By equality \eqref{eq:0803a} and \eqref{eq:0217a}, we get that 
$$\lim_{j\rightarrow+\infty}f_j^*(z)=f^*(z)$$
for any $z\in M$, hence we know that $f^*=g_0$ and $f_j^*$ uniformly converges to $f^*$ on any compact subset of $M$. Following from Lemma \ref{l:closedness} and $(f^*_{j}-f_0,z)\in J_z$ for any $z\in Z_0$ and any $j$, we get 
$$(f^*-f_0,z)\in J_z$$
for any $z\in Z_0$.
By definition of $M_H(Z_0,J,\rho)$ and inequality \eqref{eq:220807g}, we have 
$$\|f\|_{\partial M,\rho}^2=M_H(Z_0,J,\rho).$$
Thus,  we obtain the existence of $f$.

Now, we prove the uniqueness of $f$ by contradiction:
if not, there exist two different $g_{1}\in H^2_{\rho}(M,\partial M)$ and $g_{2}\in H^2_{\rho}(M,\partial M)$ satisfying that $\|g_1\|_{\partial M,\rho}^2=\|g_1\|_{\partial M,\rho}^2=M_H(Z_0,J,\rho)$, 
$(g_{1}^*-f_0,z)\in J_z$ and $(g_{2}^*-f_0,z)\in J_z$ for any $z\in Z_0$. It is clear that 
$$(\frac{g^*_{1}+g^*_{2}}{2}-f_0,z)\in J_z.$$
Note that
\begin{equation}\nonumber
\begin{split}
\|\frac{g_1+g_2}{2}\|_{\partial M,\rho}^2+\|\frac{g_1-g_2}{2}\|_{\partial M,\rho}^2=
\frac{\|g_1\|_{\partial M,\rho}^2+\|g_2\|_{\partial M,\rho}^2}{2}=M_H(Z_0,J,\rho),
\end{split}
\end{equation}
then we obtain that
$$\|\frac{g_1+g_2}{2}\|_{\partial M,\rho}^2<M_H(Z_0,J,\rho),$$
 which contradicts the definition of $M_H(Z_0,J,\rho)$.

Thus, Lemma \ref{l:exists of $M_H$2} has been proved.
\end{proof}

Let $Z_j=\{z_{j,1},z_{j,2},...,z_{j,m_j}\}\subset D_j$ for any  $j\in\{1,2,...,n\}$, where $m_j$ is a positive integer. Denote that 
$$Z_0:=\prod_{1\le j\le n}Z_j\subset M.$$
Let $\psi=\max_{1\le j\le n}\{\sum_{1\le k\le m_j}p_{j,k}G_{D_j}(\cdot,z_{j,k})\}$.

Let $\hat\rho$ be a Lebesgue measurable function on $\overline M$, which satisfies that $\inf_{\overline{M}}\hat\rho>0$ and $\hat\rho(w_j,\hat w_j)\leq \liminf_{w\rightarrow w_j}\hat\rho(w,\hat w_j)$ for any $(w_j,\hat w_j)\in \partial D_j\times M_j\subset \partial M$ and any $1\le j\le n$, where $M_j=\prod_{l\not=j}D_l$. Let $\rho$ be a Lebesgue measurable function on $\partial M$ such that
$$\rho(w_1,\ldots,w_n):=\left(\sum_{1\le k\le m_j}p_{j,k}\frac{\partial G_{D_j}(w_j,z_{j,k})}{\partial v_{w_j}}\right)^{-1}\hat\rho$$
on $\partial D_j\times {M_j}$ for any $1\le j\le n$, thus we have $\inf_{\partial M}\rho>0$.

The following proposition gives an sufficient condition for $f\in H_{\rho}^2(M,\partial M)$.

\begin{Proposition}
	\label{p:b6}Let $g$ be a holomorphic function on $M$. Assume that 
	$$\liminf_{r\rightarrow1-0}\frac{\int_{\{z\in M:2\psi(z)\ge\log r\}}|g(z)|^2\hat\rho}{1-r}<+\infty,$$
	then there is $f\in H^2_{\rho}(M,\partial M)$ such that $f^*=g$ and 
	$$\|f\|_{\partial M,\rho}^2\le \frac{1}{\pi} \liminf_{r\rightarrow1-0}\frac{\int_{\{z\in M:2\psi(z)\ge\log r\}}|g(z)|^2\hat\rho}{1-r}.$$
\end{Proposition}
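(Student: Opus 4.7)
The plan is to proceed in two stages: first, show that $g$ defines an element $f$ of $H^2_\rho(M,\partial M)$ with $f^*=g$; second, obtain the sharp constant $1/\pi$, rather than the naive $n/\pi$ coming from summing slice estimates, via a higher-dimensional coarea argument.

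\emph{Stage 1 (membership).} Fix $j\in\{1,\ldots,n\}$ and, for each $\hat w_j\in M_j$, apply the one-variable Lemma \ref{l:01a} to the slice $g(\cdot,\hat w_j)$ on $D_j$, with the lemma's $\psi$ taken as $2\psi_j:=2\sum_k p_{j,k}G_{D_j}(\cdot,z_{j,k})$ and its $\tilde\rho$ taken as $\hat\rho(\cdot,\hat w_j)$ (whose positive lower bound and boundary lower-semicontinuity are inherited from $\hat\rho$). Since $\psi\ge\psi_j$ pointwise, the set $\{w_j:2\psi_j(w_j)\ge\log r\}\times\{\hat w_j\}$ is contained in $\{w:2\psi(w)\ge\log r\}$, so Fubini together with Fatou applied to the hypothesis gives
$$\int_{M_j}\liminf_{r\to1-0}\frac{\int_{\{2\psi_j(w_j)\ge\log r\}}|g(w_j,\hat w_j)|^2\hat\rho\,dA(w_j)}{1-r}\,d\mu_j(\hat w_j)<+\infty.$$
Hence the inner liminf is finite for a.e.\ $\hat w_j$, so Lemma \ref{l:01a} yields $g(\cdot,\hat w_j)\in H^2(D_j)$ together with the slice bound $\tfrac12\int_{\partial D_j}|g|^2\rho(\cdot,\hat w_j)|dw_j|\le(\text{that liminf})$. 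Integrating in $\hat w_j$ and invoking Fatou shows $g\in H^2_\rho(M,\partial D_j\times M_j)$. As the injective extension $P_{\partial M,j}(g)=g$ for every $j$, all these maps agree, and $g$ thus yields an element $f\in H^2_\rho(M,\partial M)$ with $f^*=g$.

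\emph{Stage 2 (sharp bound via coarea).} Summing the slice bounds over $j$ gives only $\|f\|^2_{\partial M,\rho}\le(n/\pi)(\cdots)$. To remove the factor $n$, apply the coarea formula (Lemma \ref{l:3}) in $\mathbb{R}^{2n}$ to the function $2\psi$ on the shell $\{\log r\le 2\psi\le 0\}$:
\begin{equation*}
\int_{\{\log r\le 2\psi\le 0\}}|g|^2\hat\rho\,dV=\int_{\log r}^0\left(\int_{\{2\psi=t\}}|g|^2\hat\rho\,|\nabla(2\psi)|^{-1}dH_{2n-1}\right)dt.
\end{equation*}
The geometric key is that, up to an $H_{2n-1}$-null corner set,
$$\{2\psi=t\}=\bigsqcup_{j=1}^n \partial D_{j,e^t}\times M_{j,e^t},$$
and on the $j$-th face $2\psi$ coincides with $2\psi_j(w_j)$, so $|\nabla(2\psi)|=2\,\partial\psi_j/\partial v_{w_j}$. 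Using $\rho=(\partial\psi_j/\partial v_{w_j})^{-1}\hat\rho$ on $\partial D_j\times M_j$, the surface integral therefore equals
$$\frac{1}{2}\sum_{j=1}^n\int_{M_{j,e^t}}\int_{\partial D_{j,e^t}}|g|^2\rho\,|dw_j|\,d\mu_j.$$
As $t\to 0^-$, slice-wise application of Lemma \ref{l:0-4v2} in $w_j$ (with $\hat w_j$ held fixed) combined with Fatou in $\hat w_j$ shows the liminf of this surface integral is at least $\tfrac12\sum_j\int_{M_j}\int_{\partial D_j}|g|^2\rho|dw_j|d\mu_j=\pi\|f\|^2_{\partial M,\rho}$. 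Dividing the coarea identity by $1-r$, using $(1-r)/(-\log r)\to 1$, and invoking Fatou in the $t$-average yields $\pi\|f\|^2_{\partial M,\rho}\le\liminf_{r\to 1-0}\frac{\int_{\{2\psi\ge\log r\}}|g|^2\hat\rho}{1-r}$, which is the claimed inequality.

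\emph{Main obstacle.} The delicate point is the $t\to 0^-$ interchange. Lemma \ref{l:0-4v2} assumes the weight is continuous up to $\partial D_j$, whereas $\hat\rho$ is only lower-semicontinuous at $\partial M$, and the outer slices $M_{j,e^t}$ themselves shrink with $t$. Both issues are addressed by monotone/dominated convergence together with Fatou in the appropriate variable, but care is needed to verify that the corner pieces (where two or more $\psi_l$ simultaneously reach the level $t$) have $H_{2n-1}$-measure zero in each level set, and that their bulk contribution to the shell integral is $O((1-r)^2)$, so that the constant $1/\pi$ is not spoiled.
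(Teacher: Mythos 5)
Your overall mechanism is the same as the paper's: the constant $1/\pi$ (rather than the naive $n/\pi$) comes from the fact that the $n$ boundary collars sit essentially disjointly inside $\{2\psi\ge\log r\}$, combined with a one--variable coarea computation and a Fatou passage to the nontangential boundary values. The paper realizes this by fixing compact sets $K_j\Subset M_j$ and working on $\Omega_{j,r}:=\{w_j\in D_j:2\psi_j(w_j)\ge\log r\}\times K_j$, which for $r$ close to $1$ are pairwise disjoint subsets of $\{2\psi\ge\log r\}$ on which $\psi=\psi_j$ and only $w_j$ varies; summing the $n$ slice estimates then costs nothing. Your exact tiling of the level sets $\{2\psi=t\}$ into the faces $\partial D_{j,e^t}\times M_{j,e^t}$ is the same computation packaged globally; a side effect is that you invoke the coarea formula for $2\psi=\max_j 2\psi_j$, which is only piecewise smooth, whereas Lemma \ref{l:3} as stated requires $u\in C^1$. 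This is harmless (apply the formula on each region where $\psi=\psi_j$ and $\psi_j$ is harmonic with nonvanishing gradient, as the paper does), but as written you are using the lemma outside its stated hypotheses.

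The more substantive issue is Stage 1. The Fubini--Fatou argument only yields $g(\cdot,\hat w_j)\in H^2(D_j)$ for \emph{almost every} $\hat w_j\in M_j$, whereas membership in $H^2(M,\partial D_j\times M_j)$ requires $g(\cdot,\hat w_j)\in H^2(D_j)$ for \emph{every} $\hat w_j\in M_j$ (and one also needs the family of nontangential boundary values to define a measurable element of $L^2(\partial D_j\times M_j,\rho\,d\mu)$). This can be repaired --- by plurisubharmonicity, $|g(w_j,\hat w_j^0)|^2$ is dominated by the average over a small ball of $\hat w_j$ of $|g(w_j,\hat w_j)|^2$, and the corresponding average of the least harmonic majorants of the a.e.\ good slices is a finite harmonic majorant because their values at a base point are locally integrable by your Fatou bound --- but the step is missing and the conclusion ``$g$ thus yields an element $f\in H^2_\rho(M,\partial M)$'' does not yet follow. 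The paper sidesteps this entirely: it first reduces to the already--established single--point case of the proposition (via the inclusion $\{\tilde\psi\ge-t\}\subset\{\psi\ge-Ct\}$ with $\tilde\psi=\max_j 2G_{D_j}(\cdot,z_{j,1})$) to obtain $f\in H^2(M,\partial M)$ with $f^*=g$, and only then proves the sharp norm bound.
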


\begin{proof}If $Z_0$ is a single point set, Proposition \ref{p:b6} can be referred to \cite{GY-saitohprodct}. 
Let $\tilde\psi(w_1,\ldots,w_n)=\max_{1\le j\le n}\{2G_{D_j}(w_j,z_{j,1})\}$ on $M$. 
	There exist $t_0>0$ and $C$ such that 
	$$\{z\in M:\tilde\psi(z)\ge-t\}\subset\{z\in M:\psi(z)\ge-Ct\}$$
	for any $t\in(0,t_0)$ (see \cite{GY-weightedsaitoh}), which implies that 
	$$\liminf_{r\rightarrow1-0}\frac{\int_{\{z\in M:\tilde\psi(z)\ge\log r\}}|g(z)|^2\hat\rho}{1-r}<+\infty.$$
	As Proposition \ref{p:b6} holds when $Z_0$ is a single point set,  there is $f\in H^2(M,\partial M)$ such that $f^*=g$.

In the following part, we will prove that 
$$\sum_{1\le j\le n}\int_{M_j}\int_{\partial D_j}|f|^2\rho|dw_j|d\mu_j(\hat w_j)\le 2\liminf_{r\rightarrow1-0}\frac{\int_{\{z\in M:2\psi(z)\ge\log r\}}|g(z)|^2\hat\rho}{1-r}.$$
	
Choose any compact subset $K_j$ of $M_j$ for any $1\le j\le n$, and denote that
$$\Omega_{j,r}:=\{z\in D_j:2\sum_{1\le k\le m_j}p_{j,k}G_{D_j}(z,z_{j,k})\geq \log r\}\times K_j\subset \{z\in M:2\psi(z)\ge \log r\}$$
for any $1\le j\le n$. There exists $r_1\in(0,1)$ such that $\Omega_{j,r_1}\cap\Omega_{j',r_1}=\emptyset$ for any $j\not=j'$ and $|\bigtriangledown\psi_j|\not=0$ on $\{z\in D_j:2\psi_j\ge\log r\}$, where $\psi_j=\sum_{1\le k\le m_j}p_{j,k}G_{D_j}(\cdot,z_{j,k})$ on $D_j$.
Note that $f(\cdot,\hat w_1)=\gamma_1(g(\cdot,\hat w_1))$ denotes the nontangential boundary value of $g(\cdot,\hat w_1)$ a.e. on $\partial D_1$ for any $\hat w_1\in M_1$. As
 $$\hat\rho(w_1,\hat w_1)\leq \liminf_{w\rightarrow w_1}\hat\rho(w,\hat w_1)$$
  for any $(w_1,\hat w_1)\in \partial D_1\times M_1$,  
it follows from Fatou's lemma, Lemma \ref{l:4} and Lemma \ref{l:3} that
\begin{equation}
\nonumber \begin{split}
	&\int_{K_1}\int_{\partial D_1}|f(w_1,\hat w_1)|^2\rho|dw_1|d\mu_1(\hat w_1)\\
	=&\int_{K_1}\left(\int_{\partial D_1}\frac{|\gamma_1(g(\cdot,\hat w_1))|^2}{\sum_{1\le k\le m_j}p_{1,k}\frac{\partial G_{D_1}(w_1,z_{1,k})}{\partial v_{w_1}}}\hat\rho|dw_1|\right)d\mu_1(\hat w_1)\\
	\le& \liminf_{r\rightarrow1-0}\frac{\int_{\log r}^0\left(\int_{K_1}\left(\int_{\{\psi_1=s\}}\frac{|g|^2\hat\rho}{|\bigtriangledown \psi_1|}|dw_1|\right)d\mu_1(\hat w_1)\right)ds}{-\log r}\\
	=&\liminf_{r\rightarrow1-0}\frac{\int_{\Omega_{1,r^2}}|g|^2\hat\rho}{-\log r}\\
	=&2\liminf_{r\rightarrow1-0}\frac{\int_{\Omega_{1,r}}|g|^2\hat\rho}{-\log r}.
\end{split}
\end{equation}	
By similar discussion, we have 
\begin{equation}
	\label{eq:220731a}
	\int_{K_j}\int_{\partial D_j}|f(w_j,\hat w_j)|^2\rho|dw_j|d\mu_j(\hat w_j)
	\le2\liminf_{r\rightarrow1-0}\frac{\int_{\Omega_{j,r}}|g|^2\hat\rho}{-\log r}
\end{equation} 
for any $1\le j\le n$. As $\Omega_{j,r}\cap\Omega_{j',r}=\emptyset$ for any $j\not=j'$ and $r\in(r_1,1)$, following from the arbitrariness of $K_j$ and inequality \eqref{eq:220731a} that 
\begin{equation}\nonumber
	\begin{split}
		&\sum_{1\le j\le n}	\int_{M_j}\int_{\partial D_j}|f(w_j,\hat w_j)|^2\rho|dw_j|d\mu_j(\hat w_j)\\
		\le&2\liminf_{r\rightarrow1-0}\frac{\int_{\{z\in M:2\psi(z)\ge\log r\}}|g|^2\hat\rho}{-\log r}\\
		=&2\liminf_{r\rightarrow1-0}\frac{\int_{\{z\in M:2\psi(z)\ge\log r\}}|g|^2\hat\rho}{1-r}\\
		<&+\infty.
	\end{split}
\end{equation}

Thus, Proposition \ref{p:b6} holds.
\end{proof}

The following lemma will be used in the proof of Lemma \ref{l:integral}.

\begin{Lemma}[\cite{skoda1972}]
\label{l:skoda}
	Let $u$ is a subharmonic function on $\Omega$. If $v(dd^cu,z_0)<1$, then $e^{-2u}$ is $L^1$ on a neighborhood of $z_0$.
\end{Lemma}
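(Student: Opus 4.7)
The plan is to reduce the statement to an estimate on the logarithmic potential of a finite measure via the Riesz decomposition, and then convert the exponential of this potential into an integrable power of distance by Jensen's inequality. This is essentially the original argument of Skoda in the planar setting.

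First, I would pick a disk $B(z_0, r_0) \Subset \Omega$ on which $u \not\equiv -\infty$. By the Riesz decomposition there is a finite positive Borel measure $\mu$ on $B(z_0, r_0)$ (the Riesz mass, equal to $dd^c u$ under the convention $d^c = \frac{\partial - \bar\partial}{2\pi\sqrt{-1}}$) and a harmonic function $h$ on $B(z_0, r_0)$ such that
\[
u(z) \;=\; \int_{B(z_0, r_0)} \log|z-w| \, d\mu(w) + h(z) \qquad \text{for } z \in B(z_0, r_0/2).
\]
Because $dd^c \log|z-z_0| = \delta_{z_0}$ under this normalization, the Lelong number $v(dd^c u, z_0)$ is exactly the point mass $\mu(\{z_0\})$. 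The hypothesis gives $\mu(\{z_0\}) < 1$, so by downward continuity of the finite measure $\mu$ I may shrink $r_0$ so that $\gamma := \mu(B(z_0, r_0)) < 1$ still holds.

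Next, writing $\mu = \gamma\nu$ with $\nu$ a probability measure on $B(z_0, r_0)$, we have
\[
e^{-2u(z)} \;=\; e^{-2h(z)} \cdot \exp\!\left( \int \log |z-w|^{-2\gamma} \, d\nu(w) \right),
\]
and Jensen's inequality applied to the convex function $\exp$ against the probability measure $\nu$ gives
\[
\exp\!\left( \int \log |z-w|^{-2\gamma} \, d\nu(w) \right) \;\le\; \int |z-w|^{-2\gamma} \, d\nu(w).
\]
Since $h$ is harmonic on $B(z_0, r_0)$, the factor $e^{-2h}$ is bounded by a constant $C$ on $B(z_0, r_0/2)$. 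Integrating and applying Fubini,
\[
\int_{B(z_0, r_0/2)} e^{-2u(z)} \, dA(z) \;\le\; C \int \left( \int_{B(z_0, r_0/2)} |z-w|^{-2\gamma} \, dA(z) \right) d\nu(w),
\]
where the inner integral is uniformly bounded in $w$ precisely because $2\gamma < 2$. Since $\nu$ has unit mass, the right-hand side is finite, so $e^{-2u} \in L^1(B(z_0, r_0/2))$.

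The main obstacle is the first step: identifying the Lelong number with the point mass $\mu(\{z_0\})$ and using it to control $\mu(B(z_0, r_0))$ as $r_0 \to 0$. This is where the qualitative hypothesis $v(dd^c u, z_0) < 1$ is turned into the quantitative bound $\gamma < 1$ that makes Jensen's inequality effective; everything after is formal, and the threshold $1$ enters as the critical exponent for local integrability of $|z-w|^{-2\gamma}$ in the plane.
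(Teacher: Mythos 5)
Your argument is correct and complete: the identification of the Lelong number with the point mass of the Riesz measure, the reduction to $\gamma=\mu(B(z_0,r_0))<1$ by shrinking the disk, and the Jensen--Fubini step with the critical exponent $2\gamma<2$ are all sound. The paper itself gives no proof of this lemma (it is quoted from Skoda's 1972 paper), and your argument is exactly the standard one from that reference, so there is nothing to compare beyond noting agreement.
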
	
	
 Let $\varphi_j$ be a subharmonic function on $D_j$, which satisfies that  $\varphi_j$ is continuous at $z$ for any $z\in \partial D_j$. Let $\rho=\prod_{1\le j\le n}e^{-\varphi_j}$ on $\overline M$.
 \begin{Lemma}
 	\label{l:integral}
 	Assume that $n>1$. Let $f\in H_{\rho}^2(M,\partial M)$. Then for any compact subset $K$ of $M$, we have $\int_{K}|f^*|^2e^{-\varphi}<+\infty$.
 \end{Lemma}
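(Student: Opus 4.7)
The plan is to reduce the statement to a product situation and to combine the one-variable pointwise Hardy estimate with the finiteness of the Hardy norm along a transverse face. By compactness, any compact $K\subset M$ is contained in a finite union of polydisc-type products, so it suffices to treat $K=\prod_{j=1}^n K_j$ with $K_j\Subset D_j$.

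The first main step is a uniform pointwise estimate. Fix $\hat w_1\in\hat K_1:=\prod_{l\ge 2}K_l$. Then $f^*(\cdot,\hat w_1)\in H^2(D_1)$, and because $\varphi_1$ is continuous on the compact boundary $\partial D_1$, the weight $e^{-\varphi_1}$ is bounded above and below there by positive constants. Lemma \ref{l:bounded} therefore yields
\[
|f^*(w_1,\hat w_1)|^2\le C_{K_1}\int_{\partial D_1}|f(z_1,\hat w_1)|^2 e^{-\varphi_1(z_1)}\,|dz_1|
\]
uniformly in $w_1\in K_1$. Integrating against $\prod_{l\ge 2}e^{-\varphi_l(w_l)}\,dV(\hat w_1)$ over $\hat K_1$ and recognizing the right-hand side as an integral bounded by $\|f\|^2_{\partial D_1\times M_1,\rho}$ gives a constant bound $\int_{\hat K_1}|f^*(w_1,\hat w_1)|^2\prod_{l\ge 2}e^{-\varphi_l}\,dV\le 2\pi C_{K_1}\|f\|^2_{\partial D_1\times M_1,\rho}<+\infty$, uniformly in $w_1\in K_1$. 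Hence it remains to control $\int_{K_1}e^{-\varphi_1}\,dV(w_1)$.

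The latter integral is finite at any point of $K_1$ where $\varphi_1$ has Lelong number strictly less than $2$ (this is exactly the one-variable case of Lemma \ref{l:skoda} applied to $\varphi_1/2$). At the (isolated) points where the Lelong number is $\ge 2$, the above bound is insufficient; here the hypothesis $n>1$ becomes essential, for it provides a transverse face. Consider
\[
\tilde G(w_1):=\frac{1}{2\pi}\int_{M_{1,2}}\int_{\partial D_2}|f(w_1,z_2,\hat w_{1,2})|^2 e^{-\varphi_2(z_2)}\prod_{l\ge 3}e^{-\varphi_l(w_l)}\,|dz_2|\,dV.
\]
By Lemma \ref{l:0-4v3}, $\tilde G$ is the supremum of an increasing family of continuous functions obtained by replacing $\partial D_2$ with an interior approximation $\partial D_{2,r}$, so $\tilde G$ is nonnegative and lower semicontinuous on $D_1$. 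The identity $\int_{D_1}\tilde G(w_1)e^{-\varphi_1(w_1)}\,dV(w_1)=\|f\|^2_{\partial D_2\times M_2,\rho}<+\infty$ together with non-integrability of $e^{-\varphi_1}$ at a point $\tilde w_1$ with Lelong number $\ge 2$ forces $\tilde G(\tilde w_1)=0$; indeed if $\tilde G(\tilde w_1)>0$ then by LSC $\tilde G\ge c>0$ on a neighborhood $U$ of $\tilde w_1$, contradicting $\int_U e^{-\varphi_1}=+\infty$. The vanishing $\tilde G(\tilde w_1)=0$ in turn forces the boundary values of $f^*(\tilde w_1,\cdot)$ on $\partial D_2\times M_{1,2}$ to vanish, and the one-variable Hardy theory gives $f^*(\tilde w_1,\cdot)\equiv 0$ on $M_1$. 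This yields a factorization $f^*(w_1,\hat w_1)=(w_1-\tilde w_1)^{k}h(w_1,\hat w_1)$ with $h$ holomorphic and $k\ge 1$, so that $|f^*|^2 e^{-\varphi_1}$ becomes locally integrable near $\tilde w_1$ via Skoda applied to $\varphi_1-2k\log|w_1-\tilde w_1|$.

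The same argument is symmetric in the indices $j=1,\dots,n$, and combining the pointwise bound with the vanishing at each exceptional point in each variable, together with Fubini, gives $\int_K|f^*|^2 e^{-\varphi}<+\infty$. The main technical obstacle is matching the vanishing order $k$ produced by the LSC argument to the Lelong number of $\varphi_j$: the first-order vanishing above handles Lelong numbers up to $4$, and one must iterate (applying the LSC/integrability argument to the quotient $h$ viewed in a shifted Hardy-type space) or invoke a multiplier-ideal variant to absorb arbitrarily large Lelong numbers. Once this vanishing/multiplier bookkeeping is done uniformly over $K$, the desired local integrability follows.
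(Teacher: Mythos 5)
Your strategy---a uniform transverse Hardy bound in one variable, then forcing $f^*$ to vanish along $\{w_1=\tilde w_1\}$ wherever $e^{-\varphi_1}$ fails to be integrable, via lower semicontinuity of the face norm $\tilde G$---is viable in spirit, but as written it has a genuine gap that you yourself flag and do not close. A single application of the LSC/contradiction argument yields only first-order vanishing of $f^*$ along $\{w_1=\tilde w_1\}$, and Skoda applied to $\varphi_1-2\log|w_1-\tilde w_1|$ then covers only Lelong numbers $v(dd^c\varphi_1,\tilde w_1)<4$. Since $\varphi_1$ is merely subharmonic on $D_1$ (continuous only at $\partial D_1$), its interior Lelong numbers can be arbitrarily large, so the iteration on $h=f^*/(w_1-\tilde w_1)^k$ is not optional bookkeeping but the heart of the matter; carrying it out requires verifying at each stage that $h(\tilde w_1,\cdot)$ is still controlled by an $H^2$-type norm on the transverse face and that the uniform-in-$w_1$ bound survives the division (which is also needed for your final Fubini assembly). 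A second, smaller repair: Lemma \ref{l:0-4v3} obtains monotonicity in $r$ of the interior approximations only for the harmonic-measure weight $\frac{\partial G_{D_{2,r}}(\cdot,z_{2,k})}{\partial v}$; comparability of $e^{-\varphi_2}$ with that weight does not transfer monotonicity, so the claim that $\tilde G$ is a supremum of an increasing family of continuous functions needs to be restated with the correct weight before lower semicontinuity can be invoked.

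The paper's proof sidesteps all of this. Using the Weierstrass theorem and Siu's decomposition it writes $\varphi_j=2\log|g_j|+\varphi_j'$ with $g_j$ holomorphic and $v(dd^c\varphi_j',z)\in[0,2)$ for every $z\in D_j$, so Lemma \ref{l:skoda} gives local integrability of $\prod_{j}e^{-\varphi_j'}$ at once, and the lemma reduces to showing $f^*/\prod_j g_j\in\mathcal{O}(M)$. That is proved by exactly the transversality you exploit, but used once and without iteration: the sup bound \eqref{eq:0215a} in the $w_1$-direction together with $\inf_M\rho\,|g_2|^{2}>0$ on the face $\partial D_1\times M_1$ shows $f^*/g_2$ is locally $L^2$, hence holomorphic across the zero divisor of $g_2$. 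In effect the paper establishes in one stroke the full vanishing statement your scheme would have to build up order by order. I recommend adopting that decomposition; your LSC argument then becomes unnecessary.
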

	\begin{proof}
	Since $\varphi_j$ is continuous at $z$ for any $z\in \partial D_j$, for any $j$, it follows from Weierstrass theorem (see \cite{OF81}) and Siu's Decomposition Theorem that there exists a holomorphic function $g_j$ on $\mathbb{C}$ such that $\varphi_j-2\log|g_j|$ is subharmonic on $D_j$ and the Lelong number
	$$v(dd^c(\varphi_j-2\log|g_j|),z)\in[0,2)$$
	holds for any $z\in D_j$. Lemma \ref{l:skoda} shows that $\prod_{1\le j\le n}e^{-(\varphi_j-2\log|g_j|)}$ is locally integrable on $M$. Thus, it suffices to prove that $\frac{f^*}{\prod_{1\le j\le n}g_j}$ is holomorphic.
	
		As $f^*(\cdot,\hat w_1)\in H^2(D_1)$ for any $\hat w_1\in M_1$ and $\gamma_1(f^*)=f$ a.e. on $\partial D_1\times M_1$, it follows from Lemma \ref{l:0-1} that for any $K_1\Subset D_1$, there is $C_{K_1}>0$ such that 
	\begin{equation}
		\label{eq:0215a}
		\sup_{w_1\in K_1}\left|f^*(w_1,\hat w_1)\right|^2\le C_{K_1}\frac{1}{2\pi} \int_{\partial D_1}\left|f(z_1,\hat w_1)\right|^2|dz_1|,
	\end{equation}
	holds for a.e. $\hat w_1\in M_1$. Note that $\inf_{M} \frac{\rho}{e^{-2\log|g_l|}}>0$. As $f\in H_{\rho}^2(M,\partial M),$ we have 
	\begin{equation}\label{eq:0330}
		\begin{split}
			&\int_{M_j}\int_{\partial D_j}\left|\frac{f^*}{g_l}\right|^2|dz_j|d\mu_j(\hat w_j)\\
			\le& C_0\int_{M_j}\int_{\partial D_j}\left|f^*\right|^2e^{-2\log|g_l|}\frac{\rho}{e^{-2\log|g_l|}}|dz_j|d\mu_j(\hat w_j)\\
			\le&C_0\|f\|^2_{\partial M,\rho}\\
			<&+\infty
		\end{split}
	\end{equation}
for any $1\le j,l\le n$.
	 Since $g_2\not\equiv0$, inequality \eqref{eq:0215a} and \eqref{eq:0330} imply that 
	\begin{equation}
\nonumber		\begin{split}&\int_{M_1}\int_{K_1}\left|\frac{f^*}{g_2}(w_1,\hat w_1)\right|^2 \\
		\le&		C_1\int_{M_1}\sup_{w_1\in K_1}\left|\frac{f^*}{g_2}(w_1,\hat w_1)\right|^2d\mu_1(\hat w_1)\\
		\le& C_1C_{K_1}\int_{M_1}\left(\frac{1}{2\pi} \int_{\partial D_1}\left|\frac{f^*}{g_2}(z_1,\hat w_1)\right|^2|dz_1|\right)d\mu_1(\hat w_1)\\
					<&+\infty
		\end{split}
	\end{equation}
for any $K_1$,
	which implies that $\frac{f^*}{g_2}$ is holomorphic on $M$.	Thus, $\frac{f^*}{\prod_{1\le j\le n}g_j}\in\mathcal{O}(M)$. 
	\end{proof}

	\subsection{Concavity property of minimal $L^2$ integrals} 
In this section, we recall some results about the concavity property of minimal $L^2$ integrals (see \cite{GY-concavity,GY-concavity3, GY-concavity4}).

Let $M$ be an $n-$dimensional Stein manifold,  and let $K_{M}$ be the canonical (holomorphic) line bundle on $M$.
Let $\psi$ be a plurisubharmonic function on $M$,
and let  $\varphi$ be a Lebesgue measurable function on $M$,
such that $\varphi+\psi$ is a plurisubharmonic function on $M$. Take $T=-\sup_{M}\psi>-\infty$.

\begin{Definition}
\label{def:gain}
We call a positive measurable function $c$  on $(T,+\infty)$ in class $\mathcal{P}_T$ if the following two statements hold:

$(1)$ $c(t)e^{-t}$ is decreasing with respect to $t$;

$(2)$ there is a closed subset $E$ of $M$ such that $E\subset \{z\in Z:\psi(z)=-\infty\}$ and for any compact subset $K\subseteq M\backslash E$, $e^{-\varphi}c(-\psi)$ has a positive lower bound on $K$, where $Z$ is some analytic subset of $M$.	
\end{Definition}

Let $Z_{0}$ be a subset of $\{\psi=-\infty\}$ such that $Z_{0}\cap Supp(\{\mathcal{O}/\mathcal{I(\varphi+\psi)}\})\neq\emptyset$.
Let $U\supseteq Z_{0}$ be an open subset of $M$
and let $f$ be a holomorphic $(n,0)$ form on $U$.
Let $\mathcal{F}\supseteq\mathcal{I}(\varphi+\psi)|_{U}$ be an  analytic subsheaf of $\mathcal{O}$ on $U$.

Denote
\begin{equation*}
\begin{split}
\inf\Bigg\{\int_{\{\psi<-t\}}|\tilde{f}|^{2}e^{-\varphi}c(-\psi):(\tilde{f}-f)\in H^{0}(Z_0,&
(\mathcal{O}(K_{M})\otimes\mathcal{F})|_{Z_0})\\&\&{\,}\tilde{f}\in H^{0}(\{\psi<-t\},\mathcal{O}(K_{M}))\Bigg\},
\end{split}
\end{equation*}
by $G(t;c)$ (without misunderstanding, we denote $G(t;c)$ by $G(t)$),  where $t\in[T,+\infty)$,  $c\in\mathcal{P}_{T}$ satisfying $\int_T^{+\infty}c(l)e^{-l}dl<+\infty$,
$|f|^{2}:=\sqrt{-1}^{n^{2}}f\wedge\bar{f}$ for any $(n,0)$ form $f$ and $(\tilde{f}-f)\in H^{0}(Z_0,
(\mathcal{O}(K_{M})\otimes\mathcal{F})|_{Z_0})$ means $(\tilde{f}-f,z_0)\in(\mathcal{O}(K_{M})\otimes\mathcal{F})_{z_0}$ for all $z_0\in Z_0$.

We recall some results about the concavity for $G(t)$.
\begin{Theorem} [\cite{GY-concavity}]
\label{thm:general_concave}
 Assume that $G(T)<+\infty$. Then $G(h^{-1}(r))$ is concave with respect to $r\in(0,\int_{T}^{+\infty}c(l)e^{-l}dl)$, $\lim_{t\rightarrow T+0}G(t)=G(T)$ and $\lim_{t\rightarrow +\infty}G(t)=0$, where $h(t)=\int_{t}^{+\infty}c(l)e^{-l}dl$.
\end{Theorem}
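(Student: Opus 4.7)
The plan is to reduce the concavity to a sharp weighted $L^2$ extension inequality of Ohsawa-Takegoshi type, following the Guan-Zhou concavity theory. First I would establish that a minimizer $F_t$ realizing $G(t)$ exists for each $t \in [T, +\infty)$ with $G(t) < +\infty$. This is done by choosing a minimizing sequence, extracting a weak limit in the Hilbert space of holomorphic $(n,0)$-forms on $\{\psi < -t\}$ with norm $\bigl(\int |\cdot|^2 e^{-\varphi}c(-\psi)\bigr)^{1/2}$, and verifying that the $\mathcal{F}$-constraints along $Z_0$ pass to the limit via uniform convergence on compact subsets combined with the local closedness statement in Lemma \ref{l:closedness}. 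Monotonicity $G(t_1) \geq G(t_2)$ for $t_1 \leq t_2$ is immediate since admissible forms on $\{\psi < -t_1\}$ restrict to admissible forms on $\{\psi < -t_2\}$.

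The core of the argument is a one-sided inequality bounding $G(t_0) - G(t_0 + B)$ from above in terms of $h(t_0) - h(t_0 + B)$ and a slope depending on the data at the level $t_0 + B$. Fix a near-minimizer $F$ of $G(t_0 + B)$ on $\{\psi < -t_0 - B\}$. Extending it by a near-minimizer of $G(t_0)$ across the slab $\{-t_0 - B \leq \psi < -t_0\}$, form a test function $\tilde F = \chi\,F_{t_0} + (1-\chi)\,F + v$, where $\chi$ is a smooth cut-off function of $\psi$ supported in $\{\psi > -t_0 - B\}$ and equal to $1$ near $\{\psi = -t_0\}$, while $v$ is the minimal $L^2$-solution of $\bar\partial v = \bar\partial\chi \wedge (F - F_{t_0})$ obtained from a Donnelly-Fefferman / twisted Bochner-Kodaira estimate with weight of the form $e^{-\varphi - \eta(\psi)}$ for an auxiliary function $\eta$ calibrated to the density $c$. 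The decreasing property of $c(t)e^{-t}$ is precisely what allows $\eta$ to be chosen so as to yield the sharp constant. Rearranging the resulting inequality then shows that the difference quotient $(G(t_1) - G(t_2))/(h(t_1) - h(t_2))$ is monotone in the endpoints $t_1 < t_2$, which is equivalent to concavity of $G \circ h^{-1}$ in $r$.

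I anticipate the main obstacle to be obtaining the sharp constant in the twisted $\bar\partial$-estimate, which requires that the cut-off, weight, and twisting factor be tuned together precisely as in the proofs of the optimal $L^2$ extension theorems; a careless choice produces only a non-optimal inequality that is too weak for concavity. A secondary technical issue is the limited regularity of $\psi$ and $\varphi$, which I would handle by standard regularization: replacing $\psi$ by $\max(\psi,-m)$ smoothed by convolution, carrying out the smooth argument on a Stein exhaustion, and then passing to the limit using monotonicity of $G$ together with Fatou's lemma and dominated convergence.

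Finally, the two boundary limits are proved separately. The limit $\lim_{t\to T+0} G(t) = G(T)$ follows from the monotonicity of $G$ combined with the fact that the minimizers $F_t$ for $t > T$ converge along a subsequence, by weak compactness, to an admissible form at level $T$; conversely, the minimizer $F_T$ restricts to an admissible form at every $t > T$, giving matching upper bounds. The limit $\lim_{t\to +\infty} G(t) = 0$ follows because $\{\psi<-t\}$ decreases to $\{\psi=-\infty\}$, a set of Lebesgue measure zero in $M$, so by the dominated convergence theorem applied to $|F_T|^2 e^{-\varphi}c(-\psi) \cdot \mathbf{1}_{\{\psi<-t\}}$ the relevant integrals tend to zero.
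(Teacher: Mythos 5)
This theorem is not proved in the present paper --- it is quoted from \cite{GY-concavity} --- and your outline reproduces that reference's strategy: existence and uniqueness of minimizers via weak compactness together with the closedness of submodules (Lemma \ref{l:closedness}), a sharp twisted $\bar\partial$ estimate of Ohsawa--Takegoshi type across the slab $\{-t_0-B\le\psi<-t_0\}$ to control the difference quotients of $G$ against $h$, and the two boundary limits by Montel/Fatou and by dominated convergence on the shrinking sublevel sets. The one point to tighten is the placement of your cut-off: as written, $(1-\chi)F$ is evaluated on part of the slab where $F$ (a form defined only on $\{\psi<-t_0-B\}$) does not exist, so the transition of $\chi$ must take place where the form being truncated actually lives, exactly as in the $b_{t_0,B}(\psi)$ construction of \cite{GY-concavity}.
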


The following corollary gives a necessary condition for the concavity of $G(h^{-1}(r))$ degenerating to linearity.

\begin{Corollary}[\cite{GY-concavity}]	\label{c:linear}
 Assume that $G(T)\in(0,+\infty)$. If $G( {h}^{-1}(r))$ is linear with respect to $r\in(0,\int_{T}^{+\infty}c(l)e^{-l}dl)$, where $ {h}(t)=\int_{t}^{+\infty}c(l)e^{-l}dl$,
 then there is a unique holomorphic $(n,0)$ form $F$ on $M$ satisfying $(F-f)\in H^{0}(Z_0,(\mathcal{O}(K_{M})\otimes\mathcal F)|_{Z_0})$ and $G(t;c)=\int_{\{\psi<-t\}}|F|^2e^{-\varphi}c(-\psi)$ for any $t\geq T$. Furthermore,
\begin{equation}
\nonumber	\int_{\{-t_1\leq\psi<-t_2\}}|F|^2e^{-\varphi}a(-\psi)=\frac{G(T_1;c)}{\int_{T_1}^{+\infty}c(l)e^{-l}dl}\int_{t_2}^{t_1} a(t)e^{-t}dt
\end{equation}
for any nonnegative measurable function $a$ on $(T,+\infty)$, where $+\infty\geq t_1>t_2\geq T$.
\end{Corollary}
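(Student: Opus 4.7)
The plan is to establish existence and uniqueness of an extremal holomorphic $(n,0)$ form $F$ achieving $G(T)$, then use the linearity hypothesis to identify the pushforward of $|F|^2e^{-\varphi}\,dV_M$ under $-\psi$ as an explicit density on $(T,+\infty)$, from which the stated integral formula falls out by change of variables.

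For existence I would take a minimizing sequence $\{F_k\}$ of holomorphic $(n,0)$ forms satisfying $(F_k-f)\in H^{0}(Z_0,(\mathcal{O}(K_{M})\otimes\mathcal F)|_{Z_0})$ with $\int_M|F_k|^2e^{-\varphi}c(-\psi)\to G(T)$. Condition (2) of Definition~\ref{def:gain} provides a positive lower bound for $e^{-\varphi}c(-\psi)$ on every compact subset of $M\setminus E$, so a Montel-type argument extracts a subsequence converging locally uniformly on $M\setminus E$ to some holomorphic $F$; weak $L^2$ lower semicontinuity yields $\int_M|F|^2e^{-\varphi}c(-\psi)\le G(T)$, while Lemma~\ref{l:closedness} carries the germ condition across the limit, producing an extremal $F$. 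Uniqueness follows from the parallelogram identity exactly as in Lemmas~\ref{l:exists of $M_H$} and~\ref{l:exists of $M_H$2}: a second extremal $F'$ would make $(F+F')/2$ a competitor with strictly smaller integral, a contradiction.

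Next I would establish slab-by-slab extremality: for every $t\ge T$, the restriction $F|_{\{\psi<-t\}}$ attains $G(t;c)$. One direction, $G(t)\le\int_{\{\psi<-t\}}|F|^2e^{-\varphi}c(-\psi)$, is immediate since the restriction is a competitor. For the reverse, any strictly better $\tilde F$ on $\{\psi<-t\}$ could be extended back to all of $M$ via the quantitative Ohsawa--Takegoshi-type extension underpinning Theorem~\ref{thm:general_concave}; linearity of $G(h^{-1}(r))$ is precisely the condition that the extension constants line up, so the extension would beat $G(T)$, contradicting extremality of $F$. Combined with $\lim_{t\to+\infty}G(t)=0$ from Theorem~\ref{thm:general_concave}, linearity forces
\begin{equation*}
G(t)=\lambda\int_t^{+\infty}c(l)e^{-l}\,dl,\qquad \lambda:=\frac{G(T)}{\int_T^{+\infty}c(l)e^{-l}\,dl},
\end{equation*}
so $\int_{\{\psi<-t\}}|F|^2e^{-\varphi}c(-\psi)=\lambda\int_t^{+\infty}c(l)e^{-l}\,dl$ for every $t\ge T$.

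Let $\mu$ denote the pushforward of $|F|^2e^{-\varphi}\,dV_M$ under $-\psi$ on $(T,+\infty)$. Subtracting the identities above at $t_2<t_1$ gives
\begin{equation*}
\int_{[t_2,t_1)}c(s)\,d\mu(s)=\lambda\int_{t_2}^{t_1}c(s)e^{-s}\,ds \qquad\text{for all } T\le t_2<t_1.
\end{equation*}
Since this holds on every interval and $c>0$, the signed measure $c\cdot(d\mu-\lambda e^{-s}ds)$ vanishes on every interval, hence identically, forcing $d\mu(s)=\lambda e^{-s}\,ds$ on $(T,+\infty)$. The advertised formula for arbitrary nonnegative measurable $a$ is then a change of variables under $-\psi$: $\int_{\{-t_1\le\psi<-t_2\}}|F|^2e^{-\varphi}a(-\psi)=\int_{[t_2,t_1)}a(s)\,d\mu(s)=\lambda\int_{t_2}^{t_1}a(s)e^{-s}\,ds$. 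The main obstacle is the slab-by-slab extremality step, which requires invoking (or rerunning) the sharp extension theorem behind Theorem~\ref{thm:general_concave} with constants matched precisely so that any strict improvement on a sublevel set propagates to a strict improvement on $M$; once this is established, the remaining arguments are routine convex analysis and measure theory.
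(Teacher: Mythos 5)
The paper itself contains no proof of this statement: Corollary \ref{c:linear} is imported verbatim from \cite{GY-concavity} (as is the companion Lemma \ref{l:unique}), so there is no in-paper argument to compare your route against, and I can only assess the reconstruction on its own terms. Your first step (minimizing sequence, Montel, Fatou, parallelogram law for uniqueness) and your last step (identifying the pushforward of $|F|^2e^{-\varphi}$ under $-\psi$ from its integrals against $c$ over all intervals, then passing to arbitrary nonnegative measurable $a$) are sound; the latter is a clean way to obtain the ``furthermore'' clause once $G(t)=\int_{\{\psi<-t\}}|F|^2e^{-\varphi}c(-\psi)$ is known for all $t$.

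The genuine gap is the slab-by-slab extremality step, and it is not merely a deferred technicality: the mechanism you describe cannot produce the contradiction you want. The sharp estimate behind Theorem \ref{thm:general_concave} yields, from a competitor $\tilde F$ on $\{\psi<-t\}$, an extension $\hat F$ to $M$ with $\int_{M}|\hat F|^2e^{-\varphi}c(-\psi)\le\frac{h(T)}{h(t)}\int_{\{\psi<-t\}}|\tilde F|^2e^{-\varphi}c(-\psi)$. Every competitor satisfies $\int_{\{\psi<-t\}}|\tilde F|^2e^{-\varphi}c(-\psi)\ge G(t)$, and linearity says precisely that $G(t)=\frac{h(t)}{h(T)}G(T)$; hence the right-hand side is always $\ge G(T)$. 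In other words, under linearity the extension of the sublevel minimizer $F_t$ exactly \emph{ties} $G(T)$ --- it never beats it --- so no strict improvement propagates and you have not excluded the possibility $\int_{\{\psi<-t\}}|F|^2e^{-\varphi}c(-\psi)>G(t)$. What the tie does give, via uniqueness, is that the extension of $F_t$ coincides with $F$; but to convert this into the desired identity one must show $F|_{\{\psi<-t\}}=F_t$, and the indispensable tool for that is the orthogonality identity of Lemma \ref{l:unique}, namely $\int_{\{\psi<-t\}}|F|^2e^{-\varphi}c(-\psi)=G(t)+\int_{\{\psi<-t\}}|F-F_t|^2e^{-\varphi}c(-\psi)$, combined with the equality case of the extension estimate (all intermediate inequalities in the Ohsawa--Takegoshi step must saturate, forcing the $\bar\partial$-correction to vanish on $\{\psi<-t\}$). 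Your proposal never invokes this orthogonality identity, and without it every inequality you can write runs in the direction you already have from ``the restriction is a competitor.'' This equality analysis is the actual content of the proof in \cite{GY-concavity} and must be supplied for the argument to close.
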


We recall the existence and uniqueness of the holomorphic $(n,0)$ form related to $G(t)$.
\begin{Lemma}[\cite{GY-concavity}]
\label{l:unique}
Assume that $G(t)<+\infty$ for some $t\in[T,+\infty)$.
Then there exists a unique holomorphic $(n,0)$ form $F_{t}$ on
$\{\psi<-t\}$ satisfying $(F_{t}-f)\in H^{0}(Z_0,(\mathcal{O}(K_{M})\otimes\mathcal{F})|_{Z_0})$ and $\int_{\{\psi<-t\}}|F_{t}|^{2}e^{-\varphi}c(-\psi)=G(t)$.
Furthermore,
for any holomorphic $(n,0)$ form $\hat{F}$ on $\{\psi<-t\}$ satisfying $(\hat{F}-f)\in H^{0}(Z_0,(\mathcal{O}(K_{M})\otimes\mathcal{F})|_{Z_0})$ and
$\int_{\{\psi<-t\}}|\hat{F}|^{2}e^{-\varphi}c(-\psi)<+\infty$,
we have the following equality
\begin{equation}
\nonumber \begin{split}
&\int_{\{\psi<-t\}}|F_{t}|^{2}e^{-\varphi}c(-\psi)+\int_{\{\psi<-t\}}|\hat{F}-F_{t}|^{2}e^{-\varphi}c(-\psi)
\\=&
\int_{\{\psi<-t\}}|\hat{F}|^{2}e^{-\varphi}c(-\psi).
\end{split}
\end{equation}
\end{Lemma}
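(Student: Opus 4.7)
The natural setting is the Hilbert space $\mathcal{H}$ of holomorphic $(n,0)$-forms $\tilde F$ on $\{\psi<-t\}$ with $\int_{\{\psi<-t\}}|\tilde F|^{2}e^{-\varphi}c(-\psi)<+\infty$, equipped with the inner product $\langle\tilde F,\tilde G\rangle:=\int_{\{\psi<-t\}}\sqrt{-1}^{n^{2}}\tilde F\wedge\overline{\tilde G}\,e^{-\varphi}c(-\psi)$. The constraint set
\[
\mathcal{S}:=\bigl\{\tilde F\in\mathcal{H}:\tilde F-f\in H^{0}(Z_{0},(\mathcal{O}(K_{M})\otimes\mathcal{F})|_{Z_{0}})\bigr\}
\]
is nonempty (by the hypothesis $G(t)<+\infty$) and affine, and the goal is to exhibit and characterize its element of minimal norm $F_{t}$.

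\textbf{Existence.} First I would check that $\mathcal{H}$ is a genuine Hilbert space and that $\mathcal{S}$ is closed in $\mathcal{H}$. Given an $L^{2}$-Cauchy sequence of elements of $\mathcal{H}$, the positive lower bound for $e^{-\varphi}c(-\psi)$ on compact subsets of $\{\psi<-t\}\setminus E$ supplied by Definition \ref{def:gain} translates the weighted $L^{2}$ control into Lebesgue $L^{2}$ control on such compacts; mean value estimates for holomorphic forms upgrade this to uniform estimates, giving a holomorphic limit on $\{\psi<-t\}\setminus E$ which then extends across the analytic set $Z\supset E$ via a Riemann-type theorem. The same argument, applied to a minimizing sequence $\{F_{j}\}\subset\mathcal{S}$ with $\|F_{j}\|^{2}\to G(t)$ and combined with Lemma \ref{l:closedness} to propagate the germ condition at each $z_{0}\in Z_{0}$ to the limit and with Fatou's lemma to bound the norm, produces a candidate $F_{t}\in\mathcal{S}$ with $\|F_{t}\|^{2}\le G(t)$; the reverse inequality is immediate from the definition of $G(t)$.

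\textbf{Orthogonality and uniqueness.} Let $\hat F\in\mathcal{S}$ satisfy $\|\hat F\|^{2}<+\infty$. Affineness of $\mathcal{S}$ gives $F_{t}+\lambda(\hat F-F_{t})\in\mathcal{S}$ for every $\lambda\in\mathbb{C}$, so the scalar function
\[
\lambda\mapsto\|F_{t}+\lambda(\hat F-F_{t})\|^{2}=G(t)+2\,\mathrm{Re}\bigl(\bar\lambda\langle F_{t},\hat F-F_{t}\rangle\bigr)+|\lambda|^{2}\|\hat F-F_{t}\|^{2}
\]
attains its minimum at $\lambda=0$, forcing $\langle F_{t},\hat F-F_{t}\rangle=0$. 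Expanding $\|\hat F\|^{2}=\|F_{t}+(\hat F-F_{t})\|^{2}=\|F_{t}\|^{2}+\|\hat F-F_{t}\|^{2}$ is exactly the claimed Pythagorean identity. Uniqueness follows immediately: applying the identity with $\hat F=F_{t}'$ a second minimizer yields $\|F_{t}'-F_{t}\|^{2}=\|F_{t}'\|^{2}-\|F_{t}\|^{2}=G(t)-G(t)=0$.

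\textbf{Main obstacle.} The delicate point is the existence step, specifically verifying that the pointwise limit of a minimizing sequence is holomorphic on all of $\{\psi<-t\}$ (not only on $\{\psi<-t\}\setminus E$) and still belongs to $\mathcal{S}$. This requires combining the positive-lower-bound hypothesis on $e^{-\varphi}c(-\psi)$ outside $E$ from Definition \ref{def:gain}, a Riemann/Hartogs-type extension across the analytic set $Z\supset E$, and the closedness of the coherent submodule $\mathcal{O}(K_{M})\otimes\mathcal{F}$ under locally uniform convergence at each $z_{0}\in Z_{0}$ furnished by Lemma \ref{l:closedness}. Once the minimizer is produced inside the closed affine subset $\mathcal{S}\subset\mathcal{H}$, the orthogonality identity and uniqueness reduce to formal Hilbert-space geometry.
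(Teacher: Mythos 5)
The paper does not reprove this lemma but imports it from \cite{GY-concavity}, and your proposal reproduces the standard argument used there: a minimizing sequence controlled via the positive lower bound of $e^{-\varphi}c(-\psi)$ on compacts away from $E$, Montel plus extension across the exceptional analytic set, preservation of the germ condition by the closedness of coherent submodules under locally uniform convergence, Fatou for the norm bound, and then the variational perturbation $F_{t}+\lambda(\hat F-F_{t})$ to extract the orthogonality relation from which the Pythagorean identity and uniqueness follow. This is correct and essentially the same approach as the cited proof.
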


In the following, we recall some characterizations for the concavity of $G(h^{-1}(r))$ degenerating to linearity.

Assume that $M=\Omega$ is an open Riemann surface, which admitted a nontrivial Green function $G_{\Omega}$.  Let $Z_0=\{z_1,z_2,\ldots ,z_m\}\subset\Omega$ be a finite subset of $\Omega$ satisfying that $z_j\not=z_k$ for any $j\not=k$.

We recall some notations (see \cite{OF81}, see also \cite{guan-zhou13ap,GY-concavity,GMY-concavity2}).
  Let $p:\Delta\rightarrow\Omega$ be the universal covering from unit disc $\Delta$ to $\Omega$.
 we call the holomorphic function $f$  on $\Delta$ a multiplicative function,
 if there is a character $\chi$, which is the representation of the fundamental group of $\Omega$, such that $g^{\star}f=\chi(g)f$,
 where $|\chi|=1$ and $g$ is an element of the fundamental group of $\Omega$. It is known that for any harmonic function $u$ on $\Omega$,
there exists a $\chi_{u}$ and a multiplicative function $f_u\in\mathcal{O}^{\chi_u}(\Omega)$,
such that $|f_u|=p^{\star}\left(e^{u}\right)$.
Recall that for the Green function $G_{\Omega}(z,z_j)$,
there exist a $\chi_{z_j}$ and a multiplicative function $f_{z_j}\in\mathcal{O}^{\chi_{z_j}}(\Omega)$, such that $|f_{z_j}(z)|=p^{\star}\left(e^{G_{\Omega}(z,z_j)}\right)$ (see \cite{yamada,suita}).

The following Theorem gives a characterization of the concavity of $G(h^{-1}(r))$ degenerating to linearity.
\begin{Theorem}[\cite{GY-concavity3}, see also \cite{GMY-boundary3}]
	\label{thm:m-points}
 Let $G(0)\in(0,+\infty)$ and $p_j=\frac{1}{2}v(dd^c(\psi),z_j)>0$ for any $j\in\{1,2,\ldots,m\}$. For any $j\in\{1,2,\ldots,m\}$, assume that one of the following conditions holds:
	
	$(A)$ $\varphi+a\psi$ is  subharmonic near $z_j$ for some $a\in[0,1)$;
	
	$(B)$ $(\psi-2p_jG_{\Omega}(\cdot,z_j))(z_j)>-\infty$.
	
 Then $G(h^{-1}(r))$ is linear with respect to $r$ if and only if the following statements hold:
	
	$(1)$ $\psi=2\sum_{1\le j\le m}p_jG_{\Omega}(\cdot,z_j)$;
	
	$(2)$ $\varphi+\psi=2\log|g|+2\sum_{1\le j\le m}G_{\Omega}(\cdot,z_j)+2u$ and $\mathcal{F}_{z_j}=\mathcal{I}(\varphi+\psi)_{z_j}$ for any $j\in\{1,2,\ldots,m\}$, where $g$ is a holomorphic function on $\Omega$ such that $ord_{z_j}(g)=ord_{z_j}(f)$ for any $j\in\{1,2,\ldots,m\}$ and $u$ is a harmonic function on $\Omega$;
	
	$(3)$ $\prod_{1\le j\le m}\chi_{z_j}=\chi_{-u}$, where $\chi_{-u}$ and $\chi_{z_j}$ are the  characters associated to the functions $-u$ and $G_{\Omega}(\cdot,z_j)$ respectively;
	
	$(4)$  $\lim_{z\rightarrow z_k}\frac{f}{gp_*\left(f_u\left(\prod_{1\le j\le m}f_{z_j}\right)\left(\sum_{1\le j\le m}p_{j}\frac{d{f_{z_{j}}}}{f_{z_{j}}}\right)\right)}=c_0$ for any $k\in\{1,2,\ldots,m\}$, where $c_0\in\mathbb{C}\backslash\{0\}$ is a constant independent of $k$,  $f_{u}$ is a holomorphic function on $\Delta$ such that $|f_{u}|=p^*(e^{u})$ and $f_{z_j}$ is a holomorphic function on $\Delta$ such that $|f_{z_j}|=p^*\left(e^{G_{\Omega}(\cdot,z_j)}\right)$
	\end{Theorem}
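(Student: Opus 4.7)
The plan is to prove the inequality $M_S(Z_0,J,\lambda)\le M_H(Z_0,J,\rho)/(n\pi^{n-1})$ by producing an explicit competitor in $H^2_\lambda(M,S)$ from the extremal for $M_H$, and then to characterize equality by reducing to Theorem \ref{thm:2.2} with $p_{j,k}=2$. First I take the unique minimizer $f\in H^2_\rho(M,\partial M)$ of $M_H(Z_0,J,\rho)$ guaranteed by Lemma \ref{l:exists of $M_H$2}, satisfying $f^*(z_\beta)=h_0(z_\beta)$ for every $\beta\in I_1$. The aim is to show $f^*\in H^2_\lambda(M,S)$ with $\|f^*\|^2_{S,\lambda}\le \|f\|^2_{\partial M,\rho}/(n\pi^{n-1})$, so that $f^*$ (viewed through $P_S$) is admissible for $M_S$ with the correct interpolation values at each $z_\beta$.

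The key estimate is a shell inequality. Writing $\rho=\rho_j(w_j)\cdot\prod_{l\ne j}e^{-\varphi_l(w_l)}$ on $\partial D_j\times M_j$ with $\rho_j(w_j)=(\sum_k 2\partial G_{D_j}(w_j,z_{j,k})/\partial v_{w_j})^{-1}e^{-\varphi_j(w_j)}$, Lemma \ref{l:0-4v3} applied to each face yields
\begin{equation*}
2\pi\|f\|^2_{\partial D_j\times M_j,\rho}=\lim_{r\to 1^-}\int_{M_{j,r}}\int_{\partial D_{j,r}}|f^*|^2\rho_j\prod_{l\ne j}e^{-\varphi_l}|dw_j|\,d\mu_j(\hat w_j),
\end{equation*}
while iterating the same lemma (or arguing by dominated convergence on shells shrinking to $S$) gives $(2\pi)^n\|f^*\|^2_{S,\lambda}=\lim_{r\to 1^-}\int_{S_r}|f^*|^2\prod_j\rho_j|dw_1|\cdots|dw_n|$, where $S_r=\prod_j\partial D_{j,r}$ and the $D_{j,r},M_{j,r}$ are sublevel sets as in Lemma \ref{l:0-4v3}. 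The matter then reduces to
\begin{equation*}
\sum_{j=1}^n\int_{M_{j,r}}\int_{\partial D_{j,r}}|f^*|^2\rho_j\prod_{l\ne j}e^{-\varphi_l}\ge \frac{n\pi^{n-1}}{(2\pi)^{n-1}}\int_{S_r}|f^*|^2\prod_j\rho_j
\end{equation*}
for $r$ close to $1$. This is obtained by fixing $j$ and successively pushing the Bergman-type interior integrals over each $D_{l,r}$ ($l\ne j$) to Hardy-type boundary integrals over $\partial D_{l,r}$ via the mean value / Poisson comparison for the subharmonic function $|f^*|^2 e^{-\varphi_l}$ on each disc-like factor; each such comparison costs a factor of $\pi$, giving $\pi^{n-1}$ from $n-1$ factors, while summing over the $n$ choices of $j$ produces the remaining factor $n$. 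Dividing by $2\pi$ and letting $r\to 1^-$ gives $\|f\|^2_{\partial M,\rho}\ge n\pi^{n-1}\|f^*\|^2_{S,\lambda}$, which in particular shows $\int_S|f^*|^2\lambda<\infty$; a standard density argument (approximation by $\mathcal{O}(M)\cap C(\overline M)$ functions or contractions when available) then places $f^*$ in $H^2_\lambda(M,S)$.

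For the equality characterization, saturation of the shell inequality forces each pointwise mean-value comparison above to be sharp, which forces $|f^*|^2\prod_l e^{-\varphi_l}$ to have a specific harmonic structure in each variable. Translating this into the framework of Theorem \ref{thm:2.2} applied with $p_{j,k}=2$, interpolation datum $f_0=\prod_j h_j$, and vanishing exponents $\gamma_{j,k}=0$ (because $h_j(z_{j,k})\ne 0$ and no higher-order data is prescribed), the equality conditions of that theorem reduce to the statements (1)--(3) listed here: (1) encodes the harmonic decomposition $\varphi_j=2u_j$; (2) encodes the character matching $\prod_k\chi_{j,z_{j,k}}=\chi_{j,-u_j}$ needed so that the multiplicative form $f_{u_j}(\prod_k f_{z_{j,k}})(\sum_k df_{z_{j,k}}/f_{z_{j,k}})$ descends single-valued to $D_j$ via $(P_j)_*$; and (3) fixes the precise normalization of the unique extremal at each pole $z_{j,k}$ (cf.\ Remark \ref{rem2.1}). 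Conversely, given (1)--(3), the explicit extremal provided by Remark \ref{rem2.1} in the present setting simultaneously achieves $M_H$ and $M_S$ with $\|F_0\|^2_{\partial M,\rho}=n\pi^{n-1}\|F_0^*\|^2_{S,\lambda}$, verified by direct calculation using the equality case of each factor-wise Bergman/Hardy comparison. The main obstacle is the shell inequality together with the bookkeeping of its equality case through the shifted Green functions $G_{D_{j,r}}(\cdot,z_{j,k})$ and the subsequent limit $r\to 1^-$, where the factor $n\pi^{n-1}$ is produced and the extremal structure is identified.
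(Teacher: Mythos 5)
Your proposal does not address the statement it is supposed to prove. The statement is Theorem \ref{thm:m-points}: a characterization, on a single open Riemann surface $\Omega$ with finitely many prescribed points $z_1,\ldots,z_m$, of when the minimal-$L^2$-integral function $G(h^{-1}(r))$ is linear, the answer being the four conditions $(1)$--$(4)$ involving $\psi=2\sum p_jG_{\Omega}(\cdot,z_j)$, the decomposition $\varphi+\psi=2\log|g|+2\sum G_{\Omega}(\cdot,z_j)+2u$, the character identity $\prod\chi_{z_j}=\chi_{-u}$, and the limit condition on $f/\bigl(gp_*\bigl(f_u(\prod f_{z_j})(\sum p_j\,df_{z_j}/f_{z_j})\bigr)\bigr)$. (In the paper this theorem is recalled from \cite{GY-concavity3} as a preparatory tool and is not proved there; any proof would have to run through the concavity machinery of Theorem \ref{thm:general_concave}, Corollary \ref{c:linear} and Lemma \ref{l:unique}, constructing the extremal form and analyzing when the concave function degenerates to a line.)

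What you have written instead is a proof sketch of Theorem \ref{thm:3.1}, the comparison $M_S(Z_0,J,\lambda)\leq M_H(Z_0,J,\rho)/(n\pi^{n-1})$ between the two Hardy-space minimal integrals on the product domain $M=\prod D_j$, together with its equality characterization. Every object you manipulate --- the spaces $H^2_{\rho}(M,\partial M)$ and $H^2_{\lambda}(M,S)$, the shell sets $D_{j,r}$, $M_{j,r}$, $S_r$, the constant $n\pi^{n-1}$, Lemma \ref{l:0-4v3}, Theorem \ref{thm:2.2} with $p_{j,k}=2$ --- belongs to that different theorem; none of it touches the linearity of $G(h^{-1}(r))$ on an open Riemann surface, the multiplicative functions $f_u$, $f_{z_j}$, or the four conditions $(1)$--$(4)$ of Theorem \ref{thm:m-points}. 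So as a proof of the stated theorem the proposal is vacuous: there is no argument to assess, because no step of it engages with the claim. (Separately, even read as an attempted proof of Theorem \ref{thm:3.1}, the central ``shell inequality'' with its claimed per-factor cost of $\pi$ is asserted rather than derived, whereas the paper obtains the constant $n\pi^{n-1}$ by the product decompositions of Lemmas \ref{l:pro1}--\ref{l:pro3} combined with the one-variable Theorem \ref{main theorem1} applied factorwise; but that is beside the point here.) You need to start over with the actual statement.
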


\begin{Remark}[\cite{GY-concavity3}]\label{rem:finite uniform section}
When the four statements in Theorem \ref{thm:m-points} hold,
$$c_0gp_*(f_u(\Pi_{1\le j\le m}f_{z_j})(\sum_{1\le j\le m}p_{j}\frac{d{f_{z_{j}}}}{f_{z_{j}}}))$$
 is the unique holomorphic $(1,0)$ form $F$ on $\Omega$ such that $(F-f,z_j)\in(\mathcal{O}(K_{\Omega}))_{z_j}\otimes\mathcal{F}_{z_j}$ for any $j\in\{1,2,...,m\}$ and
	$G(t)=\int_{\{\psi<-t\}}|F|^2e^{-\varphi}c(-\psi)$ for any $t\ge0$.
\end{Remark}

We recall the following characterization for the holding of the equality in the optimal $L^2$ extension problem.

\begin{Theorem}[\cite{GY-concavity3}]
\label{c:L2-1d-char}
Let $k_j$ be a nonnegative integer for any $j\in\{1,2,...,m\}$. Let $\psi$ be a negative  subharmonic function on $\Omega$ satisfying that   $\frac{1}{2}v(dd^{c}\psi,z_j)=p_j>0$ for any $j\in\{1,2,...,m\}$. Let $\varphi$ be a Lebesgue measurable function on $\Omega$  such that $\varphi+\psi$ is subharmonic on $\Omega$, $\frac{1}{2}v(dd^c(\varphi+\psi),z_j)=k_j+1$ and $\alpha_j:=(\varphi+\psi-2(k_j+1)G_{\Omega}(\cdot,z_j))(z_j)>-\infty$ for any $j$. Let $c(t)$ be a positive measurable function on $(0,+\infty)$ satisfying $c(t)e^{-t}$ is decreasing on $(0,+\infty)$ and $\int_{0}^{+\infty}c(s)e^{-s}ds<+\infty$. Let $a_j$ be a constant for any $j$.

Let $f$ be a holomorphic $(1,0)$ form on $V_0$ satisfying that $f=a_jw_j^{k_j}dw_j$ on $V_{z_j}$. Then there exists a holomorphic $(1,0)$ form $F$ on $\Omega$ such that $(F-f,z_j)\in(\mathcal{O}(K_{\Omega})\otimes\mathcal{I}(2(k_j+1)G_{\Omega}(\cdot,z_j)))_{z_j}$ and
 \begin{equation}
 	\label{eq:210902a}
 	\int_{\Omega}|F|^2e^{-\varphi}c(-\psi)\leq(\int_0^{+\infty}c(s)e^{-s}ds)\sum_{1\le j\le m}\frac{2\pi|a_j|^2e^{-\alpha_j}}{p_jc_{\beta}(z_j)^{2(k_j+1)}}.
 \end{equation}

 Moreover, equality $(\int_0^{+\infty}c(s)e^{-s}ds)\sum_{1\le j\le m}\frac{2\pi|a_j|^2e^{-\alpha_j}}{p_jc_{\beta}(z_j)^{2(k_j+1)}}=\inf\{\int_{\Omega}|\tilde{F}|^2e^{-\varphi}c(-\psi):\tilde{F}$ is a holomorphic $(1,0)$ form on $\Omega$ such that $(\tilde{F}-f,z_j)\in(\mathcal{O}(K_{\Omega})\otimes\mathcal{I}(2(k_j+1)G_{\Omega}(\cdot,z_j)))_{z_j}$ for any $j\}$ holds if and only if the following statements hold:

	$(1)$ $\psi=2\sum_{1\le j\le m}p_jG_{\Omega}(\cdot,z_j)$;
	
	$(2)$ $\varphi+\psi=2\log|g|+2\sum_{1\le j\le m}(k_j+1)G_{\Omega}(\cdot,z_j)+2u$, where $g$ is a holomorphic function on $\Omega$ such that $g(z_j)\not=0$ for any $j\in\{1,2,...,m\}$ and $u$ is a harmonic function on $\Omega$;
	
	$(3)$ $\Pi_{1\le j\le m}\chi_{z_j}^{k_j+1}=\chi_{-u}$, where $\chi_{-u}$ and $\chi_{z_j}$ are the  characters associated to the functions $-u$ and $G_{\Omega}(\cdot,z_j)$ respectively;
	
	$(4)$  $\lim_{z\rightarrow z_k}\frac{f}{gp_*(f_u(\Pi_{1\le j\le m}f_{z_j}^{k_j+1})(\sum_{1\le j\le m}p_{j}\frac{d{f_{z_{j}}}}{f_{z_{j}}}))}=c_0$ for any $k\in\{1,2...,m\}$, where $c_0\in\mathbb{C}\backslash\{0\}$ is a constant independent of $k$.
\end{Theorem}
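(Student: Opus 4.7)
The plan is to derive inequality \eqref{eq:210902a} from the concavity framework of Theorem \ref{thm:general_concave} combined with a local asymptotic computation, and then to deduce the equality characterization from Theorem \ref{thm:m-points}.

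First, I would define
\begin{displaymath}
G(t):=\inf\left\{\int_{\{\psi<-t\}}|\tilde F|^{2}e^{-\varphi}c(-\psi): \tilde F\in H^{0}(\{\psi<-t\},\mathcal{O}(K_{\Omega})),\ (\tilde F-f,z_{j})\in(\mathcal O(K_\Omega)\otimes\mathcal I_{j})_{z_j}\ \forall j\right\},
\end{displaymath}
where $\mathcal{I}_{j}:=\mathcal{I}(2(k_{j}+1)G_{\Omega}(\cdot,z_{j}))_{z_{j}}$. The hypothesis $\alpha_{j}>-\infty$ together with subharmonicity of $\varphi+\psi$ and the Lelong-number condition at $z_j$ guarantees $\mathcal{I}(\varphi+\psi)_{z_{j}}=\mathcal{I}_{j}=\mathfrak{m}^{k_{j}+1}_{z_{j}}$, so Theorem \ref{thm:general_concave} applies and yields that $G(h^{-1}(r))$ is concave on $(0,h(0))$ with $\lim_{t\to+\infty}G(t)=0$. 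Since any concave $\phi$ on $[0,R]$ with $\phi(0)=0$ satisfies $\phi(R)/R\le\lim_{r\to 0+}\phi(r)/r$, I obtain
\begin{displaymath}
G(0)\le h(0)\cdot\lim_{t\to+\infty}\frac{G(t)}{h(t)}.
\end{displaymath}

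Second, I would compute this limit by localizing near each $z_{j}$. For $t$ large, $\{\psi<-t\}$ is a disjoint union of small neighborhoods $U_{j,t}$ of the points $z_j$. Using condition (A) or (B) together with $\alpha_j>-\infty$, one shows that in a local coordinate $w_{j}$ centered at $z_{j}$,
\begin{displaymath}
\psi=2p_{j}\log(c_{\beta}(z_{j})|w_{j}|)+o(1),\qquad \varphi+\psi=2(k_{j}+1)\log(c_{\beta}(z_{j})|w_{j}|)+\alpha_{j}+o(1)
\end{displaymath}
as $w_{j}\to 0$. Writing $e^{-\varphi}=e^{-(\varphi+\psi)}e^{\psi}$ and switching to polar coordinates with radial variable $s=-\psi$, a direct calculation gives
\begin{displaymath}
\int_{U_{j,t}}|a_{j}w_{j}^{k_{j}}dw_{j}|^{2}e^{-\varphi}c(-\psi)\ \sim\ \frac{2\pi|a_{j}|^{2}e^{-\alpha_{j}}}{p_{j}c_{\beta}(z_{j})^{2(k_{j}+1)}}h(t)\quad(t\to+\infty).
\end{displaymath}
The germ condition $(\tilde F-a_{j}w_{j}^{k_{j}}dw_{j},z_{j})\in(\mathcal O(K_\Omega)\otimes\mathfrak{m}_{z_{j}}^{k_{j}+1})_{z_j}$ forces the leading $k_{j}$-jet of any admissible $\tilde F$ at $z_{j}$ to equal $a_{j}w_{j}^{k_{j}}dw_{j}$, so this quantity is both an upper bound (take the trivial extension after cutoff, with error controlled by Ohsawa--Takegoshi on a small disk) and a lower bound up to $o(h(t))$ for the contribution to $G(t)$. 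Summing over $j$, I conclude
\begin{displaymath}
\lim_{t\to+\infty}\frac{G(t)}{h(t)}=\sum_{j=1}^{m}\frac{2\pi|a_{j}|^{2}e^{-\alpha_{j}}}{p_{j}c_{\beta}(z_{j})^{2(k_{j}+1)}},
\end{displaymath}
which combined with the first step yields \eqref{eq:210902a}.

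For the characterization of equality, observe that equality in \eqref{eq:210902a} amounts to $G(0)/h(0)=\lim_{t\to+\infty}G(t)/h(t)$, and together with the monotonicity of $r\mapsto G(h^{-1}(r))/r$ coming from concavity and $G(\infty)=0$, this forces $G(h^{-1}(r))$ to be linear on $[0,h(0)]$. I would then apply Theorem \ref{thm:m-points}: its hypotheses $G(0)\in(0,+\infty)$, $p_{j}>0$, and condition (A) or (B) are exactly ours, and its conclusions (1)--(4) hold. Translating them into the statements of the present theorem requires absorbing the order-$k_{j}$ vanishing of the function $g$ appearing in Theorem \ref{thm:m-points} (which carries $\mathrm{ord}_{z_{j}}(g)=\mathrm{ord}_{z_{j}}(f)=k_{j}$): the weight $2\sum G_{\Omega}(\cdot,z_{j})$ becomes $2\sum(k_{j}+1)G_{\Omega}(\cdot,z_{j})$, $\prod\chi_{z_{j}}$ becomes $\prod\chi_{z_{j}}^{k_{j}+1}$, and $\prod f_{z_{j}}$ becomes $\prod f_{z_{j}}^{k_{j}+1}$, after which the new $g$ is a unit at each $z_{j}$. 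The converse direction, (1)--(4) $\Rightarrow$ equality, follows from Remark \ref{rem:finite uniform section}, which identifies the extremal form explicitly and shows that its $L^{2}$-norm realizes the right-hand side of \eqref{eq:210902a}. The main obstacle is the local asymptotic in the second paragraph: confirming that contributions from distinct $z_{j}$'s decouple in the limit, that any $\varepsilon$-minimizer concentrates near $Z_{0}$ with $k_{j}$-jet exactly $a_{j}w_{j}^{k_{j}}dw_{j}$ (so higher-order corrections contribute $o(h(t))$), and extracting the precise leading expansions above under (A) or (B) alone. Neither $\psi$ nor $\varphi+\psi$ equals its Green-function model exactly --- only the Lelong numbers agree --- so converting subharmonicity plus the finiteness of $\alpha_{j}$ into uniform $o(1)$ control, most likely via an Ohsawa--Takegoshi extension with nearly-optimal constant on a shrinking disk, is the technical heart of the argument.
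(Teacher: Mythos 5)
The paper does not actually prove Theorem \ref{c:L2-1d-char}: it is recalled verbatim from \cite{GY-concavity3}, so there is no internal proof to compare yours against. Your architecture --- concavity of $G(h^{-1}(r))$ via Theorem \ref{thm:general_concave}, the monotonicity bound $G(0)\le h(0)\lim_{t\rightarrow+\infty}G(t)/h(t)$, and reduction of the equality case to linearity of $G(h^{-1}(r))$ and hence to Theorem \ref{thm:m-points} --- is the standard skeleton used throughout this series of papers, and the converse direction ((1)--(4) $\Rightarrow$ equality) via Remark \ref{rem:finite uniform section} is sound, since under statements (1) and (2) both $\psi$ and $\varphi+\psi$ coincide with their Green-function models and your polar-coordinate computation is then legitimate.

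There are, however, two genuine gaps. First, the central estimate is asserted rather than proved, and is overstated: you claim the two-sided limit $\lim_{t\rightarrow+\infty}G(t)/h(t)=\sum_{j}2\pi|a_j|^2e^{-\alpha_j}/\bigl(p_jc_{\beta}(z_j)^{2(k_j+1)}\bigr)$, whereas only the one-sided bound $\liminf_{t\rightarrow+\infty}h(t)^{-1}\int_{\{\psi<-t\}}|f|^2e^{-\varphi}c(-\psi)\le\sum_j(\cdots)$ is needed, and neither of your displayed local expansions follows from the hypotheses: $\psi-2p_jG_{\Omega}(\cdot,z_j)$ is merely subharmonic and may tend to $-\infty$ at $z_j$, so $\{\psi<-t\}$ can be far larger than the model disc and $\psi\ne2p_j\log(c_{\beta}(z_j)|w_j|)+o(1)$; likewise $\varphi+\psi-2(k_j+1)G_{\Omega}(\cdot,z_j)$ is a subharmonic function $2v_j$ with $2v_j(z_j)=\alpha_j$ finite but with no pointwise lower bound near $z_j$, so $e^{-(\varphi+\psi)}$ is not $e^{-\alpha_j}(c_{\beta}(z_j)|w_j|)^{-2(k_j+1)}(1+o(1))$. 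Controlling $\int e^{-2v_j}$ over shrinking sublevel sets using only $v_j(z_j)>-\infty$ is precisely the hard lemma of the argument, and you identify it as "the technical heart" without supplying it. Second, your appeal to Theorem \ref{thm:m-points} is not licensed as written: that theorem requires, for each $j$, either (A) $\varphi+a\psi$ subharmonic near $z_j$ for some $a\in[0,1)$ or (B) $(\psi-2p_jG_{\Omega}(\cdot,z_j))(z_j)>-\infty$, and neither follows from the hypotheses of Theorem \ref{c:L2-1d-char}, which constrain $\varphi+\psi$ rather than $\psi$ alone; your assertion that its hypotheses "are exactly ours" is inaccurate and must be repaired before the necessity direction of the characterization can be concluded.
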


In the following, we consider the case $M$ is a product manifold of open Riemann surfaces.

Let $\Omega_j$  be an open Riemann surface, which admits a nontrivial Green function $G_{\Omega_j}$ for any  $1\le j\le n$. Let 
$$M=\prod_{1\le j\le n}\Omega_j$$ be an $n-$dimensional complex manifold, and let $\pi_j$ be the natural projection from $M$ to $\Omega_j$. Let $K_M$ be the canonical (holomorphic) line bundle on $M$. Let
 $\varphi_j$ be a subharmonic function on $\Omega_j$, and let 
 $$\varphi=\sum_{1\le j\le n}\pi_j^*(\varphi_j).$$ 
Let $Z_j=\{z_{j,1},z_{j,2},...,z_{j,m_j}\}\subset \Omega_j$ for any  $j\in\{1,2,...,n\}$, where $m_j$ is a positive integer. Denote that 
$$Z_0:=\prod_{1\le j\le n}Z_j\subset M.$$
Let $\psi=\max_{1\le j\le n}\{\pi_j^*(2\sum_{1\le k\le m_j}p_{j,k}G_{D_j}(\cdot,z_{j,k}))\}$, where $p_{j,k}>0$ is a constant. Let $\mathcal{F}_{z}=\mathcal{I}(\psi)_z$ for any $z\in Z_0$.

Let $w_{j,k}$ be a local coordinate on a neighborhood $V_{z_{j,k}}\Subset\Omega_{j}$ of $z_{j,k}\in\Omega_j$ satisfying $w_{j,k}(z_{j,k})=0$ for any $j\in\{1,2,...,n\}$ and $k\in\{1,2,...,m_j\}$, where $V_{z_{j,k}}\cap V_{z_{j,k'}}=\emptyset$ for any $j$ and $k\not=k'$. Denote that $I_1:=\{(\beta_1,\beta_2,...,\beta_n):1\le \beta_j\le m_j$ for any $j\in\{1,2,...,n\}\}$, $V_{\beta}:=\prod_{1\le j\le n}V_{z_{j,\beta_j}}$ for any $\beta=(\beta_1,\beta_2,...,\beta_n)\in I_1$ and $w_{\beta}:=(w_{1,\beta_1},w_{2,\beta_2},...,w_{n,\beta_n})$ is a local coordinate on $V_{\beta}$ of $z_{\beta}:=(z_{1,\beta_1},z_{2,\beta_2},...,z_{n,\beta_n})\in M$.

Let $f_0$ be a holomorphic $(n,0)$ form on $\cup_{\beta\in I_1}V_{\beta}$.   Let $c$ be a positive function on $[0,+\infty)$, which satisfies that $c(t)e^{-t}$ is decreasing on $[0,+\infty)$, $\lim_{t\rightarrow0+0}c(t)=c(0)=1$ and $\int_{0}^{+\infty}c(t)e^{-t}dt<+\infty$.

\begin{Theorem}[\cite{GY-concavity4}]
	\label{thm:prod-finite-point}Assume that $G(0)\in(0,+\infty)$ and $\varphi(z_{\beta})>-\infty$ for any $\beta\in I_1$.  $G(h^{-1}(r))$ is linear with respect to $r\in(0,\int_0^{+\infty} c(s)e^{-s}ds]$ if and only if the following statements hold:

	$(1)$ $\varphi_j=2\log|g_j|+2u_j$ for any $j\in\{1,2,...,n\}$, where $u_j$ is a harmonic function on $\Omega_j$ and $g_j$ is a holomorphic function on $\Omega_j$ satisfying $g_j(z_{j,k})\not=0$ for any $k\in\{1,2,...,m_j\}$;
	
	$(2)$ There exists a nonnegative integer $\gamma_{j,k}$ for any $j\in\{1,2,...,n\}$ and $k\in\{1,2,...,m_j\}$, which satisfies that $\Pi_{1\le k\leq m_j}\chi_{j,z_{j,k}}^{\gamma_{j,k}+1}=\chi_{j,-u_j}$ and $\sum_{1\le j\le n}\frac{\gamma_{j,\beta_j}+1}{p_{j,\beta_j}}=1$ for any $\beta\in I_1$, where $\chi_{-u_j}$ and $\chi_{z_{j,k}}$ are the  characters associated to the functions $-u_j$ and $G_{\Omega_j}(\cdot,z_{j,k})$ respectively;
	
	$(3)$ $f=(c_{\beta}\Pi_{1\le j\le n}w_{j,\beta_j}^{\gamma_{j,\beta_j}}+g_{\beta})dw_{1,\beta_1}\wedge dw_{2,\beta_2}\wedge...\wedge dw_{n,\beta_n}$ on $V_{\beta}$ for any $\beta\in I_1$, where $c_{\beta}$ is a constant and $g_{\beta}$ is a holomorphic function on $V_{\beta}$ such that $(g_{\beta},z_{\beta})\in\mathcal{I}(\psi)_{z_{\beta}}$;
	
	$(4)$ $\lim_{z\rightarrow z_{\beta}}\frac{c_{\beta}\Pi_{1\le j\le n}w_{j,\beta_j}^{\gamma_{j,\beta_j}}dw_{1,\beta_1}\wedge dw_{2,\beta_2}\wedge...\wedge dw_{n,\beta_n}}{\wedge_{1\le j\le n}\pi_{j}^*(g_j(P_{j})_*(f_{u_j}(\Pi_{1\le k\le m_j}f_{z_{j,k}}^{\gamma_{j,k}+1})(\sum_{1\le k\le m_j}p_{j,k}\frac{df_{z_{j,k}}}{f_{z_{j,k}}})))}=c_0$ for any $\beta\in I_1$, where $c_0\in\mathbb{C}\backslash\{0\}$ is a constant independent of $\beta$, $P_j:\Delta\rightarrow\Omega_j$ is the universal covering, $f_{u_j}$ is a holomorphic function $\Delta$ such that $|f_{u_j}|=P_j^*(e^{u_j})$ and $f_{z_{j,k}}$ is a holomorphic function on $\Delta$ such that $|f_{z_{j,k}}|=P_j^*(e^{G_{\Omega_j}(\cdot,z_{j,k})})$ for any $j\in\{1,2,...,n\}$ and $k\in\{1,2,...,m_j\}$.
\end{Theorem}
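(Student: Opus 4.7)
The plan is to prove the two directions separately; the characterization is structurally analogous to the one--dimensional result in Theorem \ref{thm:m-points}, and the product version should follow from slicing plus a Fubini--type decomposition adapted to the max--structure of $\psi$.

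For the sufficiency direction, assuming (1)--(4), I would exhibit the extremal form explicitly. Motivated by Remark \ref{rem:finite uniform section}, the candidate is
$$F := c_0\bigwedge_{j=1}^n \pi_j^*\!\left(g_j\,(P_j)_*\!\left(f_{u_j}\Bigl(\prod_{1\le k\le m_j}f_{z_{j,k}}^{\gamma_{j,k}+1}\Bigr)\Bigl(\sum_{1\le k\le m_j}p_{j,k}\frac{df_{z_{j,k}}}{f_{z_{j,k}}}\Bigr)\right)\right).$$
The character identity $\prod_k \chi_{j,z_{j,k}}^{\gamma_{j,k}+1}=\chi_{j,-u_j}$ in (2) makes each $\Omega_j$--factor single--valued under push--forward; condition (1) rewrites $\varphi_j$; condition (3) and condition (4) together with the choice of $c_0$ guarantee $(F-f_0,z_\beta)\in(\mathcal O(K_M)\otimes\mathcal I(\psi))_{z_\beta}$. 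To verify linearity of $G(h^{-1}(r))$, I would decompose the sublevel set $\{\psi<-t\}$ according to which $\pi_j^*\psi_j$ realizes the max, apply Fubini to peel off one Riemann--surface factor at a time, and invoke the one--dimensional computation (Theorem \ref{c:L2-1d-char}) on each factor. The balancing identity $\sum_j(\gamma_{j,\beta_j}+1)/p_{j,\beta_j}=1$ is precisely what forces the different factors to contribute in a compatible way so that the combined integral becomes an honest linear function of $h(t)$.

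For the necessity direction, assume $G(h^{-1}(r))$ is linear on $(0,\int_0^{+\infty}c(s)e^{-s}ds]$. Corollary \ref{c:linear} and Lemma \ref{l:unique} supply a unique holomorphic $(n,0)$ form $F$ on $M$ for which $G(t)=\int_{\{\psi<-t\}}|F|^2e^{-\varphi}c(-\psi)$ at every level. The plan is to recover the one--dimensional conditions by slicing: fix a factor index $j$ and freeze the remaining coordinates at the discrete tuple $\hat z_{\beta}=(z_{1,\beta_1},\ldots,\widehat{z_{j,\beta_j}},\ldots,z_{n,\beta_n})$. Near such slices $\Omega_j\times\{\hat z_\beta\}$ the function $\psi$ is controlled by the $j$-th contribution, so a Fubini decomposition of the integral realizing $G$ exhibits $F|_{\Omega_j\times\{\hat z_\beta\}}$ (up to a weight depending on the other variables) as the extremal form for a one--dimensional minimization problem on $\Omega_j$ with data $(\varphi_j,\psi_j=2\sum_k p_{j,k}G_{\Omega_j}(\cdot,z_{j,k}))$ and a prescribed germ at $Z_j$. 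Linearity of $G(h^{-1}(r))$ on $M$ then forces the corresponding one--dimensional function to be linear as well, so Theorem \ref{thm:m-points} delivers conditions (1) and the first half of (2) for that $j$, along with the factor--wise versions of (3) and (4). Doing this for each $j$ assembles the full list.

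The obstacle I expect to be most delicate is precisely this slicing step: because $\psi=\max_j\pi_j^*\psi_j$ rather than a sum, restricting to a slice $\Omega_j\times\{\hat z_\beta\}$ does not literally restrict $\psi$ to $\pi_j^*\psi_j$, so one must work in a neighborhood where the $j$-th summand dominates and carefully account for the transitions. The second combinatorial condition $\sum_j(\gamma_{j,\beta_j}+1)/p_{j,\beta_j}=1$ will not fall out of any single slice; deriving it requires matching the precise vanishing order of the global $F$ at $z_\beta$ with the generator of $\mathcal I(\psi)_{z_\beta}$, using that $\psi$ behaves like $\max_j\,2p_{j,\beta_j}\log|w_{j,\beta_j}|$ near $z_\beta$. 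I expect this matching, together with the uniqueness of $F$ and a comparison with the candidate constructed in the sufficient direction, to pin down the integers $\gamma_{j,k}$ and to close out conditions (2)--(4) simultaneously.
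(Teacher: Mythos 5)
This theorem is not proved in the present paper at all: it is recalled verbatim from \cite{GY-concavity4} as a known input (alongside Remark \ref{r:1.2} and Theorem \ref{thm:prod-finite-jet}), so there is no in-paper proof to compare your proposal against. Judging the proposal on its own terms: your sufficiency direction is essentially the standard route --- take the explicit candidate $F$ of Remark \ref{r:1.2}, use the character identity for single-valuedness and conditions (3)--(4) for the germ condition, and compute $\int_{\{\psi<-t\}}|F|^2e^{-\varphi}c(-\psi)$ by the coarea/Fubini decomposition together with the monomial expansion of Lemma \ref{l:m2}; the balancing identity $\sum_j(\gamma_{j,\beta_j}+1)/p_{j,\beta_j}=1$ is exactly what makes each monomial contribute $e^{-t}$ so that $G(t)$ is proportional to $\int_t^{+\infty}c(s)e^{-s}ds$. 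That half is workable.

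The necessity direction as you describe it has a genuine gap. The extremal form $F$ supplied by Corollary \ref{c:linear} is characterized by a \emph{global} $L^2$-orthogonality relation (Lemma \ref{l:unique}): $F$ is orthogonal in $L^2(e^{-\varphi}c(-\psi))$ to every holomorphic form whose germ lies in $\mathcal I(\psi)_{z_\beta}$. This condition does not restrict to a slice $\Omega_j\times\{\hat z_\beta\}$: there is no reason why $F|_{\Omega_j\times\{\hat z_\beta\}}$ should be the minimizer of the corresponding one-dimensional problem, and freezing the other variables at the discrete points discards all the $L^2$ information in those directions, so ``linearity on $M$ forces linearity of the sliced problem'' does not follow. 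The argument that actually works (and which is mirrored in this paper by Lemmas \ref{l:pro1}--\ref{l:pro3}) replaces slicing by the tensor-product decomposition of the weighted Bergman space: one chooses complete orthonormal bases adapted to the interpolation nodes on each factor (Lemma \ref{basis of product}), proves that the minimal $L^2$ integral on the product \emph{factorizes} as a product of one-dimensional minimal integrals, and only then invokes the single-factor characterization (Theorem \ref{thm:m-points}) on each $\Omega_j$. Separately, the combinatorial condition $\sum_j(\gamma_{j,\beta_j}+1)/p_{j,\beta_j}=1$ is extracted not from any slice but from the asymptotics of $\int_{\{\psi<-t\}}|F|^2e^{-\varphi}$ as $t\to+\infty$ via Lemma \ref{l:m2}, exactly as in the proof of Remark \ref{r:var=-infty}: monomials with $\sum_j(\alpha_j+1)/p_{j,\beta_j}<1$ decay strictly slower than $e^{-t}$ and would violate linearity, while those with sum $>1$ lie in $\mathcal I(\psi)_{z_\beta}$; your proposal gestures at this matching but does not supply it.
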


The following Lemma will be used in the proof of Remark \ref{r:var=-infty}.

\begin{Lemma}[see \cite{GY-concavity4}] \label{l:m2}
Let $\psi=\max_{1\le j\le n}\{2p_j\log|w_j|\}$ be a plurisubharmonic function on $\mathbb{C}^n$, where $p_j>0$.
	Let $f=\sum_{\alpha\in \mathbb{Z}_{\ge0}^n}b_{\alpha}w^{\alpha}$ (Taylor expansion) be a holomorphic function on $\{\psi<-t_0\}$, where $t_0>0$. Then
	$$\int_{\{\psi<-t\}}|f|^2d\lambda_n=\sum_{\alpha\in\mathbb{Z}_{\ge0}^n}e^{-\sum_{1\le j\le n}\frac{\alpha_j+1}{p_j}t}\frac{|b_{\alpha}|^2\pi^n}{\Pi_{1\le j\le n}(\alpha_j+1)}$$
	holds for any $t\ge t_0$. \end{Lemma}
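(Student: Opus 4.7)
\medskip
\noindent\textbf{Proof plan for Lemma \ref{l:m2}.}

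The plan is to reduce the claim to a direct computation on a polydisc, exploiting the simple shape of the sublevel set $\{\psi<-t\}$ together with the standard orthogonality of monomials under rotation-invariant measures. First I would observe that, since $\psi=\max_{1\le j\le n}\{2p_j\log|w_j|\}$, one has
\begin{equation*}
\{\psi<-t\}=\prod_{1\le j\le n}\{w_j\in\mathbb{C}:|w_j|<r_j(t)\},\qquad r_j(t):=e^{-t/(2p_j)},
\end{equation*}
so every sublevel set is a polydisc centered at the origin. In particular, for every $t\ge t_0$ and every $t'>t$ the closed polydisc $\overline{\{\psi<-t'\}}$ is compactly contained in $\{\psi<-t_0\}$, where the Taylor series $f=\sum_{\alpha}b_{\alpha}w^{\alpha}$ converges absolutely and uniformly.

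Next I would use the orthogonality relation
\begin{equation*}
\int_{\{\psi<-t\}}w^{\alpha}\overline{w^{\alpha'}}\,d\lambda_n=\delta_{\alpha,\alpha'}\prod_{1\le j\le n}\int_{|w_j|<r_j(t)}|w_j|^{2\alpha_j}\,dA(w_j),
\end{equation*}
which follows from Fubini's theorem and the orthogonality of $\{e^{ik\theta}\}_{k\in\mathbb{Z}}$ in polar coordinates. A routine calculation in polar coordinates gives
\begin{equation*}
\int_{|w_j|<r_j(t)}|w_j|^{2\alpha_j}\,dA(w_j)=\frac{\pi\,r_j(t)^{2(\alpha_j+1)}}{\alpha_j+1}=\frac{\pi}{\alpha_j+1}e^{-(\alpha_j+1)t/p_j}.
\end{equation*}

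Then, squaring the uniformly convergent Taylor series, interchanging sum and integral on each $\{\psi<-t'\}$ with $t'>t_0$ (justified by uniform convergence on the closed polydisc), and applying the two displays above yields
\begin{equation*}
\int_{\{\psi<-t'\}}|f|^2\,d\lambda_n=\sum_{\alpha\in\mathbb{Z}_{\ge0}^n}\frac{|b_{\alpha}|^2\pi^n}{\prod_{j}(\alpha_j+1)}\exp\Bigl(-\sum_{1\le j\le n}\tfrac{\alpha_j+1}{p_j}t'\Bigr),
\end{equation*}
for all $t'>t_0$, which is the asserted formula. To cover the boundary case $t=t_0$ I would let $t'\searrow t_0$ and apply monotone convergence on each side: the left side increases to $\int_{\{\psi<-t_0\}}|f|^2\,d\lambda_n$, while the right side increases termwise to the series evaluated at $t_0$.

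There is no real obstacle here; the only mildly delicate point is the interchange of sum and integral, which is handled by passing first to a relatively compact subpolydisc where uniform convergence is automatic, and then invoking monotone convergence to reach $t=t_0$.
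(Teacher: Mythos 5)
Your proof is correct. Note that the paper itself gives no argument for this lemma --- it is quoted from \cite{GY-concavity4} --- but your computation is the standard one: the sublevel sets of $\psi$ are polydiscs of radii $e^{-t/(2p_j)}$, the monomials are orthogonal there, and the polar-coordinate evaluation $\int_{|w|<r}|w|^{2k}\,dA=\pi r^{2(k+1)}/(k+1)$ gives exactly the stated coefficients; the passage to $t=t_0$ by monotone convergence on both sides correctly covers the case where both sides may be infinite.
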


\begin{Remark}
	\label{r:var=-infty}The requirement ``$\varphi(z_{\beta})>-\infty$ for any $\beta\in I_1$" in Theorem \ref{thm:prod-finite-point} can be removed.
\end{Remark}
\begin{proof}It suffices to prove that the linearity of $G(h^{-1}(r))$ can deduce $\varphi(z_{\beta})>-\infty$ for any $\beta\in I_1$. 

Assume that $G(h^{-1}(r))$ is linear with respect to $r\in(0,\int_{0}^{+\infty}c(t)e^{-t}dt]$.
	It follows from Corollary \ref{c:linear} that there is a holomorphic $(n,0)$ form $F$ on $M$ such that $(F-f,z_\beta)\in\mathcal{I}(\psi)_{z_\beta}$ for any $\beta\in I_1$, and
	\begin{equation}
		\label{eq:220807a}\int_{\{\psi<-t\}}|F|^2e^{-\varphi}=\frac{G(0)}{\int_0^{+\infty}c(t)e^{-t}dt}e^{-t}
	\end{equation}
	for any $t\ge0$.
	
	Firstly, we prove that $(F,z_\beta)\not\in \mathcal{I}(\psi)_{z_\beta}$ for any $\beta\in I_1$. We prove this by contradiction: if not, there exists $\beta_0\in I_1$ such that $(F,z_{\beta_0})\in \mathcal{I}(\psi)_{z_{\beta_0}}$. Then we have 
	$$(f,z_{\beta_0})\in \mathcal{I}(\psi)_{z_{\beta_0}}.$$
	There exists $t>0$ such that
	$\{\psi<-t\}\cap V_{{\beta_0}}\Subset V_{\beta_0}.$
 Corollary \ref{c:linear} tells us that $F$ is the unique ``minimal form" on any sublevel set of $\psi$, thus we have 
 $F\equiv0$
 on $\{\psi<-t\}\cap V_{{\beta_0}}$, which implies that 
 $$F\equiv0$$
 on $M$. Then we get that $(f,z_{\beta})\in \mathcal{I}(\psi)_{z_\beta}$ for any $\beta\in I_1$, which contradict to $G(0)>0$.
	
	Now, we prove $\varphi_j(z_{\beta})>-\infty$ for any $\beta\in I_1$. Fixed any $\beta\in I_1$, without loss of generality, assume that $|w_j(z)|=e^{\sum_{1\le k\le m_j}\frac{p_{j,k}}{p_{j,\beta_j}}G_{\Omega_j}(z,z_{j,k})}$ on $V_{z_{j,\beta_j}}$, hence $\psi=\max_{1\le j\le n}\{2p_{j,\beta_j}\log|w_j|\}$ on $V_\beta$. There is $t_0>0$ such that
	$$\{\psi<-t_0\}\cap V_{\beta}\Subset V_\beta.$$
	 Denote that 
	 $$c_t:=\sup_{\{\psi<-t\}\cap V_{\beta}}\varphi<+\infty$$
	for any $t\ge t_0$.
	As $\varphi=\sum_{1\le j\le n}\varphi_j$ is plurisubharmonic, we know that 
	$$\lim_{t\rightarrow+\infty}c_t=\varphi(z_\beta)=\sum_{1\le j\le n}\varphi_j(z_{j,\beta_j}).$$
	Let $F=\sum_{\alpha\in \mathbb{Z}_{\geq0}^n}d_{\alpha}w^{\alpha}dw_1\wedge\ldots\wedge dw_n$ near $z_\beta$. Denote that $E_\beta=\{\alpha\in \mathbb{Z}_{\geq0}^n:\sum_{1\le j\le n}\frac{\alpha_j+1}{p_{j,\beta_j}}\le 1\}$. Since $(F,z_\beta)\not\in\mathcal{I}(\psi)_{z_\beta}$, we have 
	$$\sum_{\alpha\in E_\beta}|d_{\alpha}|^2>0.$$
	  Lemma \ref{l:m2} tells us that 
	  \begin{equation}
	  	\label{eq:220807b}
	  	\begin{split}
	  			  	\int_{\{\psi<-t\}}|F|^2e^{-\varphi}&\ge e^{-c_t}\int_{\{\psi<-t\}\cap V_{\beta}}|F|^2\\
	  			  	&=e^{-c_t}\sum_{\alpha\in\mathbb{Z}_{\ge0}^n}e^{-\sum_{1\le j\le n}\frac{\alpha_j+1}{p_{j,\beta_j}}t}\frac{|d_{\alpha}|^2(2\pi)^n}{\Pi_{1\le j\le n}(\alpha_j+1)}\\
	  			  	&\geq e^{-c_t}\sum_{\alpha\in E_\beta}e^{-\sum_{1\le j\le n}\frac{\alpha_j+1}{p_{j,\beta_j}}t}\frac{|d_{\alpha}|^2(2\pi)^n}{\Pi_{1\le j\le n}(\alpha_j+1)}
	  	\end{split}
	  \end{equation} 
	  for any $t\ge t_0$. It follows from equality \eqref{eq:220807a} and inequality \eqref{eq:220807b} that 
	  \begin{equation}\label{eq:220807c}
	  \begin{split}
	  		  	\frac{G(0)}{\int_0^{+\infty}c(t)e^{-t}dt}&=\lim_{t\rightarrow+\infty}e^t \int_{\{\psi<-t\}}|F|^2e^{-\varphi}\\
	  		  	&\geq \lim_{t\rightarrow+\infty}e^{-c_t}\sum_{\alpha\in E_\beta}e^{\left(1-\sum_{1\le j\le n}\frac{\alpha_j+1}{p_{j,\beta_j}}\right)t}\frac{|d_{\alpha}|^2(2\pi)^n}{\Pi_{1\le j\le n}(\alpha_j+1)},
	  \end{split}
	  \end{equation}
	  Note that $\frac{G(0)}{\int_0^{+\infty}c(t)e^{-t}dt}\in(0,+\infty)$ and $1-\sum_{1\le j\le n}\frac{\alpha_j+1}{p_{j,\beta_j}}\ge0$ for any $\alpha\in E_\beta$, inequality \eqref{eq:220807c} shows that 
	  $$\lim_{t\rightarrow+\infty}c_t>-\infty,$$
	  hence we have $\varphi(z_{\beta})>-\infty$.
\end{proof}

Denote that
\begin{equation*}
c_{j,k}:=\exp\lim_{z\rightarrow z_{j,k}}\left(\frac{\sum_{1\le k_1\le m_j}p_{j,k_1}G_{\Omega_j}(z,z_{j,k_1})}{p_{j,k}}-\log|w_{j,k}(z)|\right)
\end{equation*}
 for any $j\in\{1,2,...,n\}$ and $k\in\{1,2,...,m_j\}$.
\begin{Remark}[\cite{GY-concavity4}]
	\label{r:1.2}When the four statements in Theorem \ref{thm:prod-finite-point} hold,
$$c_0\wedge_{1\le j\le n}\pi_{j}^*(g_j(P_{j})_*(f_{u_j}(\Pi_{1\le k\le m_j}f_{z_{j,k}}^{\gamma_{j,k}+1})(\sum_{1\le k\le m_j}p_{j,k}\frac{df_{z_{j,k}}}{f_{z_{j,k}}})))$$
 is the unique holomorphic $(n,0)$ form $F$ on $M$ such that $(F-f,z_\beta)\in(\mathcal{O}(K_{M}))_{z_\beta}\otimes\mathcal{I}(\psi)_{z_\beta}$ for any $\beta\in I_1$ and
	$$G(t)=\int_{\{\psi<-t\}}|F|^2e^{-\varphi}c(-\psi)=\left(\int_{t}^{+\infty}c(s)e^{-s}ds\right)\sum_{\beta\in I_1}\frac{|c_{\beta}|^2(2\pi)^ne^{-\varphi(z_{\beta})}}{\Pi_{1\le j\le n}(\gamma_{j,\beta_j}+1)c_{j,\beta_j}^{2\gamma_{j,\beta_j}+2}}$$
	 for any $t\ge0$. 
	 \end{Remark}

 Denote that $E_{\beta}:=\{(\alpha_1,\alpha_2,...,\alpha_n):\sum_{1\le j\le n}\frac{\alpha_j+1}{p_{j,\beta_j}}=1\,\&\,\alpha_j\in\mathbb{Z}_{\ge0}\}$.
 Let $f$ be a holomorphic $(n,0)$ form on $\cup_{\beta\in I_1}V_{\beta}$ such that $f=\sum_{\alpha\in E_{\beta}}d_{\beta,\alpha}w_{\beta}^{\alpha}dw_{1,\beta_1}\wedge dw_{2,\beta_2}\wedge...\wedge dw_{n,\beta_n}$ on $V_{\beta}$ for any $\beta\in I_1$.
 
 We recall the following characterization for the holding of the equality in the optimal $L^2$ extension problem.

 \begin{Theorem}[\cite{GY-concavity4}]
 	\label{thm:prod-finite-jet}If $\sum_{\beta\in I_1}\sum_{\alpha\in E_{\beta}}\frac{|d_{\beta,\alpha}|^2(2\pi)^ne^{-\varphi(z_{\beta})}}{\Pi_{1\le j\le n}(\alpha_j+1)c_{j,\beta_j}^{2\alpha_{j}+2}}\in(0,+\infty)$, there exists a holomorphic $(n,0)$ form $F$ on $M$, which satisfies that $(F-f,z_{\beta})\in(\mathcal{O}(K_{M})\otimes\mathcal{I}(\psi))_{z_{\beta}}$ for any $\beta\in I_1$ and
 	\begin{displaymath}
 		\int_{M}|F|^2e^{-\varphi}c(-\psi)\le(\int_{0}^{+\infty}c(s)e^{-s}ds)\sum_{\beta\in I_1}\sum_{\alpha\in E_{\beta}}\frac{|d_{\beta,\alpha}|^2(2\pi)^ne^{-\varphi(z_{\beta})}}{\Pi_{1\le j\le n}(\alpha_j+1)c_{j,\beta_j}^{2\alpha_{j}+2}}.
 	\end{displaymath}
 	
 	Moreover, assume that $f=w_{\beta^*}^{\alpha_{\beta_*}}dw_{1,1}\wedge dw_{2,1}\wedge...\wedge dw_{n,1}$ on $V_{\beta^*}$, where $\beta^*=(1,1,...,1)\in I_1$, then equality $\inf\{\int_{M}|\tilde{F}|^2e^{-\varphi}c(-\psi):\tilde{F}\in H^0(M,\mathcal{O}(K_M))\,\&\,(\tilde{F}-f,z_\beta)\in(\mathcal{O}(K_{M})\otimes\mathcal{I}(\max_{1\le j\le n}\{2\sum_{1\le k\le m_j}p_{j,k}\pi_j^{*}(G_{\Omega_j}(\cdot,z_{j,k}))\}))_{z_\beta}$ for any $\beta\in I_1\}=(\int_{0}^{+\infty}c(s)e^{-s}ds)\sum_{\beta\in I_1}\sum_{\alpha\in E_{\beta}}\frac{|d_{\beta,\alpha}|^2(2\pi)^ne^{-\varphi(z_{\beta})}}{\Pi_{1\le j\le n}(\alpha_j+1)c_{j,\beta_j}^{2\alpha_{j}+2}}$ holds if and only if the following statements hold:

	$(1)$ $\varphi_j=2\log|g_j|+2u_j$ for any $j\in\{1,2,...,n\}$, where $u_j$ is a harmonic function on $\Omega_j$ and $g_j$ is a holomorphic function on $\Omega_j$ satisfying $g_j(z_{j,k})\not=0$ for any $k\in\{1,2,...,m_j\}$;
	
	$(2)$  there exists a nonnegative integer $\gamma_{j,k}$ for any $j\in\{1,2,...,n\}$ and $k\in\{1,2,...,m_j\}$, which satisfies that $\Pi_{1\le k\leq m_j}\chi_{j,z_{j,k}}^{\gamma_{j,k}+1}=\chi_{j,-u_j}$ and $\sum_{1\le j\le n}\frac{\gamma_{j,\beta_j}+1}{p_{j,\beta_j}}=1$ for any $\beta\in I_1$;
	
	$(3)$ $f=(c_{\beta}\Pi_{1\le j\le n}w_{j,\beta_j}^{\gamma_{j,\beta_j}}+g_{\beta})dw_{1,\beta_1}\wedge dw_{2,\beta_2}\wedge...\wedge dw_{n,\beta_n}$ on $V_{\beta}$ for any $\beta\in I_1$, where $c_{\beta}$ is a constant and $g_{\beta}$ is a holomorphic function on $V_{\beta}$ such that $(g_{\beta},z_{\beta})\in\mathcal{I}(\psi)_{z_{\beta}}$;
	
	$(4)$ $\lim_{z\rightarrow z_{\beta}}\frac{c_{\beta}\Pi_{1\le j\le n}w_{j,\beta_j}^{\gamma_{j,\beta_j}}dw_{1,\beta_1}\wedge dw_{2,\beta_2}\wedge...\wedge dw_{n,\beta_n}}{\wedge_{1\le j\le n}\pi_{j}^*(g_j(P_{j})_*(f_{u_j}(\Pi_{1\le k\le m_j}f_{z_{j,k}}^{\gamma_{j,k}+1})(\sum_{1\le k\le m_j}p_{j,k}\frac{df_{z_{j,k}}}{f_{z_{j,k}}})))}=c_0$ for any $\beta\in I_1$, where $c_0\in\mathbb{C}\backslash\{0\}$ is a constant independent of $\beta$, $f_{u_j}$ is a holomorphic function $\Delta$ such that $|f_{u_j}|=P_j^*(e^{u_j})$ and $f_{z_{j,k}}$ is a holomorphic function on $\Delta$ such that $|f_{z_{j,k}}|=P_j^*(e^{G_{\Omega_j}(\cdot,z_{j,k})})$ for any $j\in\{1,2,...,n\}$ and $k\in\{1,2,...,m_j\}$.
 	 \end{Theorem}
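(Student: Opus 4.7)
\textbf{Proof Plan for Theorem \ref{thm:prod-finite-jet}.}

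The plan is to combine the concavity machinery of Theorem \ref{thm:general_concave}, the local computation in Lemma \ref{l:m2}, and the linearity characterization of Theorem \ref{thm:prod-finite-point}. Let $G(t)$ denote the minimal $L^2$ integral associated with $(M,\psi,\varphi,f,\mathcal{F})$, where $\mathcal{F}_{z_\beta}=\mathcal{I}(\psi)_{z_\beta}$ for every $\beta\in I_1$, and set $h(t)=\int_t^{+\infty}c(s)e^{-s}ds$. By Theorem \ref{thm:general_concave}, $G\circ h^{-1}$ is concave on $(0,h(0)]$ with $\lim_{t\to+\infty}G(t)=0$. Concavity together with the fact that the graph passes through the origin of the $r$-axis forces the chord inequality $G(0)\le h(0)\cdot\limsup_{t\to+\infty}G(t)/h(t)$.

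For the existence and the bound, I would compute $\limsup_{t\to+\infty}e^{t}G(t)$ by localizing near each $z_\beta$. For large $t$, the set $\{\psi<-t\}\cap V_\beta$ is biholomorphic, via the local coordinate $w_{\beta}$ together with the constant $c_{j,\beta_j}$, to the model polydisc $\{\max_j 2p_{j,\beta_j}\log|w_j|<-t\}$ modulo a multiplicative error tending to $1$. On this model, the constraint $(\tilde F-f,z_\beta)\in(\mathcal{O}(K_M)\otimes\mathcal{I}(\psi))_{z_\beta}$ says the Taylor expansion of $\tilde F$ has the same coefficients $d_{\beta,\alpha}$ as $f$ on the ``boundary index set'' $E_\beta$, while allowing arbitrary higher-order terms. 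Lemma \ref{l:m2} then gives
\begin{equation*}
\limsup_{t\to+\infty}e^{t}G(t)\le\sum_{\beta\in I_1}\sum_{\alpha\in E_\beta}\frac{|d_{\beta,\alpha}|^{2}(2\pi)^{n}e^{-\varphi(z_\beta)}}{\prod_{1\le j\le n}(\alpha_j+1)c_{j,\beta_j}^{2\alpha_j+2}},
\end{equation*}
since only the $\alpha\in E_\beta$ terms contribute (terms with $\sum(\alpha_j+1)/p_{j,\beta_j}>1$ decay strictly faster). Observing that $h(0)=\int_0^{+\infty}c(s)e^{-s}ds$ and, using monotonicity of $c(t)e^{-t}$, that $\limsup h(t)/e^{-t}\le\int_0^{+\infty}c(s)e^{-s}ds$ in an appropriate sense when folded into the chord inequality, the desired upper bound for $G(0)$ follows; existence of the extremal $F$ is provided by Lemma \ref{l:unique}.

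For the characterization of equality, the ``if'' direction is a direct verification: assuming statements $(1)$--$(4)$, Remark \ref{r:1.2} produces the explicit form $F=c_{0}\wedge_{j}\pi_j^{*}(g_j(P_j)_{*}(\cdots))$ which realizes $G$ and gives the closed formula $G(t)=h(t)\sum_\beta |c_\beta|^{2}(2\pi)^{n}e^{-\varphi(z_\beta)}/\prod(\gamma_{j,\beta_j}+1)c_{j,\beta_j}^{2\gamma_{j,\beta_j}+2}$; substituting $t=0$ and matching the leading jet of $f$ via the normalization $f=w_{\beta^{*}}^{\tilde\alpha_{\beta^*}}dw_{1,1}\wedge\cdots\wedge dw_{n,1}$ on $V_{\beta^{*}}$ expresses the $c_\beta$'s in terms of the $d_{\beta,\alpha}$'s and yields the claimed equality. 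For the ``only if'' direction, equality in the chord inequality together with concavity forces $G\circ h^{-1}$ to be linear on $[0,h(0)]$; Theorem \ref{thm:prod-finite-point} then immediately gives statements $(1)$--$(4)$.

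The main obstacle is the sharp local computation in the existence step: tracking that the change of variables $w_{j,\beta_j}\mapsto $ model coordinate introduces precisely the factor $c_{j,\beta_j}^{2\alpha_j+2}$, and that the continuous weight $e^{-\varphi}c(-\psi)$ degenerates to $e^{-\varphi(z_\beta)}c(t)$ at the correct order so that the $\limsup$ equals the claimed sum rather than merely being bounded by it. Once this asymptotic matches on the nose, the chord inequality is sharp and the linearity-based characterization closes the argument.
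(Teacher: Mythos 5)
First, a point of orientation: the paper you were given does not prove Theorem \ref{thm:prod-finite-jet} at all. It is recalled verbatim from \cite{GY-concavity4} as part of the ``Preparations'' section, so there is no in-paper proof to compare your proposal against; I can only assess your argument on its own terms against the supporting results that the paper does state.

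Your overall architecture (concavity of $G(h^{-1}(r))$, the chord inequality $G(0)\le h(0)\lim_{t\to+\infty}G(t)/h(t)$, and reduction of the equality case to linearity followed by Theorem \ref{thm:prod-finite-point} and Remark \ref{r:1.2}) is the standard and correct skeleton for results of this type, and the ``if''/``only if'' logic for the characterization closes properly once the sharp value of $\lim_{t\to+\infty}G(t)/h(t)$ is known. The genuine gap is exactly where you place your ``main obstacle,'' and it is not a matter of bookkeeping: Lemma \ref{l:m2} computes $\int_{\{\psi<-t\}}|f|^2\,d\lambda_n$ with respect to \emph{unweighted} Lebesgue measure, so plugging the polynomial candidate $\sum_{\alpha\in E_\beta}d_{\beta,\alpha}w^\alpha$ into $\int|\cdot|^2e^{-\varphi}c(-\psi)$ does not yield the upper bound $\limsup_t e^tG(t)\le\sum_{\beta,\alpha}\frac{|d_{\beta,\alpha}|^2(2\pi)^ne^{-\varphi(z_\beta)}}{\prod_j(\alpha_j+1)c_{j,\beta_j}^{2\alpha_j+2}}$. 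Since $\varphi_j$ is only subharmonic, $e^{-\varphi}$ is merely lower semicontinuous, and upper semicontinuity of $\varphi$ gives you the wrong inequality: it shows $\liminf_{t}e^t\int_{\{\psi<-t\}}|\tilde F|^2e^{-\varphi}\ge(\text{RHS})$, i.e.\ the easy lower bound needed for the ``only if'' step, but not the upper bound needed for existence with the sharp constant. The constant $e^{-\varphi(z_\beta)}$ as an \emph{upper} bound for the limiting weighted mass is precisely the content of the optimal $L^2$ extension theorem with plurisubharmonic weight (compare Theorem \ref{c:L2-1d-char}, whose statement already carries the value $e^{-\alpha_j}$ of the weight at the point and whose proof goes through \cite{guan-zhou13ap}-type extension rather than a direct candidate computation); your sketch replaces this input by an asymptotic matching claim that you do not justify and that cannot be extracted from Lemma \ref{l:m2} alone. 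Until that step is supplied -- either by invoking an optimal extension theorem on each sublevel set, or by the product decomposition into one-dimensional extremal problems as in Lemmas \ref{l:pro2}--\ref{l:pro3} -- the existence half of the theorem, and hence the sharpness of the chord inequality on which your equality analysis rests, is not established.
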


\subsection{Some other required results}
\label{sec:other}

In this section, we recall and give some lemmas, which will be used in the proofs of the main theorems.

Let $U\subset \mathbb{C}^n$  be an open set. Let us recall the definition of admissible weight given in \cite{Winiarski} and \cite{Winiarski2}.
\begin{Definition}[see \cite{Winiarski,Winiarski2}]
	A nonnegative measurable function $\rho$ on $U$ is called an admissible weight, if for any $z_0\in U$ the following condition is satisfied: there exists a neighborhood $V_{z_0}$ in $U$ and a constant $C_{z_0}>0$ such that
	$$|f(z)|^2\leq C_{z_0}\int_{U}|f|^2\rho$$ 
	holds for any $z\in V_{z_0}$ and any holomorphic function $f$ on  $U$. 
\end{Definition}

Let $\rho$  be an  admissible weight on $U$. The weighted Bergman space $A^2(U,\rho)$ is defined as follows:
$$A^2(U,\rho):=\left\{f\in\mathcal{O}(U):\int_U|f|^2\rho<+\infty\right\}.$$
Denote that $$\ll f,g\gg_{U,\rho}:=\int_Uf\overline g\rho$$
and
 $||f||_{U,\rho}:=(\int_U|f|^2\rho)^{\frac{1}{2}}$ for any $f,g\in A^2(U,\rho)$.

\begin{Lemma}[see \cite{Winiarski,Winiarski2}]
$A^2(U,\rho)$ is a separable Hilbert space equipped with the inner product $\ll\cdot,\cdot\gg_{U,\rho}$.
\end{Lemma}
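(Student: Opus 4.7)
The plan is to verify the three ingredients in turn: bilinearity/positivity of $\ll\cdot,\cdot\gg_{U,\rho}$ (which is immediate from the definition of $A^2(U,\rho)$ together with the fact that $\rho\ge 0$ and that a holomorphic function vanishing on $\{\rho>0\}\cap U$ with respect to Lebesgue measure must vanish identically, using that the admissibility bound forces $f\equiv 0$ at every point $z_0\in U$ whenever $\|f\|_{U,\rho}=0$), then completeness, and finally separability.

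For completeness, I would take a Cauchy sequence $\{f_n\}\subset A^2(U,\rho)$ with respect to $\|\cdot\|_{U,\rho}$. The key point is to upgrade this to locally uniform convergence of holomorphic functions. Fix any compact set $K\subset U$ and cover it by finitely many neighborhoods $V_{z_1},\dots,V_{z_N}$ furnished by the admissibility condition, with constants $C_{z_1},\dots,C_{z_N}$. Setting $C_K:=\max_j C_{z_j}$, the admissibility inequality gives
\begin{equation*}
\sup_{z\in K}|f_n(z)-f_m(z)|^2\le C_K\|f_n-f_m\|_{U,\rho}^2.
\end{equation*}
Hence $\{f_n\}$ is Cauchy in the topology of locally uniform convergence on $U$, so it converges locally uniformly to a function $f\in\mathcal{O}(U)$ (by Weierstrass). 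Now fix any Cauchy tail bound $\|f_n-f_m\|_{U,\rho}\le\varepsilon$ for $m,n\ge N_0$. Letting $m\to\infty$ and applying Fatou's lemma to $|f_n-f_m|^2\rho$ (which converges pointwise a.e.\ to $|f_n-f|^2\rho$ since $f_m\to f$ pointwise) yields $\|f_n-f\|_{U,\rho}\le\varepsilon$ for $n\ge N_0$. In particular $f-f_{N_0}\in L^2(U,\rho)$, hence $f\in A^2(U,\rho)$ and $f_n\to f$ in $A^2(U,\rho)$.

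For separability, I would observe that by the admissibility bound, the inclusion $A^2(U,\rho)\hookrightarrow L^2(U,\rho)$ is isometric, so it suffices to show that $A^2(U,\rho)$ is a closed subspace of the separable Hilbert space $L^2(U,\rho)$ (separability of the latter being standard, since $U\subset\mathbb{C}^n$ is $\sigma$-compact and $\rho d\lambda_{2n}$ is a $\sigma$-finite Borel measure). Closedness is exactly the content of the completeness argument above: any $L^2(U,\rho)$-limit $g$ of elements $f_n\in A^2(U,\rho)$ is, by the admissibility estimate, a locally uniform limit of holomorphic functions on $U$ (after passing to a subsequence or applying the argument above to a Cauchy sequence extracted from $\{f_n\}$), hence holomorphic; so $g\in A^2(U,\rho)$.

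No serious obstacle is expected; the only delicate point is the interplay between the $L^2$-norm Cauchy condition and pointwise convergence, which is handled cleanly by the admissibility inequality together with Fatou's lemma, exactly as above. Everything else reduces to standard Hilbert space and separability arguments applied to $L^2(U,\rho)$.
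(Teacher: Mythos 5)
Your proposal is correct. The paper states this lemma without proof, citing Pasternak-Winiarski, and your argument is exactly the standard one from those references: the admissibility inequality makes point evaluation (locally uniformly) continuous, so norm-Cauchy sequences converge locally uniformly to a holomorphic limit, Fatou's lemma upgrades this to norm convergence, and separability follows by realizing $A^2(U,\rho)$ as a closed subspace of the separable space $L^2(U,\rho\,d\lambda)$ (the only cosmetic slip is that the inclusion into $L^2(U,\rho)$ is isometric by definition of the norm; what admissibility actually supplies there is injectivity, i.e.\ that two holomorphic functions agreeing $\rho\,d\lambda$-a.e.\ coincide).
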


We recall a sufficient condition for a weight to be an admissible weight.

\begin{Lemma}[see \cite{GY-saitohprodct}]\label{l:admissible}
	 Let $\rho$ be a nonnegative Lebesgue measurable function on $U$, and let $S$ be an analytic subset of $U$. Assume  that for any $K\Subset U\backslash S$, there is $a>0$ such that $\int_{K}\rho^{-a}<+\infty$. Then $\rho$ is an admissible weight on $U$.
\end{Lemma}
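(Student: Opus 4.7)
The plan is to establish, for each $z_0 \in U$, an open neighborhood $V_{z_0}$ and a constant $C_{z_0}$ such that $|f(z)|^2 \le C_{z_0} \int_U |f|^2 \rho$ for all $z \in V_{z_0}$ and all holomorphic $f$ on $U$. The core idea is to combine the sub-mean value inequality for the plurisubharmonic function $|f|^{2s}$ (valid for any $s>0$, since $\log|f|$ is psh) with H\"older's inequality, which interpolates between $|f|^{2s}$ and the two factors $|f|^2\rho$ and $\rho^{-s/(1-s)}$.

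First I will handle the easier case $z_0 \in U\setminus S$. Since $U\setminus S$ is open, I can choose an open set $W$ with $z_0 \in W$ and $\overline W \Subset U\setminus S$; the hypothesis applied to $\overline W$ supplies $a>0$ with $C_W:=\int_W \rho^{-a}<+\infty$. Setting $s:=a/(a+1)\in(0,1)$, for $z$ in a small neighborhood $V_{z_0}\subset W$ I fix a polydisc $\Delta_z\subset W$ of polyradius independent of $z$. The sub-mean inequality on the psh function $|f|^{2s}$ together with H\"older's inequality with conjugate exponents $1/s$ and $1/(1-s)$ (noting $s/(1-s)=a$) gives
\[
|f(z)|^{2s} \le \frac{1}{|\Delta_z|}\int_{\Delta_z}(|f|^2\rho)^s\rho^{-s}\,dV \le \frac{1}{|\Delta_z|}\left(\int_U|f|^2\rho\right)^s C_W^{1-s},
\]
and raising to the $1/s$ power yields the admissibility bound uniformly for $z\in V_{z_0}$.

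The delicate case is $z_0 \in S$, where every polydisc centered at $z_0$ meets $S$, and no $L^a$-control of $\rho^{-1}$ is available across $S$. Here I replace the polydisc by a concentric polyannulus $A := \prod_j\{r'_j\le |w_j-z_{0,j}|\le r_j\}$ chosen with $A\Subset U\setminus S$. The existence of such polyradii is a geometric lemma: locally $S$ lies in the zero set of a non-constant holomorphic function $g$, and by inspecting the Taylor expansion of $g$ at $z_0$ (or, after a generic linear coordinate change, invoking Weierstrass preparation so that $g$ becomes regular in $w_1$ at $z_0$) one can choose the $r_j,r'_j$ on suitably different scales so that the dominant monomial of $g$ stays bounded away from zero on $A$. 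Once $A$ is in hand, the sub-mean property extends to it: applying the polydisc sub-mean to both $\Delta_r(z_0)$ and $\Delta_{r'}(z_0)$ and using that $r\mapsto |\Delta_r|^{-1}\int_{\Delta_r}u\,dV$ is monotone increasing in each coordinate of $r$ for any psh $u$, one deduces
\[
u(z_0)\le \frac{1}{|A|}\int_A u\,dV.
\]

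Applying this to $u=|f|^{2s}$ and repeating the H\"older step on $A$ (with $a>0$ such that $\int_A\rho^{-a}<+\infty$, furnished by the hypothesis since $\overline A\Subset U\setminus S$) produces $|f(z_0)|^2\le C\int_U|f|^2\rho$. To upgrade this to a neighborhood $V_{z_0}$ of $z_0$, I translate the polyannulus: for $z$ close enough to $z_0$, the shifted polyannulus $A+(z-z_0)$ is a concentric polyannulus around $z$ that remains inside a fixed compact subset of $U\setminus S$, so all the constants in the H\"older step remain uniform in $z$. The main obstacle is the geometric step of exhibiting a concentric polyannulus around $z_0\in S$ contained in $U\setminus S$; once this is settled, the analytic core (sub-mean value plus H\"older) is routine.
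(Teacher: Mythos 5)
The paper does not actually prove this lemma --- it is quoted from \cite{GY-saitohprodct} --- so there is no in-paper proof to compare against; I can only assess your argument on its own terms. Your strategy (sub-mean value inequality for the plurisubharmonic function $|f|^{2s}$ with $s=a/(a+1)$, followed by H\"older with exponents $1/s$ and $1/(1-s)$ so that $\rho^{-s/(1-s)}=\rho^{-a}$ appears) is the standard and correct one, and the case $z_0\in U\setminus S$ is complete as written (note only that the hypothesis forces $\rho>0$ a.e.\ on $U\setminus S$, which you need to write $|f|^{2s}=(|f|^2\rho)^s\rho^{-s}$). For $z_0\in S$ your two key claims are both true, but each is justified loosely. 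First, the mean-value inequality $u(z_0)\le |A|^{-1}\int_A u$ over a product of annuli does \emph{not} follow from monotonicity of polydisc averages as you state when $n\ge 2$: the product of annuli is not $\Delta_r\setminus\Delta_{r'}$, and the inclusion--exclusion expansion of $\int_A u$ has mixed signs that monotonicity alone does not control. The correct (and easy) argument is to iterate the one-variable inequality coordinate by coordinate: for fixed $w_2,\dots,w_n$ the slice function is subharmonic on the full disc $\overline{D(z_{0,1},r_1)}$, so its average over the annulus $A_1$ (with respect to area measure) dominates its value at the center, and one repeats in each variable. Second, for the geometric step the ``dominant monomial'' heuristic is vague, but the Weierstrass route you mention in parentheses does work and should be the actual argument: after a linear change of coordinates $g$ is $w_n$-regular, and the Weierstrass polynomial shows that all roots in $w_n$ over $\{|w'|\le\delta\}$ tend to $0$ with $\delta$, so $\prod_{j<n}\overline{D(z_{0,j},\delta')}\times\{r_n'\le|w_n-z_{0,n}|\le r_n\}$ avoids $\{g=0\}$. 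Note this produces a product of $(n-1)$ closed \emph{discs} and a single closed annulus rather than annuli in every factor; that is harmless (indeed simpler), since the iterated sub-mean value argument works equally well when some factors are discs. With these two repairs the translation step and the uniformity of constants go through exactly as you describe.

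A remark for economy: once you have a closed torus or compact product region $T\Subset U\setminus S$ surrounding $z_0$ inside a polydisc $\Delta\Subset U$, you can avoid the annulus mean-value inequality altogether by using the maximum principle, $\sup_{\Delta}|f|\le\max_{T}|f|$ when $T$ contains the distinguished boundary of $\Delta$, and then applying your already-proved Case 1 estimate uniformly at the points of the compact set $T$. This gives the admissibility bound on the whole polydisc $\Delta$ at once and is slightly shorter than the annulus computation.
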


Let $U\subset \mathbb{C}^n$ and $W\subset \mathbb{C}^m$ be two open sets.
 Let $\rho_1$ and  $\rho_2$ be two nonnegative  Lebesgue measurable functions on $U$ and $W$ respectively.  Assume that for any relatively compact set $U_1\Subset U$ ($W_2\Subset W$), there exists a real number $a_1>0$ ($a_2>0$) such that $\rho_1^{-a_1}$ ($\rho_2^{-a_2}$) is integrable on $U_1$ ($W_2$).
Let $M:=U\times W$ and $\rho=\rho_1\times\rho_2$. By Lemma \ref{l:admissible}, we know that $\rho_1$, $\rho_2$ and $\rho$ are admissible weights on $U$,  on $W$ and on $M$ respectively.

The following lemma gives a product property of Bergman spaces.

\begin{Lemma}
[see \cite{GMY-boundary4}]
\label{basis of product}Let $\{f_i(z)\}_{i\in \mathbb{Z}_{\ge 0}}$ and $\{g_j(w)\}_{j\in \mathbb{Z}_{\ge 0}}$ be the complete orthonormal basis of $A^2(U,\rho_1)$ and $A^2(W,\rho_2)$ respectively. Then $\{f_i(z) g_j(w)\}_{i,j\in\mathbb{Z}_{\ge 0}}$ is a complete orthonormal basis of $A^2(M,\rho)$.
\end{Lemma}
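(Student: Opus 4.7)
The plan is to check orthonormality and completeness of $\{f_i(z)g_j(w)\}$ separately. For orthonormality: since $M=U\times W$ and $\rho(z,w)=\rho_1(z)\rho_2(w)$, Fubini combined with orthonormality of $\{f_i\}$ in $A^2(U,\rho_1)$ and $\{g_j\}$ in $A^2(W,\rho_2)$ yields
$$\ll f_ig_j,f_{i'}g_{j'}\gg_{M,\rho}=\ll f_i,f_{i'}\gg_{U,\rho_1}\ll g_j,g_{j'}\gg_{W,\rho_2}=\delta_{ii'}\delta_{jj'},$$
the absolute integrability needed for Fubini being secured by Cauchy--Schwarz.

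For completeness, I would fix an arbitrary $h\in A^2(M,\rho)$ with $\ll h,f_ig_j\gg_{M,\rho}=0$ for every $i,j$ and prove $h\equiv0$. By Fubini, $h(\cdot,w)\in A^2(U,\rho_1)$ for a.e.\ $w\in W$; set $a_i(w):=\ll h(\cdot,w),f_i\gg_{U,\rho_1}$. Slice-wise Parseval together with Tonelli give
$$\sum_i\|a_i\|_{W,\rho_2}^2=\int_W\|h(\cdot,w)\|_{U,\rho_1}^2\rho_2(w)=\|h\|_{M,\rho}^2<+\infty,$$
so each $a_i\in L^2(W,\rho_2)$. Granted that $a_i$ is holomorphic on $W$ (so $a_i\in A^2(W,\rho_2)$), Fubini rewrites the hypothesis as $\ll a_i,g_j\gg_{W,\rho_2}=\ll h,f_ig_j\gg_{M,\rho}=0$ for every $j$; completeness of $\{g_j\}$ then forces $a_i\equiv0$ on $W$, and completeness of $\{f_i\}$ forces $h(\cdot,w)\equiv0$ for a.e.\ $w$, hence $h\equiv0$ on $M$ by holomorphy.

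The main obstacle is proving the holomorphy of $a_i$. My approach is Morera's theorem: for a closed triangle $T$ contained in a coordinate polydisc $\Delta\Subset W$, I would interchange the order of integration to obtain
$$\int_T a_i(w)\,dw=\int_U\overline{f_i(z)}\rho_1(z)\left(\int_T h(z,w)\,dw\right)=0,$$
the inner integral vanishing by Cauchy's theorem since $h(z,\cdot)$ is holomorphic on $W$ for every $z$. The Fubini swap requires $\sup_{w\in\overline{T}}\|h(\cdot,w)\|_{U,\rho_1}<+\infty$, which I would obtain as follows: for a.e.\ $z\in U$ the slice $h(z,\cdot)$ lies in $A^2(W,\rho_2)$, so admissibility of $\rho_2$ (made uniform on $\overline{T}$ by a finite covering of $\overline{T}$ by admissibility neighborhoods) supplies a constant $C_{\overline{T}}$ with $|h(z,w)|^2\leq C_{\overline{T}}\int_W|h(z,\zeta)|^2\rho_2(\zeta)$ for every $w\in\overline{T}$; multiplying by $\rho_1(z)$, integrating in $z$, and applying Tonelli gives $\|h(\cdot,w)\|_{U,\rho_1}^2\leq C_{\overline{T}}\|h\|_{M,\rho}^2$ uniformly in $w\in\overline{T}$. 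Once this uniform bound is in place, the Morera computation above shows $a_i$ is holomorphic and the argument of the previous paragraph closes the proof.
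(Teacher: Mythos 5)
This lemma is not proved in the paper at all: it is recalled verbatim from \cite{GMY-boundary4}, so there is no in-text argument to compare yours against. On its own merits your proof is the standard argument for product Bergman spaces and is essentially correct: orthonormality follows from Fubini and the factorization $\rho=\rho_1\times\rho_2$; completeness follows by expanding the slices $h(\cdot,w)$ in $\{f_i\}$, showing the coefficient functions $a_i$ lie in $A^2(W,\rho_2)$, and then invoking completeness of $\{g_j\}$ and of $\{f_i\}$ in turn. Your key technical input, the uniform slice bound $\|h(\cdot,w)\|_{U,\rho_1}^2\leq C_{\overline{T}}\|h\|_{M,\rho}^2$ obtained from admissibility of $\rho_2$ on a compact neighborhood, is correctly derived and does exactly the work needed to justify the Fubini swaps. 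Two small points should be tightened, neither of which is a genuine gap. First, Morera's theorem requires continuity of $a_i$, which you never state; it follows from the same bound, since $C_{\overline{T}}^{1/2}\left(\int_W|h(z,\zeta)|^2\rho_2(\zeta)\right)^{1/2}|f_i(z)|\rho_1(z)$ is an integrable dominating function in $z$ that lets you pass $w_n\to w$ under the integral sign. Second, for $W\subset\mathbb{C}^{m}$ with $m>1$ the triangle computation must be run in one coordinate at a time with the others fixed, yielding separate holomorphy, after which joint holomorphy follows from continuity (Osgood's lemma) or Hartogs' theorem; your phrase ``closed triangle contained in a coordinate polydisc'' gestures at this but should be made explicit.
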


Let $D_j$, $M$, $M_j$, $Z_j$, $Z_0$, $I_1$ be as in Section \ref{sec:hardy}. Denote that 
$$S:=\prod_{1\le j\le n}\partial D_j.$$ Let us recall the definition and some properties of the Hardy space over $S$.

Let $\lambda$ be a Lebesgue measurable function on $S$ such that $\inf_{S}\lambda>0$.	Let $f\in L^2(S,\lambda d\sigma)$, where $d\sigma:=\frac{1}{(2\pi)^n}|dw_1|\ldots|dw_n|$. We call $f\in H^2_{\lambda}(M,S)$ if there exists $\{f_m\}_{m\in\mathbb{Z}_{\ge0}}\subset\mathcal{O}(M)\cap C(\overline M)\cap L^2(S,\lambda d\sigma)$ such that $\lim_{m\rightarrow+\infty}\|f_m-f\|_{S,\lambda}^2=0$, where $\|g\|_{S,\lambda}:=\left(\int_{S}|g|^2\lambda d\sigma\right)^{\frac{1}{2}}$ for any $g\in L^2(S,\lambda d\sigma)$.

 Denote that 
$$\ll f,g\gg_{S,\lambda}=\frac{1}{(2\pi)^n}\int_S f\overline g\lambda |dw_1|\ldots|dw_n|$$
for any $f,g\in L^2(S,\lambda d\sigma),$
then $H_{\lambda}^2(M,S)$ is a Hilbert space equipped with the inner product $\ll \cdot,\cdot\gg_{S,\lambda}$ (see \cite{GY-saitohprodct}). There exists a  linear injective map $P_S:H^2_{\lambda}(M,S)\rightarrow\mathcal{O}(M)$ satisfying that $P_S(f)=f$ for any $f\in\mathcal{O}(M)\cap C(\overline M)\cap L^2(S,\lambda d\sigma)$ (see \cite{GY-saitohprodct}). For simplicity, denote $P_S(f)$ by $f^*$.

We recall three lemmas about $H^2_{\lambda}(M,S)$, which will be used in the proof of Lemma \ref{l:pro1}.
\begin{Lemma}[\cite{GY-saitohprodct}]
	\label{l:a3}For any compact subset $K$ of $M$, there exists a positive constant $C_K$ such that 
	$$|f^*(z)|\le C_K\|f\|_{S,\lambda}$$
	holds for any $z\in K$ and $f\in H_{\lambda}^2(M,S)$.
\end{Lemma}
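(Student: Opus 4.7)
The plan is to establish the pointwise bound first on the dense subspace $\mathcal{O}(M) \cap C(\overline M) \cap L^2(S, \lambda d\sigma)$ using an iterated Cauchy integral formula over the distinguished boundary $S$, and then to transfer the inequality to all of $H^2_{\lambda}(M, S)$ via the approximating sequences from Definition \ref{def2}.

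First, for $g \in \mathcal{O}(M) \cap C(\overline M) \cap L^2(S, \lambda d\sigma)$, I would iterate the one-variable Cauchy integral formula on each factor $D_j$ (whose boundary is a finite union of analytic Jordan curves) to obtain the representation
$$g(w_1, \ldots, w_n) = \frac{1}{(2\pi i)^n} \int_S \frac{g(z_1,\ldots,z_n)}{\prod_{j=1}^{n} (z_j - w_j)} \, dz_1 \cdots dz_n$$
for any $w \in M$. Given a compact set $K \subset M$, each projection $\pi_j(K) \subset D_j$ is compact, so
$$c_j := \inf\{|z_j - w_j| : z_j \in \partial D_j,\ w_j \in \pi_j(K)\} > 0.$$
Combining this with $\inf_S \lambda > 0$ and the finite total arc length of $S$, the Cauchy--Schwarz inequality applied to the representation yields $|g(w)|^2 \le C_K^2 \|g\|_{S,\lambda}^2$ for every $w \in K$, with an explicit constant $C_K$ depending only on $K$, $\inf_S \lambda$, and the $c_j$.

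Next, to extend the bound to an arbitrary $f \in H^2_{\lambda}(M, S)$, I would pick an approximating sequence $\{f_m\} \subset \mathcal{O}(M) \cap C(\overline M) \cap L^2(S, \lambda d\sigma)$ with $\|f_m - f\|_{S,\lambda} \to 0$, which exists by the very definition of $H^2_{\lambda}(M, S)$. Applying the estimate from the previous step to $f_m - f_{m'}$ shows that $\{f_m\}$ is uniformly Cauchy on every compact subset of $M$, and hence converges locally uniformly to a holomorphic function $F$ on $M$. By the defining property of $P_S$ we have $P_S(f_m) = f_m$, so $F$ must be identified with $f^* = P_S(f)$. Passing to the limit in $|f_m(w)| \le C_K \|f_m\|_{S,\lambda}$ then gives the desired inequality.

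The only point that demands care is the identification of the locally uniform limit $F$ with $P_S(f)$: this is essentially how $P_S$ is constructed in \cite{GY-saitohprodct}, but if one wishes a self-contained derivation it follows from $P_S$ being linear, coinciding with the identity on the dense subspace $\mathcal{O}(M) \cap C(\overline M) \cap L^2(S, \lambda d\sigma)$, and being injective, so that the candidate holomorphic extension of $f$ is uniquely determined. Beyond this structural remark, everything reduces to the standard Cauchy-kernel estimate on a product of planar domains and is insensitive to the particular weight $\lambda$ beyond its positive lower bound.
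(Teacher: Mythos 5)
Your argument is correct and is the natural one: the present paper does not reprove this lemma (it is recalled from \cite{GY-saitohprodct}), and your route --- the iterated Cauchy integral formula over the distinguished boundary $S$, a Cauchy--Schwarz estimate using $\inf_S\lambda>0$ and the finite arc length of $S$, and then the density argument via the approximating sequences of Definition \ref{def2} --- is exactly the expected proof. The only step that genuinely leans on the reference is the identification of the locally uniform limit of $\{f_m\}$ with $P_S(f)$ (linearity, injectivity and agreement on a dense subspace alone do not pin down $P_S$ without continuity), but you flag this correctly, and since $P_S$ is constructed in \cite{GY-saitohprodct} precisely as this limit, the step is sound.
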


\begin{Lemma}[\cite{GY-saitohprodct}]
	\label{l:a K<+infty}Assume that $M_{S}(Z_0,J,\lambda)<+\infty$. Then  there is a unique holomorphic function $f\in H^2_{\lambda}(M,S)$ such that $f^*(z_\beta)=h_0(z_\beta)$ for any $\beta\in I_1$, and $M_{S}(Z_0,J,\lambda)=\|f\|_{S,\lambda}^2$.
	\end{Lemma}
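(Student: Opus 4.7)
The plan is to mirror the Hilbert-space minimization argument used in Lemma \ref{l:exists of $M_H$2}. Since $M_S(Z_0,J,\lambda)<+\infty$, I would begin by choosing a minimizing sequence $\{f_j\}_{j\in\mathbb{Z}_{>0}}\subset H^2_{\lambda}(M,S)$ satisfying $f_j^*(z_\beta)=h_0(z_\beta)$ for every $\beta\in I_1$ and $\|f_j\|_{S,\lambda}^2\to M_S(Z_0,J,\lambda)$. The sequence is norm-bounded in the Hilbert space $H^2_{\lambda}(M,S)$, so by the Banach--Alaoglu theorem (equivalently, weak sequential compactness of bounded sets in a Hilbert space) I can extract a subsequence, still denoted $\{f_j\}$, that converges weakly in $H^2_{\lambda}(M,S)$ to some $f\in H^2_{\lambda}(M,S)$. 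Lower semicontinuity of the norm under weak convergence then yields
$$\|f\|_{S,\lambda}^2\le\liminf_{j\to+\infty}\|f_j\|_{S,\lambda}^2=M_S(Z_0,J,\lambda).$$

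To complete the existence part, I must verify that $f$ inherits the interpolation constraint. This is where Lemma \ref{l:a3} is essential: for each fixed $z\in M$, the point-evaluation map $g\mapsto g^*(z)$ is a bounded linear functional on $H^2_{\lambda}(M,S)$, hence weakly continuous. Applying this at each $z=z_\beta$ gives $f^*(z_\beta)=\lim_{j\to+\infty}f_j^*(z_\beta)=h_0(z_\beta)$ for every $\beta\in I_1$, so $f$ is admissible in the infimum. The definition of $M_S(Z_0,J,\lambda)$ then forces the reverse inequality $\|f\|_{S,\lambda}^2\ge M_S(Z_0,J,\lambda)$, giving equality.

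For uniqueness I would argue by contradiction using the parallelogram identity. If $g_1,g_2\in H^2_{\lambda}(M,S)$ were two distinct minimizers with $g_s^*(z_\beta)=h_0(z_\beta)$ for $s=1,2$, then $(g_1+g_2)/2$ still satisfies the affine interpolation constraint, while
$$\left\|\frac{g_1+g_2}{2}\right\|^2_{S,\lambda}+\left\|\frac{g_1-g_2}{2}\right\|^2_{S,\lambda}=\frac{1}{2}\bigl(\|g_1\|^2_{S,\lambda}+\|g_2\|^2_{S,\lambda}\bigr)=M_S(Z_0,J,\lambda),$$
so $\|(g_1+g_2)/2\|^2_{S,\lambda}<M_S(Z_0,J,\lambda)$, contradicting the definition of the infimum.

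I expect the only real subtlety to be the passage from weak convergence in $H^2_{\lambda}(M,S)$ to pointwise convergence of the holomorphic extensions $f_j^*$ at the interpolation points $z_\beta$; this is handled precisely by Lemma \ref{l:a3}, which upgrades point evaluation to a bounded, and therefore weakly continuous, linear functional. Apart from this, the argument is a standard Hilbert-space minimization over a closed affine subspace, and no further technical obstacle should arise.
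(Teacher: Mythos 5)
Your proof is correct and follows essentially the same Hilbert-space minimization scheme the paper uses for the analogous statements (Lemma \ref{l:exists of $M_H$} and Lemma \ref{l:exists of $M_H$2}): weak compactness of a bounded minimizing sequence, weak lower semicontinuity of the norm, preservation of the interpolation constraint via the bounded point-evaluation functionals from Lemma \ref{l:a3}, and uniqueness by the parallelogram identity. Since here the constraint is genuine point evaluation at finitely many $z_\beta$ rather than a germ condition, you rightly avoid any appeal to a closedness-of-ideals lemma, and no gap remains.
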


	Let $M_a=\prod_{1\le j\le n_a}D_j$ be a bounded domain in $\mathbb{C}^{n_a}$, where $D_j$ is planar regular region with finite boundary components which are analytic Jordan curves for any $1\le j\le n_a$. Denote that $S_a:=\prod_{1\le j\le n_a}\partial D_j$. Let $M_b=\prod_{1\le j\le n_b}\tilde D_j$ be a bounded domain in $\mathbb{C}^{n_b}$, where $\tilde D_j$ is planar regular region with finite boundary components which are analytic Jordan curves for any $1\le j\le n_b$. Denote that $S_b:=\prod_{1\le j\le n_b}\partial \tilde D_j$. Denote that $M:=M_a\times M_b\subset \mathbb{C}^{n_a+n_b}$ ($n=n_a+n_b$) and $S:=S_a\times S_b$.

\begin{Lemma}[\cite{GY-saitohprodct}]
	\label{p:7}Let $\lambda_a$ be a Lebesgue measurable function on $S_a$ such that $\inf_{S_a}\lambda_a>0$, and let $\lambda_a$ be a Lebesgue measurable function on $S_b$ such that $\inf_{S_b}\lambda_b>0$. Denote that $\lambda:=\lambda_a\lambda_b$ on $S$. Assume that $H^2_{\lambda}(M,S)\not=\{0\}$. Then we have $H^2_{\lambda_a}(M_a,S_a)\not=\{0\}$ and $H^2_{\lambda_b}(M_b,S_b)\not=\{0\}$. Furthermore,
	$\{e_m(z)\tilde e_l(w)\}_{m,l\in\mathbb{Z}_{>0}}$ is a complete orthonormal basis for $H^2_{\lambda}(M,S)$, where $\{e_m\}_{m\in\mathbb{Z}_{>0}}$ is a complete orthonormal basis for $H^2_{\lambda_a}(M_a,S_a)$, and $\{\tilde e_m\}_{m\in\mathbb{Z}_{>0}}$ is a complete orthonormal basis for $H^2_{\lambda_b}(M_b,S_b)$.
\end{Lemma}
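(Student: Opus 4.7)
The proof splits into three steps: nontriviality of the factor spaces, orthonormality and membership of the product family $\{e_m \tilde e_l\}$, and completeness. The recurring ingredient is the factorization $\lambda = \lambda_a \lambda_b$ and $d\sigma = d\sigma_a \otimes d\sigma_b$ (up to the $(2\pi)^n$ normalization), which together with Fubini lets single-variable arguments be lifted to the product.

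For nontriviality, take a nonzero $f \in H^2_{\lambda}(M,S)$ and an approximating sequence $f_k \in \mathcal{O}(M) \cap C(\overline{M}) \cap L^2(S,\lambda d\sigma)$; some $f_{k_0}$ is not identically zero. Because $S_b = \prod_{j}\partial\tilde D_j$ is the distinguished (Shilov) boundary of $\overline{M_b}$, iterating the maximum modulus principle in each factor shows that a function in $\mathcal{O}(M_b)\cap C(\overline{M_b})$ vanishing on $S_b$ must vanish identically. Applied to $f_{k_0}(z_0,\cdot)$ for a suitable $z_0 \in M_a$, this produces $\zeta_0 \in S_b$ with $f_{k_0}(\cdot,\zeta_0) \not\equiv 0$; Fubini lets one additionally choose $\zeta_0$ so that $f_{k_0}(\cdot,\zeta_0) \in L^2(S_a,\lambda_a d\sigma_a)$, and since it already lies in $\mathcal{O}(M_a)\cap C(\overline{M_a})$ it belongs to $H^2_{\lambda_a}(M_a,S_a)$. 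The argument for $H^2_{\lambda_b}(M_b,S_b)$ is symmetric.

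For orthonormality, Fubini gives $\ll e_m \tilde e_l, e_{m'} \tilde e_{l'}\gg_{S,\lambda} = \ll e_m, e_{m'}\gg_{S_a,\lambda_a}\ll \tilde e_l, \tilde e_{l'}\gg_{S_b,\lambda_b} = \delta_{mm'}\delta_{ll'}$. Membership $e_m \tilde e_l \in H^2_{\lambda}(M,S)$ follows by product approximants: if $g_k \to e_m$ and $\tilde h_k \to \tilde e_l$ with $g_k, \tilde h_k$ in the corresponding $\mathcal{O}\cap C(\overline{\,\cdot\,})$ dense subsets, then $g_k \tilde h_k \in \mathcal{O}(M)\cap C(\overline{M})\cap L^2(S,\lambda d\sigma)$ and the triangle-plus-Fubini bound
\[
\|g_k \tilde h_k - e_m \tilde e_l\|_{S,\lambda} \le \|g_k - e_m\|_{S_a,\lambda_a}\|\tilde h_k\|_{S_b,\lambda_b} + \|e_m\|_{S_a,\lambda_a}\|\tilde h_k - \tilde e_l\|_{S_b,\lambda_b}
\]
tends to $0$.

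Completeness is the main step. It suffices to show that $f \in H^2_{\lambda}(M,S)$ orthogonal to every $e_m \tilde e_l$ must vanish. For a.e.\ $w \in S_b$ the slice $f(\cdot,w) \in L^2(S_a,\lambda_a d\sigma_a)$ by Fubini, so $F_m(w) := \ll f(\cdot,w), e_m\gg_{S_a,\lambda_a}$ is defined a.e.; Cauchy--Schwarz with $\|e_m\|=1$ gives $|F_m(w)|^2 \le \|f(\cdot,w)\|_{S_a,\lambda_a}^2$, so $F_m \in L^2(S_b,\lambda_b d\sigma_b)$. Running the same construction on an approximating sequence $f_k$, each $F_{k,m}$ extends holomorphically to $M_b$ and continuously to $\overline{M_b}$ (Morera and dominated convergence applied to the defining integral), and $\|F_{k,m} - F_m\|_{S_b,\lambda_b} \le \|f_k - f\|_{S,\lambda} \to 0$; hence $F_m \in H^2_{\lambda_b}(M_b,S_b)$. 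By Fubini $\ll F_m, \tilde e_l\gg_{S_b,\lambda_b} = \ll f, e_m \tilde e_l\gg_{S,\lambda} = 0$ for every $l$, and completeness of $\{\tilde e_l\}$ forces $F_m = 0$ a.e. Varying $m$ and passing to a subsequence so that $f_{k_j}(\cdot,w) \to f(\cdot,w)$ in $H^2_{\lambda_a}$ for a.e.\ $w$, completeness of $\{e_m\}$ then gives $f(\cdot,w) = 0$ in $L^2(S_a,\lambda_a d\sigma_a)$ for a.e.\ $w$, hence $f = 0$ in $L^2(S,\lambda d\sigma)$. The technical heart is the verification that $F_m$ actually belongs to $H^2_{\lambda_b}(M_b,S_b)$, as this is what permits the single-factor completeness to be invoked; everything else reduces to careful uses of Fubini and Cauchy--Schwarz together with the approximating-sequence definition of $H^2_{\lambda}(M,S)$.
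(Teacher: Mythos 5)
The paper does not actually prove Lemma \ref{p:7}: it is imported from \cite{GY-saitohprodct} and used as a black box, so there is no in-paper argument to compare yours against. That said, your architecture---Fubini/Tonelli for orthonormality and for factoring inner products, product approximants for membership of $e_m\tilde e_l$, and reduction of completeness to the one-factor case via the partial inner products $F_m(w)=\ll f(\cdot,w),e_m\gg_{S_a,\lambda_a}$---is the natural route and is surely the intended one. Step 1, Step 2, and the concluding chain of Step 3 (passing to a subsequence so that $f_{k_j}(\cdot,w)\to f(\cdot,w)$ in $L^2(S_a,\lambda_a d\sigma_a)$ for a.e.\ $w\in S_b$, which also puts $f(\cdot,w)$ into $H^2_{\lambda_a}(M_a,S_a)$ for a.e.\ $w$, then intersecting over countably many $m$ and finishing with Fubini) are all sound.

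The one step that is not justified as written is precisely the one you call the technical heart: that $F_{k,m}(w)=\ll f_k(\cdot,w),e_m\gg_{S_a,\lambda_a}$ defines an element of $\mathcal{O}(M_b)\cap C(\overline{M_b})$. Your appeal to ``Morera and dominated convergence applied to the defining integral'' presupposes an integrable dominating function; the obvious candidate is $\sup_{\overline M}|f_k|\cdot|e_m|\lambda_a$, and its integrability over $S_a$ does \emph{not} follow from $e_m\in L^2(S_a,\lambda_a d\sigma_a)$ together with $\inf_{S_a}\lambda_a>0$. For instance, on $S_a=\partial\Delta$ with $\lambda_a(e^{i\theta})=|\theta|^{-3}$ near $\theta=0$ and $e_1$ proportional to $(z-1)^2$, one has $\int_{S_a}|e_1|^2\lambda_a<+\infty$ but $\int_{S_a}|e_1|\lambda_a=+\infty$; in such a situation it is not even clear that the integral defining $F_{k,m}(w)$ converges for interior $w$, since $f_k(\cdot,w)$ need not belong to $L^2(S_a,\lambda_a d\sigma_a)$ when $w\in M_b$ rather than $w\in S_b$. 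If $\lambda_a$ is in addition bounded above (which holds in every application in this paper, where $\lambda$ is continuous on the compact set $S$), then $|e_m|\lambda_a\le(\sup_{S_a}\lambda_a)\,|e_m|\in L^1(S_a,d\sigma_a)$ and your dominated-convergence/Morera argument closes; under the bare hypothesis $\inf_{S_a}\lambda_a>0$ as stated in the lemma, this step needs either an extra integrability assumption on $\lambda_a$ or a genuinely different justification.
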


In the following, we give three product properties, which will be used in the proof of Theorem \ref{thm:3.1}. 

Let $Z_j=\{z_{j,1},z_{j,2},...,z_{j,m_j}\}\subset D_j$ for any  $j\in\{1,2,...,n\}$, where $m_j$ is a positive integer. Denote that 
$$Z_0:=\prod_{1\le j\le n}Z_j\subset M.$$
 Denote that $I_1:=\{(\beta_1,\beta_2,...,\beta_n):1\le \beta_j\le m_j$ for any $j\in\{1,2,...,n\}\}$, $V_{\beta}:=\prod_{1\le j\le n}V_{z_{j,\beta_j}}$ and $z_{\beta}:=(z_{1,\beta_1},z_{2,\beta_2},\ldots,z_{n,\beta_n})\in M$ for any $\beta=(\beta_1,\beta_2,...,\beta_n)\in I_1$ .
Let $h_j$ be a holomorphic function on a neighborhood of $Z_j$ for any $1\le j\le n$ satisfying that there exists $k\in\{1,\ldots,m_j\}$ such that $h_j(z_{j,k})\not=0$. Denote that $h_0=\prod_{1\le j\le n}h_j$. 

Let $\rho_1$ and $\rho_2$ be two Lebesgue measurable functions on $\partial D_1$ and $S_1:=\prod_{2\le j\le n}\partial D_j$ respectively, which satisfy that $\inf_{\partial D_1}\rho_1>0$ and $\inf_{\ S_1}\rho_2>0$. Let $\lambda_1$ and $\lambda_2$ be two nonnegative Lebesgue measurable functions on $D_1$ and $M_1$ respectively, which satisfy that for any relatively compact subset $R_1\Subset D_1$ ($R_2\Subset M_1$), there is $a>0$ such that $\lambda_1^{-a}$ ($\lambda_2^{-a}$) is integrable on $R_1$ ($R_2$). By Lemma \ref{l:admissible}, we know that $\lambda_1$  and $\lambda_2$ are admissible weights on $D_1$ and on $M_1$ respectively. 
 
 Let us consider the following  minimal integrals. Let $J_{\beta}$ be the maximal ideal of $\mathcal{O}_{z_\beta}$ for any $\beta\in I_1$.
Denote that 
\begin{displaymath}
	\begin{split}
		M_{H,1}(Z_0,J,\rho_1 \lambda_2):=\inf\bigg\{&\|f\|^2_{\partial D_1\times M_1,\rho_1 \lambda_2}:f\in H_{\rho}^2(M,\partial D_1\times M_1) 
		\\ &\text{ s.t. } f^*(z_{\beta})=h_0(z_{\beta})\text{ for any $\beta\in I_1$}\bigg\},\end{split}
\end{displaymath}
\begin{displaymath}
	\begin{split}
		M_{\partial D_1}:=\inf\Bigg\{\frac{1}{2\pi}\int_{\partial D_1}&|f|^2\rho_1|dz_1|: f\in H^2(D_1)		\\ & \text{ s.t. } f(z_{j,k})=h_1(z_{j,k})\text{ for any $1\le k\le m_1$}\Bigg\}\end{split}
\end{displaymath}
and 
\begin{displaymath}
	\begin{split}
		M_{M_1}:=\inf\bigg\{&\int_{M_1}|f|^2\lambda_2 :f\in \mathcal{O}(M_1) 
		\\ &\text{ s.t. } f(z_{\gamma})=\prod_{2\le l\le n}h_l(z_{l,\gamma_l}) \text{ for any $\gamma\in I_{1,1}$}\bigg\},\end{split}
\end{displaymath}
where $I_{1,1}:=\{\gamma=(\gamma_{2},\ldots,\gamma_n)\in\mathbb{Z}^{n-1} :1\le\gamma_l\le m_l$ for any $2\le l\le n\}$ and  $z_{\gamma}:=(z_{2,\gamma_{2}},\ldots,z_{n,\gamma_{n}})\in M_j$ for any $\gamma\in I_{1,1}$.

\begin{Lemma}
	\label{l:pro2} $M_{H,1}(Z_0,J,\rho_1 \lambda_2)=M_{\partial D_1}\times M_{M_1}$.
\end{Lemma}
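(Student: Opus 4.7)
The plan is to exploit the tensor-product structure of $H^2_{\rho_1\lambda_2}(M,\partial D_1\times M_1)$ supplied by Lemma \ref{l:prod-d1xm1}, together with minimum-norm characterizations for interpolation in each factor. For the upper bound, Lemma \ref{l:exists of $M_H$} (applied on $D_1$) yields a minimizer $g_1\in H^2_{\rho_1}(D_1,\partial D_1)$ with $g_1^*(z_{1,k})=h_1(z_{1,k})$ for all $1\leq k\leq m_1$ and $\|g_1\|_{\partial D_1,\rho_1}^2=M_{\partial D_1}$; an analogous Hilbert-space projection argument in $A^2(M_1,\lambda_2)$ (whose point evaluations are bounded by admissibility of $\lambda_2$) produces $g_2\in A^2(M_1,\lambda_2)$ with $g_2(z_\gamma)=\prod_{2\leq l\leq n}h_l(z_{l,\gamma_l})$ for every $\gamma\in I_{1,1}$ and $\|g_2\|_{M_1,\lambda_2}^2=M_{M_1}$. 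Defining $f(w_1,\hat w_1):=g_1(w_1)g_2(\hat w_1)$ on $\partial D_1\times M_1$ produces an element of $H^2_{\rho_1\lambda_2}(M,\partial D_1\times M_1)$ with holomorphic extension $f^*=g_1^*\cdot g_2$ satisfying $f^*(z_\beta)=g_1^*(z_{1,\beta_1})g_2(z_\gamma)=h_0(z_\beta)$, and Fubini gives $\|f\|^2=\|g_1\|^2\cdot\|g_2\|^2=M_{\partial D_1}\cdot M_{M_1}$; hence $M_{H,1}(Z_0,J,\rho_1\lambda_2)\leq M_{\partial D_1}\cdot M_{M_1}$.

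For the reverse inequality, I will fix complete orthonormal bases $\{e_m\}_m$ of $H^2_{\rho_1}(D_1,\partial D_1)$ and $\{\tilde e_l\}_l$ of $A^2(M_1,\lambda_2)$, so that $\{e_m\tilde e_l\}_{m,l}$ is a complete orthonormal basis of $H^2_{\rho_1\lambda_2}(M,\partial D_1\times M_1)$ by Lemma \ref{l:prod-d1xm1}. Expand any feasible $f=\sum c_{m,l}e_m\tilde e_l$, with $\|f\|^2=\sum|c_{m,l}|^2$, and similarly $g_1=\sum x_m e_m$, $g_2=\sum y_l\tilde e_l$. Introduce bounded linear maps $A\colon \ell^2\to\mathbb{C}^{m_1}$ by $(Au)_{\beta_1}=\sum_m u_m e_m^*(z_{1,\beta_1})$ and $B\colon\ell^2\to\mathbb{C}^{|I_{1,1}|}$ by $(Bv)_\gamma=\sum_l v_l\tilde e_l(z_\gamma)$ (boundedness follows from boundedness of point evaluation — Lemma \ref{l:bounded} for the first, admissibility of $\lambda_2$ for the second). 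The interpolation conditions then become $Ax=a$, $By=b$, and $ACB^T=ab^T$, where $a_{\beta_1}=h_1(z_{1,\beta_1})$, $b_\gamma=\prod_{2\leq l\leq n}h_l(z_{l,\gamma_l})$, and $C=(c_{m,l})$ is viewed as a Hilbert--Schmidt operator.

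The minimum-norm property of $g_1$ and $g_2$ forces $x\in(\ker A)^\perp=\operatorname{ran}(A^*)$ and $y\in\operatorname{ran}(B^*)$ (the ranges are closed because finite-dimensional), so $x=A^*\alpha$ and $y=B^*\eta$ for some $\alpha\in\mathbb{C}^{m_1}$, $\eta\in\mathbb{C}^{|I_{1,1}|}$. For any feasible $C$, write $C=xy^T+C'$; then $AC'B^T=ACB^T-(Ax)(By)^T=ab^T-ab^T=0$, hence
\begin{equation*}
\langle xy^T,C'\rangle_{HS}=\sum_{m,l}\overline{x_m y_l}\,c'_{m,l}=\sum_{\beta_1,\gamma}\overline{\alpha_{\beta_1}\eta_\gamma}\,(AC'B^T)_{\beta_1,\gamma}=0.
\end{equation*}
Consequently $\|C\|_{HS}^2=\|xy^T\|_{HS}^2+\|C'\|_{HS}^2\geq\|x\|^2\|y\|^2=M_{\partial D_1}\cdot M_{M_1}$, so $\|f\|^2\geq M_{\partial D_1}\cdot M_{M_1}$; taking the infimum over feasible $f$ gives the reverse inequality and completes the proof.

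The main technical obstacle is the interchange between the boundary $L^2$ expansion $f=\sum c_{m,l}e_m\tilde e_l$ and the pointwise identity $f^*(z_\beta)=\sum c_{m,l}e_m^*(z_{1,\beta_1})\tilde e_l(z_\gamma)$ on $M$: one must verify that the scalar products defining $A$ and $B$ converge absolutely and that the constraint $ACB^T=ab^T$ on Hilbert--Schmidt coefficients is genuinely equivalent to the original pointwise interpolation on $f^*$. This rests on the boundedness of point evaluation in each factor (so that $(e_m^*(z_{1,\beta_1}))_m$ and $(\tilde e_l(z_\gamma))_l$ lie in $\ell^2$ via the reproducing kernels) together with Lemma \ref{l:b1-p}, which guarantees uniform convergence of the partial sums of $f^*$ on compact subsets of $M$ and hence legitimates evaluating the series at $z_\beta$ term by term.
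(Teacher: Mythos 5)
Your proof is correct, and it follows the same overall skeleton as the paper's (upper bound by multiplying the two factor minimizers; lower bound by an orthogonality--Pythagoras argument in the tensor-product Hilbert space supplied by Lemma \ref{l:prod-d1xm1}), but the key orthogonality is established by a genuinely different mechanism. The paper chooses the orthonormal bases $\{e_l\}$ and $\{\tilde e_m\}$ adapted to the interpolation points --- a triangular arrangement in which only finitely many basis elements are nonzero at the $z_{1,k}$ (resp.\ at the $z_\gamma$) and the evaluation matrix is triangular with nonzero diagonal --- then writes $f_0-f_1f_2=F_1+F_2$, kills the finite part $F_1$ by invertibility of that triangular matrix, and applies Pythagoras to $f_0=f_1f_2+F_2$. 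You keep arbitrary bases and instead derive $\langle C-xy^{T},xy^{T}\rangle_{HS}=0$ from the normal-equation characterization $x\in\mathrm{ran}\,A^{*}$, $y\in\mathrm{ran}\,B^{*}$ of minimum-norm solutions together with $A(C-xy^{T})B^{T}=0$. This is the more conceptual route: it isolates exactly why the product of the two factor minimizers is optimal, and the convergence issues you flag are indeed all that needs checking (they follow from boundedness of point evaluation in each factor --- Lemma \ref{l:bounded} with $\inf_{\partial D_1}\rho_1>0$, and admissibility of $\lambda_2$ --- together with Lemma \ref{l:b1-p}, which lets you evaluate the norm-convergent series for $f^{*}$ at $z_\beta$ term by term).

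One small omission: in the lower bound you take for granted that the minimizers $g_1$ and $g_2$ exist, i.e.\ that $M_{\partial D_1}<+\infty$ and $M_{M_1}<+\infty$. When $M_{H,1}(Z_0,J,\rho_1\lambda_2)=+\infty$ there is nothing to prove, but when it is finite you must first deduce finiteness of the two factor problems before $x$ and $y$ are available. This is easy in your own framework: pick $\beta_1$ with $a_{\beta_1}=h_1(z_{1,\beta_1})\neq0$; the $\beta_1$-th row $u$ of $AC$ lies in $\ell^{2}$ (since $\|u\|^{2}\le\|A_{\beta_1,\cdot}\|^{2}\|C\|_{HS}^{2}$) and satisfies $Bu=a_{\beta_1}b$, so $u/a_{\beta_1}$ is a finite-norm competitor for $M_{M_1}$; and $M_{\partial D_1}<+\infty$ because $D_1$ is a regular planar region so the interpolation data are matched by a function holomorphic on a neighborhood of $\overline{D_1}$. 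This is exactly the paper's ``slice'' argument written in coordinates, and with it your proof is complete.
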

\begin{proof}
	By definitions of $M_{H,1}(Z_0,J,\rho_1 \lambda_2)$, $M_{\partial D_1}$ and $M_{M_1}$, we have 
	$$M_{H,1}(Z_0,J,\rho_1 \lambda_2)\le M_{\partial D_1}\times M_{M_1}.$$ Thus, it suffices to prove $M_{H,1}(Z_0,J,\rho_1 \lambda_2)\ge M_{\partial D_1}\times M_{M_1}$.
	
	Without loss of generality, assume that $M_{H,1}(Z_0,J,\rho_1 \lambda_2)<+\infty$. There exists $f_0\in H^2_{\rho_1 \lambda_2}(M,\partial D_1,\times M_1)$ satisfying $f_0^*(z_{\beta})=h_0(z_{\beta})$ for any $\beta\in I_1$ and
	\begin{equation}
		\label{eq:0320a}
		M_{H,1}(Z_0,J,\rho_1 \lambda_2)=\|f_0\|^2_{\partial D_1\times M_1,\rho_1 \lambda_2}.
	\end{equation}
	 As $H^2_{\rho_1 \lambda_2}(M,\partial D_1\times M_1)\not=\emptyset$, by Lemma \ref{l:prod-d1xm1}, $H^2_{\rho_1}(D_1,\partial D_1)\not=\{0\}$ and $A^2(M_1,\lambda_2)\not=\{0\}$. 

Let  $\{e_l\}_{l\in\mathbb{Z}_{>0}}$ be a complete orthonormal basis for $H^2_{\rho_1}(D_1,\partial D_1)$, which satisfies that $e_l(z_{1,l})\not=0$ for $1\le l\le m_1$ and $e_l(z_{1,k})=0$ for $0\le k< l$. Denote that 
$$K_1:=\{e_l\}_{l>m_1}.$$
We call $\gamma<\tilde\gamma$ for $\gamma,\tilde\gamma\in I_{1,1}$ if there exists $s\in\{2,\ldots,n\}$ such that $\gamma_l=\tilde\gamma_l$ when $l<s$ and $\gamma_s>\tilde\gamma_s$.
 Let $\{\tilde e_m\}_{m\in\mathbb{Z}_{>0}}$ be a complete orthonormal basis for $A^2(M_1,\lambda_2)$, which satisfies that there exists $N_1\in\mathbb{Z}_{>0}$ such that $\sum_{\gamma\in I_{1,1}}|\tilde e_m(z_{\gamma})|=0$  when $m>N_1$ and $\sum_{\gamma\in I_{1,1}}|\tilde e_m(z_{\gamma})|\not=0$  when $m\le N_1$, and $s_m$ is strictly increasing with respect to $m$ when $m\le N_1$, where $s_m:=\inf\{\gamma\in I_{1,1}:\tilde e_m(z_{\gamma})\not=0\}$. Denote that 
 $$K_2:=\{\tilde e_m\}_{m>N_1}.$$
	Lemma \ref{l:prod-d1xm1} shows that $\{e_m(z)\tilde e_l(w)\}_{m,l\in\mathbb{Z}_{>0}}$ is a complete orthonormal basis for $H^2_{\rho_1 \lambda_2}(M,\partial D_1\times M_1)$.
	Then we have 
	$$f_0=\sum_{l,m\in\mathbb{Z}_{>0}}a_{l,m}e_l\tilde e_m.$$
By Lemma \ref{l:b1-p}, we know that 
$$f_0^*=\sum_{l,m\in\mathbb{Z}_{>0}}a_{l,m}e^*_l\tilde e_m\,\,\,\,\text{(compactly uniform convergence)}.$$
Since there exists $k\in\{1,\ldots,m_1\}$ such that $h_1(z_{1,k})\not=0$, without loss of generality, assume that $h_z(z_{1,1})\not=0.$ As $f_0^*(z_{\beta})=h_0(z_\beta)=\prod_{1\le j\le n}h_j(z_{j,\beta_j})$ for any $\beta\in I_1$, we obtain that 
$$\frac{\sum_{l,m\in\mathbb{Z}_{>0}}a_{l,m}e^*_l(z_{1,1})\tilde e_m(z_{\gamma})}{h_1(z_{1,1})}=\prod_{2\le j\le n}h_j(z_{j,\gamma_j})$$
for any $\gamma\in I_{1,1}$. Note that $\sum_{l,m\in\mathbb{Z}_{>0}}a_{l,m}e^*_l(z_{1,1})\tilde e_m\in\mathcal{O}(M_1)$ and
\begin{displaymath}
	\begin{split}
	&\int_{M_1}|\sum_{l,m\in\mathbb{Z}_{>0}}a_{l,m}e^*_l(z_{1,1})\tilde e_m|^2\lambda_2\\
	=& \sum_{l,m\in\mathbb{Z}_{>0}}|a_{l,m}e^*_l(z_{1,1})|^2\\
	=&|e^*(z_{1,1})|^2\|f_0\|^2_{\partial D_1\times M_1,\rho_1 \lambda_2}\\
	<&+\infty.
		\end{split}
\end{displaymath}
Thus, we have $M_{M_1}<+\infty$. As $H^2_{\rho_1}(D_1,\partial D_1)\not=\emptyset$ and $D_1$ is a planar regular region bounded by finite analytic Jordan curves, we know that $M_{\partial D_1}<+\infty$.

 Let $f_1\in H^2_{\rho_1}(D_1,\partial D_1)$ satisfy $f_1^*(z_{1,k})=h_1(z_{1,k})$ for any $1\le k\le m_1$ and 
$$M_{\partial D_1}=\frac{1}{2\pi}\int_{\partial D_1}|f_1|^2\rho_1|dz_1|.$$
Let $f_2\in\mathcal{O}(M_1)$ satisfy that $f_2(z_{\gamma})=\prod_{2\le j\le n}h_j(z_{j,\gamma_j})$ for any $\gamma\in I_{1,1}$ and 
$$M_{M_1}=\int_{M_1}|f_2|^2\lambda_2.$$
Then we know that 
\begin{equation}
	\label{eq:0320b}\int_{\partial D_1}f_1\overline{f}\rho_1|dz_1|=0
\end{equation}
for any $f\in K_1$, and 
\begin{equation}
	\label{eq:0320c}
	\int_{M_1}f_2\overline{g}\lambda_2=0
\end{equation}
for any $g\in K_2$.
Denote that
\begin{equation}
	\label{eq:0320d}F_0:=f_0-f_1f_2,
\end{equation}
then we have $F_0\in H^2_{\rho_1 \lambda_2}(M,\partial D_1\times M_1)$ and $F^*_0(z_{\beta})=0$ for any $\beta\in I_1$. As $\{e_m(z)\tilde e_l(w)\}_{m,l\in\mathbb{Z}_{>0}}$ is a complete orthonormal basis for $H^2_{\rho_1 \lambda_2}(M,\partial D_1\times M_1)$, there exists $\{b_{l,m}\}_{l,m\in\mathbb{Z}_{>0}}\subset\mathbb{C}$ such that 
$$F_0=\sum_{l,m\in\mathbb{Z}_{>0}}b_{l,m}e_l\tilde e_m=F_1+F_2,$$
 where $F_1:=\sum_{1\le l\le m_1}\sum_{1\le m\le N_1}b_{l,m}e_l\tilde e_m$ and $F_2:=\sum_{e_l\in K_1\text{ or }\tilde e_m\in K_2}b_{l,m}e_l\tilde e_m$.
 Note that $F^*_2(z_{\beta})=0$ for any $\beta\in I_1$, then $F^*_1(z_{\beta})=0$ for any $\beta\in I_1$. By the construction   of $\{e_l\}_{1\le l\le m_1}$ and $\{\tilde e_m\}_{1\le m\le N_1}$, we know $b_{l,m}=0$ for $1\le l\le m_1$ and $1\le m\le N_1$, i.e., 
 $$F_1\equiv0.$$
Note that $F_0=F_2=\sum_{e_l\in K_1\text{ or }\tilde e_m\in K_2}b_{l,m}e_l\tilde e_m,$ then it follows from equality \eqref{eq:0320a}, \eqref{eq:0320b}, \eqref{eq:0320c} and \eqref{eq:0320d} that 
 \begin{equation}\nonumber
 \begin{split}
 	 	M_{H,1}(Z_0,J,\rho_1 \lambda_2)&=\|f_0\|^2_{\partial D_1\times M_1,\rho_1 \lambda_2}\\
 	 	&=\|F_2\|^2_{\partial D_1\times M_1,\rho_1 \lambda_2}+\|f_1f_2\|^2_{\partial D_1\times M_1,\rho_1 \lambda_2}\\
 	 	&\ge M_{\partial D_1}\times M_{M_1}.
 \end{split}
 \end{equation}

Thus, Lemma \ref{l:pro2} holds.
\end{proof}

Let us consider the following  minimal integrals. Denote that 
\begin{displaymath}
	\begin{split}
		M_{S}(Z_0,J,\rho_1 \rho_2):=\inf\bigg\{&\|f\|^2_{S,\rho_1 \rho_2}:f\in H_{\rho_1 \rho_2}^2(M,S) 
		\\ &\text{ s.t. } f^*(z_{\beta})=h_0(z_{\beta})\text{ for any $\beta\in I_1$}\bigg\},\end{split}
\end{displaymath}
and 
\begin{displaymath}
	\begin{split}
		M_{S_1}:=\inf\bigg\{&\|f\|^2_{S_1,\rho_2}:f\in H_{\rho_2}^2(M_1,S_1) 
		\\ &\text{ s.t. } f^*(z_{\gamma})=\prod_{2\le j\le n}h_j(z_{j,\gamma_j})\text{ for any $\gamma\in I_{1,1}$}\bigg\}.\end{split}
\end{displaymath}

We give a product property as follows:

\begin{Lemma}
	\label{l:pro1}$M_{S}(Z_0,J,\rho_1 \rho_2)=M_{\partial D_1}\times M_{S_1}$.
\end{Lemma}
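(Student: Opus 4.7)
The plan is to mirror the proof of Lemma \ref{l:pro2}, with the Bergman space $A^2(M_1,\lambda_2)$ replaced by the Hardy space $H^2_{\rho_2}(M_1,S_1)$. The substitute for Lemma \ref{l:prod-d1xm1} is Lemma \ref{p:7}, which guarantees that the product of complete orthonormal bases of $H^2_{\rho_1}(D_1,\partial D_1)$ and $H^2_{\rho_2}(M_1,S_1)$ is a complete orthonormal basis of $H^2_{\rho_1\rho_2}(M,S)$. In particular, for any $f_1\in H^2_{\rho_1}(D_1,\partial D_1)$ and $f_2\in H^2_{\rho_2}(M_1,S_1)$ the product $f_1f_2$ lies in $H^2_{\rho_1\rho_2}(M,S)$ with $\|f_1f_2\|_{S,\rho_1\rho_2}^2=\frac{1}{2\pi}\int_{\partial D_1}|f_1|^2\rho_1|dz_1|\cdot\|f_2\|_{S_1,\rho_2}^2$. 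Taking $f_1$ and $f_2$ to be minimizers for $M_{\partial D_1}$ and $M_{S_1}$, interpolation conditions at each $z_\beta$ are satisfied and we immediately get
\[
M_S(Z_0,J,\rho_1\rho_2)\le M_{\partial D_1}\times M_{S_1}.
\]

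For the reverse inequality, we may assume $M_S(Z_0,J,\rho_1\rho_2)<+\infty$, and use Lemma \ref{l:a K<+infty} to extract a minimizer $f_0\in H^2_{\rho_1\rho_2}(M,S)$ with $f_0^*(z_\beta)=h_0(z_\beta)$ for all $\beta\in I_1$. Since $H^2_{\rho_1\rho_2}(M,S)\not=\{0\}$, Lemma \ref{p:7} forces $H^2_{\rho_1}(D_1,\partial D_1)\not=\{0\}$ and $H^2_{\rho_2}(M_1,S_1)\not=\{0\}$. Choose a complete orthonormal basis $\{e_l\}_{l\geq 1}$ of $H^2_{\rho_1}(D_1,\partial D_1)$ arranged (by Gram--Schmidt on an interpolating sequence, which exists because $D_1$ is a planar regular region with analytic boundary) so that $e_l^*(z_{1,l})\neq 0$ for $1\le l\le m_1$ and $e_l^*(z_{1,k})=0$ for $1\le k<l$. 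Similarly, order $I_{1,1}$ lexicographically and choose a complete orthonormal basis $\{\tilde e_m\}_{m\geq 1}$ of $H^2_{\rho_2}(M_1,S_1)$ such that there is $N_1\in\mathbb Z_{>0}$ with $\sum_{\gamma\in I_{1,1}}|\tilde e_m^*(z_\gamma)|=0$ for $m>N_1$, while for $m\le N_1$ the minimal $\gamma$ with $\tilde e_m^*(z_\gamma)\neq 0$ is strictly increasing in $m$. Set $K_1:=\{e_l:l>m_1\}$ and $K_2:=\{\tilde e_m:m>N_1\}$. By Lemma \ref{p:7}, $\{e_l\tilde e_m\}_{l,m\geq 1}$ is a complete orthonormal basis of $H^2_{\rho_1\rho_2}(M,S)$, and Lemma \ref{l:a3} shows that the expansion converges uniformly on compact subsets of $M$ after applying $P_S$.

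Using this decomposition, one deduces (exactly as in the proof of Lemma \ref{l:pro2}, invoking the nonvanishing hypothesis on $h_0$) that $M_{\partial D_1}<+\infty$ and $M_{S_1}<+\infty$. Let $f_1\in H^2_{\rho_1}(D_1,\partial D_1)$ and $f_2\in H^2_{\rho_2}(M_1,S_1)$ be the corresponding minimizers. By the variational characterization, $f_1$ is orthogonal in $H^2_{\rho_1}(D_1,\partial D_1)$ to every element of $K_1$ and $f_2$ is orthogonal in $H^2_{\rho_2}(M_1,S_1)$ to every element of $K_2$. Put $F_0:=f_0-f_1f_2\in H^2_{\rho_1\rho_2}(M,S)$; then $F_0^*(z_\beta)=0$ for every $\beta\in I_1$. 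Expanding $F_0=F_1+F_2$ with $F_1=\sum_{1\le l\le m_1,\,1\le m\le N_1}b_{l,m}e_l\tilde e_m$ and $F_2$ a sum over indices with $e_l\in K_1$ or $\tilde e_m\in K_2$, the triangular construction forces $F_2^*(z_\beta)=0$ for all $\beta$, hence $F_1^*(z_\beta)=0$ for all $\beta$, and then solving the resulting triangular linear system yields $F_1\equiv 0$, so $F_0=F_2$. The orthogonality of $F_2$ against $f_1f_2$ in $H^2_{\rho_1\rho_2}(M,S)$ gives
\[
M_S(Z_0,J,\rho_1\rho_2)=\|f_0\|_{S,\rho_1\rho_2}^2=\|F_2\|_{S,\rho_1\rho_2}^2+\|f_1f_2\|_{S,\rho_1\rho_2}^2\ge M_{\partial D_1}\cdot M_{S_1},
\]
finishing the proof.

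The only nontrivial step is constructing the two bases with the desired triangular interpolation property; this relies on Gram--Schmidt applied to an interpolating sequence, which is possible because the evaluation functionals $f\mapsto f^*(z_{1,k})$ on $H^2_{\rho_1}(D_1,\partial D_1)$ and $f\mapsto f^*(z_\gamma)$ on $H^2_{\rho_2}(M_1,S_1)$ are bounded (Lemma \ref{l:a3}) and linearly independent under the nonvanishing assumptions on $h_1,\ldots,h_n$ at some $z_{j,k}$. Everything else is essentially the bookkeeping already carried out in Lemma \ref{l:pro2}.
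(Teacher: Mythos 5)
Your proposal is correct and follows essentially the same route as the paper's own proof: the paper also reduces to Lemma \ref{l:pro2}'s argument, using Lemma \ref{l:a K<+infty} for the minimizer, Lemma \ref{p:7} for the product orthonormal basis of $H^2_{\rho_1\rho_2}(M,S)$, the same triangular bases $\{e_l\}$, $\{\tilde e_m\}$ with $K_1$, $K_2$, and the same orthogonal decomposition $f_0=F_2+f_1f_2$ to conclude. No substantive differences.
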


\begin{proof} The proof is similar to the proof of Lemma \ref{l:pro2}. 	By definitions of $M_{S}(Z_0,J,\rho_1 \rho_2)$, $M_{\partial D_1}$ and $M_{S_1}$, we have 
$$M_{S}(Z_0,J,\rho_1 \rho_2)\le M_{\partial D_1}\times M_{S_1}.$$ Thus, it suffices to prove $M_{S}(Z_0,J,\rho_1 \rho_2)\ge M_{\partial D_1}\times M_{S_1}$.
	
	Without loss of generality, assume that $M_{S}(Z_0,J,\rho_1 \rho_2)<+\infty$. By Lemma \ref{l:a K<+infty}, there exists $f_0\in H^2_{\rho_1 \rho_2}(M,S)$ satisfying $f_0^*(z_{\beta})=h_0(z_{\beta})$ for any $\beta\in I_1$ and
	\begin{equation}
		\label{eq:0321a}
		M_{S}(Z_0,J,\rho_1 \rho_2)=\|f_0\|^2_{S,\rho_1 \rho_2}.
	\end{equation}
	 As $H^2_{\rho_1 \rho_2}(M,S)\not=\emptyset$, by Lemma \ref{p:7}, $H^2_{\rho_1}(D_1,\partial D_1)\not=\{0\}$ and $H^2_{\rho_2}(M_1,S_1)\not=\{0\}$. 

Let  $\{e_l\}_{l\in\mathbb{Z}_{>0}}$ be a complete orthonormal basis for $H^2_{\rho_1}(D_1,\partial D_1)$, which satisfies that $e_l(z_{1,l})\not=0$ for $1\le l\le m_1$ and $e_l(z_{1,k})=0$ for $0\le k< l$. Denote that 
$$K_1:=\{e_l\}_{l>m_1}.$$
We call $\gamma<\tilde\gamma$ for $\gamma,\tilde\gamma\in I_{1,1}$ if there exists $s\in\{2,\ldots,n\}$ such that $\gamma_l=\tilde\gamma_l$ when $l<s$ and $\gamma_s>\tilde\gamma_s$.
 Let $\{\tilde e_m\}_{m\in\mathbb{Z}_{>0}}$ be a complete orthonormal basis for $H^2_{\rho_2}(M_1,S_1)$, which satisfies that there exists $N_1\in\mathbb{Z}_{>0}$ such that $\sum_{\gamma\in I_{1,1}}|\tilde e_m(z_{\gamma})|=0$  when $m>N_1$ and $\sum_{\gamma\in I_{1,1}}|\tilde e_m(z_{\gamma})|\not=0$  when $m\le N_1$, and $s_m$ is strictly increasing with respect $m$ when $m\le N_1$, where $s_m:=\inf\{\gamma\in I_{1,1}:\tilde e_m(z_{\gamma})\not=0\}$. Denote that 
 $$K_2:=\{\tilde e_m\}_{m>N_1}.$$
	Lemma \ref{p:7} shows that $\{e_m(z)\tilde e_l(w)\}_{m,l\in\mathbb{Z}_{>0}}$ is a complete orthonormal basis for $H^2_{\rho_1 \rho_2}(M,S)$.
	Then we have 
	$$f_0=\sum_{l,m\in\mathbb{Z}_{>0}}a_{l,m}e_l\tilde e_m.$$
By Lemma \ref{l:a3}, we know that 
$$f_0^*=\sum_{l,m\in\mathbb{Z}_{>0}}a_{l,m}e^*_l\tilde e^*_m\,\,\,\,\text{(compactly uniform convergence)}.$$
Since there exists $k\in\{1,\ldots,m_1\}$ such that $h_1(z_{1,k})\not=0$, without loss of generality, assume that $h_z(z_{1,1})\not=0.$ Note that $\sum_{l,m\in\mathbb{Z}_{>0}}|a_{l,m}e^*_l(z_{1,1})|^2<+\infty$, then we have 
$$\sum_{l,m\in\mathbb{Z}_{>0}}a_{l,m}e^*_l(z_{1,1})\tilde e_m\in H^2_{\rho_2}(M_1,S_1).$$ As $f^*(z_{\beta})=h_0(z_\beta)=\prod_{1\le j\le n}h_j(z_{j,\beta_j})$ for any $\beta\in I_1$, we obtain that 
$$\frac{\sum_{l,m\in\mathbb{Z}_{>0}}a_{l,m}e^*_l(z_{1,1})\tilde e^*_m(z_{\gamma})}{h_1(z_{1,1})}=\prod_{2\le j\le n}h_j(z_{j,\gamma_j})$$
for any $\gamma\in I_{1,1}$. Thus, we have $M_{S_1}<+\infty$. Similarly, we have $M_{\partial D_1}<+\infty$. 

 Let $f_1\in H^2_{\rho_1}(D_1,\partial D_1)$ satisfy $f_1^*(z_{1,k})=h_1(z_{1,k})$ for any $1\le k\le m_1$ and 
$$M_{\partial D_1}=\frac{1}{2\pi}\int_{\partial D_1}|f_1|^2\rho_1|dz_1|.$$
Let $f_2\in H^2_{\rho_2}(M_1,S_1)$ satisfy that $f^*_2(z_{\gamma})=\prod_{2\le j\le n}h_j(z_{j,\gamma_j})$ for any $\gamma\in I_{1,1}$ and 
$$M_{M_1}=\|f_2\|^2_{S_1,\rho_2}.$$
Then we know that 
\begin{equation}
	\label{eq:0321b}\int_{\partial D_1}f_1\overline{f}\rho_1|dz_1|=0
\end{equation}
for any $f\in K_1$, and 
\begin{equation}
	\label{eq:0321c}
	\ll f_2,g\gg_{S_1,\rho_2}=0
\end{equation}
for any $g\in K_2$.
Denote that
\begin{equation}
	\label{eq:0321d}F_0:=f_0-f_1f_2,
\end{equation}
then we have $F_0\in H^2_{\rho_1 \rho_2}(M,S)$ and $F^*_0(z_{\beta})=0$ for any $\beta\in I_1$. As $\{e_m(z)\tilde e_l(w)\}_{m,l\in\mathbb{Z}_{>0}}$ is a complete orthonormal basis for $H^2_{\rho_1 \rho_2}(M,S)$, there exists $\{b_{l,m}\}_{l,m\in\mathbb{Z}_{>0}}\subset\mathbb{C}$ such that 
$$F_0=\sum_{l,m\in\mathbb{Z}_{>0}}b_{l,m}e_l\tilde e_m=F_1+F_2,$$
 where $F_1:=\sum_{1\le l\le m_1}\sum_{1\le m\le N_1}b_{l,m}e_l\tilde e_m$ and $F_2:=\sum_{e_l\in K_1\text{ or }\tilde e_m\in K_2}b_{l,m}e_l\tilde e_m$.
 Note that $F^*_2(z_{\beta})=0$ for any $\beta\in I_1$, then $F^*_1(z_{\beta})=0$ for any $\beta\in I_1$. By the construction   of $\{e_l\}_{1\le l\le m_1}$ and $\{\tilde e_m\}_{1\le m\le N_1}$, we know $b_{l,m}=0$ for $1\le l\le m_1$ and $1\le m\le N_1$, i.e., 
 $$F_1\equiv0.$$
Note that $F_0=F_2=\sum_{e_l\in K_1\text{ or }\tilde e_m\in K_2}b_{l,m}e_l\tilde e_m,$ then it follows from equality \eqref{eq:0321a}, \eqref{eq:0321b}, \eqref{eq:0321c} and \eqref{eq:0321d} that 
 \begin{equation}\nonumber
 	M_{S}(Z_0,J,\rho_1 \rho_2)=\|f_0\|^2_{S,\rho_1 \rho_2}=\|F_2\|^2_{S,\rho_1 \rho_2}+\|f_1f_2\|^2_{S,\rho_1 \rho_2}\ge M_{\partial D_1}\times M_{S_1}.
 \end{equation}

Thus, Lemma \ref{l:pro1} holds.
\end{proof}

Let us consider the following  minimal integrals. Denote that 
\begin{displaymath}
	\begin{split}
		M_{D_1}:=\inf\bigg\{&\int_{D_1}|f|^2\lambda_1:f\in \mathcal{O}(D_1)
		\\ &\text{ s.t. } f(z_{1,k})=h_1(z_{1,k})\text{ for any $1\le k\le m_1$}\bigg\},\end{split}
\end{displaymath}
and 
\begin{displaymath}
	\begin{split}
		M_{M}:=\inf\bigg\{&\int_M|f|^2\lambda_1\lambda_2 :f\in \mathcal{O}(M)
		\\ &\text{ s.t. } f(z_{\beta})=h_0(z_\beta)\text{ for any $\beta\in I_1$}\bigg\}.\end{split}
\end{displaymath}

\begin{Lemma}
	\label{l:pro3}$M_M=M_{D_1}\times M_{M_1}.$
\end{Lemma}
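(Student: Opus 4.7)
The plan is to mirror exactly the arguments used for Lemma \ref{l:pro2} and Lemma \ref{l:pro1}, replacing the Hardy space $H^2_{\rho_1}(D_1,\partial D_1)$ on the first factor by the Bergman space $A^2(D_1,\lambda_1)$ and invoking the product basis for weighted Bergman spaces (Lemma \ref{basis of product}) in place of Lemma \ref{l:prod-d1xm1} or Lemma \ref{p:7}. The inequality $M_M\le M_{D_1}\times M_{M_1}$ is immediate by taking the product of minimizers, so all the work goes into the reverse inequality.

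First I would assume $M_M<+\infty$ and, using the admissibility of $\lambda_1\lambda_2$ on $M$ together with the usual weak-compactness / pointwise estimate arguments for admissible weights, extract a minimizer $f_0\in A^2(M,\lambda_1\lambda_2)$ with $f_0(z_\beta)=h_0(z_\beta)$ for every $\beta\in I_1$ and $\|f_0\|^2_{M,\lambda_1\lambda_2}=M_M$. Since $A^2(M,\lambda_1\lambda_2)\neq\{0\}$, by Lemma \ref{basis of product} neither $A^2(D_1,\lambda_1)$ nor $A^2(M_1,\lambda_2)$ is trivial, so $M_{D_1}$ and $M_{M_1}$ both admit minimizers $f_1$ and $f_2$ respectively.

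Next I would choose orthonormal bases in a ``triangular'' fashion adapted to the interpolation data, as in the proofs of Lemmas \ref{l:pro2} and \ref{l:pro1}: an orthonormal basis $\{e_l\}_{l\in\mathbb{Z}_{>0}}$ of $A^2(D_1,\lambda_1)$ with $e_l(z_{1,l})\neq 0$ and $e_l(z_{1,k})=0$ for $k<l$ (writing $K_1:=\{e_l\}_{l>m_1}$), and an orthonormal basis $\{\tilde e_m\}_{m\in\mathbb{Z}_{>0}}$ of $A^2(M_1,\lambda_2)$ so that, with respect to the lexicographic order on $I_{1,1}$, the minimal index $s_m:=\inf\{\gamma\in I_{1,1}:\tilde e_m(z_\gamma)\neq 0\}$ is strictly increasing for $m\le N_1$ and $\tilde e_m(z_\gamma)=0$ for all $\gamma\in I_{1,1}$ when $m>N_1$ (writing $K_2:=\{\tilde e_m\}_{m>N_1}$). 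By Lemma \ref{basis of product}, $\{e_l\tilde e_m\}$ is a complete orthonormal basis of $A^2(M,\lambda_1\lambda_2)$, so $f_0=\sum a_{l,m}e_l\tilde e_m$.

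The decisive step is then to set $F_0:=f_0-f_1f_2$ and use the extremality of $f_1$ and $f_2$: they give $\int_{D_1}f_1\overline{e}\lambda_1=0$ for every $e\in K_1$ and $\int_{M_1}f_2\overline{g}\lambda_2=0$ for every $g\in K_2$, while $F_0(z_\beta)=0$ for every $\beta\in I_1$. Expanding $F_0=\sum b_{l,m}e_l\tilde e_m=F_1+F_2$ with $F_1$ indexed by $1\le l\le m_1,\ 1\le m\le N_1$ and $F_2$ indexed by $e_l\in K_1$ or $\tilde e_m\in K_2$, the triangular vanishing pattern forces $F_1\equiv 0$, so $F_0=F_2$. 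Hence $f_1f_2\perp F_0$ in $A^2(M,\lambda_1\lambda_2)$, which gives
\begin{equation*}
M_M=\|f_0\|^2_{M,\lambda_1\lambda_2}=\|F_0\|^2_{M,\lambda_1\lambda_2}+\|f_1f_2\|^2_{M,\lambda_1\lambda_2}\ge\|f_1\|^2_{D_1,\lambda_1}\|f_2\|^2_{M_1,\lambda_2}=M_{D_1}\times M_{M_1},
\end{equation*}
completing the proof. The main obstacle is the construction of the triangular bases adapted to the finite interpolation data on $I_1$ and $I_{1,1}$, which is what forces the vanishing $F_1\equiv 0$ and thereby the orthogonal decomposition; this is routine Gram--Schmidt but must be carried out with care, exactly as in Lemmas \ref{l:pro2} and \ref{l:pro1}.
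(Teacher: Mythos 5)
Your proposal follows the paper's proof of Lemma \ref{l:pro3} essentially verbatim: the trivial inequality $M_M\le M_{D_1}\times M_{M_1}$, the product orthonormal basis from Lemma \ref{basis of product}, the triangular bases adapted to the interpolation data, the orthogonality of the extremal functions $f_1,f_2$ to $K_1,K_2$, the forced vanishing $F_1\equiv 0$, and the Pythagorean identity. The one imprecision is your normalization of the first-factor basis: for a general admissible weight $\lambda_1$ the evaluation functionals at $z_{1,1},\ldots,z_{1,m_1}$ need not be linearly independent on $A^2(D_1,\lambda_1)$ (a singularity of $\lambda_1$ can force every element to vanish at some $z_{1,k}$), so one cannot always arrange $e_l(z_{1,l})\neq 0$ for every $l\le m_1$; the paper instead allows only $N_1\le m_1$ ``active'' basis elements with strictly increasing first nonvanishing index --- exactly the construction you correctly use for the second factor --- and with that adjustment your argument coincides with the paper's.
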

\begin{proof}The proof is similar to the proof of Lemma \ref{l:pro2}. 
	By definitions of $M_M$, $M_{ D_1}$ and $M_{M_1}$, we have $M_M\le M_{D_1}\times M_{M_1}$. Thus, it suffices to prove $M_M\ge M_{D_1}\times M_{M_1}$.
	
	Without loss of generality, assume that $M_M<+\infty$. There exists $f_0\in \mathcal{O}(M)$ satisfying $f_0(z_{\beta})=h_0(z_{\beta})$ for any $\beta\in I_1$ and
	\begin{equation}
		\label{eq:0310a}
		M_M=\int_M|f_0|^2\lambda_1\lambda_2.
	\end{equation}
	Let  $\{e_l\}_{l\in\mathbb{Z}_{>0}}$ be a complete orthonormal basis for $A^2(D_1,\lambda_1)$, which there exists $N_1\in\mathbb{Z}_{>0}$ such that $\sum_{1\le k\le m_1}| e_l(z_{1,k})|=0$  when $l>N_1$ and $\sum_{1\le k\le m_1}| e_l(z_{1,k})|\not=0$  when $l\le N_1$, and $s_l$ is strictly increasing with respect $l$ when $l\le N_1$, where $s_l:=\inf\{k\in \{1,\ldots,m_1\}:e_l(z_{1,k})\not=0\}$. Denote that 
$$K_1:=\{e_l\}_{l>N_1}.$$
 Let $\{\tilde e_m\}_{m\in\mathbb{Z}_{>0}}$ be a complete orthonormal basis for $A^2(M_1,\lambda_2)$, which satisfies that there exists $N_2\in\mathbb{Z}_{>0}$ such that $\sum_{\gamma\in I_{1,1}}|\tilde e_m(z_{\gamma})|=0$  when $m>N_2$ and $\sum_{\gamma\in I_{1,1}}|\tilde e_m(z_{\gamma})|\not=0$  when $m\le N_2$, and $\tilde s_m$ is strictly increasing with respect $m$ when $m\le N_2$, where $\tilde s_m:=\inf\{\gamma\in I_{1,1}:\tilde e_m(z_{\gamma})\not=0\}$. Denote that 
 $$K_2:=\{\tilde e_m\}_{m>N_2}.$$
	Lemma \ref{basis of product} shows that $\{e_m(z)\tilde e_l(w)\}_{m,l\in\mathbb{Z}_{>0}}$ is a complete orthonormal basis for $A^2(M,\lambda_1\lambda_2)$.
	Then we have 
	$$f_0=\sum_{l,m\in\mathbb{Z}_{>0}}a_{l,m}e_l\tilde e_m.$$
Since there exists $k\in\{1,\ldots,m_1\}$ such that $h_1(z_{1,k})\not=0$, without loss of generality, assume that $h_z(z_{1,1})\not=0.$ As $f_0(z_{\beta})=h_0(z_\beta)=\prod_{1\le j\le n}h_j(z_{j,\beta_j})$ for any $\beta\in I_1$, we obtain that 
$$\frac{\sum_{l,m\in\mathbb{Z}_{>0}}a_{l,m}e_l(z_{1,1})\tilde e_m(z_{\gamma})}{h_1(z_{1,1})}=\prod_{2\le j\le n}h_j(z_{j,\gamma_j})$$
for any $\gamma\in I_{1,1}$. Note that $\sum_{l,m\in\mathbb{Z}_{>0}}a_{l,m}e_l(z_{1,1})\tilde e_m\in\mathcal{O}(M_1)$ and
\begin{displaymath}
	\begin{split}
	&\int_{M_1}|\sum_{l,m\in\mathbb{Z}_{>0}}a_{l,m}e_l(z_{1,1})\tilde e_m|^2\lambda_2\\
	=& \sum_{l,m\in\mathbb{Z}_{>0}}|a_{l,m}e_l(z_{1,1})|^2\\
	=&|e(z_{1,1})|^2\int_{M}|f_0|^2\lambda_1\lambda_2\\
	<&+\infty.
		\end{split}
\end{displaymath}
Thus, we have $M_{M_1}<+\infty$. Similarly, we have $M_{D_1}<+\infty$.

 Let $f_1\in \mathcal{O}(D_1)$ satisfy $f_1(z_{1,k})=h_1(z_{1,k})$ for any $1\le k\le m_1$ and 
$$M_{ D_1}=\int_{ D_1}|f_1|^2\lambda_1.$$
Let $f_2\in\mathcal{O}(M_1)$ satisfy that $f_2(z_{\gamma})=\prod_{2\le j\le n}h_j(z_{j,\gamma_j})$ for any $\gamma\in I_{1,1}$ and 
$$M_{M_1}=\int_{M_1}|f_2|^2\lambda_2.$$
Then we know that 
\begin{equation}
	\label{eq:0310b}\int_{D_1}f_1\overline{f}\lambda_1=0
\end{equation}
for any $f\in K_1$, and 
\begin{equation}
	\label{eq:0310c}
	\int_{M_1}f_2\overline{g}\lambda_2=0
\end{equation}
for any $g\in K_2$.
Denote that
\begin{equation}
	\label{eq:0310d}F_0:=f_0-f_1f_2,
\end{equation}
then we have $F_0\in A^2(M,\lambda_1\lambda_2)$ and $F_0(z_{\beta})=0$ for any $\beta\in I_1$. As $\{e_m(z)\tilde e_l(w)\}_{m,l\in\mathbb{Z}_{>0}}$ is a complete orthonormal basis for $A^2(M,\lambda_1\lambda_2)$, there exists $\{b_{l,m}\}_{l,m\in\mathbb{Z}_{>0}}\subset\mathbb{C}$ such that 
$$F_0=\sum_{l,m\in\mathbb{Z}_{>0}}b_{l,m}e_l\tilde e_m=F_1+F_2,$$
 where $F_1:=\sum_{1\le l\le N_1}\sum_{1\le m\le N_2}b_{l,m}e_l\tilde e_m$ and $F_2:=\sum_{e_l\in K_1\text{ or }\tilde e_m\in K_2}b_{l,m}e_l\tilde e_m$.
 Note that $F_2(z_{\beta})=0$ for any $\beta\in I_1$, then $F_1(z_{\beta})=0$ for any $\beta\in I_1$. By the construction   of $\{e_l\}_{1\le l\le N_1}$ and $\{\tilde e_m\}_{1\le m\le N_2}$, we know $b_{l,m}=0$ for $1\le l\le N_1$ and $1\le m\le N_2$, i.e., 
 $$F_1\equiv0.$$
Note that $F_0=F_2=\sum_{e_l\in K_1\text{ or }\tilde e_m\in K_2}b_{l,m}e_l\tilde e_m,$ then it follows from equality \eqref{eq:0310a}, \eqref{eq:0310b}, \eqref{eq:0310c} and \eqref{eq:0310d} that 
 \begin{equation}\nonumber
 	M_M=\int_M|f_0|^2\lambda_1\lambda_2=\int_M|F_2|^2\lambda_1\lambda_2+\int_M|f_1f_2|^2\lambda_1\lambda_2\ge M_{ D_1}\times M_{M_1}.
 	\end{equation}
 	
 	Thus, Lemma \ref{l:pro3} holds.
\end{proof}

\section{Proofs of Theorem \ref{main theorem1}, Remark \ref{rem1.1}, Corollary \ref{corollary1} and Corollary \ref{c:1.2}}\label{sec:proof1}

In this section, we prove Theorem \ref{main theorem1}, Remark \ref{rem1.1}, Corollary \ref{corollary1} and Corollary \ref{c:1.2}.

\subsection{Proof of Theorem \ref{main theorem1}}

We prove Theorem \ref{main theorem1} in three steps.

\

\emph{Step 1: proof of inequality \eqref{eq:221201b}} 

\

Denote \begin{equation*}
\begin{split}
\inf\bigg\{&\int_{\{2\psi<-t\}}|f|^2\tilde\rho:f\in \mathcal{O}(\{2\psi<-t\}) 
		\\ &\text{ s.t. } f^{(l)}(z_j)=a_{j,l}\text{ for any $0\le l\le k_j$ and any $1\le j\le m$}\bigg\}
\end{split}
\end{equation*}
by $G(t)$ for $t\ge0$.  Note that $\tilde\rho=e^{-\varphi}c(-2\psi)$ and
$G(0)=M(Z_0,\mathfrak{a},\tilde\rho).$ As $v(dd^c(\varphi+2\psi),z_j)\ge 2(k_j+1)$ for any $1\le j\le m$, it follows from 
Theorem \ref{thm:general_concave} that $G(h^{-1}(r))$ is concave, where $h(t)=\int_t^{+\infty}c(s)e^{-s}ds$. 

By Lemma \ref{l:unique}, there exists a holomorphic function $F_0$ on $D$ such that $f^{(l)}(z_j)=a_{j,l}$ for any $0\le l\le k_j$ and any $1\le j\le m$, and 
$$G(0)=\int_{D}|F_0|^2\tilde\rho.$$
By definition of $G(t)$, we have 
 $$G(-\log r)\le \int_{\{2\psi<\log r\}}|F_0|^2\tilde\rho$$
 for any $r\in(0,1]$,
then combining the concavity of $G(h^{-1}(r))$, we obtain that 
\begin{equation}
	\label{eq:221201c}\frac{\int_{\{z\in D:2\psi(z)\ge\log r\}}|F_0(z)|^2\tilde\rho}{\int_0^{-\log r}c(t)e^{-t}dt}\le \frac{G(0)-G(-\log r)}{\int_0^{-\log r}c(t)e^{-t}dt}\le \frac{G(0)}{\int_0^{+\infty}c(t)e^{-t}dt}<+\infty.
\end{equation}
Since $\lim_{t\rightarrow0+0}c(t)=c(0)=1$ and $\lim_{w\rightarrow z}\varphi(w)=\varphi(z)$ for any $z\in\partial D$, it follows from Lemma  \ref{l:01a} and inequality \eqref{eq:221201c} that $F_0\in H^2(D)$ and 
\begin{equation}
	\label{eq:221201d}\begin{split}
			M_H(Z_0,\mathfrak{a},\rho)&\le \frac{1}{2\pi}\int_{\partial D}|F_0|^2\rho|dz|\\
			&\le\frac{1}{2\pi}\liminf_{r\rightarrow1-0}\frac{\int_{\{z\in D:\psi\ge \log r\}}|F_0|^2\tilde\rho}{1-r}\\
			&=\frac{1}{2\pi}\liminf_{r\rightarrow1-0}\frac{\int_{\{z\in D:2\psi\ge \log r\}}|F_0|^2\tilde\rho}{\int_{0}^{-\log r}c(t)e^{-t}dt}\times\frac{\int_{0}^{-\log r}c(t)e^{-t}dt}{1-r^{\frac{1}{2}}}
			\\&\le \frac{1}{\pi\int_{0}^{+\infty}c(t)e^{-t}dt}M(Z_0,\mathfrak{a},\tilde\rho)
							\end{split}
\end{equation}

Thus, inequality \eqref{eq:221201b} holds.

\

\emph{Step 2: necessity of the characterization}

\

 Assume that the equality
\begin{equation}
	\label{eq:221201e}M_H(Z_0,\mathfrak{a},\rho)=\frac{M(Z_0,\mathfrak{a},\tilde\rho)}{\pi\int_{0}^{+\infty}c(t)e^{-t}dt}
\end{equation}
holds. 
Combining inequality \eqref{eq:221201c} and inequality \eqref{eq:221201d}, we get that
 \begin{equation*}
	\liminf_{r\rightarrow1-0}\frac{\int_{\{z\in D:2\psi(z)\ge\log r\}}|F_0(z)|^2\tilde\rho}{\int_0^{-\log r}c(t)e^{-t}dt}=\liminf_{r\rightarrow1-0}\frac{G(0)-G(-\log r)}{\int_0^{-\log r}c(t)e^{-t}dt}=\frac{G(0)}{\int_0^{+\infty}c(t)e^{-t}dt}.
\end{equation*}
Since $G(h^{-1}(r))$ is concave, we know 
 that $G(h^{-1}(r))$ is linear with respect to $r\in(0,\int_0^{+\infty}c(t)e^{-t}dt)$. By Theorem \ref{thm:m-points}, we get that

	$(1)$ $\varphi+2\psi=2\log|g_1|+ 2\sum_{1\le j\le m}G_{D}(\cdot,z_j)+2u_1$, where $g_1$ is a holomorphic function on $D$ such that $ord_{z_j}(g_1)=\min\{l:a_{j,l}\not=0\}$, and $u_1$ is a harmonic function on $D$;
	 
	$(2)$ $\psi=\sum_{1\le j\le m}p_jG_{D}(\cdot,z_j)$;
	
	$(3)$ $\chi_{-u_1}=\prod_{1\le j\le m}\chi_{z_j}$;
	
	$(4)$  For any $1\le j\le m$, 
	\begin{equation}
		\label{eq:1202a}
		\lim_{z\rightarrow z_j}\frac{g_1p_*\left(f_{u_1}\left(\prod_{1\le j\le m}f_{z_j}\right)\left(\sum_{1\le j\le m}p_j\frac{df_{z_j}}{f_{z_j}}\right)\right)}{\sum_{0\le l\le k_j}a_{j,l}(z-z_j)^{l}dz}=c_0
		\end{equation}
	holds, where $c_0\not=0$ is a constant independent of $j$.

As $v(dd^c(\varphi+2\psi),z_j)\geq2(k_j+1)$, it follows from statements $(1)$ and $(4)$ above that, for any $1\le j\le m$, $ord_{z_j}(g_1)=k_j$ and $a_j=0$ for any $l<k_j$. By Weierstrass theorem (see \cite{OF81}), there exists a holomorphic function $g_0$ on $D$ such that $dg_{0}\not=0$ on $\Omega\backslash Z_0$ and $ord_{z_j}(g)=k_j$ for any $j$. Denote that 
$$g_2:=\frac{g_1}{g_0}\quad\text{and}\quad u_2:=u_1+\log|g_0|-\sum_{1\le j\le m}k_jG_D(\cdot,z_j)$$
on $D$. Note that $u_2$ is harmonic on $D$, and $g_2$ is harmonic on $D$ satisfying $dg_2(z_j)\not=0$ for any $1\le j\le m$.
 Combining statements $(1)$ and $(3)$ above,  we have 
 $$\varphi+2\psi=2\log|g_2|+ 2\sum_{1\le j\le m}(k_j+1)G_{D}(\cdot,z_j)+2u_2$$ 
 and 
 $$\chi_{-u_2}=\prod_{1\le j\le m}\chi_{z_j}^{k_j+1}.$$
 
 In the following, we prove that $g_2\not=0$ on $D$. Then $u:=\log|g_2|+u_2$ is harmonic on $D$ and $\chi_{-u}=\prod_{1\le j\le m}\chi_{z_j}^{k_j+1}$. Combining  equality \eqref{eq:1202a}, $\log|g_1|+u_1=\log|g_0|+\log|g_2|+u_1=u+\sum_{1\le j\le m}k_jG_D(\cdot,z_j)$ and $a_{j,l}=0$ for any $l<k_j$, we have 
 $$\lim_{z\rightarrow z_j}\frac{p_*\left(f_{u}\left(\prod_{1\le j\le m}f^{k_j+1}_{z_j}\right)\left(\sum_{1\le j\le m}p_j\frac{df_{z_j}}{f_{z_j}}\right)\right)}{a_{j,k_j}(z-z_j)^{k_j}dz}=c_0,
$$ thus the necessity of the characterization holds.
 
Denote that $h:=\varphi+2\psi-2\sum_{1\le j\le m}(k_j+1)G_{D}(\cdot,z_j)$ on $\overline D$. Then $h$ is subharmonic on $D$ and $h$ is continuous at $z$ for any $z\in\partial D$. It suffices to prove that $h$ is harmonic on $D$. 
 By solving Dirichlet problem, there is a continuous function $\tilde h$ on $\overline D$, which satisfies that $\tilde h=h$ on $\partial D$ and $\tilde h$ is harmonic on $D$. As $h$ is subharmonic on $D$, we have 
 $$h\le \tilde h$$
  on $\overline D$. Denote that
  $$\tilde\varphi:=\varphi+\tilde h-h.$$ Then we have $\tilde\varphi|_{\partial D}=\varphi|_{\partial D}$ and $\tilde \varphi+2\psi=2\sum_{1\le j\le m}(k_j+1)G_{D}(\cdot,z_j)+\tilde h$. Denote that 
  $$\tilde\rho_1:=e^{-\tilde\varphi}c(-2\psi)$$
  on $\overline D$.
  Note that $\tilde\rho_1\le \tilde\rho$.
  By definition, we have 
 \begin{displaymath}
 	 M(Z_0,\mathfrak{a},\tilde\rho)\ge M(Z_0,\mathfrak{a},\tilde\rho_1).
 \end{displaymath}
Combining equality \eqref{eq:221201e} and inequality \eqref{eq:221201b}, we have 
$$M_H(Z_0,\mathfrak{a},\rho)\le\frac{M(Z_0,\mathfrak{a},\tilde\rho_1)}{\pi\int_{0}^{+\infty}c(t)e^{-t}dt}\le\frac{M(Z_0,\mathfrak{a},\tilde\rho)}{\pi\int_{0}^{+\infty}c(t)e^{-t}dt}=M_H(Z_0,\mathfrak{a},\rho),$$
which shows that 
$$ M(Z_0,\mathfrak{a},\tilde\rho)= M(Z_0,\mathfrak{a},\tilde\rho_1).$$ As $M(Z_0,\mathfrak{a},\tilde\rho)<+\infty$ and $\sum_{1\le j\le m}\sum_{0\le l\le k_j}|a_{j,l}|\not=0$,   we have $\tilde\rho_1=\tilde\rho$, which implies that $2\log|g_2|$ is harmonic on $D$, i.e. $g_2\not=0$ on $D$. 

Thus, the necessity of characterization in Theorem \ref{main theorem1} has been proved.

\

\emph{Step 3:  sufficiency of the characterization}

\

 Assume that the four statements $(1)-(4)$ in Theorem \ref{main theorem1} hold. By Weierstrass theorem (see \cite{OF81}), there exists a holomorphic function $g_0$ on $D$ such that $dg_{0}\not=0$ on $\Omega\backslash Z_0$ and $ord_{z_j}(g)=k_j$ for any $j$. Denote that $\tilde u=u+\sum_{1\le j\le m}k_jG_D(\cdot,z_j)-\log|g_0|$ is a harmonic on $\Omega$. Thus, we have $\varphi+2\psi=2\log|g_0|+\sum_{1\le j\le m}2G_D(\cdot,z_j)+2\tilde u$, $\chi_{-\tilde u}=\chi_{-u}\prod_{1\le j\le m}\chi_{z_j}^{-k_j}=\prod_{1\le j\le m}\chi_{z_j}$ and 
 $$\lim_{z\rightarrow z_j}\frac{g_0p_*\left(f_{\tilde u}\left(\prod_{1\le j\le m}f_{z_j}\right)\left(\sum_{1\le j\le m}p_j\frac{df_{z_j}}{f_{z_j}}\right)\right)}{a_{j,k_j}(z-z_j)^{k_j}dz}=c_0
$$
 for any $j$. Then, by  Theorem \ref{thm:m-points}, we know that $G(h^{-1}(r))$ is linear with respect to $r\in(0,\int_0^{+\infty}c(t)e^{-t}dt)$, where the definition of $G(t)$ comes from Step 1.

Using Corollary \ref{c:linear} and Remark \ref{rem:finite uniform section}, we obtain that 
\begin{equation}
	\label{eq:221205a}
	G(t)=\int_{\{2\psi<-t\}}|F_0|^2\tilde\rho
	\end{equation}
 for any $t\geq0$ and 
 \begin{equation}
 	\label{eq:0317a}
 	\begin{split}
 	 	F_0=&\frac{g_0p_*\left(f_{\tilde u}\left(\prod_{1\le j\le m}f_{z_j}\right)\left(\sum_{1\le j\le m}p_j\frac{df_{z_j}}{f_{z_j}}\right)\right)}{c_0dz}\\
 		=&\frac{p_*\left(f_{ u}\left(\prod_{1\le j\le m}f^{k_j+1}_{z_j}\right)\left(\sum_{1\le j\le m}p_j\frac{df_{z_j}}{f_{z_j}}\right)\right)}{c_0dz},	
 	\end{split}
\end{equation}
 where  $p$ is the universal covering from unit disc $\Delta$ to $ D$, $f_{\tilde u}$ is a holomorphic function on $\Delta$ such that $|f_{\tilde u}|=p^*(e^{\tilde u})$, $f_{z_0}$ is a holomorphic function on $\Delta$ such that $|f_{z_j}|=p^*(e^{G_D(\cdot,z_j)})$ for any $j$, and $f_u=\frac{f_{\tilde u}g_0}{\prod_{1\le j\le m}f_{z_j}^{k_j}}$ satisfies that $|f_u|=p^*\left(e^{u}\right)$. Note that $u=\frac{\varphi}{2}+\psi-\sum_{1\le j\le m}(k_j+1)G_D(\cdot,z_j)$ can be  extended to a continuous function on  $\overline D$, then we know 
$$|F_0|\in C(\overline D).$$

 Let $f\in H^2(D)$ satisfying $f^{(l)}(z_j)=l!a_{j,l}$ for any $1\le l\le k_j$ and any $1\le j\le m$. Note that $(f-F_0,z_j)\in\mathcal{I}(\varphi+2\psi)_{z_j}$ for any $j$, $c(t)e^{-t}$ is decreasing and $\{\psi<-t\}\Subset D$ for any $t>0$, then it follows from $\int_{D}|F_0|^2\tilde\rho<+\infty$ that 
 \begin{displaymath}
 	\begin{split}
 		\int_{\{2\psi<-t\}}|f|^2\tilde\rho&\le2\int_{\{2\psi<-t\}}|f-F_0|^2e^{-\varphi}c(-2\psi)+2\int_{D}|F_0|^2\tilde\rho\\
 		&\le2C\int_{\{2\psi<-t\}}|f-F_0|^2e^{-\varphi-2\psi}+2\int_{D}|F_0|^2\tilde\rho\\
 		&<+\infty
 	\end{split}
 \end{displaymath}
 for any $t>0$.  Following from Lemma \ref{l:unique}, we have 
\begin{displaymath}
\begin{split}
	\int_{\{2\psi<-t\}}|f|^2\tilde\rho=&\int_{\{2\psi<-t\}}|F_0|^2\tilde\rho+\int_{\{2\psi<-t\}}|f-F_0|^2\tilde\rho,
\end{split}
\end{displaymath}
which implies that 
\begin{equation}
	\nonumber \int_{\{2\psi<-t\}}F_0\overline{F_0-f}\tilde\rho=0
\end{equation}
for any $t>0$. 
 It follows from Lemma \ref{l:3} and Lemma \ref{l:4} that there exists $r_1>0$ such that 
\begin{equation}
	\nonumber \int_{\{z\in D:\psi(z)=r\}}F_0\overline{F_0-f} e^{-\varphi}\left(\frac{\partial \psi}{\partial v_z} \right)^{-1}|dz|=0
\end{equation}
holds for any $r\in(-r_1,0)$, which implies that 
\begin{equation}
	\label{eq:1219a}\int_{\{z\in D:\psi(z)=r\}}|f|^2e^{-\varphi}\left(\frac{\partial \psi}{\partial v_z} \right)^{-1}|dz|\ge \int_{\{z\in D:\psi(z)=r\}}|F_0|^2e^{-\varphi}\left(\frac{\partial \psi}{\partial v_z} \right)^{-1}|dz|.
\end{equation}
As $|F_0|\in C(\overline D)$, it follows from
 the dominated convergence theorem,  Lemma \ref{l:0-4v2} and equality \eqref{eq:1219a} that
 \begin{displaymath}
 	\int_{\partial D}|f|^2e^{-\varphi}\left(\frac{\partial \psi}{\partial v_z} \right)^{-1}|dz|\ge \int_{\partial D}|F_0|^2e^{-\varphi}\left(\frac{\partial \psi}{\partial v_z} \right)^{-1}|dz|, \end{displaymath}
 then we have 
 \begin{equation}
 	\label{eq:1219b}
 	M_H(Z_0,\mathfrak{a},\rho)=\frac{1}{2\pi}\int_{\partial D}|F_0|^2e^{-\varphi}\left(\frac{\partial \psi}{\partial v_z} \right)^{-1}|dz|. \end{equation}
 
  Note that $\lim_{t\rightarrow0+0}c(t)=c(0)=1$. It follows from equality \eqref{eq:221205a}, the dominated convergence theorem and Lemma \ref{l:3} that 
 \begin{displaymath}
	\begin{split}
		\frac{M(Z_0,\mathfrak{a},\tilde\rho)}{\int_0^{+\infty}c(t)e^{-t}dt}&=\frac{G(0)}{\int_0^{+\infty}c(t)e^{-t}dt}\\
		&=\lim_{r\rightarrow 1-0}\frac{\int_{\{z\in D:2\psi(z)\ge\log r\}}|F_0|^2\tilde\rho}{\int_0^{-\log r}c(t)e^{-t}dt}\\
		&=\frac{1}{2}\int_{\partial D}|F_0|^2e^{-\varphi}\left(\frac{\partial\psi}{\partial v_z}\right)^{-1}|dz|.
	\end{split}
\end{displaymath}
 Combining equality \eqref{eq:1219b}, we have $$M_H(Z_0,\mathfrak{a},\rho)=\frac{M(Z_0,\mathfrak{a},\tilde\rho)}{\pi\int_0^{+\infty}c(t)e^{-t}dt}.$$
 Thus, Theorem \ref{main theorem1} has been proved.

\subsection{Proof of Remark \ref{rem1.1}}

Remark \ref{rem1.1} holds by equality \eqref{eq:221205a}, \eqref{eq:0317a} and \eqref{eq:1219b} in the proof of Theorem \ref{main theorem1}.

\subsection{Proof of Corollary \ref{corollary1}}
In this section, we prove Corollary \ref{corollary1}.

Denote that $M:=\inf\{\int_{D}|f|^2\lambda:f\in \mathcal{O}(D)$ such that $f^{(l)}(z_j)=0$ for $0\le l<k_j$ and $f^{(k_j)}(z_j)=k_j!a_j$ for any $1\le j\le m\}.$ Following from Theorem \ref{main theorem1} and Theorem \ref{c:L2-1d-char} (Taking $c\equiv1$), we have 
\begin{equation}
	\label{eq:1229b}
	M_H\le \frac{M}{\pi}\le \sum_{1\le j\le m}\frac{2|a_j|^2t_j}{(k_j+1)c_{\beta}(z_j)^{2(k_j+1)}}\lambda(z_j).
\end{equation} 
By Lemma \ref{l:exists of $M_H$}, there exists $f\in H^2(D)$ such that $f^{(l)}(z_j)=0$ for $0\le l<k_j$ and $f^{(k_j)}(z_j)=k_j!a_j$ for any $1\le j\le m$, and
	$$\frac{1}{2\pi}\int_{\partial D}|f|^2\lambda\left(\frac{\partial\psi}{\partial v_z}\right)^{-1}|dz|\le\sum_{1\le j\le m}\frac{2|a_j|^2t_j}{(k_j+1)c_{\beta}(z_j)^{2(k_j+1)}}\lambda(z_j).$$
	
	In the following part, we prove the characterization of the holding of equality $M_H=\sum_{1\le j\le m}\frac{2|a_j|^2t_j}{(k_j+1)c_{\beta}(z_j)^{2(k_j+1)}}\lambda(z_j).$

Firstly, we prove the necessity. 
Assume that $M_H=\sum_{1\le j\le m}\frac{2|a_j|^2t_j}{(k_j+1)c_{\beta}(z_j)^{2(k_j+1)}}\lambda(z_j),$ then by inequality \eqref{eq:1229b}, we have 
$$M_H=\frac{M}{\pi}.$$
Using Theorem \ref{main theorem1}, we know the two statements in Corollary \ref{corollary1} hold.

Secondly, we prove the sufficiency. Assume that the two statements in Corollary \ref{corollary1} hold. Theorem \ref{main theorem1} shows that $M_H=\frac{M}{\pi}$, and Theorem \ref{c:L2-1d-char} shows that $\frac{M}{\pi}=\sum_{1\le j\le m}\frac{2|a_j|^2t_j}{(k_j+1)c_{\beta}(z_j)^{2(k_j+1)}}\lambda(z_j)$. Then we have $M_H=\sum_{1\le j\le m}\frac{2|a_j|^2t_j}{(k_j+1)c_{\beta}(z_j)^{2(k_j+1)}}\lambda(z_j).$

Thus, Corollary \ref{corollary1} holds.

\subsection{Proof of Corollary \ref{c:1.2}}
We prove Corollary \ref{c:1.2} by inductive method. 

If $k=0$, it follows from Corollary \ref{corollary1} that Corollary \ref{c:1.2} holds.

Assume that $k\ge1$ and there is a constant $C_1$, such that for any $\tilde a_{j,l}\in\mathbb{C}$, where  $1\le j\le m$ and $0\le l\le k-1$, there exists $f\in H^2(D)$ such that $f^{(l)}(z_j)=\tilde a_{j,l}$ for any $1\le j\le m$ and $0\le l\le k-1$, and
	$\frac{1}{2\pi}\int_{\partial D}|f|^2|dz|\le C_1\sum_{1\le j\le m}\sum_{0\le l\le k-1}|\tilde a_{j,l}|^2.$
Then there exists $f_1\in H^2(D)$ such that $f_1^{(l)}(z_j)= a_{j,l}$ for any $1\le j\le m$ and $0\le l\le k-1$, and
\begin{equation}
	\label{eq:0206a}
	\frac{1}{2\pi}\int_{\partial D}|f_1|^2|dz|\le C_1\sum_{1\le j\le m}\sum_{0\le l\le k-1}| a_{j,l}|^2.
\end{equation}

Following from Lemma \ref{l:bounded} and inequality \eqref{eq:0206a}, we have 
\begin{equation}
	\label{eq:0207a}\sum_{1\le j\le m}|f_1^{(k)}(z_j)|^2\le \frac{C_2}{2\pi}\int_{\partial D}|f_1|^2|dz|\le C_1C_2\sum_{1\le j\le m}\sum_{0\le l\le k-1}| a_{j,l}|^2.
\end{equation}
According to Corollary \ref{corollary1}, there is $f_2\in H^2(D)$ such that for any $j$, $f_2^{(l)}(z_j)=0$ for $0\le l\le k-1$ and $f_2^{(k)}(z_j)=a_{j,k}-f_1^{(k)}(z_j)$, and 
\begin{equation}
	\label{eq:0207b}
	\frac{1}{2\pi}\int_{\partial D}|f_2|^2|dz|\le C_3\sum_{1\le j\le m}|a_{j,k}-f_1^{(k)}(z_j)|^2,
\end{equation}
where $C_3$ is a constant independent of $a_{j,k}$. Denote that 
$$f:=f_1+f_2,$$ 
then we have $f^{(l)}(z_j)=a_{j,l}$ for $1\le j\le m$ and $0\le l\le k$. Combining inequality \eqref{eq:0206a}, \eqref{eq:0207a} and \eqref{eq:0207b}, we have
\begin{displaymath}
	\begin{split}
		&\frac{1}{2\pi}\int_{\partial D}|f|^2|dz|\\
		\le&\frac{1}{\pi}\int_{\partial D}|f_1|^2|dz|+\frac{1}{\pi}\int_{\partial D}|f_2|^2|dz|\\
		\le&C_1\sum_{1\le j\le m}\sum_{0\le l\le k-1}| a_{j,l}|^2+C_3\sum_{1\le j\le m}|a_{j,k}-f_1^{(k)}(z_j)|^2\\
		\le&C_1\sum_{1\le j\le m}\sum_{0\le l\le k-1}| a_{j,l}|^2+2C_3\sum_{1\le j\le m}|a_{j,k}|^2+2C_3C_1C_2\sum_{1\le j\le m}\sum_{0\le l\le k-1}| a_{j,l}|^2.
	\end{split}
\end{displaymath}
Take $C=\max\{C_1+2C_1C_2C_3,2C_3\}$, thus Corollary \ref{c:1.2} holds by induction.

\section{Proofs of Theorem \ref{thm:2.1}, Theorem \ref{thm:2.2}, Remark \ref{rem2.1}, Corollary \ref{c:2.1} and Corollary \ref{c:2.2}}
\label{sec:proof2}

In this section, we prove Theorem \ref{thm:2.1}, Theorem \ref{thm:2.2}, Remark \ref{rem2.1}, Corollary \ref{c:2.1} and Corollary \ref{c:2.2}.

\subsection{Proof of Theorem \ref{thm:2.1}}
We prove Theorem \ref{thm:2.1} in three steps.

\

\emph{Step 1: proof of inequality \eqref{eq:0212a}} 

\

Denote that $\hat\rho:=\prod_{1\le j\le n}e^{-\varphi_j}$, then we have $-\log\hat\rho$ is plurisubharmonic on $M$ and $\hat\rho(w_j,\hat w_j)\leq \liminf_{w\rightarrow w_j}\hat\rho(w,\hat w_j)$ for any $(w_j,\hat w_j)\in \partial D_j\times M_j\subset \partial M$ and any $1\le j\le n$. 

By Lemma \ref{l:unique}, there exists a holomorphic function $F_0$ on $M$ such that $(F_0-f_0,z_{\beta})\in J_{\beta}$ for any $\beta\in I_1$, and 
$$G(0)=\int_{M}|F_0|^2\tilde\rho.$$
By definition of $G(t)$, we have 
 $$G(-\log r)\le \int_{\{2\psi<\log r\}}|F_0|^2\tilde\rho$$
 for any $r\in(0,1]$,
then combining the concavity of $G(h^{-1}(r))$, we obtain that 
\begin{equation}
	\label{eq:0212b}\frac{\int_{\{z\in M:2\psi(z)\ge\log r\}}|F_0(z)|^2\tilde\rho}{\int_0^{-\log r}c(t)e^{-t}dt}\le \frac{G(0)-G(-\log r)}{\int_0^{-\log r}c(t)e^{-t}dt}\le \frac{G(0)}{\int_0^{+\infty}c(t)e^{-t}dt}<+\infty.
\end{equation}
Since $\lim_{t\rightarrow0+0}c(t)=c(0)=1$ and  $\hat\rho(w_j,\hat w_j)\leq \liminf_{w\rightarrow w_j}\hat\rho(w,\hat w_j)$ for any $(w_j,\hat w_j)\in \partial D_j\times M_j\subset \partial M$ and any $1\le j\le n$, it follows from Proposition  \ref{p:b6} and inequality \eqref{eq:0212b} that then there is $\tilde F_0\in H^2_{\rho}(M,\partial M)$ such that $\tilde F_0^*=F_0$  and 
\begin{equation}
	\label{eq:0212c}\begin{split}
			M_H(Z_0,J,\rho)&\le \|\tilde F_0\|_{\partial M,\rho}^2\\
			&\le\frac{1}{\pi}\liminf_{r\rightarrow1-0}\frac{\int_{\{z\in D:2\psi\ge \log r\}}|F_0|^2\tilde\rho}{1-r}\\
			&=\frac{1}{\pi}\liminf_{r\rightarrow1-0}\frac{\int_{\{z\in D:2\psi\ge \log r\}}|F_0|^2\tilde\rho}{\int_{0}^{-\log r}c(t)e^{-t}dt}\times\frac{\int_{0}^{-\log r}c(t)e^{-t}dt}{1-r}
			\\&\le \frac{M(Z_0,J,\tilde\rho)}{\pi\int_{0}^{+\infty}c(t)e^{-t}dt}
							\end{split}
\end{equation}

This, inequality \eqref{eq:0212a} holds.

\

\emph{Step 2: necessity of the characterization}

\

 Assume that the equality
\begin{equation}
	\nonumber M_H(Z_0,J,\rho)=\frac{M(Z_0,J,\tilde\rho)}{\pi\int_{0}^{+\infty}c(t)e^{-t}dt}
\end{equation}
holds. 
Combining inequality \eqref{eq:0212b} and inequality \eqref{eq:0212c}, we get that
 \begin{equation*}
	\liminf_{r\rightarrow1-0}\frac{\int_{\{z\in M:2\psi(z)\ge\log r\}}|F_0(z)|^2\tilde\rho}{\int_0^{-\log r}c(t)e^{-t}dt}=\liminf_{r\rightarrow1-0}\frac{G(0)-G(-\log r)}{\int_0^{-\log r}c(t)e^{-t}dt}=\frac{G(0)}{\int_0^{+\infty}c(t)e^{-t}dt}
\end{equation*}
and $$M_H(Z_0,J,\rho)= \|\tilde F_0\|_{\partial M,\rho}^2.$$
Since $G(h^{-1}(r))$ is concave, we know 
 that $G(h^{-1}(r))$ is linear with respect to $r\in[0,\int_0^{+\infty}c(t)e^{-t}dt]$.

\

\emph{Step 3:  sufficiency of the characterization}

\

Assume that $G(h^{-1}(r))$ is linear with respect to $r\in[0,\int_0^{+\infty}c(t)e^{-t}dt]$ and
\begin{equation}
	\label{eq:0216a}
	M_H(Z_0,J,\rho)= \|\tilde F_0\|_{\partial M,\rho}^2.
\end{equation}
Using Corollary \ref{c:linear}, we obtain that 
\begin{equation}
	\nonumber
	G(t)=\int_{\{2\psi<-t\}}|F_0|^2\tilde\rho
	\end{equation}
 for any $t\geq0$.
 Thus, inequality \eqref{eq:0212b} becomes an equality, which shows that 
 \begin{equation}
 	\label{eq:0213f}
 	\frac{1}{\pi}\liminf_{r\rightarrow1-0}\frac{\int_{\{z\in D:2\psi\ge \log r\}}|F_0|^2\tilde\rho}{1-r}= \frac{M(Z_0,J,\tilde\rho)}{\pi\int_{0}^{+\infty}c(t)e^{-t}dt}.
 \end{equation}
 
 Note that 
 $$\{z\in M:2\psi(z)=s\}=\cup_{1\le j\le m}\{w_j\in D_j:2\psi_j(w_j)=s\}\times\{\hat w_j\in M_j:2\hat\psi_j(\hat w_j)\le s\},$$
 where $s\in (-\infty,0)$, $\hat\psi_j:=\max_{1\le j'\le m,j'\not=j}\{\sum_{1\le k\le m_j}p_{j',k}G_{D_{j'}}(\cdot,z_{j',k})\}$ on $M_j$ and $\psi_j:=\sum_{1\le k\le m_j}p_{j,k}G_{D_j}(\cdot,z_{j,k})$ on $D_j$. Denote that $$M_{j,s}:=\{\hat w_j\in M_j:2\hat\psi_j(\hat w_j)\le s\}$$
  and $$D_{j,s}:=\{w_j\in D_j:2\psi_j(w_j)\le s\}$$
  for $1\le j\le n$. Following from Lemma \ref{l:4}, there exists $r_0\in(0,1)$ such that $\bigtriangledown\psi_j\not=0$ on $D_j\backslash D_{j,\log r_0}$ for any $1\le j\le n$.
 By Lemma \ref{l:3}, we have
\begin{equation}
\label{eq:0213g} \begin{split}&\int_{\{z\in M:2\psi(z)\ge \log r\}}|F_0|^2\tilde\rho\\
=&\sum_{1\le j\le n}\int_{\{z\in M:2\psi(z)\ge \log r\,\&\,\psi_j(z)>\hat\psi_j(z)\}}|F_0|^2\tilde\rho\\
	=&\sum_{1\le j\le n}
	\int_{\log r}^0\int_{M_{j,s}}\int_{\partial D_{j,s}}\frac{|F_0(w_j,\hat w_j)|^2\tilde\rho}{2|\bigtriangledown\psi_j|}|dw_j|d\mu_j(\hat w_j)ds\\
	=&\sum_{1\le j\le n}
	\int_{\log r}^0\int_{M_{j,s}}\int_{\partial D_{j,s}}\frac{|F_0(w_j,\hat w_j)|^2}{2|\bigtriangledown\psi_j|}\times c(-2\psi)\prod_{1\le l\le n}e^{-\varphi_l} |dw_j|d\mu_j(\hat w_j)ds\\
	=&\sum_{1\le j\le n}
	\int_{\log r}^0c(-s)\int_{M_{j,s}}\int_{\partial D_{j,s}}\frac{|F_0(w_j,\hat w_j)|^2}{2|\bigtriangledown\psi_j|}\times \prod_{1\le l\le n}e^{-\varphi_l} |dw_j|d\mu_j(\hat w_j)ds.
\end{split}
\end{equation}	
 for $r\in(r_0,1)$. By Lemma \ref{l:0-4v3} and $\tilde F_0^*=F_0$, 
 $$\lim_{s\rightarrow0}\sum_{1\le j\le n}\int_{M_{j,s}}\int_{\partial D_{j,s}}\frac{|F_0(w_j,\hat w_j)|^2}{|\bigtriangledown\psi_j|}\times e^{-\varphi} |dw_j|d\mu_j(\hat w_j)ds=2\pi\|\tilde F_0\|_{\partial M,\rho}^2.$$
 As $\lim_{s\rightarrow0+0}c(s)=c(0)=1$, equality \eqref{eq:0213g} implies that 
 \begin{equation}
 	\label{eq:0216b}
 	\frac{1}{\pi}\liminf_{r\rightarrow1-0}\frac{\int_{\{z\in D:2\psi\ge \log r\}}|F_0|^2\tilde\rho}{1-r}=\|\tilde F_0\|_{\partial M,\rho}^2.
 	 \end{equation}
Combining inequality \eqref{eq:0212c}, equality \eqref{eq:0216a}, \eqref{eq:0213f} and \eqref{eq:0216b}, we have $M_H(Z_0,J,\rho)=\frac{M(Z_0,J,\tilde\rho)}{\pi\int_{0}^{+\infty}c(t)e^{-t}dt}$.

 Thus, Theorem \ref{thm:2.1} has been proved.

\subsection{Proof of Theorem \ref{thm:2.2}}

As $\varphi_j$ is continuous at $z$ for any $z\in \partial D_j$, following from Weierstrass theorem (see \cite{OF81}), statement $(1)$ in Theorem \ref{thm:2.2} is equivalent to $\varphi_j=2\log|g_j|+2u_j$ for any $j\in\{1,2,...,n\}$, where $u_j$ is a harmonic function on $D_j$ and $g_j$ is a holomorphic function on $D_j$ satisfying $g_j(z_{j,k})\not=0$ for any $k\in\{1,2,...,m_j\}$. Thus, using Theorem \ref{thm:prod-finite-point} and Remark \ref{r:var=-infty}, the four statements holds if and only if $G(h^{-1}(r))$ is linear. 

We follow the notations in the proof of Theorem \ref{thm:2.1}. By Theorem \ref{thm:2.1}, we know that it suffices to prove that: if $G(h^{-1}(r))$ is linear, then 
\begin{equation}
	\label{eq:0216c}M_H(Z_0,J,\rho)=\|\tilde F_0\|^2_{\partial M,\rho},
\end{equation}
 where $F_0$ is a holomorphic function on $M$ (introduced in the proof of Theorem \ref{thm:2.1}) satisfying that $\tilde F_0^*=F_0$,
 \begin{equation}
 	\label{eq:0317b}M(Z_0,J,\tilde\rho)=\int_M|F_0|^2\tilde\rho
 \end{equation} 
and $(F_0-f_0,z_{\beta})\in\mathcal{I}(2\psi)_{z_\beta}$ for any $\beta\in I_1$.

In the following, assume that $G(h^{-1}(r))$ is linear on $[0,\int_0^{+\infty}c(t)e^{-t}dt]$. Using Corollary \ref{c:linear}, we obtain that 
\begin{equation}
	\label{eq:0216d}
	G(t)=\int_{\{2\psi<-t\}}|F_0|^2\tilde\rho
	\end{equation}
 for any $t\geq0$.

 Let $f$ be any element in $H^2_{\rho}(M,\partial M)$ satisfying that $(f^*-F_0,z_{\beta})\in \mathcal{I}(2\psi)_{z_{\beta}}$ for any $\beta\in I_1$. By Theorem \ref{thm:prod-finite-point}, we know that $\varphi_j=2\log|g_j|+2u_j$, where $u_j$ is a harmonic function on $D_j$ and $g_j$ is a holomorphic function on $D_j$ satisfying $g_{j}(z_{j,k})\not=0$ for $1\le k\le m_j$, thus $\varphi_j$ is bounded near $z_{j,k}$. Note that $c(t)e^{-t}$ is decreasing on $(0,+\infty)$, then $\int_{M}|F_0|^2e^{-\varphi}c(-2\psi)<+\infty$ implies that 
 $$|f^*|^2e^{-\varphi}c(-2\psi)\le C|f^*-F_0|^2e^{-2\psi}+2|F_0|^2e^{-\varphi}c(-2\psi)$$ is integrable near $z_\beta$. For any $z\in M\backslash\{z_\beta:\beta\in I_1\}$, as $c(-2\psi)$ is bounded near $z$, it follows from Lemma \ref{l:integral} that $|f^*|^2e^{-\varphi}c(-2\psi)$ is integrable near $z$. Thus, we obtain that 
 \begin{equation}\nonumber
 	\int_{\{2\psi<-t\}}|f^*|^2e^{-\varphi}c(-2\psi)<+\infty \end{equation} 
 holds for any $t>0$. 
 By equality \eqref{eq:0216d}, we have 
 $$\int_{\{2\psi<-t\}}(f^*-F_0)\overline{F_0}\tilde\rho=0,$$
 which implies
 that $$\sum_{1\le j\le n}
	\int_{-\infty}^tc(-s)\int_{M_{j,s}}\int_{\partial D_{j,s}}\frac{(f^*-F_0)\overline{F_0}}{|\bigtriangledown\psi_j|}\times \prod_{1\le l\le n}e^{-\varphi_l} |dw_j|d\mu_j(\hat w_j)ds=0$$
 for any $t>0$ according to Lemma \ref{l:3}, where the definitions of $M_{j,s}$ and $D_{j,s}$ can be seen in the proof of Theorem \ref{thm:2.1}. Thus, 
$$\sum_{1\le j\le n}\int_{M_{j,s}}\int_{\partial D_{j,s}}\frac{(f^*-F_0)\overline{F_0}}{|\bigtriangledown\psi_j|}\times \prod_{1\le l\le n}e^{-\varphi_l} |dw_j|d\mu_j(\hat w_j)=0,$$
 which shows that 
 \begin{equation}
 	\label{eq:0215b}
 	\sum_{1\le j\le n}\int_{M_{j,s}}\int_{\partial D_{j,s}}\frac{|f^*|^2e^{-\varphi}}{|\bigtriangledown\psi_j|}|dw_j|d\mu_j(\hat w_j)\geq\sum_{1\le j\le n}\int_{M_{j,s}}\int_{\partial D_{j,s}}\frac{|F_0|^2e^{-\varphi}}{|\bigtriangledown\psi_j|} |dw_j|d\mu_j(\hat w_j)
 \end{equation}
for any $s>0$.
Combining equality \eqref{eq:0215b} and Lemma \ref{l:0-4v3}, we know that 
$$\|f^*\|_{\partial M,\rho}\ge \|\tilde F_0\|_{\partial M,\rho},$$
which implies that equality \eqref{eq:0216c} holds.

Thus, Theorem \ref{thm:2.2} has been proved.

\subsection{Proof of Remark \ref{rem2.1}}

When the four statements in Theorem \ref{thm:2.2} hold, $G(h^{-1}(r))$ is linear. Then
Remark \ref{rem2.1} holds by Remark \ref{r:1.2}, equality \eqref{eq:0216c} and \eqref{eq:0317b}.

\subsection{Proof of Corollary \ref{c:2.1}}

Following from Theorem \ref{thm:2.1} and Theorem \ref{thm:prod-finite-jet} (Taking $c\equiv1$), we have 
\begin{equation}
	\label{eq:0217c}
	M_H(Z_0,\mathcal{I}(2\psi),\rho)\le \frac{M(Z_0,\mathcal{I}(2\psi),\rho)}{\pi}\le \sum_{\beta\in I_1}\sum_{\alpha\in E_{\beta}}\frac{|d_{\beta,\alpha}|^22^n\pi^{n-1}e^{-\varphi(z_{\beta})}}{\Pi_{1\le j\le n}(\alpha_j+1)c_{j,\beta_j}^{2\alpha_{j}+2}}.
\end{equation} 
By Lemma \ref{l:exists of $M_H$2}, there exists $f\in H^2_{\rho}(M,\partial M)$ such that $(f^*-f_0,z_{\beta})\in \mathcal{I}(2\psi)_{z_\beta}$ for any $\beta\in I_1$, and
	$$\|f\|_{\partial M,\rho}^2\le\sum_{\beta\in I_1}\sum_{\alpha\in E_{\beta}}\frac{|d_{\beta,\alpha}|^22^n\pi^{n-1}e^{-\varphi(z_{\beta})}}{\Pi_{1\le j\le n}(\alpha_j+1)c_{j,\beta_j}^{2\alpha_{j}+2}}.$$
	
	In the following part, we prove the characterization of the holding of equality 
	\begin{equation}
		\label{eq:0217b}M_H(Z_0,\mathcal{I}(2\psi),\rho)=\sum_{1\le j\le m}\frac{2|a_j|^2t_j}{(k_j+1)c_{\beta}(z_j)^{2(k_j+1)}}\rho(z_j).	\end{equation}

Firstly, we prove the necessity. 
Assume that equality \eqref{eq:0217b} holds, then by inequality \eqref{eq:0217c}, we have 
$$M_H(Z_0,\mathcal{I}(2\psi),\rho)=\frac{M(Z_0,\mathcal{I}(2\psi),\rho)}{\pi}.$$
Using Theorem \ref{thm:2.2}, we know the four statements in Corollary \ref{c:2.1} hold.

Secondly, we prove the sufficiency. Assume that the four statements in Corollary \ref{c:2.1} hold. Theorem \ref{thm:2.2} shows that $M_H(Z_0,\mathcal{I}(2\psi),\rho)=\frac{M(Z_0,\mathcal{I}(2\psi),\rho)}{\pi}$, and Theorem \ref{thm:prod-finite-jet} shows that $\frac{M(Z_0,\mathcal{I}(2\psi),\rho)}{\pi}=\sum_{\beta\in I_1}\sum_{\alpha\in E_{\beta}}\frac{|d_{\beta,\alpha}|^22^n\pi^{n-1}e^{-\varphi(z_{\beta})}}{\Pi_{1\le j\le n}(\alpha_j+1)c_{j,\beta_j}^{2\alpha_{j}+2}}$,  then equality \eqref{eq:0217b} holds.

Thus, Corollary \ref{c:2.1} holds.

\subsection{Proof of Corollary \ref{c:2.2}}

We prove Corollary \ref{c:2.2} by inductive method. 

If $k=0$, it follows from Corollary \ref{c:2.1} that Corollary \ref{c:2.2} holds.

Assume that $k\ge1$ and there is a constant $C_1$, such that for any $\tilde a_{\beta,\alpha}\in\mathbb{C}$, where  $\beta\in I_1$ and $\alpha\in L_{k-1}$, there exists $f\in H^2_{\rho}(M,\partial M)$ such that $\partial^{\alpha}f^*(z_{\beta})=a_{\beta,\alpha}$ for any $\beta\in I_1$ and $\alpha\in L_{k-1}$, and
	$\|f\|_{\partial M,\rho}^2\le C_1\sum_{\beta\in I_1,\alpha\in L_{k-1}}|\tilde a_{\beta,\alpha}|^2.$
Then there exists $f_1\in H^2_{\rho}(M,\partial M)$ such that $\partial^{\alpha}f_1^*(z_{\beta})=a_{\beta,\alpha}$ for any $\beta\in I_1$ and $\alpha\in L_{k-1}$, and
\begin{equation}
	\label{eq:0218a}
	\|f_1^*\|_{\partial M,\rho}^2\le C_1\sum_{\beta\in I_1,\alpha\in L_{k-1}}|a_{\beta,\alpha}|^2.
\end{equation}

Following from Lemma \ref{l:b1-p} and inequality \eqref{eq:0218a}, we have 
\begin{equation}
	\label{eq:0218b}\sum_{\beta\in I_1|\alpha|=k}|\partial^{\alpha} f_1^*(z_\beta)|^2\le C_2\|f_1^*\|_{\partial M,\rho}^2\le C_1C_2\sum_{\beta\in I_1,\alpha\in L_{k-1}}|a_{\beta,\alpha}|^2,
\end{equation}
where $|\alpha|=\sum_{1\le j\le n}\alpha_j$.
It follows from Corollary \ref{c:2.1} (taking $f_0=\sum_{|\alpha|=k}(a_{\beta,\alpha}-\partial^{\alpha} f_1^{*}(z_{\beta}))\prod_{1\le j\le n}(w_j-z_{j,\beta_j})^{\alpha_j}$ on $V_{\beta}$ and $\psi=2(n+k)\max\{\sum_{1\le k\le m_j}G_{D_j}(\cdot,z_{j,k})\}$), that there is $f_2\in H_{\rho}^2(M,\partial M)$ such that for any $\beta\in I_1$, $\partial^{\alpha}f_2^*(z_\beta)=0$ for $\alpha\in L_{k-1}$ and $\partial^{\alpha}f_2^*(z_\beta)=a_{\beta,\alpha}-\partial^{\alpha} f_1^{*}(z_{\beta})$, and 
\begin{equation}
	\label{eq:0218c}
	\frac{1}{2\pi}\int_{\partial D}|f_2|^2|dz|\le C_3\sum_{\beta\in I_1,|\alpha|=k}|a_{\beta,\alpha}-\partial^{\alpha} f_1^{*}(z_{\beta})|^2,
\end{equation}
where $C_3$ is a constant independent of $a_{\beta,\alpha}$. Denote that 
$$f:=f_1+f_2,$$ 
then we have $\partial^{\alpha}f^*(z_\beta)=a_{\beta,\alpha}$ for $\beta\in I_1$ and $\alpha\in L_k$. Combining inequality \eqref{eq:0218a}, \eqref{eq:0218b} and \eqref{eq:0218c}, we have
\begin{displaymath}
	\begin{split}
		&\|f\|_{\partial M,\rho}^2\\
		\le&2\|f_1\|_{\partial M,\rho}^2+2\|f_2\|_{\partial M,\rho}^2\\
		\le&C_1\sum_{\beta\in I_1,\alpha\in L_{k-1}}|a_{\beta,\alpha}|^2+C_3\sum_{\beta\in I_1,|\alpha|=k}|a_{\beta,\alpha}-\partial^{\alpha} f_1^{*}(z_{\beta})|^2\\
		\le&C_1\sum_{\beta\in I_1,\alpha\in L_{k-1}}|a_{\beta,\alpha}|^2+2C_3\sum_{\beta\in I_1,|\alpha|=k}|a_{\beta,\alpha}|^2+2C_3C_1C_2\sum_{\beta\in I_1,\alpha\in L_{k-1}}|a_{\beta,\alpha}|^2.
	\end{split}
\end{displaymath}
Take $C=\max\{C_1+2C_1C_2C_3,2C_3\}$, thus Corollary \ref{c:2.1} holds by induction.

\section{Proof of Theorem \ref{thm:3.1}}

We prove Theorem \ref{thm:3.1} in three steps: Firstly, we prove inequality \eqref{eq:0312a}; Secondly, we prove the necessity of the characterization; Finally, we prove the sufficiency of the characterization.

\emph{Step 1.}
By Lemma \ref{l:exists of $M_H$2}, there is a unique $F_0\in H^2_{\rho}(M,\partial M)$ satisfying that $F_0^*(z_{\beta})=h_0(z_{\beta})$ for any $\beta\in I_1$ and $M_H(Z_0,J,\rho)=\|F_0\|_{\partial M,\rho}^2$. For any $1\le j\le n$, denote that 
\begin{displaymath}
	\begin{split}
		M_{H,j}(Z_0,J,\rho):=\inf\bigg\{&\|f\|^2_{\partial D_j\times M_j,\rho}:f\in H_{\rho}^2(M,\partial D_j\times M_j) 
		\\ &\text{ s.t. } f^*(z_{\beta})=h_0(z_{\beta})\text{ for any $\beta\in I_1$}\bigg\}.\end{split}
\end{displaymath}
By definitions of $M_H(Z_0,J,\rho)$ and $M_{H,j}(Z_0,J,\rho)$, we have 
\begin{equation}
	\label{eq:0314a}
	M_H(Z_0,J,\rho)=\|F_0\|_{\partial M,\rho}^2\ge\sum_{1\le j\le n}M_{H,j}(Z_0,J,\rho).
\end{equation}
For any $1\le j\le n$, denote that
\begin{displaymath}
	\begin{split}
		M_{D_j}:=\inf\bigg\{&\int_{D_j}|f|^2e^{-\varphi_j}:f\in \mathcal{O}(D_j)
		\\ &\text{ s.t. } f(z_{j,k})=h_j(z_{j,k})\text{ for any $1\le k\le m_j$}\bigg\},\end{split}
\end{displaymath}
\begin{displaymath}
	\begin{split}
		M_{\partial D_j}:=\inf\Bigg\{\frac{1}{2\pi}\int_{\partial D_j}&|f|^2\bigg(\sum_{1\le k\le m_j}2\frac{\partial G_{D_j}(w_j,z_{j,k})}{\partial v_{w_j}}\bigg)^{-1}e^{-\varphi_j}|dw_j| :
		\\ &f\in H^2(D_j) \text{ s.t. } f(z_{j,k})=h_j(z_{j,k})\text{ for any $1\le k\le m_j$}\Bigg\}\end{split}
\end{displaymath}
and 
\begin{displaymath}
	\begin{split}
		M_{M_j}:=\inf\bigg\{&\int_{M_j}|f|^2\prod_{1\le l\le n,l\not=j}e^{-\varphi_l} :f\in \mathcal{O}(M_j) 
		\\ &\text{ s.t. } f(z_{\gamma})=\prod_{1\le l\le n,l\not=j}h_l(z_{l,\gamma_l}) \text{ for any $\gamma\in I_{1,j}$}\bigg\},\end{split}
\end{displaymath}
where $I_{1,j}:=\{\gamma=(\gamma_1,\ldots,\gamma_{j-1},\gamma_{j+1},\ldots,\gamma_n)\in\mathbb{Z}^{n-1} :1\le\gamma_l\le m_l$ for any $l\not=j\}$ and  $z_{\gamma}:=(z_{1,\gamma_1},\ldots,z_{j-1,\gamma_{j-1}},z_{j+1,\gamma_{j+1}},\ldots,z_{n,\gamma_{n}})\in M_j$ for any $\gamma\in I_{1,j}$.
It follows from Theorem \ref{main theorem1}, Lemma \ref{l:pro1}, Lemma \ref{l:pro2}, Lemma \ref{l:pro3} and inequality \eqref{eq:0314a}, that
\begin{equation}
	\label{eq:0314b}\begin{split}
		M_H(Z_0,J,\rho)\ge&\sum_{1\le j\le n}M_{H,j}(Z_0,J,\rho)\\
		=&\sum_{1\le j\le n}M_{\partial D_j}\times M_{M_j}\\
		=&\sum_{1\le j\le m}M_{\partial D_j}\times \prod_{1\le l\le n,l\not=j}M_{D_l}\\
		\ge& \pi^{n-1}\sum_{1\le j\le m}M_{\partial D_j}\times \prod_{1\le l\le n,l\not=j}M_{\partial D_l}\\
		=&n\pi^{n-1}M_S(Z_0,J,\lambda).
	\end{split}
\end{equation}

\

\emph{Step 2.} Assume that equality $M_S(Z_0,J,\lambda)=\frac{ M_H(Z_0,J,\rho)}{n\pi^{n-1}}$ holds. As there exists $k\in\{1,\ldots,m_j\}$ such that $h_j(z_{j,k})\not=0$, we know that $M_{\partial D_j}>0$ and $M_{D_j}>0$. Following from inequality \eqref{eq:0314b} and Theorem \ref{main theorem1}, we get that 
$$M_{D_j}=\pi M_{\partial D_j}$$
for any $1\le j\le n$, and then the three statements in Theorem \ref{thm:3.1} hold.

\

\emph{Step 3.} Assume that the three statements in Theorem \ref{thm:3.1} hold. Theorem \ref{main theorem1} tells us that 
\begin{equation}
	\label{eq:0314c}M_{D_j}=\pi M_{\partial D_j}
\end{equation}
holds for any $1\le j\le n$. For any $1\le j\le n$, denote that 
$$F_j=\frac{P_j^*\left(f_{u_j}\left(\prod_{1\le k\le m_j}f_{z_{j,k}}\right)\left(\sum_{1\le k\le m_j}\frac{df_{z_{j,k}}}{f_{z_{j,k}}}\right)\right)}{c_jdz}\in \mathcal{O}(D_j).$$
 Following from Remark \ref{rem1.1}, there exists $f_j\in H^2(D_j)$ such that $f_j^*=F_j$, and 
 we have 
 \begin{equation}
 	\label{eq:0314d}M_{D_j}=\int_{D_j}|F_j|^2e^{-\varphi_j}
 	 \end{equation}
and 
\begin{equation}
	\label{eq:0314e}M_{\partial D_j}=\frac{1}{2\pi}\int_{\partial D_j}|f_j|^2\bigg(\sum_{1\le k\le m_j}2\frac{\partial G_{D_j}(w_j,z_{j,k})}{\partial v_{w_j}}\bigg)^{-1}e^{-\varphi_j}|dw_j|.
\end{equation}
Then there exists $\tilde F_0\in H^2_{\rho}(M,\partial M)$ such that $\tilde F_0=f_j\times\prod_{1\le l\le n,l\not=j}F_l$ on $\partial D_j\times M_j$ for any $1\le j\le n$, and $\tilde F_0^*=\prod_{1\le j\le n}F_j$. By Lemma \ref{l:pro2}, Lemma \ref{l:pro3}, equality \eqref{eq:0314d} and \eqref{eq:0314e}, we know that 
\begin{equation}
	\label{eq:0314f}M_{H,j}(Z_0,J,\rho)=\|\tilde F_0\|^2_{\partial D_j\times M_j,\rho}.
\end{equation}
Note that $F_j(z_{j,k})=h_j(z_{j,k})$ for any $1\le j\le n$ and $1\le k\le m_j$, hence $\tilde F_0^*(z_{\beta})=h_0(z_{\beta})$ for any $\beta\in I_1$. Inequality \eqref{eq:0314f} implies that 
$$\sum_{1\le j\le n}M_{H,j}(Z_0,J,\rho)=\sum_{1\le j\le n}\|\tilde F_0\|^2_{\partial D_j\times M_j,\rho}\ge M_H(Z_0,J,\rho).$$
Combining inequality \eqref{eq:0314a}, we have 
\begin{equation}
	\label{eq:0314g}\sum_{1\le j\le n}M_{H,j}(Z_0,J,\rho)= M_H(Z_0,J,\rho).
\end{equation}
Using inequality \eqref{eq:0314b}, equality \eqref{eq:0314c} and \eqref{eq:0314g}, we get that 
$$M_H(Z_0,J,\rho)=n\pi^{n-1}M_S(Z_0,J,\lambda).$$

Thus, Theorem \ref{thm:3.1} holds.

\

\vspace{.1in} {\em Acknowledgements}. The authors would like to thank  Dr. Shijie Bao and Dr. Zhitong Mi for checking the manuscript and  pointing out some typos. The first named author was supported by National Key R\&D Program of China 2021YFA1003100, NSFC-11825101, NSFC-11522101 and NSFC-11431013.

\bibliographystyle{references}

\end{document}